\documentclass[reqno,11pt]{amsart}
\usepackage{changes}
\usepackage{lipsum}
\usepackage{bm,cancel,ulem}
\usepackage{amssymb,amsfonts}
\usepackage{amsmath,latexsym}
\usepackage{xcolor}
\usepackage{pdfsync}
\usepackage{mathrsfs}
\usepackage[numbers,sort&compress]{natbib}

\usepackage[colorlinks=true]{hyperref}
\hypersetup{
citecolor=[HTML]{006666},
linkcolor=[HTML]{8B0000},
urlcolor =[HTML]{800000}
}
\newtheorem{thm}{Theorem}[section]
\newtheorem{cor}[thm]{Corollary}
\newtheorem{lemma}[thm]{Lemma}
\newtheorem{prop}[thm]{Proposition}
\theoremstyle{definition}
\newtheorem{defn}[thm]{Definition}
\newtheorem{rem}[thm]{Remark}
\numberwithin{equation}{section}
\newtheorem{claim}[thm]{Claim}

\newcommand{\mR}{\mathbb R}

\newcommand{\dx}{\,{\rm d}x}

\newcommand{\bn}{\bar{\nabla}}

\newcommand{\al}{\alpha}
\newcommand{\ep}{\epsilon}
\newcommand{\f}{\frac}
\newcommand{\q}{\quad}
\newcommand{\na}{\nabla}
\newcommand{\un}{\underbrace}

\renewcommand{\leq}{\leqslant}
\renewcommand{\geq}{\geqslant}
\renewcommand{\d}{\,{\rm d}}

\def\XXint#1#2#3{{\setbox0=\hbox{$#1{#2#3}{\int}$}	\vcenter{\hbox{$#2#3$}}\kern-.5\wd0}}

\newcommand{\les}{\lesssim}
\newcommand{\p}{\partial}

\def\be{\begin{equation}}
\def\ee{\end{equation}}
\def\bes{\begin{equation*}}
\def\ees{\end{equation*}}
\def\bs{\begin{split}}
\def\es{\end{split}}
\def\bali{\begin{aligned}}
\def\eali{\end{aligned}}

\def\bR{{\mathbb R}}

\def\mR{\mathbb{R}}

\def\un{\underbrace}
\def\ol{\overline}
\def\al{\alpha}

\def\la{\lambda}

\def\t{\tilde}
\def\wt{\widetilde}
\def\th{\theta}

\def\G{\Gamma}

\def\Dl{\Delta}

\def\ls{\lesssim}
\def\gs{\gtrsim}

\def\i{\infty}
\def\p{\partial}
\def\f{\frac}
\def\na{\nabla}
\def\o{\omega}
\def\O{\Omega}

\def\q{\quad}
\def\qq{\qquad}

\def\m{\mathcal}
\def\mK{\mathcal{K}}

\allowdisplaybreaks
\def\cd{\cdot}
\def\les{\lesssim}

\def\bm{\boldsymbol}
\def\dx{\,\mathrm{d}x}
\def\d{\,\mathrm{d}}
\def\ba{\begin{equation}\begin{aligned}}
	\def\ea{\end{aligned}\end{equation}}
\def\bn{\[\begin{aligned}}
\def\en{\end{aligned}\]}
\def\l{\label}
\def\ed{\buildrel\hbox{\footnotesize def}\over =}
\def\ef{\eqref}
\def\t{\tilde}
\def\pr{^\prime}
\def\mF{\m{F}}
\def\mO{\m{O}}
\def\mK{\m{K}}
\def\pr{\prime}
\DeclareRobustCommand{\blll}[1]{\text{\boldmath$#1$}}
\begin{document}


\title[NS flows with Navier total-slip boundary]{On the axisymmetric Navier-Stokes flow passing a cone with the total-slip boundary condition}%

\author[Z. Li]{Zijin Li}
\address[Z. Li]
{School of Mathematics and Statistics, Nanjing University of Information Science and Technology, Nanjing
	210044, China}
\email{zijinli@nuist.edu.cn}

\author[X. Yang]{Xin Yang}
\address[X. Yang]
{School of Mathematics, Southeast University, Nanjing, 211189, China; }
\email{xinyang@seu.edu.cn}

\author[Q. Zhang]{Qi S. Zhang}
\address[Q. Zhang]
{Department of mathematics, University of California, Riverside, CA 92521, USA;}
\email{qizhang@math.ucr.edu}

\date{\today}		
%

\begin{abstract}
	(A) It is known that among the currently unresolved cases of the axially symmetric Navier-Stokes equations (ASNS), the most relatively tractable one is where the fluid passes the exterior of a cone.
	In this paper, we investigate this case with Navier total-slip boundary condition. We show that there exists an absolute constant $C_* > 0$ 
	such that if
	\[
	\sup_{x\in D}r|v_{0,\th}(x)|\leq C_* \quad\text{and}\quad \int_{D} r v_{0,\theta}(x) \d x = 0,
	\]
	then there exists a unique global bounded strong solution with finite energy. Note that, for the initial velocity, there is neither a size restriction on other components, nor a parity assumption. There are four key ingredients in the proof.
	\begin{itemize}
		\item[(1)] Three new good unknowns are introduced, and a self-closed energy estimate for them is derived.
		
		\item[(2)] An elliptic estimate for pressure is established to control boundary terms arising from the boundary condition.
		
		\item[(3)] A De Giorgi iteration scheme is applied to establish the boundedness of $rv_\th$.
		
		\item[(4)] A new anisotropic Hardy's inequality is derived for weighted mean-zero functions to overcome the lack of parity of $\bm{v}$.
	\end{itemize}
	
	(B) Based on (A), we introduce and prove the \emph{controlled regularity} for the above problem, i.e. for suitable initial data without any smallness assumption, there exists an external force supported away from the axis of symmetry such that the corresponding problem admits a global strong solution. This seems to add a little weight to the regularity scenario for ASNS, since the force is supported away from the axis which is the only place regularity may break down. We also prove that  if there exists a solution that blows up in finite time, an unstable blow-up solution must exist.
\end{abstract}

\maketitle
%
%
%
\noindent {{\sl Keywords:} Axially symmetric Navier-Stokes equations; Global strong solutions; Exterior conic regions; Absolute partial smallness; Navier total-slip boundary condition; Controlled regularity
}

\vskip 0.2cm

\noindent {\sl AMS Subject Classification (2020):}  35Q35, 76D05
\tableofcontents

\section{Introduction}
\subsection{The problem and related works}
The goal of the paper is to construct a class of global bounded solutions to the axially symmetric Navier-Stokes equations, abbreviated as ASNS henceforth:
\be\label{eqasns}
\left\{
\begin{aligned}
	&\Big( \Delta-\frac{1}{r^2} \Big)
	v_r - (v_r \p_r + v_3 \p_{x_3}) v_r + \frac{v_{\theta}^2}{r} - \partial_r P - \p_t  v_r = 0,  \\
	&\Big(\Delta-\frac{1}{r^2} \Big) v_{\theta} - (v_r \p_r + v_3 \p_{x_3} )v_{\theta} - \frac{v_{\theta} v_r}{r} - \partial_t v_{\theta} = 0,\\
	&\Delta v_3 - (v_r \p_r + v_3 \p_{x_3})v_3 - \p_{x_3} P - \p_t v_3 = 0,\\
	&\frac{1}{r} \p_r (rv_r) +\p_{x_3}v_3 = 0.
\end{aligned}
\right.
\ee
Here,  $\bm{v} = v_{r}\bm{e_{r}} + v_{\th}\bm{e_{\th}} + v_{3}\bm{e_{3}}$ is the velocity in the cylindrical system with the standard basis $\{\bm{e_{r}}, \bm{e_{\th}}, \bm{e_{3}}\}$, where for any $ x=(x_1,x_2,x_3)\in\bR^3 $, $ r=\sqrt{x_1^2+x_2^2}$, $\theta=\arctan\f{x_2}{x_1}$ and
\bn
\bm{e_r}=(x_1/r, x_2/r, 0)^T,\quad \bm{e_\th}=(-x_2/r, x_1/r, 0)^T,\quad \bm{e_3}=(0,0,1)^T.
\en
The characterization of ASNS is that the components $v_{r}$, $v_{\th}$ and $v_{3}$ are independent of the azimuthal angle $\th$. Although ASNS is a special case of the full 3D Navier-Stokes equations,
\be
\label{nse}
\Delta \bm{v} -  (\bm{v}\cdot \nabla) \bm{v} - \nabla P -\partial_t \bm{v} =0, \quad \text{div} \, \bm{v}=0,
\ee
the regularity problem of ASNS is still open in general settings. In the last several decades, there has been an outburst of research on ASNS,  see e.g. \cite{La, UY, CSTY1, CSTY2, KNSS, HLL, CFZ, LZ17, Weid, Zha22, CLZJMPA} and the references therein.  

After it was realized in \cite{LZ17, CFZ} that ASNS is essentially a critical system, there is expectation that the regularity problem is more promising.
In this direction, the regularity problem was solved in \cite{Zha22} for a cusp domain under the Navier total-slip boundary condition. 
This is the first time that the regularity problem of ASNS is settled when the essential difficulty is beyond that in 2D.  
Earlier, the regularity problem of the 3D Navier-Stokes equations is also solved in \cite{MTL90} for solutions with helical symmetry. 
Such an assumption makes the classical 2D Ladyzhenskaya's inequality available in 3D, therefore the fundamental obstacle of the 3D regularity problem is absent in this situation.
One may feel that the cusp domain in \cite{Zha22} is too special. Subsequently, the authors in \cite{LPYZZZ24} studied a more realistic domain 
(see the blue region in Figure \ref{Fig,domain-cyl} or Definition \ref{Def, domain}). By only restricting the swirl component of the initial velocity to be small, \cite{LPYZZZ24} justified the global existence of bounded axially symmetric solutions to the Navier-Stokes equations.

\begin{figure}[!ht]
	\includegraphics[scale=0.22]{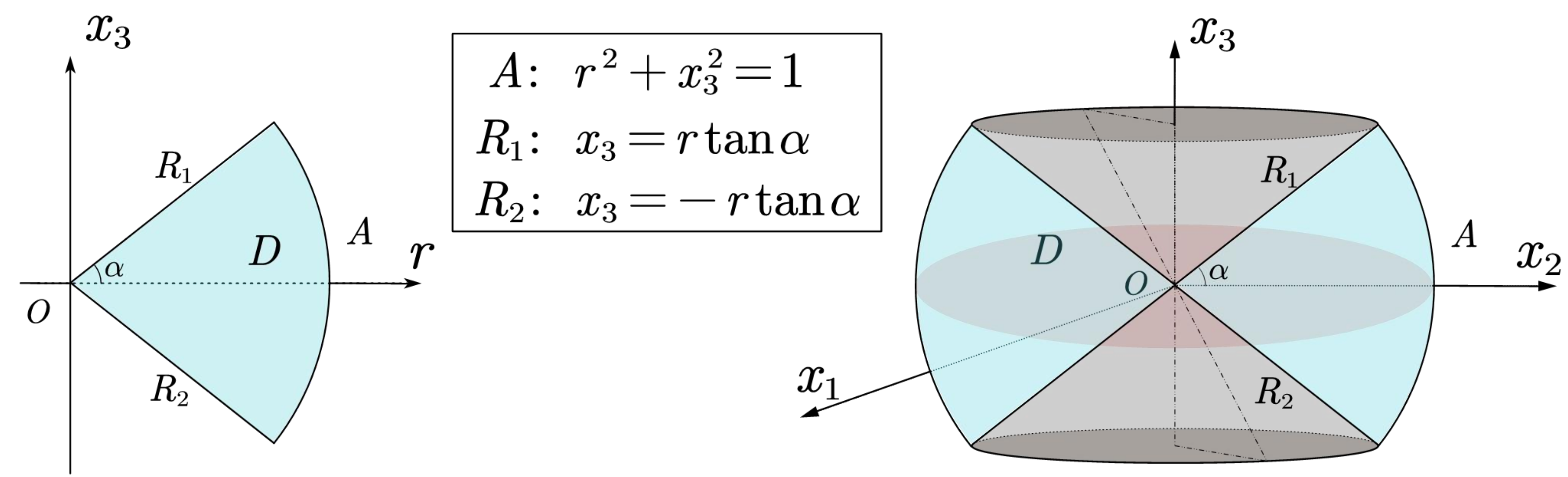}
	\caption{Domain $D$ in cylindrical coordinates}
	\label{Fig,domain-cyl}
\end{figure}

\begin{defn}\label{Def, domain}
	Let $\al\in\big(0,\frac{\pi}{2}\big)$ be any fixed angle.  The domain $D$ with boundary surfaces $R_1$, $R_2$ and $A$ is defined in the cylindrical coordinates as follows  (also see Figure \ref{Fig,domain-cyl}):
	\be\label{domain-cyl}
	D=\big\{(r,\theta,x_3): 0<r^2+x_3^2<1, \, -r\tan\al <x_3< r\tan\al, \, \theta\in [0,2\pi) \big\}.
	\ee
	Moreover, for convenience of notation, we denote
	\[\p^{R}D = R_{1} \cup R_2,\quad \p^{A}D = A,\]
	where the superscripts $R$ and $A$ stand for the radial boundary and the annular boundary respectively.
\end{defn}

The boundary condition adopted in \cite{LPYZZZ24} was the Navier-Hodge-Lions boundary condition:
\be\label{NHL slip bdry}
\bm{v}\cdot \bm{n}=0, \quad \bm{\o}\times \bm{n}=0, \quad \text{on}\quad \p D,\ee
where $\bm{n}$ is the outward unit normal direction on the smooth part of $\p D$, and $\bm{\o}$ is the vorticity defined as
\bn
\bm{\o} =\text{curl $\bm{v}$} =\nabla\times \bm{v}. \en
This boundary condition was abbreviated as the NHL boundary condition in \cite{LPYZZZ24} and it was also named as the vorticity slip boundary condition in some literature. 

The goal of this paper is to study the ASNS on domains $\t{D}$ defined in (\ref{domain_general}), which are more general than the domains $D$ in (\ref{domain-cyl}), with the Navier total-slip boundary condition defined as below:
\be\label{NTS bdry}
\bm{v} \cdot \bm{n}=0, \quad (\mathbb{S}\bm{v}\cdot\bm{n})_{\tan }=0, \quad \text{on}\quad \p D;
\ee
see Theorem \ref{Thm_main} and Remark \ref{Remark, gen_domain} in the sequel.
For convenience of statement, the rest of this paper will still focus on the symmetric domains $D$, but we point out that all the results obtained in this paper are also valid on domains $\t{D}$ (\ref{domain_general}) since the symmetry of the domains $D$ is not used in any proof in this paper.

Condition (\ref{NTS bdry}) is a special case in a family of boundary conditions proposed by Navier \cite{Nav}:
\[
\bm{v} \cdot \bm{n}=0, \quad (\mathbb{S}\bm{v}\cdot\bm{n})_{\tan }+\beta\bm{v}_{\tan}=0, \quad \text{on}\quad \p D\,.
\]
This condition has been studied extensively in the literature and was attributed to different authors, see e.g. \cite{WatanabeJCAM, AACG, MuchaAAM, Kel06, LPYSCM, LLZJDE, CQ10, MR}, where $\beta\geq0$ is the slip ratio. Here the case $\beta>0$ (``non-total slip'') is often technically easier than the total-slip limit case $\beta=0$ because \(\beta>0\) provides boundary dissipation. In the basic energy identity, one gains the coercive term
$\beta\|\bm{v}_\tau\|_{L^2(\p_D)}^2$, which can absorb boundary integrals created via integration by parts. However, this damping disappears when \(\beta=0\). Then boundary terms involving tangential velocity or swirl flux (e.g., Robin-type traces for \(v_\theta\) induced by \((\mathbb{S}\bm{v}\cdot\bm{n})_{\tan }=0\)) generally have bad sign in our case and cannot be controlled solely by the interior dissipation, especially near non-smooth geometries such as cones where trace/curvature effects are delicate. This makes closing energy inequalities harder, and is a primary reason why we initially focused on the NHL boundary condition rather than the Navier total-slip boundary condition in \cite{LPYZZZ24}.

Due to Leray \cite{Le2}, if $D=\bR^3$ and $v_0 \in L^2(\bR^3)$, then the Cauchy problem (\ref{nse}) has a weak solution in the energy space (c.f. \ref{enorm} below). By finite energy, we mean the solutions are in the energy space $\mathbf{E}=L^2_t H^{1}_x \cap L^\infty_t L^2_x$.  Here and throughout, the norm in $\mathbf{E}$ for a function $v$ on $D \times [0, T]$ is taken as
\be
\label{enorm}
\Vert \bm{v} \Vert^2_\mathbf{E} = \int^T_0 \int_D |\nabla \bm{v}|^2 \d x \d t + \sup_{t \in [0, T]} \int_D |\bm{v}(x, t)|^2 \d x \,.
\ee
Here, $ T>0 $ and the function $\bm{v}$ can be  vector-valued or scalar-valued, depending on the context. The solutions with finite energy include the so-called Leray-Hopf solutions which need to satisfy an energy inequality. In general, it is not known if Leray-Hopf solutions stay bounded or regular for all $t>0$. Recently, by allowing a super-critical forcing term in (\ref{nse}), it was shown in \cite{ABC22} that Leray-Hopf solutions may not be unique 
even with zero initial value and identical forcing term.

In this paper, we will focus on a special case of \eqref{nse}, namely when $\bm{v}$ and $P$ are independent of the azimuthal angle $ \th $ in the cylindrical coordinate system $(r,\,\th,\,x_3)$. Although ASNS (\ref{eqasns}) seems more complicated than the full 3D equation (\ref{nse}), a simplification happens in (\ref{eqasns})$_{2}$ where the pressure term disappears. For a succinct derivation of the ASNS (\ref{eqasns}) using the tensor notations, we refer the readers to \cite{Zha22}. If the swirl $v_\theta=0$, then it is well-known that finite energy solutions to the Cauchy problem of (\ref{eqasns}) in $\mathbb{R}^3$ are smooth for all time $t>0$, see e.g. \cite{La, UY, LMNP}. In the presence of swirl, it is still not known in general whether finite energy solutions blow up in finite time.

By the partial regularity result in \cite{CKN}, possible singularity for suitable weak solutions of ASNS can only appear at the $x_3$ axis. See also \cite{Linf} for a simplified proof and \cite{BuZh} for  the same statement but without the "suitable" requirement. Moreover, in \cite{CSTY1, CSTY2, KNSS, SS}, it was shown that if
\be
\label{v<1/r}
|\bm{v}(x, t)| \le
\frac{C}{r},
\ee
where $C$ is any positive constant, then finite energy solutions to the Cauchy problem of ASNS are smooth for all time. Later, there are some logarithmic improvements on the order of the criterion (\ref{v<1/r}), see e.g. \cite{Panx, Ser22a, Ser22b, CTZarxiv}. Also see \cite{Tao21} for a similar improvement in full 3D Navier-Stokes equations. In contrast, the energy bound scales as
$-1/2$. So even with axial symmetry, there is a finite scaling gap which makes the ASNS supercritical, just like the full 3D equations. In \cite{CFZ, LZ17},  the authors revealed that the vortex stretching term of the ASNS is critical after a  suitable change of dependent variables. Thus, the aforementioned scaling gap is zero, which makes the regularity problem of ASNS appear less formidable. Nevertheless, all major open problems are still open.

The main result in \cite{LZ17} includes the following statement.
Let $\delta_0 \in (0, \frac{1}{2})$ and $C_{*} > 1$.
If
\begin{equation}\label{CD}
	\sup_{0 \leq t < T}|r v_\theta (r, x_3, t)| \leq C_{*} |\ln r|^{- 2},\ \ r \leq \delta_0,
\end{equation}
then the velocity $\bm{v}$ is regular globally in time. Noting that a priori we have $|r v_\theta (r, x_3, t)| \leq C$ by the maximal principle applied on equation
\eqref{eqvth} of $ \Gamma $:
\be
\label{eqvth}
\Delta \Gamma - \bm{b} \cdot \nabla \Gamma- \frac{2}{r} \p_r
\Gamma-\p_t \Gamma=0,
\ee
where $\Gamma= r v_\theta$ and $\bm{b}=v_r \bm{e_r} + v_3 \bm{e_3}$, so there is still a gap of logarithmic nature from regularity. Later, the power index $ -2 $ in (\ref{CD}) was improved to $ -\frac32 $ in  \cite{Weid}.

In \cite{LPYZZZ24}, by requiring the angle $\al\in(0,\frac{\pi}{6}]$ in the domain $D$, as defined in  (\ref{domain-cyl}), and restricting 
\[ \sup_{x\in D} |\Gamma(x,0)| \leq \frac{1}{100},\]
the authors demonstrated the solution $\bm{v}$ to the Navier-Stokes equations with the NHL boundary condition and with certain symmetry is bounded and regular globally in time. However, they were not able to treat the more natural Navier total-slip (NTS) condition due to the difficulty induced by the boundary conditions. Moreover, the symmetry assumption of the velocity in \cite{LPYZZZ24} forces the domains to be symmetric, which undermines the applicability of the theory in reality.

Now we specify the meaning of solutions to ASNS (\ref{eqasns}) associated with the Navier total-slip (NTS) boundary condition (\ref{NTS bdry}). In the rest of this paper, functions and vector fields are always assumed to be axially symmetric with respect to the $ x_3 $-axis unless stated otherwise. Fix any $ T>0 $ and any $ \bm{v}_0\in H^2 (D) $ which is divergence free in $ D $ and satisfies the NTS boundary condition (\ref{NTS bdry}). Consider
\be\label{NS1} \left\{\, \begin{aligned}
	&\Delta \bm{v} - (\bm{v}\cdot \nabla) \bm{v} - \nabla P - \p_{t} \bm{v} = 0 \quad\text{in}\quad  D\times (0,T], \\
	&\nabla \cdot \bm{v} = 0  \quad \text{in} \quad  D\times (0,T], \\
	&\bm{v}\cdot \bm{n} = 0,\quad (\mathbb{S}\bm{v}\cdot\bm{n})_{\tan } = 0 \quad\text{on} \quad  \p D\times (0,T],\\
	&\bm{v}(\cdot, 0) = \bm{v}_0(\cdot) \quad\text{in} \quad  D.
\end{aligned} \right.\ee
In this paper, we are looking for strong solutions of (\ref{NS1}) which are defined as below.
\begin{defn}\label{Def, ss}
	\sl{If there exist $ \bm{v}\in L_t^2 H_x^2 \cap H_t^1 L_x^2 \big(D\times[0,T]\big) $ and $ P\in L_t^2 H_x^1  \big(D\times[0,T]\big) $ such that $ (\bm{v},P) $ satisfies (\ref{NS1}) in $ L_{tx}^2 $ sense, then $ \bm{v} $ or $ (\bm{v},P) $ is called a strong solution of (\ref{NS1}) on $ D\times [0,T] $.}
\end{defn}

For the bounded domain $ D $ in (\ref{domain-cyl}) with $ \al\in\big(0, \frac{\pi}{6}\big] $ and under the NTS boundary condition (\ref{NTS bdry}), we manage to obtain a strong solution to ASNS (\ref{NS1}) under the assumptions (i) and (\ref{COND}) in the main result, Theorem \ref{Thm_main}, of this paper. 
By taking advantage of the geometric feature of the targeted domain (\ref{domain-cyl}), we  adopt the spherical coordinates to study it. Under the spherical coordinates, the domain $D$ in (\ref{domain-cyl}) is equivalent to the following (also see Figure \ref{Fig,domain-sph})
\bn
D = \Big\{(\rho,\phi,\th): 0<\rho <1, \, \frac{\pi}{2}-\al < \phi < \frac{\pi}{2}+\al, \, \theta\in [0,2\pi) \Big\}.
\en

\begin{figure}[!ht]
	\centering
	\includegraphics[scale=0.25]{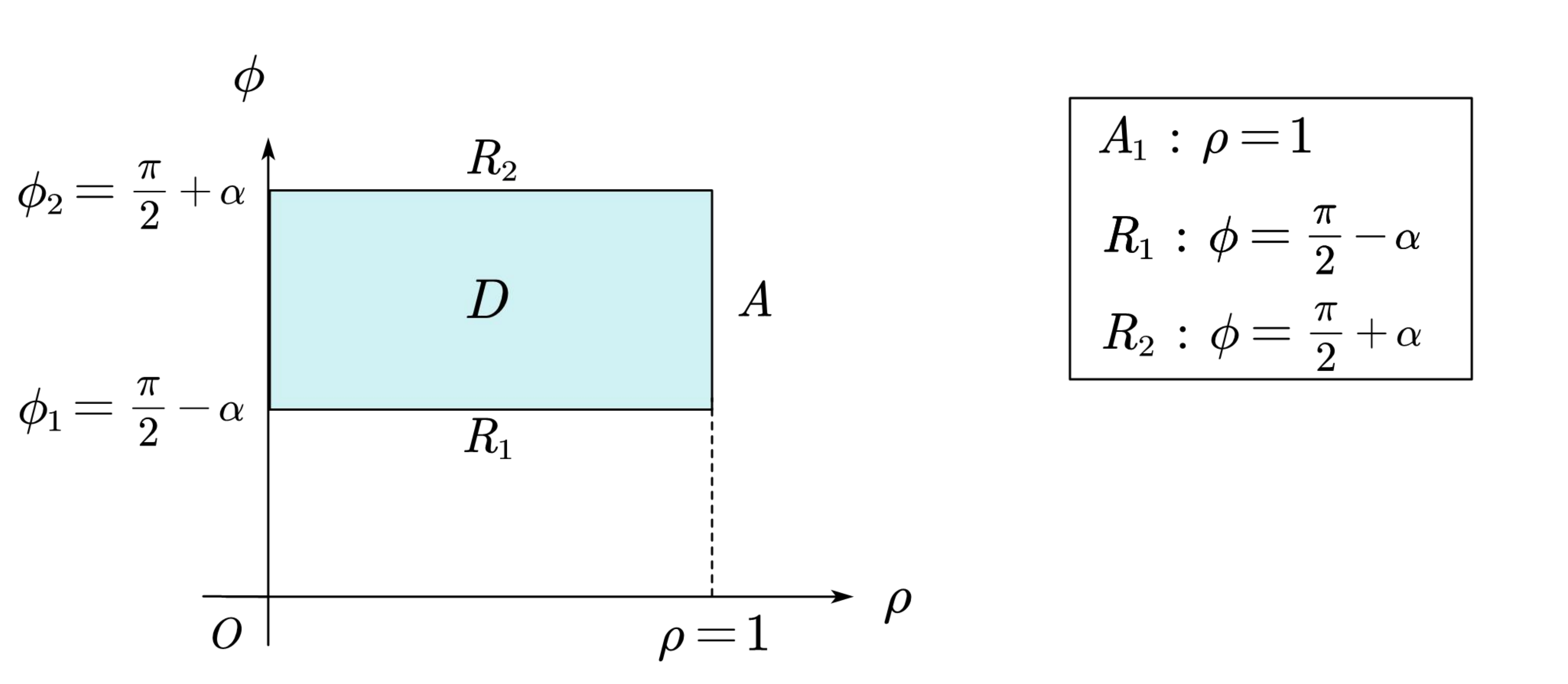}
	\caption{Domain $D$ in spherical coordinates}
	\label{Fig,domain-sph}
\end{figure}

The axially symmetric Navier-Stokes equations in the spherical coordinate system are as below (see \cite[(2.7) and Appendix A.1]{LPYZZZ24} for computational details in spherical coordinates):
\be\label{NS}
\left\{
\begin{aligned}
	&\left(\Delta+\frac{2}{\rho} \partial_\rho+\frac{2}{\rho^2}\right) v_\rho-\bm{b} \cdot \nabla v_\rho+\frac{1}{\rho}\left(v_\phi^2+v_\theta^2\right)-\partial_\rho P-\partial_t v_\rho=0\,, \\
	&\left(\Delta-\frac{1}{\rho^2 \sin ^2 \phi}\right) v_\phi-\bm{b} \cdot \nabla v_\phi+\frac{2}{\rho^2} \partial_\phi v_\rho-\frac{1}{\rho} v_\rho v_\phi+\frac{\cot \phi}{\rho} v_\theta^2-\frac{1}{\rho} \partial_\phi P-\partial_t v_\phi=0\,, \\
	&\left(\Delta-\frac{1}{\rho^2 \sin ^2 \phi}\right) v_\theta-\bm{b} \cdot \nabla v_\theta-\frac{1}{\rho}\left(v_\rho + \cot \phi \, v_\phi\right) v_\theta-\partial_t v_\theta=0\,, \\
	&\frac{1}{\rho^2} \partial_\rho\left(\rho^2 v_\rho\right)+\frac{1}{\rho \sin \phi} \partial_\phi\left(\sin \phi \, v_\phi\right)=0\,.
\end{aligned}\right.
\ee
Here, $\rho$ is the radial distance and $\phi$ is the angle between the radius vector and the positive $x_3$ axis:
\[
\rho\ed\sqrt{r^2 + x_3^2},\q\q\phi\ed\left\{
\begin{aligned}
	&\arctan\f{r}{x_3}, &\text{for}\q x_3 > 0\,,\\
	& \frac{\pi}{2}, &\text{for}\q x_3=0\,,\\
	&\pi+\arctan\f{r}{x_3}, &\text{for}\q x_3<0\,,
\end{aligned}
\right.\q\q\th\ed\arctan\f{x_2}{x_1}\,.
\] 
And $\bm{v}=v_\rho(\rho, \phi, t) \bm{e_\rho}+v_\phi(\rho, \phi, t) \bm{e_\phi}+v_\theta(\rho, \phi, t)\bm{e_\theta}\,,$
where 
\[
\bm{e_\rho}\ed\left(\begin{array}{c}
	\sin \phi \cos \theta \\
	\sin \phi \sin \theta \\
	\cos \phi
\end{array}\right), \quad \bm{e_\phi}\ed\left(\begin{array}{c}
	\cos \phi \cos \theta \\
	\cos \phi \sin \theta \\
	-\sin \phi
\end{array}\right), \quad \bm{e_\theta}\ed\left(\begin{array}{c}
	-\sin \theta \\
	\cos \theta \\
	0
\end{array}\right)\,.
\]
The components of the vorticity $ \bm{\o} := \nabla\times \bm{v} $ is given below: 
\be\label{vor-sph}
\left\{
\begin{aligned}
	&\omega_\rho=\frac{1}{\rho}\left(\partial_\phi+\cot \phi\right) v_\theta=\frac{1}{\rho \sin \phi} \partial_\phi\left(\sin \phi v_\theta\right)\,,\\
	&\omega_\phi=-\left(\partial_\rho+\frac{1}{\rho}\right) v_\theta=-\frac{1}{\rho} \partial_\rho\left(\rho v_\theta\right)\,, \\
	&\omega_\theta=\left(\partial_\rho+\frac{1}{\rho}\right) v_\phi-\frac{1}{\rho} \partial_\phi v_\rho=\frac{1}{\rho} \partial_\rho\left(\rho v_\phi\right)-\frac{1}{\rho} \partial_\phi v_\rho\,.
\end{aligned}
\right.
\ee

Now we state the main result of this paper.
\begin{thm}\label{Thm_main}
	Let the domain $D$ be as defined in \eqref{domain-cyl} with the angle $\alpha \in\left(0, \frac{\pi}{6}\right]$. Suppose the initial velocity $\bm{v}_0$ in the admissible class $\mathscr{A} \subset C^2(\ol{D})$ (see Definition \ref{Def, admissible sets}) satisfies
	\begin{itemize}
		\item[(i)] $\int_{D} r \, v_{0,\th}\d x=0$, where $v_{0,\th}$ represents the swirl component of $\bm{v}_0$;
		\item[(ii)] there exists an absolute positive constant $C_*$, given in (\ref{small_G0}), such that 
		\ba\l{COND}
		\sup_{x\in D} r |v_{0,\th}|\leq C_*\,.
		\ea
	\end{itemize}
	Then for any $T>0$, the problem (\ref{NS1}) for the Navier-Stokes equation with the initial data $\bm{v}_0$ under the Navier total-slip boundary condition has a strong solution $(\bm{v}, P)$ on $D \times[0, T]$ such that $\bm{v}$ is bounded uniformly in time, and satisfies
	\bn
	\|\bm{v}\|_{L_{t x}^{\infty}(D \times[0, T])}+\|\bm{v}\|_{H_t^1 L_x^2(D \times[0, T])}+\|\bm{v}\|_{L_t^2 H_x^2(D \times[0, T])}+\|P\|_{L_t^2 H_x^1(D \times[0, T])} \leq C,
	\en
	and 
	\be\label{gamma_int0}
	\int_{D} r v_{\th} (x,t)\d x = 0, \quad \forall\, t\in [0,T].
	\ee
	Here, $C$ is a constant that only depends on $\left\|\bm{v}_0\right\|_{C^2(\bar{D})}$. In addition, the following energy inequality holds:
	\be\label{energy_decay}
	\int_D|\bm{v}(x, T)|^2 \d x+\f{3}{4}\int_0^T \int_D|\nabla \bm{v}(x, t)|^2 \d x \d t \leq \int_D\left|\bm{v}_0(x)\right|^2 \d x\,.
	\ee
	On the other hand, if $ (\t{v}, \t{P}) $ is another strong solution of (\ref{NS1}) on $ D\times[0,T] $ under the Navier total-slip boundary condition with the same initial data $\bm{v}_0$, then $ \t{v} $ coincides with the above strong solution $ v $.
\end{thm}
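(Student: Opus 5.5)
The proof will be a continuation argument on top of local well-posedness. For $\bm{v}_0\in\mathscr{A}$ the standard Galerkin/contraction construction gives a local strong solution on some $[0,T_0]$ with the regularity of Definition \ref{Def, ss}. It will remain strong as long as a suitable critical-type quantity stays bounded, for example $\|\bm{v}\|_{L^\infty}$ or an $L^2$ bound on $\omega_\theta/r$ together with $\Gamma=rv_\theta$. So everything reduces to a priori estimates that hold uniformly on any interval of existence and depend only on $\|\bm{v}_0\|_{C^2(\bar D)}$.

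\textbf{Step 1: conserved and bounded quantities.}
\begin{itemize}
\item \emph{Mean of $\Gamma$.} Integrate \eqref{eqvth} over $D$. With $\bm{v}\cdot\bm{n}=0$ and the Robin-type trace of $v_\theta$ coming from $(\mathbb{S}\bm{v}\cdot\bm{n})_{\tan}=0$, the boundary fluxes should cancel, so $\int_D\Gamma\,dx$ is conserved. This gives \eqref{gamma_int0}.
\item \emph{Energy inequality.} The basic energy identity under the NTS condition produces a curvature boundary term from the cone surfaces $R_1,R_2$ and the sphere $A$. I would absorb it, using the trace and Korn-type inequalities on $D$ with $\alpha\le\pi/6$, at the cost of $\tfrac14$ of the dissipation. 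This yields \eqref{energy_decay}.
\item \emph{$L^\infty$ bound on $\Gamma$.} Since the Robin trace has a possibly bad sign, the plain maximum principle is not available. Instead I would run a De Giorgi iteration on $(\Gamma-k)_+$, using $\operatorname{div}\bm b=0$ and $\bm b\cdot\bm n=0$ to kill the drift, and controlling the boundary terms by trace inequalities. The target is $\|\Gamma(t)\|_{L^\infty}\le C\,C_*$ uniformly in $t$.
\end{itemize}

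\textbf{Step 2: energy estimate for good unknowns (the main obstacle).} Following the abstract, I would introduce three good unknowns built from $\omega_\theta/r$ (or $\omega_\theta/\rho$), $\omega_\rho$ and $\omega_\phi$. They are adapted to spherical coordinates so that the NTS condition turns into homogeneous or lower-order boundary conditions for them. The aim is an inequality of the form
\[
\frac{d}{dt}\mathcal{E}+\mathcal{D}\le C\|\Gamma\|_{L^\infty}\,\mathcal{D}+\mathcal{B},
\]
where $\mathcal{E}$ is a weighted $L^2$ norm of the unknowns, $\mathcal{D}$ is the corresponding dissipation, and $\mathcal{B}$ collects boundary terms.

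\begin{itemize}
\item \emph{Vortex-stretching term.} The critical term $\partial_{x_3}(v_\theta^2)/r$-type is written via $\Gamma$ and $\omega_\rho,\omega_\phi$. It is bounded by $\|\Gamma\|_{L^\infty}$ times the dissipation, using a Hardy inequality in $\phi$. The smallness $C\,C_*\le\tfrac12$ from Step 1 then absorbs it.
\item \emph{Anisotropic Hardy inequality.} No parity is assumed, so the usual Hardy inequality for odd functions is unavailable. I would replace it by the new anisotropic Hardy inequality for weighted mean-zero functions, applied to $\Gamma$. Its mean-zero hypothesis is exactly what Step 1 supplies.
\item \emph{Boundary terms $\mathcal{B}$.} These come from integrating by parts and involve the pressure trace and the curvature of the boundary. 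I would rewrite them with the equations and bound them through an elliptic estimate for $P$: solve $-\Delta P=\partial_i\partial_j(v_iv_j)$ with the Neumann data inherited from the momentum equation, giving $\|\nabla P\|_{L^2}\lesssim$ (dissipation)$^{1/2}\times$(energy-controlled factors). They then close with Gronwall and the energy bound.
\end{itemize}

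This step is where failure is most likely, since the sign and size of the curvature and pressure boundary contributions on the cone is delicate. That is where the restriction $\alpha\le\pi/6$ should be used.

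\textbf{Step 3: upgrade, continuation and uniqueness.}
\begin{itemize}
\item \emph{Higher regularity.} Uniform control of $\omega_\theta/r$ and $\Gamma$ gives, by the standard axisymmetric argument (the Biot–Savart-type elliptic estimate for $\bm b$ plus parabolic bootstrapping), bounds on $\|\bm{v}\|_{L^\infty}$ and on the $L^2_tH^2_x\cap H^1_tL^2_x$ and $L^2_tH^1_x$ norms of $(\bm v,P)$. Each bound depends only on $\|\bm{v}_0\|_{C^2}$.
\item \emph{Continuation.} These bounds continue the local solution to every $T$.
\item \emph{Uniqueness.} Take the difference $\bm w=\tilde{\bm v}-\bm v$ and test with $\bm w$. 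The boundary terms are handled as in the energy inequality, the convection term is bounded by $\|\bm v\|_{L^\infty}\|\bm w\|_{L^2}\|\nabla\bm w\|_{L^2}$, and Gronwall gives $\bm w\equiv0$.
\end{itemize}
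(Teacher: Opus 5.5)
Your list of ingredients largely matches the paper's: conservation of $\int\Gamma$, an energy inequality via Korn, De Giorgi for $\Gamma$, good unknowns, the anisotropic Hardy inequality, an $H^1$ pressure estimate, and Gronwall for uniqueness. The architecture holding them together, however, has a genuine gap.

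\textbf{Continuation on $D$ is not available.} You start from a local strong solution on $D$ itself and continue it. But $D$ has a conical vertex at the origin, on the symmetry axis, where the NTS condition is singular. Neither an $L^2_tH^2_x$ local theory there nor a continuation criterion in terms of $\|\bm{v}\|_{L^\infty}$ or $\omega_\theta/r$ is available off the shelf; the paper stresses that on $D$ one should not expect better than $L^2_tH^1_x$ a priori. More to the point, your Step 2 needs $\mathcal{K},\mathcal{F},\mathcal{O}$ and their gradients to be finite for $t>0$. It also integrates by parts against singular weights up to $\rho^{-4}$, e.g.\ when testing the Biot--Savart equations for $v_\phi/\rho$ and $v_\rho/\rho$ by $\rho^{-2}v_\phi/\rho$ and $\rho^{-2}v_\rho/\rho$. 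A solution known only to be strong on $D$ gives neither that finiteness nor the absence of contributions from the vertex.

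The paper avoids this by working on $D_m=D\cap\{\rho>1/m\}$, with a mixed NTS--NHL condition on the artificial inner arc $A_{1,m}$. This gives $\omega_\theta=0$ there, so the good unknowns get Dirichlet or Robin data on all of $\partial D_m$. Strong solutions exist globally on each $D_m$ because it stays away from the axis (Proposition~\ref{Prop, local soln in ad}). The paper then:
\begin{itemize}
\item proves every estimate with constants independent of $m$, which is why the Hardy, Poincar\'e and Korn constants are explicit and why inner-arc integrals are handled either by their good sign or by conversion to volume integrals with $\eta_m$;
\item corrects the data to $\bm{v}_0^{(m)}-\epsilon_m r\bm{e_\theta}$ to restore $\int_{D_m}\Gamma_0\,dx=0$;
\item obtains the solution on $D$ by compactness as $m\to\infty$.
\end{itemize}
This is also why the theorem is stated for the admissible class $\mathscr{A}$, whose defining approximation property your argument never uses.

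\textbf{Two steps are underspecified exactly where the proof is carried.}

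First, condition (i) is needed already for the energy inequality, not only for the Hardy step. The field $r\bm{e_\theta}$ satisfies the NTS condition and has $\mathbb{S}(r\bm{e_\theta})=0$, so no Korn-type inequality on $D$ can hold without $\int_D r v_\theta\,dx=0$. The paper proves Korn for $v_\theta\bm{e_\theta}$ (Corollary~\ref{Cor, Korn_ineq}) from the anisotropic Hardy inequality under exactly this constraint. Likewise, the De Giorgi iteration is made $T$-independent by starting it from $\|\Gamma\|_{L^2_{tx}}\le 3\|\Gamma_0\|_{L^2}$, which again rests on the mean-zero Poincar\'e inequality.

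Second, the plain quotients $K=\omega_\rho/\rho$, $F=\omega_\phi/\rho$, $\Omega=\omega_\theta/r$ do not have controllable boundary data under NTS. The paper subtracts lower-order velocity terms:
\begin{itemize}
\item $\mathcal{K}=K-2v_\theta\cot\phi/\rho^2$,
\item $\mathcal{F}=F+2v_\theta/\rho^2$,
\item $\mathcal{O}=\Omega-2\eta v_\phi/(\rho^2\sin\phi)$, with a cut-off $\eta$ vanishing near the vertex.
\end{itemize}
The $v_\phi$-correction is what brings $\rho^{-1}\partial_\phi P$ into the interior source of the $\mathcal{O}$-equation. That interior source, not pressure traces in boundary terms, is why the $\|\nabla P\|_{L^2}$ estimate is needed. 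The cut-off $\eta$ keeps this source and its singular weights away from the origin.
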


\begin{rem}\label{Remark, gen_domain}
	\sl{Compared with \cite{LPYZZZ24}, we do not require the velocity field $\bm{v}$ to satisfy any odd or even symmetry condition in the present paper. As a consequence, our argument is applicable to a more general class of domains $\t{D}$:
		\be\label{domain_general}
		\t{D}= \bigg\{(r,\theta,x_3): 0< r^2+x_3^2 < 1, \, -r\tan\al_1 <x_3< r\tan\al_2, \, \theta\in [0,2\pi) \bigg\}\,,
		\ee
		with $0\leq\al_1\,,\,\al_2\leq\f{\pi}{6}$, $\al_1$ and $\al_2$ are not both zero. 	
		In particular, by taking $\al_1=0$, our results remain valid on the following ``coral-type'' exterior domains $\t{D}_0$ (see Figure \ref{Fig,Coral}).
		\[
		\t{D}_0= \bigg\{(r,\theta,x_3): 0< r^2+x_3^2 < 1, \, 0<x_3< r\tan\al_2, \, \theta\in [0,2\pi) \bigg\}\,.
		\] 
		\begin{figure}[!ht]
			\centering
			\includegraphics[scale=0.22]{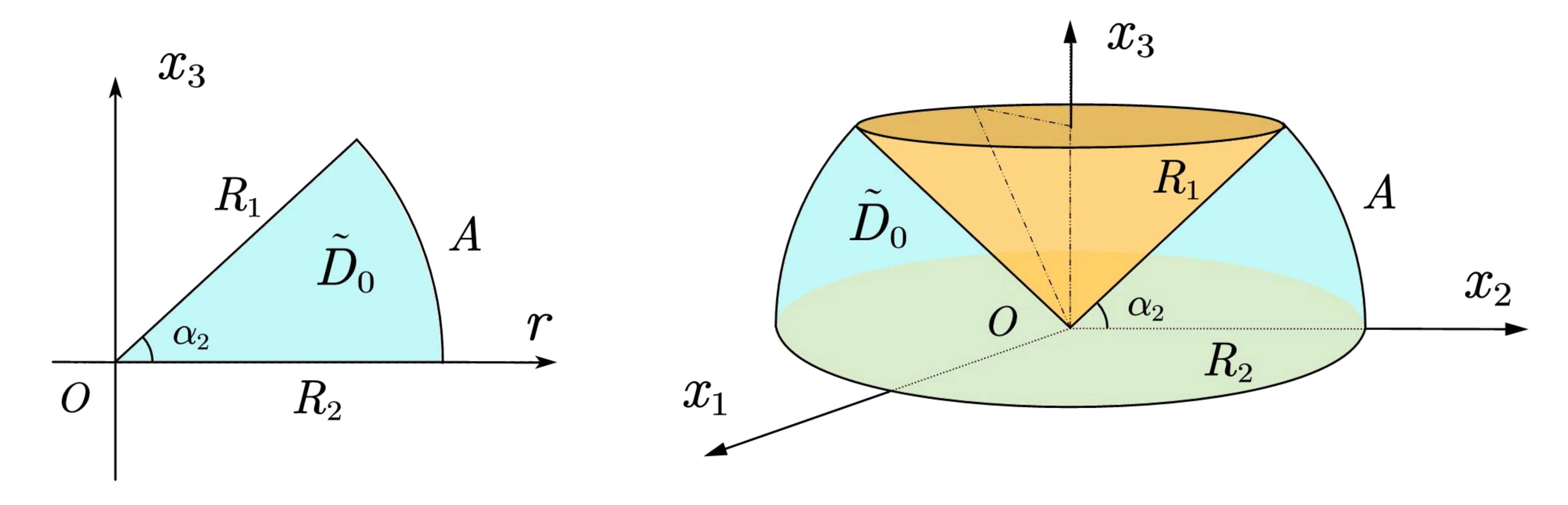}
			\caption{Coral-type exterior domain}
			\label{Fig,Coral}
	\end{figure}}
\end{rem}

\begin{rem}
	\sl{We would like to emphasize that the assumption (i) in Theorem \ref{Thm_main} is necessary for the solution $\bm{v}$ to decay and to satisfy the energy inequality (\ref{energy_decay}). Here is an exact example: for any constant $A>0$, the velocity field $\bm{v} := v_{\th}\bm{e_{\th}}$, where
		\be\l{Eacts}
		v_\th:= Ar
		\ee
		is a solution to equation (\ref{NS}) on the domain $D$ and the Navier total-slip boundary condition (\ref{NTS bdry}).  When $A$ is sufficiently small, the assumption (ii) in Theorem \ref{Thm_main} is satisfied. However, the assumption (i) is violated and (\ref{energy_decay}) does not hold since 
		\[\int_0^T \int_D|\nabla \bm{v}(x, t)|^2 \d x \d t = A^2 T \int_{D} |\nabla (r \bm{e_\th}) |^2 \d x = 2 A^2 T |D| \to \infty \quad \text{as} \quad T\to\infty.\]
		We also point out that this difficulty does not arise for the corresponding boundary value problem with the NHL boundary condition (\ref{NHL slip bdry}) owing to the lack of such special solutions as \eqref{Eacts}. }
\end{rem}

The problem studied in Theorem \ref{Thm_main} is connected to the general open question which asks that if an absolute smallness of one component of the initial velocity implies the global smoothness. We refer readers to \cite[page 873]{CZZ17} and to some of its subsequent follow-up works.  
Similar conic or wedge-type geometries have also appeared in other fluid problems, including studies of Euler singularity formation \cite{EJ19}, and Navier–Stokes flows in curved thin domains \cite{Miura2022} or exterior domains \cite{GigaSohr1991}. 
These works illustrate how geometric constraints and boundary conditions can influence regularity, stability, and long-time behavior of viscous flows.

Next, we will offer applications of Theorem \ref{Thm_main} in two opposite directions: global existence and finite-time blowup respectively.
Firstly, we discuss the direction of the global existence. Although Theorem \ref{Thm_main} is still far away from solving the regularity problem of ASNS due to the smallness assumption of $rv_{0,\th}$, it yields the following consequence which we call the controlled regularity:

\begin{defn}\label{Def, control_reg}
	\sl{We say that the controlled regularity holds for ASNS in a domain $D_*$ with a given boundary condition if for any divergence-free initial velocity $v_0$ in $C^2\cap H^2(\ol{D}_*)$ under the given boundary condition, and for any time $T>0$, there exists a forcing term $\bm{F}\in L^{2}_{tx}(D_*\times[0,T])$}, supported away from the $x_3$ axis, such that the ASNS with the forcing $\bm{F}$ under the prescribed boundary condition has a strong solution in $D_*\times [0,T]$. Noting that if $D_*$ is the whole space $\mathbb{R}^3$, then no boundary condition is needed.
\end{defn}

In the spirit of the mathematical control theory, the significance of the above concept is that the support of the forcing term $\bm{F}$ is away from the symmetric axis, otherwise the control will be much easier. In the current domain $D$ with the Navier total-slip boundary condition as stated in Theorem \ref{Thm_main}, the distance between the support of $\bm{F}$ and the axis of symmetry only depends on a single component of the initial velocity, see the next Theorem \ref{Thm2} for the precise statement.

\begin{thm}\l{Thm2}
	Let the domain $D$ be as defined in \eqref{domain-cyl} with the angle $\alpha \in\left(0, \frac{\pi}{6}\right]$. Suppose the initial velocity $\bm{v}_0$ is in the admissible class $\mathscr{A} \subset C^2(\ol{D})$ (see Definition \ref{Def, admissible sets}) and satisfies $v_{0,\th}$ is odd with respect to $\phi=\f{\pi}{2}$. Then the controlled regularity holds for the problem (\ref{NS1}), namely, for any $T>0$, there exists a force $\bm{F} \in L^{2}_{tx}(D\times [0,T])$ which is supported in 
	$\ol{D}\cap \{(\rho,\phi,\th): \rho \geq \rho_0/4\}$, where $\rho_0$ is a positive constant depending only on $v_{0,\th}$, such that the following problem 
	\be\l{PP1}\left\{\, \begin{aligned}
		&\Delta \t{\bm{v}} - (\t{\bm{v}}\cdot \nabla) \t{\bm{v}} - \nabla \t{P} - \p_{t} \t{\bm{v}} = \bm{F} \quad\text{in}\quad  D\times (0,T], \\
		&\nabla \cdot \t{\bm{v}} = 0  \quad \text{in} \quad  D\times (0,T], \\
		&\t{\bm{v}}\cdot \bm{n} = 0,\quad (\mathbb{S}\t{\bm{v}}\cdot\bm{n})_{\tan } = 0 \quad\text{on} \quad  \p D\times (0,T],\\
		&\t{\bm{v}}(\cdot, 0) = {\bm{v}}_0(\cdot) \quad\text{in} \quad  D,
	\end{aligned} \right.\ee
	has a strong solution in $D\times [0,T]$.
\end{thm}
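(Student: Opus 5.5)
The strategy is to reduce Theorem \ref{Thm2} to Theorem \ref{Thm_main} by a cut-off in $\rho$. The force $\bm F$ will be manufactured so that the solution of \eqref{PP1} agrees, for positive times, with an axisymmetric field whose swirl is small near the axis. Since $v_{0,\th}$ is odd with respect to $\phi=\pi/2$ and $D$ is symmetric about $\phi=\pi/2$, we have $\int_D r v_{0,\th}\,\d x=0$ automatically. Hypothesis (i) of Theorem \ref{Thm_main} is therefore free, and only the smallness (ii) is missing.

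\textbf{Step 1 (choice of $\rho_0$).} Because $\bm v_0\in C^2(\ol D)$, we have $|r v_{0,\th}|\le \|v_{0,\th}\|_{L^\infty}\,\rho$. Choose $\rho_0$, depending only on $v_{0,\th}$, so small that
\[
\sup_{\rho\le \rho_0} r|v_{0,\th}|\le C_*/2 .
\]

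\textbf{Step 2 (modified initial data).} Let $\chi(\rho)$ be smooth with $\chi=1$ for $\rho\le\rho_0/2$ and $\chi=0$ for $\rho\ge\rho_0$. Set $w_{0,\th}=\chi\, v_{0,\th}$, which is still odd in $\phi-\pi/2$. Keep the meridional part $v_{0,\rho}\bm{e_\rho}+v_{0,\phi}\bm{e_\phi}$ unchanged, and define
\[
\bm w_0=v_{0,\rho}\bm{e_\rho}+v_{0,\phi}\bm{e_\phi}+w_{0,\th}\bm{e_\th}.
\]
This field is divergence free, since $\p_\th$ of the swirl vanishes under axisymmetry. Its swirl satisfies $\sup r|w_{0,\th}|\le C_*$ and has zero weighted mean. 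The Navier total-slip condition involves $v_\th$ only through a Robin-type relation on $\p D$; on the conic faces it is preserved because $\chi$ depends only on $\rho$. On the spherical cap $\rho=1$ the data are untouched, since $\chi=0$ there. One must still verify that $\bm w_0\in\mathscr A$; this is the compatibility point to check against Definition \ref{Def, admissible sets}.

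\textbf{Step 3 (interpolation in time).} Theorem \ref{Thm_main} with data $\bm w_0$ gives a global bounded strong solution $\bm w$ on $[0,T]$. We cannot simply take $\tilde{\bm v}=\bm w$, because the initial value must be $\bm v_0$. Instead, define $\tilde{\bm v}$ as a time interpolation. On a short interval $[0,\tau]$, solve \eqref{NS1} from $\bm v_0$ by local well-posedness (the local strong theory underlying Theorem \ref{Thm_main}). Then define
\[
\tilde{\bm v}=\bm v+\eta(t)\big(\bm w-\bm v\big)\quad\text{on }[0,\tau],\qquad \tilde{\bm v}=\bm w\quad\text{on }[\tau,T],
\]
with $\eta(0)=0$ and $\eta(\tau)=1$. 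Let $\bm F$ be the residual of the Navier--Stokes operator applied to $(\tilde{\bm v},\tilde P)$, where $\tilde P$ is the analogous interpolated pressure.

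The key observation concerns the support of $\bm F$. In $\{\rho<\rho_0/4\}$ the data coincide, so one would like $\bm v=\bm w$ there and $\bm F=0$. This fails, because Navier--Stokes is nonlocal through the pressure and diffusion. I would therefore rather \emph{define} $\tilde{\bm v}$ first and then read $\bm F$ off:
\[
\tilde{\bm v}=\bm w+(1-\chi_1(\rho))\,\eta_1(t)\,(\text{swirl correction}),
\]
where $\chi_1=1$ for $\rho\le\rho_0/4$ and the swirl correction restores $v_{0,\th}$ at $t=0$ only in the region $\rho\ge\rho_0/2$. The correction is $(1-\chi)v_{0,\th}\bm{e_\th}$ times $\eta_1(t)$, where $\eta_1(0)=1$ and $\eta_1$ decays smoothly to zero. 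This term is purely azimuthal and axisymmetric, hence divergence free, and it satisfies the boundary conditions. Then $\bm F=(\Delta-\p_t)\tilde{\bm v}-(\tilde{\bm v}\cdot\na)\tilde{\bm v}-\na\tilde P$ with $\tilde P=P_w$. Inside $\rho<\rho_0/2$, where $1-\chi=0$, $\tilde{\bm v}=\bm w$ exactly, so $\bm F=0$ there. The resulting $\bm F$ is in $L^2_{tx}$ because $\bm w\in L^\infty\cap L^2_tH^2_x$ and the correction is smooth.

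\textbf{Main obstacle.} The hard part is the regularity and compatibility bookkeeping. One must check that the added swirl correction, whose $\bm e_\th$ component is not itself small, produces a forcing of the right integrability, including the cross terms $(\bm w\cdot\na)$ applied to the correction and the $v_\th^2$ terms entering the $\rho,\phi$ equations. One must also confirm that the modified data $\bm w_0$ lies in the admissible class $\mathscr A$, so that Theorem \ref{Thm_main} applies.
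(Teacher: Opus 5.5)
Your final construction is correct, and it takes a genuinely different and much lighter route than the paper's. This is the version you reach after discarding the time-interpolation attempt, which indeed fails because $\bm v$ and $\bm w$ differ near the axis for $t>0$; the factor $(1-\chi_1)$ is also redundant.

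The paper lets the outer swirl $(1-\eta_1)v_{0,\theta}\bm{e_\theta}$ evolve under the actual Navier--Stokes flow on $D_m$ (Proposition \ref{Prop, local soln in ad}) and glues it to $\bm v^{(1)}$ with a second cut-off $\eta_2$. Because $\bm v^{(2)}$ acquires meridional components for $t>0$, the paper must then repair the divergence with the radial-integral field $\bm X$. It must also repair the resulting flux through $\rho=1$ with the stream-function field $\bm Y$, using Lemma \ref{v_rho_mean0} to get $Y_\phi=0$ on the rays.

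You instead add the prescribed field $c_\theta\bm{e_\theta}$ with $c_\theta=\eta_1(t)(1-\chi(\rho))v_{0,\theta}$. It is divergence free because it is azimuthal and axisymmetric. It satisfies the Navier total-slip conditions because these are linear and decouple the swirl ($\p_\phi v_\theta=\cot\phi\, v_\theta$ on the rays, $\p_\rho v_\theta=v_\theta$ on $\rho=1$). These conditions are preserved by a cut-off that depends only on $\rho$ and is constant near $\rho=1$. So no $\bm X$, no $\bm Y$ and no auxiliary problem on $D_m$ is needed.

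The bookkeeping you list as the main obstacle is routine. With $\tilde P=P_{\bm w}$ one finds
\[
\bm F=\Big[\Big(\Delta-\frac{1}{r^2}\Big)c_\theta-(w_r\p_r+w_3\p_{x_3})c_\theta-\frac{w_r c_\theta}{r}-\p_t c_\theta\Big]\bm{e_\theta}+\frac{2w_\theta c_\theta+c_\theta^2}{r}\,\bm{e_r}.
\]
Here $c_\theta\in C^2$ is supported where $r\ge\sqrt3\rho_0/4$, and $\bm w\in L^\infty_{tx}$ by Theorem \ref{Thm_main}. Hence $\bm F\in L^\infty_{tx}$, supported in $\rho\ge\rho_0/2$. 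This is stronger than the paper's $L^2_{tx}$ force; the paper explicitly leaves boundedness of its $\bm F$ open.

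Admissibility of $\bm w_0$ is also routine. Apply the same cut-off to the approximants $\bm v_0^{(m)}$: since $\chi'=0$ near $\rho=1/m$ and near $\rho=1$, the Robin conditions there survive.

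What the paper's construction buys is only that $\tilde{\bm v}$ follows genuine Navier--Stokes dynamics in the outer region, so that $\bm F$ is a gluing error rather than a prescribed profile. The statement does not require this.
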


We remark that the oddness assumption of $v_{0,\th}$ in Theorem \ref{Thm2} is part of the even-odd-odd parity of $(v_{0,\rho}, v_{0,\phi}, v_{0,\th})$ which is well-known to be preserved for the flow,
but the oddness for $v_{0,\th}$ alone may not be preserved for $t>0$. Fortunately, the proof of Theorem \ref{Thm2} only requires the oddness for $v_\th$ at the initial time. On the other hand, the controlled regularity is not limited to the current situation, we can also generalize this phenomenon to the ASNS model in the whole $\mathbb{R}^3$ space. 

\begin{thm}\l{Thm3}
	Let $\bm{v_0}$ be a divergence-free vector field on $\mR^{3}$ such that $\bm{v_0}\in H^2(\mR^3)\cap C^2(\mR^3)$ and $r v_{0,\th}\in L^2(\mR^3)$. Then the controlled regularity holds for the problem (\ref{eqasns}) on the whole space $\mR^3$, namely, for any $T>0$, there exists a force $\bm{F} \in L^{\infty}_{t}L^{2}_{x} \cap L^{\infty}_{tx}\big(\mR^3 \times [0,T]\big) \subset L^2_{tx}(\mR^3 \times[0,T])$ which is supported in $\mR^3\cap \{(r,\th,x_3): r \geq r_0/4\}$, where $r_0>0$ is a constant depending on $\bm{v_0}$, such that the following problem 
	\be\l{PP2}\left\{\, \begin{aligned}
		&\Delta \t{\bm{v}} - (\t{\bm{v}}\cdot \nabla) \t{\bm{v}} - \nabla \t{P} - \p_{t} \t{\bm{v}} = \bm{F} \quad\text{in}\quad  \mathbb{R}^3 \times (0,T], \\
		&\nabla \cdot \t{\bm{v}} = 0  \quad \text{in} \quad  \mR^3\times (0,T], \\
		&\t{\bm{v}}(\cdot, 0) = {\bm{v}}_0(\cdot) \quad\text{in} \quad \mR^3,
	\end{aligned} \right.\ee
	has a strong solution in $\mR^3\times [0,T]$.
\end{thm}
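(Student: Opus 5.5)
The plan is to build the force by a cutoff argument that kills the swirl near the axis. We then use the known result that axisymmetric solutions without swirl, or with swirl that is zero near the axis, stay regular: the $1/r$ criterion (\ref{v<1/r}) of \cite{CSTY1, CSTY2, KNSS, SS}, or the classical no-swirl regularity \cite{La, UY, LMNP}.

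\textbf{Choosing the cutoff radius.} Since $\bm{v}_0\in C^2$, we have $\Gamma_0=rv_{0,\th}=O(r^2)$ near the axis. Fix $r_0>0$ and a smooth cutoff $\eta(r)$ with $\eta=0$ for $r\le r_0/4$, $\eta=1$ for $r\ge r_0/2$, and $|\eta'|\les 1/r_0$.

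\textbf{The modified system.} We seek the solution in the form $\t{\bm{v}}=\t v_r\bm{e_r}+\t v_3\bm{e_3}+\t v_\th\bm{e_\th}$, where the swirl solves a modified swirl equation. Concretely, we take $\t\Gamma=r\t v_\th$ to solve the linear (given $\t{\bm b}$) equation (\ref{eqvth}) driven by the cutoff:
\[
\Delta \t\Gamma-\t{\bm b}\cdot\na\t\Gamma-\tfrac{2}{r}\p_r\t\Gamma-\p_t\t\Gamma=0 \ \text{ for } r\ge r_0/2,
\]
while near the axis we simply impose $\t\Gamma\equiv0$.

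The cleanest version of this is the following. Let $\Gamma^\sharp$ solve the swirl equation with drift $\t{\bm b}$, and define $\t\Gamma=\eta\,\Gamma^\sharp$. The error terms $-(\Delta\eta)\Gamma^\sharp-2\na\eta\cdot\na\Gamma^\sharp+(\t{\bm b}\cdot\na\eta)\Gamma^\sharp+\tfrac2r\eta'\Gamma^\sharp$ are supported in $\{r_0/4\le r\le r_0/2\}$. We put them, divided by $r$, into the $\th$-component of $\bm F$. We then solve the meridional equations for $(\t v_r,\t v_3)$ with this swirl and no further force.

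The coupled system is $\t\Gamma=\eta\Gamma^\sharp$ together with the meridional equations containing $\t v_\th^2/r$. Because $\t v_\th$ vanishes for $r<r_0/4$, the term $\t v_\th^2/r$ is bounded by $\|\Gamma^\sharp\|_\infty^2 r_0^{-3}$ and is regular. The swirl term therefore acts like a bounded force in the no-swirl ASNS, and the Ladyzhenskaya / Ukhovskii–Yudovich argument gives global bounds on $\omega_\th/r$. The steps are:

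1. Use the maximum principle, $\|\Gamma^\sharp\|_\infty\le\|\Gamma_0\|_\infty$, to get $\|\t\Gamma\|_\infty\le\|\Gamma_0\|_\infty$.
2. Obtain $L^2$ energy bounds on $\na\Gamma^\sharp$, which make $\bm F\in L^2_{tx}$, and also in $L^\infty_tL^2_x\cap L^\infty_{tx}$ after parabolic smoothing.
3. Run the standard $\Omega=\omega_\th/r$ estimate. Here $\Omega$ satisfies $\p_t\Omega+\t{\bm b}\cdot\na\Omega-(\Delta+\tfrac2r\p_r)\Omega=-\p_3(\t v_\th^2)/r^2$. The right-hand side is supported in $r\ge r_0/4$ and is controlled by $\na\t\Gamma$, which closes an estimate for $\|\Omega\|_{L^2}$.
4. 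Conclude $\t{\bm b}\in L^\infty$ and hence a strong solution.

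\textbf{Matching the initial data.} The initial swirl is $\t\Gamma(0)=\eta\Gamma_0$, which need not equal $\Gamma_0$. To fix this we choose $\eta$ time-dependent, $\eta(r,t)=1$ at $t=0$, and let the cutoff switch on on $[0,\tau]$. The more straightforward choice is to pick $r_0$ so small that the strong solution exists on $[0,\tau]$ by local theory, with $\tau$ depending on $\|\bm{v}_0\|_{H^2}$, and to start the cutoff after $\tau$. This introduces $\p_t\eta$ terms, which are harmless because they are supported in the same annulus. The dependence of $r_0$ on $\bm{v}_0$ enters through this local time and through the hypothesis $rv_{0,\th}\in L^2$, which gives energy control of $\Gamma$.

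\textbf{Main obstacle.} The hardest step is making the coupled fixed point (drift $\t{\bm b}$ $\leftrightarrow$ cutoff swirl) rigorous and checking $\bm F\in L^\infty_{tx}$. This requires bounds on $\na\Gamma^\sharp$ in $L^\infty$ on the annulus, which I would obtain from local parabolic regularity away from the axis, where the equation is effectively 2D and smooth by \cite{CKN}. I would first attempt a Galerkin/continuation argument: local strong existence, followed by the a priori bounds in steps 1 to 4 to extend the solution to $[0,T]$.
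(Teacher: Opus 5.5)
There is a genuine gap, and it sits exactly where you flagged a difficulty: matching the initial data.

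\textbf{Why the cutoff ansatz fails.} Because $\boldsymbol F$ has to vanish on $\{r<r_0/4\}$, the swirl $\tilde\Gamma=r\tilde v_\theta$ of any admissible solution obeys the unforced equation \eqref{eqvth} there. By time continuity of a strong solution, $\tilde\Gamma\to\Gamma_0$ as $t\to0$. So if $\Gamma_0\not\equiv0$ near the axis, $\tilde\Gamma$ cannot vanish identically near the axis for small $t>0$. Your ansatz $\tilde\Gamma=\eta\Gamma^\sharp$ with $\eta=0$ for $r\le r_0/4$ is therefore incompatible with $\tilde{\boldsymbol v}(\cdot,0)=\boldsymbol v_0$.

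The proposed fix does not help, for two reasons.
\begin{itemize}
\item Any cutoff with $\eta(\cdot,0)\equiv1$ that later vanishes on $\{r\le r_0/4\}$ has $\partial_t\eta\neq0$ on $\{r\le r_0/4\}$. The term $-(\partial_t\eta)\Gamma^\sharp/r$ then enters $F_\theta$ right at the axis, not in the annulus as you assert.
\item Switching the cutoff on abruptly at $t=\tau$ makes $\tilde\Gamma$ jump in time. That is incompatible with $\tilde{\boldsymbol v}\in H^1_tL^2_x$ and $\boldsymbol F\in L^2_{tx}$.
\end{itemize}
Separately, choosing ``$r_0$ so small that the solution exists on $[0,\tau]$'' does nothing, since the local existence time depends on $\|\boldsymbol v_0\|_{H^2}$ and not on $r_0$.

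The upshot is that near the axis $\tilde{\boldsymbol v}$ must be a genuine, force-free ASNS flow carrying the initial swirl. Regularity results for swirl-free flows, or for flows whose swirl vanishes near the axis, cannot close the argument, however well steps 1--4 work for such flows.

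\textbf{The missing idea.} You need a global regularity theorem for data whose swirl near the axis is small but not zero; this is what the paper uses.
\begin{itemize}
\item It splits only the swirl: $\boldsymbol v^{(1)}_0=v_{0,r}\boldsymbol e_r+v_{0,x_3}\boldsymbol e_3+\eta_1 v_{0,\theta}\boldsymbol e_\theta$, with $\eta_1=1$ for $r\le r_0/2$ and $\eta_1=0$ for $r\ge r_0$.
\item It picks $r_0$ so that $\sup_{r\le r_0}r|v_{0,\theta}|\le\delta_*/B_0$. This puts $\boldsymbol v_0^{(1)}$ under the Lei--Zhang criterion \eqref{LZ_small_id} of \cite[Theorem 1.4]{LZ17}, so $\boldsymbol v^{(1)}$ is a global strong solution with no force at all. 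This choice is where $r_0$ acquires its dependence on $\boldsymbol v_0$, and where $rv_{0,\theta}\in L^2$ is used, since $B_0$ contains $\|\Gamma_0\|_{L^2}$.
\item The remainder $(1-\eta_1)v_{0,\theta}\boldsymbol e_\theta$ is evolved separately as $\boldsymbol v^{(2)}$, which is regular away from the axis.
\item One sets $\tilde{\boldsymbol v}=\boldsymbol v^{(1)}+\eta_2\boldsymbol v^{(2)}-\boldsymbol X$, with $\eta_2=0$ for $r\le r_0/4$ and $\boldsymbol X$ a divergence corrector carrying the factor $\eta_2'$.
\end{itemize}
Since $\eta_2(1-\eta_1)=1-\eta_1$, the initial data match exactly. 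On $\{r<r_0/4\}$ we have $\tilde{\boldsymbol v}=\boldsymbol v^{(1)}$, which solves the unforced system there, so every error term is supported in $\{r\ge r_0/4\}$.
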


\begin{rem}
	\sl{From the point of view of control theory, the result is better if the forcing term has smaller support and less dependence. In this sense, Theorem \ref{Thm2} is stronger than Theorem \ref{Thm3} since the distance between the support of the forcing term and the axis of symmetry only depends on the azimuthal component of $\bm{v_0}$ instead of the whole $\bm{v_0}$.
	On the other hand, the forcing term $\bm{F}$ can be guaranteed in $L^{\infty}_{tx}$ in Theorem \ref{Thm3} which is stronger than that in Theorem \ref{Thm2}. The reason why $\bm{F}$ is not justified in $L^{\infty}_{tx}$ in Theorem \ref{Thm2} is due to complicated boundary estimates.
	}
\end{rem}

\begin{rem}
	\sl{Theorem \ref{Thm2} and Theorem \ref{Thm3} seem to add a little weight to the regularity scenario for ASNS, since the force is supported away from the axis of symmetry which is the only place where the regularity may break down.}
\end{rem}
Secondly, we discuss the application of Theorem \ref{Thm_main} in the direction of finite-time blowup. A challenging but natural future study is to investigate if the smallness assumption (\ref{COND}) can be removed. At this moment, it is not clear if the solution will blow up in finite time if the initial data $r v_{0,\th}$ is large. But if there exists a solution that blows up in finite time, then we can take advantage of Theorem \ref{Thm_main} to find an unstable blow-up solution. The precise meaning of this statement is given below.

\begin{cor}\label{Cor, unstable-bus}
	Let the domain $D$ be as defined in \eqref{domain-cyl} with the angle $\alpha \in\left(0, \frac{\pi}{6}\right]$. Suppose the problem (\ref{NS1}) possesses a strong solution $\bm{v}$ which blows up in finite time with an initial data $\bm{v}_0$ in the admissible class $\mathscr{A}$ such that $\int_{D} r v_{0,\th} \d x = 0$. Then there exists a strong solution $\bm{v}^{*}$ to the problem (\ref{NS1}) which blows up in finite time and is unstable in the sense that for any $\delta>0$, there exists a global strong solution $\wt{\bm{v}}$ to the problem (\ref{NS1}) such that 
	\[
	\| \wt{\bm{v}}_{\th}(x,0) - \bm{v}^{*}_{\th}(x,0) \|_{C^2(\ol{D})} \leq \delta,
	\]
	where $\wt{\bm{v}}_{\th}(x,0)$ and $\bm{v}^{*}_{\th}(x,0)$ represent the azimuthal components of the initial values of $\wt{\bm{v}}$ and $\bm{v}^{*}$ respectively.
\end{cor}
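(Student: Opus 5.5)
The plan is a scaling (continuity) argument in the size of the swirl, using Theorem \ref{Thm_main} to produce regular solutions near any blow-up solution. Let $\bm{v}_0\in\mathscr{A}$ with $\int_D r v_{0,\th}\d x=0$ be the given initial datum whose strong solution blows up in finite time. For $\lambda\in[0,1]$ consider the family of initial data
\[
\bm{v}_0^{\lambda} := v_{0,\rho}\bm{e_\rho}+v_{0,\phi}\bm{e_\phi}+\lambda v_{0,\th}\bm{e_\th}.
\]
Only the swirl is rescaled. Hence $\bm{v}_0^\lambda$ stays divergence free, because $v_{0,\th}$ does not enter the divergence of an axisymmetric field. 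The NTS condition (\ref{NTS bdry}) decouples componentwise for axisymmetric fields: the tangential stress condition splits into a condition on $(v_\rho,v_\phi)$ and a separate Robin-type condition on $v_\th$ alone. It is therefore preserved, and I will check that $\bm{v}_0^\lambda\in\mathscr{A}$ in the same way, via Definition \ref{Def, admissible sets}. Moreover $\int_D r\lambda v_{0,\th}\d x=0$ and $\sup_D r|\lambda v_{0,\th}|=\lambda\sup_D r|v_{0,\th}|$.

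Define
\[
\Lambda:=\{\lambda\in[0,1]:\ \text{the strong solution with data }\bm{v}_0^\lambda\text{ blows up in finite time}\}.
\]
Then $1\in\Lambda$ by hypothesis. By Theorem \ref{Thm_main}, every $\lambda$ with $\lambda\sup_D r|v_{0,\th}|\le C_*$ gives a global bounded strong solution, so $\lambda\notin\Lambda$. Hence
\[
\lambda^*:=\inf\Lambda\geq C_*/\sup_D r|v_{0,\th}|>0.
\]
Next I will show that $\lambda^*\in\Lambda$, i.e. that the set of $\lambda$ whose solutions are global is relatively open in $[0,1]$. Suppose instead the solution with data $\bm{v}_0^{\lambda^*}$ is global. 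Fix $T$ beyond the blow-up times of nearby data. Standard continuous dependence of strong solutions on the data in $H^2\cap C^2$ on $[0,T]$ then gives a global strong solution for all $\lambda$ near $\lambda^*$, including $\lambda>\lambda^*$ arbitrarily close to $\lambda^*$ that belong to $\Lambda$. This is a contradiction, provided we know blow-up times in $\Lambda$ near $\lambda^*$ are bounded, or argue more carefully as below. Taking $\bm{v}^*$ to be the solution with data $\bm{v}_0^{\lambda^*}$, it blows up. For any $\lambda<\lambda^*$ the solution $\wt{\bm{v}}$ with data $\bm{v}_0^\lambda$ is global, and
\[
\|\wt v_\th(\cdot,0)-v^*_\th(\cdot,0)\|_{C^2}=(\lambda^*-\lambda)\|v_{0,\th}\|_{C^2}\le\delta
\]
once $\lambda$ is close enough to $\lambda^*$.

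The main obstacle is the openness step, namely the stability of global strong solutions on $D$ under the NTS condition. I plan to do it as follows. Suppose the $\lambda^*$-solution is global. On $[0,T]$ it is bounded in $L^\infty_t H^1_x\cap L^2_tH^2_x$, and the global-in-time bound comes from the energy inequality (\ref{energy_decay}) together with eventual smallness. I will run a difference estimate in $H^1$ for $\bm{w}=\bm{v}^\lambda-\bm{v}^{\lambda^*}$. The boundary terms will be handled by the pressure elliptic estimate and the integration-by-parts identities underlying Theorem \ref{Thm_main}, and this gives local-in-$T$ continuity. The worry is uniformity as $T\to\infty$. I plan to avoid it by dropping the requirement that blow-up times be bounded. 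If $\lambda_n\downarrow\lambda^*$ with $\lambda_n\in\Lambda$ and the $\lambda^*$-solution is global, then for large time that solution enters a small-data regime. Concretely, $\|\nabla\bm{v}\|_{L^2}$ becomes small at some $t_0$ by (\ref{energy_decay}). A small-$H^1$ global existence result for NTS then applies, and it is stable under small perturbations. Continuity on $[0,t_0]$ then makes the $\lambda_n$-solutions global for large $n$, which is the desired contradiction. If such a small-$H^1$ global result is not available earlier in the paper, I will prove it by the standard energy argument for $\|\nabla \bm{v}\|_{L^2}^2$, using the three good unknowns' estimate.
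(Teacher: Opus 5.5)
Your proposal follows the paper's argument. You scale only the swirl, $\bm{v}_0^\lambda=v_{0,\rho}\bm{e_\rho}+v_{0,\phi}\bm{e_\phi}+\lambda v_{0,\th}\bm{e_\th}$, and use Theorem \ref{Thm_main} for small $\lambda$. You then take the threshold $\lambda^*$; your $\inf\Lambda$ coincides with the paper's $\sup\{B:[0,B]\subseteq S\}$. Next you argue that the threshold solution must blow up, since a global one would satisfy the energy decay (\ref{energy_decay}) and be stable under small perturbations, and finally you approximate it from below by $\lambda^*-\epsilon$.

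The paper handles the openness step in one line: energy decay makes $\bm{v}^*$ globally bounded, which ``allows a small perturbation''. Your eventual-smallness plus finite-time-continuity scheme is a more explicit version of that same step. However, the small-data global stability result for the Navier total-slip problem on $D$ that your scheme relies on is not proved in the paper either, so that step is at the same level of rigour as the paper's.
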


The idea for Corollary \ref{Cor, unstable-bus} can be generalized to arbitrary setup of Navier-Stokes equations as long as an absolute smallness condition, such as (\ref{COND}), guarantees the global existence of a strong solution. For example in the full 3D Navier-Stokes model (\ref{nse}), if it possesses a strong solution $\bm{v}$ which blows up in finite time with an initial data $\bm{v}_0$ in the Schwartz space, then one can follow the proof of Corollary \ref{Cor, unstable-bus} to find a strong solution $\bm{v}^{*}$ to (\ref{nse}) which blows up in finite time and is unstable in the sense that for any $\delta>0$, there exists a global strong solution $\wt{\bm{v}}$ to (\ref{nse}) such that 
\[
\| \wt{\bm{v}}(x,0) - \bm{v}^{*}(x,0) \|_{L^{\infty}(\mathbb{R}^3)} \leq \delta,
\]
where $\wt{\bm{v}}(x,0)$ and $\bm{v}^{*}(x,0)$ represent the initial values of $\wt{\bm{v}}$ and $\bm{v}^{*}$ respectively.

\subsection{Difficulties and strategies}
We now explain the main analytical difficulties arising from the Navier total-slip boundary condition and outline the strategy used to prove Theorem \ref{Thm_main}.
A central part of proving Theorem \ref{Thm_main} is to construct a closed energy estimate for a collection of higher-order good unknowns. In \cite{LPYZZZ24}, the authors employed the following group of good unknowns
\[
K\ed\f{\o_\rho}{\rho}\,,\q F\ed\f{\o_\phi}{\rho}\,, \q \O\ed\f{\o_\th}{\rho\sin\phi}\,.
\] 
Here the quantity $\O$ could date back to Ladyzhenskaya \cite{La} and Ukhovskii-Yudovich \cite{UY} in 1960s, whilst $K$ and $F$ are new inputs which were inspired by the quantity $J\ed \frac{w_r}{r}$ in \cite{CFZ}. However, this group of good unknowns could not be applied directly in the current paper, owing to the change of boundary conditions. Instead of the traditional good unknown
\[
\O=\f{\o_\th}{\rho\sin\phi}=\f{\p_{x_3}v_r-\p_rv_3}{r}
\]
we observe that
\[
\t{\O}\ed\p_\rho\Big(\f{v_\phi}{\rho\sin\phi}\Big) - \p_\phi\Big(\f{v_\rho}{\rho^2}\Big)\,
\]
vanishes on the arc boundary provided $\bm{v}$ satisfies the Navier total-slip boundary condition \eqref{NS1}$_3$. However, owing to the singularity of the Navier total-slip boundary condition near the origin, we choose to impose a mixed boundary condition of both NTS type and NHL type, see (\ref{NTS-NHL}), on the artificial inner arc $A_{1,m}$ of the approximating domain $D_m$ (see Section \ref{Sec2} below).  Unfortunately, the boundary condition for $\t{\O}$ on $A_{1,m}$ makes it very challenging to control the following type of boundary integration 
\[
\int_{A_{1,m}}\t{\O}\p_\rho\t{\O}\d S.
\]
To overcome this difficulty, we ``connect" $\Omega$ and $\t{\O}$ by $\m{O}$ which is defined as 
\be\l{DO}
\m{O}\ed\eta\t{\O}+(1-\eta){\O}=\Omega-\frac{2 v_\phi \eta}{\rho^{2} \sin \phi}\,,
\ee
where $\eta$ is any smooth cut-off function such that $0\leq\eta(\rho)\leq1$, $0 \leq \eta'(\rho) \leq 6$ and
\ba\l{DCUT}
\eta(\rho)=\left\{
\begin{aligned}
	&0,\q\text{for}\q &0\leq\rho\leq\f{1}{3}\,;\\
	&1,\q\text{for}\q &\rho\geq\f{2}{3}\,.\\
\end{aligned}
\right.
\ea

Compared with $\Omega$, here we notice that $\m{O}$ not only depends on $\omega_\theta$, but also on $v_\phi$. Therefore, before obtaining a closed energy estimate of good unknowns, we must first establish an $H^1$ estimate for the pressure $P$, since the equation of $v_\phi$ contains it. To do this, we formulate a boundary value problem satisfied by the pressure $P$ (Lemma \ref{BVPP}), and use it to derive elliptic estimates for $P$ (Lemma \ref{EPs}). In addition, we develop techniques to estimate the nonlinear terms such as $\| v_\phi \p_{\rho} v_\rho \|_{L^2}$ (Lemma \ref{NonC}), which arise in the elliptic estimates for the pressure $P$. 

To close the estimate of $\m{O}$, we also need to pair it with quantities involving $v_\theta$ of the same order. Unlike the pair $(K,F)$ given in \cite{LPYZZZ24}, or traditional quantity $J=\f{\omega_r}{r}=-\p_z\big(\f{v_\th}{r}\big)$ \cite{CFZ}, we introduce the following good unknowns:
\be\l{DmK}
\m{K}\ed K-\frac{2 v_{\theta}\cot \phi}{\rho^{2}} = \f{\sin\phi}{\rho^2}\p_\phi\Big(\f{v_\th}{\sin\phi}\Big)
\,,\q\q
\mF \ed F+\f{2v_\th}{\rho^2} = -\p_\rho\Big(\f{v_\th}{\rho}\Big)\,.
\ee
The advantage of $\m{K}$ is that it vanishes on both $R_1$ and $R_2$ while $K$ and $J$ do not. Here we mention that $R_1$ and $R_2$ touch the origin, which may generate higher-order singularities near the origin if the related boundary condition is not of Robin type on $R_1\cup R_2$. For example, the related boundary integrals of $K$ or $J$ on $R_1\cup R_2$ have bad signs and are not controllable under the Navier total-slip boundary condition. Moreover, $\m{K}$ also satisfies boundary conditions of Robin-type on the inner arc $A_{1,m}$, see \eqref{BCOKF} below, which makes the estimate for $\m{K}$ manageable on the whole boundary $\p D_m$.  
Meanwhile, we introduce \(\m{F}\), which provides the \(\rho\)-derivative of \(v_\theta\), satisfies a Robin condition on the rays, and vanishes on the arcs. As a result, the boundary integrals of $\m{F}$ will not cause trouble in the energy estimate for $\m{F}$.

Technically, for a 2nd-order good unknown $\m{S}$, the boundary term
\[
I\ed\int_{\p D}\m{S} (\p_n\m{S}) \d S\,
\]
which is generated via integration by parts of the viscous term, can only be controlled by the following two methods: (A) $\m{S}=0$ on $\p D$ (homogeneous Dirichlet boundary condition), then $I$ vanishes directly; (B) $\p_n\m{S}$ equals some lower order terms on $\p D$ (Robin-type boundary condition), which can be used to reduce the order of derivatives appearing in $I$, and then one recovers $I$ to a volume integration by applying the Newton-Leibniz formula. We could not find a $v_\theta$-related 2nd-order good unknown as $\m{K}$ if the spherical boundary $\{\rho=1\}$ is replaced by the cylinder $\{r=1\}$, owing to the item (A) or (B) stated above cannot be simultaneously satisfied on both the ray boundary and the cylindrical boundary. This motivates us to consider the domains $D$ with arcs as boundary instead of vertical boundary. Geometrically, when the boundary normals of two adjacent boundary pieces are not mutually orthogonal, spurious second-order normal derivatives on the boundary may arise during the analysis. However, our NTS boundary condition \eqref{NTS bdry} provides no control or information on the 2nd-order normal derivatives of $\bm{v}$ on the boundary, and the normal derivative of $v_\theta$ cannot be converted into a tangential derivative via the divergence-free constraint.

Whether the initial smallness of $\G$ can be propagated for all times is also crucial to solving our problem. Unlike the NHL boundary case, where this can be obtained directly via the maximum principle, in the Navier total-slip case we must handle a “bad” boundary term (a term whose sign is favorable in the NHL case), see Section \ref{Sec7} below. Here, we use a De Giorgi iteration scheme instead of the Moser iteration scheme in which the boundary term can no longer be controlled by the viscous term because the coefficients grow very fast as the power of the function increases under the Navier total-slip boundary condition. 

Now we briefly outline the main difficulty in removing the assumption that \(v_\theta\) is odd with respect to \(\phi\), which was essentially used in \cite{LPYZZZ24}. In fact, if \(v_\theta\) is odd with respect to \(\phi\), then by applying the Poincar\'e inequality in the \(\phi\)-variable (see Lemma \ref{Poin0} below), we can obtain the following Hardy-type inequality for \(v_\theta\):
\ba\l{0319Ineq}
\int_{D_m}\Big|\f{1}{\rho} \f{v_\th}{\rho} \Big|^2\d x
\leq \f{2}{19}\int_{D_m} \Big| \frac{1}{\rho}\p_\phi \Big(\f{v_\th}{\rho}\Big) \Big|^2\d x
\leq \f{2}{19}\int_{D_m}\Big| \na \Big(\f{v_\th}{\rho}\Big) \Big|^2\d x\,.
\ea
Here, the small constant \(\frac{2}{19}\) in the inequality is crucial for closing the higher-order energy estimates. More precisely,  if the constant $\f{2}{19}$ in \eqref{0319Ineq}
were $4$ as in the standard three-dimensional Hardy's inequality, our method in this paper would no longer work. To obtain an alternative estimate as (\ref{0319Ineq}), we take advantage of the assumption (i) of Theorem \ref{Thm_main} to establish the following anisotropic Hardy's inequality for $v_\th$ (see Corollary \ref{Cor, vth} in Section \ref{Sec, App_Hardy}):
\bn
\int_{D_m} 
\Big| \frac{1}{\rho} \frac{v_\theta}{\rho} \Big|^2 \dx
\leq 
\frac{8}{\sqrt{3}} \int_{D_m}
\Big|\partial_\rho \Big(\frac{v_\theta}{\rho}\Big) \Big|^2 \dx
+ \frac{1}{5\sqrt{3}} 	\int_{D_m} 
\Big| \frac{1}{\rho}\partial_\phi \Big(\frac{v_\theta}{\rho} \Big) \Big|^2
\dx.
\en
The above estimate enables us to control $L^2$ norm of $\frac{v_\th}{\rho^2}$ by a certain combination of $L^2$ norms of first-order derivatives of $\frac{v_\th}{\rho}$, which in turn helps to control $\nabla \big(\frac{v_\th}{\rho}\big)$ through the good unknown pair \((\m{K},\m{F})\); see Lemma \ref{Lem35} which is also a key part in closing the energy estimates for $(\m{K},\m{F},\m{O})$. 
Meanwhile, following the same approach, we derive in Corollary \ref{Cor, Korn_ineq} a Korn's inequality for $v_\th\bm{e_\th}$, with its constant independent of the approximation index $m$, which is crucial for deriving the fundamental energy estimate in Proposition \ref{Funden}.
We emphasize that condition (i) of Theorem \ref{Thm_main} is weaker than the oddness assumption of $v_\th$, which allows more flexibility of the domain $\t{D}$ in Remark \ref{Remark, gen_domain} compared with the symmetric domain $D$ in (\ref{domain-cyl}). 

Finally, we discuss the explicit constructions in the proofs of Theorems \ref{Thm2} and Theorem \ref{Thm3}. We start with Theorem \ref{Thm2}.
The key step is to split the initial velocity $\bm{v}_0$ only in the azimuthal component: $\bm{v}_0 = \bm{v}^{(1)}_0 + \bm{v}^{(2)}_0$,
where 
\be\label{init_split}
\bm{v}^{(1)}_0 = v_{0,\rho}\bm{e_\rho} + v_{0,\phi}\bm{e_\phi} + \eta_1 v_{0,\th}\bm{e_\th}, \qquad 
\bm{v}^{(2)}_0 = (1-\eta_1) v_{0,\th}\bm{e_\th}.
\ee
Here, $\eta_1$ is a smooth cut-off function which equals $1$ near the $x_3$ axis and decays to $0$ quickly. The advantage of splitting the azimuthal component alone is to preserve the divergence-freeness of both $\bm{v}^{(1)}_0$ and $\bm{v}^{(2)}_0$ since the azimuthal component does not contribute to the divergence for the axisymmetric vector fields. Owing to the fact that the support of $\eta_1$ is near the $x_3$ axis, the smallness assumption (\ref{COND}) in Theorem \ref{Thm_main} is guaranteed for the initial velocity $\bm{v}^{(1)}_0$. As a result, the problem with the initial velocity $\bm{v}^{(1)}_0$ admits a global strong solution $\bm{v}^{(1)}$. 

Meanwhile, since the support of $1-\eta_1$ is away from the $x_3$ axis, we may regard $\bm{v}^{(2)}_0$ as the initial data on a truncated domain that is away from the axis of symmetry.  By Proposition \ref{Prop, local soln in ad} below,  $\bm{v}^{(2)}_0$ admits a global strong solution on this truncated domain. So we consider $\bm{v}^{(3)}\ed\bm{v}^{(1)}+\eta_2\bm{v}^{(2)}$, where $\eta_2$ is a delicately chosen cut-off function so that its support is away from the $x_3$ axis and the initial value of $\bm{v}_3$ matches $\bm{v}_0$. With this choice, $\bm{v}^{(3)}$ is regular and the support of the forcing term $\bm{F}$ can also be made away from the $x_3$ axis. 

However, due to the presence of $\eta_2$, the vector $\bm{v}^{(3)}$ is not divergence-free any more. So we perform suitable corrections to take care of both the divergence-freeness and the Navier total-slip boundary condition. At this point, the construction correction terms require particular care, since all the boundary conditions must be met simultaneously, while the initial value of $\bm{v}^{(3)}$ and its values in a neighborhood of the $x_3$ axis need to remain unaffected.

The idea for the construction in Theorem \ref{Thm3} is similar, except that we take advantage of a result in \cite{LZ17} in place of Theorem \ref{Thm_main} to address the global well-posedness in the whole space $\mR^3$. Fortunately, the criterion in \cite{LZ17} can also be met for $\bm{v}^{(1)}_0$ as long as $\eta_1$ is suitably chosen. The remaining steps are analogous to the argument for Theorem \ref{Thm2}. The difference is that the delicate matching of boundary conditions is unnecessary in this case, but one instead needs to construct the correction term to be $L^2$-integrable over the whole space $\mR^3$, which requires particular care. Moreover, without the boundary effect, the forcing term $\bm{F}$ in Theorem \ref{Thm3} can be justified in $L^\infty_{tx}$ which is stronger than that in Theorem \ref{Thm2}.

The organization of this paper is as follows: In Section \ref{Sec2}, preliminaries of setup on approximating domains $D_m$ and the equations for the key triple $(\m{K}, \m{F}, \m{O})$ in the spherical coordinates are presented. Section \ref{Sec, pressure} is devoted to elliptic estimates of the pressure $P$. Various Poincar\'e's inequalities, Hardy's inequalities and their applications could be found in Sections \ref{Sec4} and \ref{Sec, App_Hardy}. Section \ref{Sec, v_energy_est} is concerned with the fundamental energy estimate for the velocity $\bm{v}$ and Section \ref{Sec7} establishes the $L^\infty$ estimate of $\Gamma$ by its initial data. Sections \ref{Sec8} and \ref{Sec9} are devoted to the energy estimates for the triple of good unknowns $(\mK, \mF, \mO)$. In Section \ref{Sec10}, we prove Theorem \ref{Thm_main}. Finally, Section \ref{Sec11}, Section \ref{Sec, reg_control_R3} and Section \ref{Sec12} are devoted to the proof of Theorem \ref{Thm2}, Theorem \ref{Thm3} and Corollary \ref{Cor, unstable-bus}, respectively.


\section{Setup on approximating domains $D_m$}\l{Sec2}

\subsection{Approximating Domains $D_m$}
Firstly, we introduce the approximating domains $D_m$ and the admissible class $ \mathscr{A} $ of the initial vector fields that we consider in this paper. Since the original domain $ D $ touches the $ x_3 $ axis with an angle, the velocity may be more likely to develop singularities. Moreover, the solution may not be expected to have higher regularity than $ L_t^2 H_x^1$. In order to acquire more regularity and to prove the boundedness of the velocity, we first cut the corner of $ D $ and then study the problem in approximating domains $ D_m $ $ (m\geq 10^3) $, which are defined as
\be\label{app domain-cyl}
D_m = \bigg\{(r,\theta,x_3): \frac{1}{m^2} < r^2+x_3^2 < 1, \, -r\tan\al <x_3< r\tan\al, \, \theta\in [0,2\pi) \bigg\}.\ee
Under the spherical coordinates, the domain $ D_m $ in (\ref{app domain-cyl}) is equivalent to the following (also see Figure \ref{Fig,app domain-sph}):
\bn
D_m=\Big\{(\rho,\phi,\th): \frac{1}{m}<\rho <1, \, \frac{\pi}{2}-\al < \phi < \frac{\pi}{2}+\al, \, \theta\in [0,2\pi) \Big\}\,.
\en
In addition, for convenience of notation, we denote the four pieces of the boundary $\p D_m $ to be $ R_{1,m} $, $ R_{2,m} $, $ A_{1,m} $ and $ A_{2,m} $, and write $\p^{R}D_{m} = R_{1,m} \cup R_{2,m}$, $ \p^{A}D_{m} = A_{1,m} \cup A_{2,m}$.
\begin{figure}[!ht]
	\centering
	\includegraphics[scale=0.23]{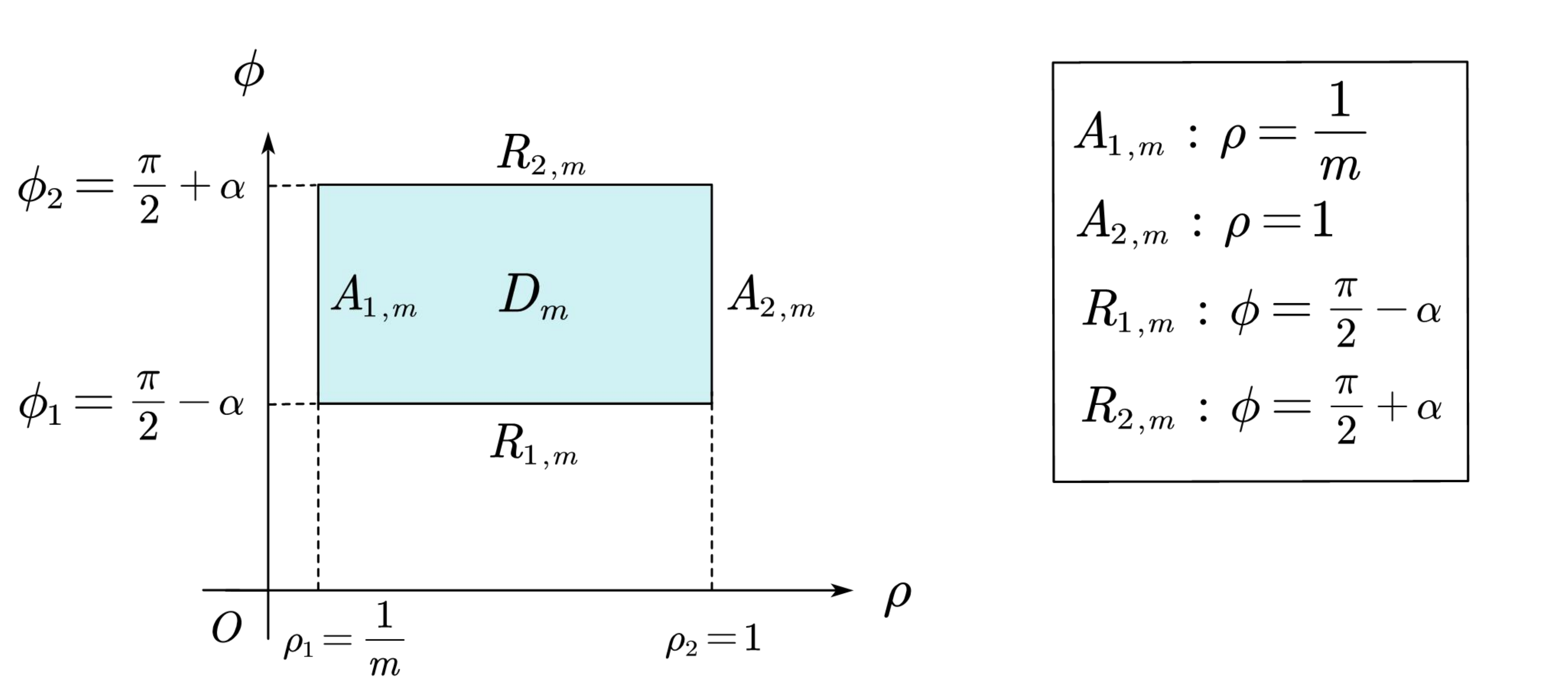}
	\caption{Domain $D_m$ in spherical coordinates}
	\label{Fig,app domain-sph}
\end{figure}

The boundary condition associated with $D_m$ is adopted as a combination of the NTS condition (\ref{NTS bdry}) and the NHL condition (\ref{NHL slip bdry}):
\be\label{bdry for Dm}
\begin{aligned}
	\bm{v}\cdot \bm{n}=0 \text{ on } \p D_m, \quad \text{(\ref{NTSR}) on } R_{1,m}\cup R_{2,m}, \quad \text{(\ref{NTSA}) on } A_{2,m}, \quad \text{(\ref{NTS-NHL}) on } A_{1,m}.
\end{aligned}\ee	

On $R_{1,m} \cup R_{2,m}$, the NTS condition (\ref{NTS bdry}) can be represented by
\be\label{NTSR}
\left\{\begin{aligned}
	&v_{\phi}=\partial_{\rho} v_{\phi}=\partial_{\phi} v_{\rho}=0, \\
	&\partial_{\phi} v_{\theta}=\cot \phi \, v_{\theta}, \\
	&\omega_{\theta}=0 .
\end{aligned}\right.
\ee
On $A_{2,m}$, the NTS condition (\ref{NTS bdry}) is
\be\label{NTSA}
\left\{\begin{aligned}
	&v_{\rho}=\partial_{\phi} v_{\rho}=0, \quad \partial_{\rho} v_{\theta}=\frac{1}{\rho} v_{\theta}, \\
	&\partial_{\rho} v_{\phi}=\frac{1}{\rho} v_{\phi},  \\
	&\omega_{\theta}=\frac{2}{\rho} v_{\phi}.
\end{aligned}\right.
\ee
Meanwhile, the mixed NTS condition (\ref{NTS bdry})-NHL condition (\ref{NHL slip bdry}) on $A_{1,m}$ is defined as 
\be\label{NTS-NHL}
\left\{
\begin{aligned}
	& v_{\rho} = \p_\phi v_\rho = 0, \quad \q\p_{\rho} v_{\th} = \frac{1}{\rho} v_{\theta}, \\ 
	& \partial_{\rho} v_{\phi} = -\frac{1}{\rho} v_{\phi}, \\	
	& \omega_{\theta} = 0.
\end{aligned}
\right.
\ee

\begin{rem}[On the mixed boundary conditions on $A_{1,m}$]
	The mixed boundary condition (\ref{NTS-NHL}), i.e. NTS for $v_\th$ and NHL for $v_\rho$ and $v_\phi$, is introduced on the inner arc $A_{1,m}$ for the approximating problem in $D_m$. As mentioned in the introduction, if one chooses other types of boundary conditions on the inner arc, some challenging boundary terms will emerge. 
	As $m\to\infty$, the inner arc $A_{1,m}$ shrinks to the cone vertex and the problem involves only the Navier total-slip boundary condition in the limit case. 
\end{rem}

Due to the above strategy, it is natural to choose the elements in $ \mathscr{A} $ to be the  limits of vector fields on $ D_m $.
\begin{defn}[Admissible classes $ \mathscr{A}_m $ and $ \mathscr{A} $]
	\label{Def, admissible sets}
	Fix any angle $ \al\in\big(0, \frac{\pi}{6}\big] $.
	\begin{itemize}
		\item[(1)] For any integer $ m\geq 10^3 $, we define the admissible class $ \mathscr{A}_{m} $ on $ D_m $ to be the space of vector fields $ \bm{v}_{0}^{(m)} $ in $ C^2(\overline{D_m}) $ that are axisymmetric and divergence free in $ D_m $ that satisfy the boundary condition (\ref{bdry for Dm}) on $ \p D_m$.
		
		\item[(2)] For the domain $ D $, we define the admissible class $ \mathscr{A} $ on it to be the space of vector fields $ \bm{v}_0 $ in $ C^2(\ol{D}) $ such that there exist vector fields $ \big\{\bm{v}_0^{(m)}\big\}_{m\geq 10^3} $ such that $ \bm{v}_0^{(m)}\in\mathscr{A}_m $ and
		\[ \lim_{m\to\infty} \big\| \bm{v}_0 - \bm{v}_0^{(m)} \big\|_{C^2(\overline{D_m})} = 0. \]
	\end{itemize}
\end{defn}
Based on Definition \ref{Def, admissible sets}, it is not difficult to see that the admissible class $\mathscr{A}$ is not empty, but it is not clear whether every function in $C^2(\ol{D})$ that satisfies the Navier total-slip boundary condition (\ref{NTS bdry}) belongs to $\mathscr{A}$.

Besides the fixed cut-off function $\eta$ defined in (\ref{DCUT}), this paper will introduce another sequence of cut-off functions $\{\eta_m\}$ which are adapted to the approximating domains $\{D_m\}$. These cut-off functions $\{\eta_m\}$ vanish on the inner arc $A_{1,m}$ and equal to 1 on the outer arc $A_{2,m}$. More precisely, we choose these $\{\eta_m\}$ as in (\ref{cut_fn}) which satisfy the crucial estimates in (\ref{est_cut_fn}).

\begin{lemma}\label{Lemcutoff}
	Let $m \geq 10^3$ and define
	\be\label{cut_fn}
	\eta_{m}(\rho)=\sin \left[\frac{\pi}{2}\left(\frac{m}{m-1} \rho-\frac{1}{m-1}\right)\right]\,, \quad \forall\, \frac{1}{m} \leq \rho \leq 1\,.
	\ee
	Then for any $\rho \in [1/m, 1]$, we have 
	\be\label{est_cut_fn}
	0 \leq \eta_{m}(\rho) + \rho \eta_{m}^{\prime}(\rho) \leq \frac{7}{5}\,, \quad \text{and}\quad 
	0 \leq 3\eta_{m}(\rho) + \rho \eta_{m}^{\prime}(\rho) \leq \frac{13}{4}\,.
	\ee
\end{lemma}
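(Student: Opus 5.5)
The plan is to reduce both estimates in \eqref{est_cut_fn} to one-variable trigonometric inequalities on $[0,\pi/2]$. First I substitute $\theta=\frac{\pi}{2}\big(\frac{m}{m-1}\rho-\frac{1}{m-1}\big)$. As $\rho$ runs over $[1/m,1]$, $\theta$ runs over $[0,\pi/2]$, and $\eta_m(\rho)=\sin\theta$. Set $\varepsilon=\frac1{m-1}\le\frac1{999}$. Then
\[
\rho=\frac{2}{\pi}\cdot\frac{m-1}{m}\,\theta+\frac1m,
\qquad
\rho\,\eta_m'(\rho)=\rho\,\frac{\pi}{2}\,\frac{m}{m-1}\cos\theta=(\theta+\tfrac{\pi}{2}\varepsilon)\cos\theta .
\]
Hence
\[
\eta_m+\rho\eta_m'=\sin\theta+(\theta+\tfrac{\pi}{2}\varepsilon)\cos\theta,
\qquad
3\eta_m+\rho\eta_m'=3\sin\theta+(\theta+\tfrac{\pi}{2}\varepsilon)\cos\theta .
\]

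The lower bounds are immediate: on $[0,\pi/2]$ we have $\sin\theta\ge0$, $\cos\theta\ge0$ and $\theta+\frac{\pi}{2}\varepsilon>0$, so both expressions are nonnegative. For the upper bounds I use $\cos\theta\le1$ to peel off the $\varepsilon$-term, which costs at most $\frac{\pi}{2}\varepsilon\le \frac{\pi}{1998}<0.0016$. It then suffices to show
\[
\max_{[0,\pi/2]} f\le 1.398,\quad f(\theta)=\sin\theta+\theta\cos\theta,
\qquad
\max_{[0,\pi/2]} g\le 3.248,\quad g(\theta)=3\sin\theta+\theta\cos\theta .
\]

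The derivatives are $f'=2\cos\theta-\theta\sin\theta$ and $g'=4\cos\theta-\theta\sin\theta$. Each is positive at $0$ and negative at $\pi/2$. The map $\theta\mapsto\theta\tan\theta$ is strictly increasing on $(0,\pi/2)$, so each function has a unique interior critical point, and it is a maximum. For $f$ the critical point satisfies $\theta\tan\theta=2$, so $\theta_f\approx1.0769$; for $g$ it satisfies $\theta\tan\theta=4$, so $\theta_g\approx1.2646$. At a critical point we can substitute $\theta\cos\theta=k\cos^2\theta/\sin\theta$ with $k=2$ or $k=4$. This gives $f_{\max}=\sin\theta_f+2\cos^2\theta_f/\sin\theta_f\approx1.391$ and $g_{\max}=3\sin\theta_g+4\cos^2\theta_g/\sin\theta_g\approx3.242$. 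Adding $0.0016$ yields $\approx1.393<7/5$ and $\approx3.244<13/4$.

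The main obstacle is rigor in the numerics, since the second margin is only about $0.006$. I would make this step airtight by bracketing each critical point in a short interval: check the sign of $\theta\tan\theta-k$ at two nearby rational endpoints, e.g.\ $[1.076,1.078]$ and $[1.264,1.266]$. On such an interval I then bound the function crudely by monotone pieces. For instance, $\sin\theta\le\sin(b)$ and $\theta\cos\theta\le b\cos(a)$ on $[a,b]$ give $f\le\sin b+b\cos a$. The interval is short enough that this crude bound still lies below $1.398$ and $3.248$ respectively. This completes the proof of Lemma \ref{Lemcutoff}.
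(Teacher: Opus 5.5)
Your proposal is correct and is essentially the paper's argument: the same substitution $x=\frac{\pi}{2}\big(\frac{m}{m-1}\rho-\frac{1}{m-1}\big)$, the same bound $\frac{\pi}{2(m-1)}\cos x\le\frac{\pi}{1998}$ for the extra term, and the same bracketing of the critical points $x\tan x=2$ and $x\tan x=4$, followed by a crude monotone bound on each bracketing interval. The paper brackets in $[1.075,1.08]$ and $[1.26,1.265]$, which leaves a margin below $10^{-3}$ in the second estimate, so your narrower intervals give more room. You also spell out the lower bounds and the uniqueness of the interior maximum (via monotonicity of $x\tan x$), which the paper leaves implicit.
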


\begin{proof}
	Direct calculation shows
	$$
	\eta_{m}(\rho)+\rho \eta_{m}^{\prime}(\rho)
	=\sin \Big[\frac{\pi}{2} \Big(\frac{m}{m-1} \rho-\frac{1}{m-1}\Big)\Big]
	+ \frac{\pi}{2} \cdot \frac{m}{m-1} \rho \cos \Big[\frac{\pi}{2} \Big(\frac{m}{m-1} \rho - \frac{1}{m-1}\Big)\Big]\,.
	$$
	Let $x=\frac{\pi}{2}(\frac{m}{m-1} \rho-\frac{1}{m-1})$. As $\rho \in\left[\frac{1}{m}, 1\right]$, we have $x \in\left[0, \frac{\pi}{2}\right]$ and 
	\[\left\{\begin{aligned}
		\eta_{m}(\rho)+\rho \eta_{m}^{\prime}(\rho) & = \sin (x)+x \cos (x)+\frac{\pi}{2(m-1)} \cos (x) := f_1(x), \\
		3\eta_{m}(\rho)+\rho \eta_{m}^{\prime}(\rho) & = 3\sin (x)+x \cos (x)+\frac{\pi}{2(m-1)} \cos (x) := f_2(x).
	\end{aligned}	\right.\]
	For the function $\sin (x)+x \cos (x)$ on $[0,\pi/2]$, its maximum is attained at $x_1\in [0,\pi/2]$ such that $x_1 \tan(x_1) = 2$, then $x_1$ is between $1.075$ and $1.08$, so 
	\[ f_1(x) \leq \sin (1.08) + 1.08 \cos (1.075) + \frac{\pi}{1998} < \frac75 \,,\q\text{for any}\q x\in [0, \pi/2]\,.\]
	Meanwhile, for the function $3\sin (x)+x \cos (x)$ on $[0,\pi/2]$, its maximum is attained at $x_2\in [0,\pi/2]$ such that $x_2 \tan(x_2) = 4$, then $x_2$ is between $1.26$ and $1.265$, so 
	\[ f_2(x) \leq 3\sin (1.265) + 1.265 \cos (1.26) + \frac{\pi}{1998} < \frac{13}{4} \,,\q\text{for any}\q x\in [0, \pi/2]\,.\]
	This concludes the lemma.
\end{proof}

\subsection{Strong Solutions on $D_m$}

\begin{prop}\label{Prop, local soln in ad}
	Let	$\al\in\big(0,\frac{\pi}{6}\big]$ and $m\geq 10^3$. Assume that the initial velocity $ \bm{v}_0\in H^{2}(D_m) $ is divergence free in $ D_m $ and satisfies the boundary  condition (\ref{bdry for Dm}) on $ \p D_m $. Then for any time $ T>0 $, the problem (\ref{NS}) on $ D_m\times[0,T] $ with the initial data $ \bm{v}_0 $ and the boundary condition (\ref{bdry for Dm}) possesses a strong solution $ (\bm{v},P) $ such that
	\[
	\bm{v}\in H_t^1 L_x^2 \cap L_t^2 H_x^2\cap L_{tx}^{\infty}\big(D_m\times[0,T]\big), \quad P\in L_t^2 H_x^1(D_m\times[0,T]). 
	\]
	In addition, if $ (\hat{\bm{v}}, \hat{P}) $ is another strong solution, then $ \hat{\bm{v}}$ coincides with $ \bm{v} $ on $ D_m\times[0,T] $. 
	Moreover, the azimuthal angular momentum of the velocity $\bm{v}$ is preserved, that is 
	\be\label{aam_con}
	\int_{D_m} \Gamma(x,t) \d x = \int_{D_m} \Gamma_0(x)\d x,\q\text{for any}\q t\in(0,T]\,,
	\ee
	where $\Gamma_0$ is the initial value of $\Gamma  \ed \rho\sin\phi\, v_\th $. In particular, if the initial azimuthal angular momentum $\int_{D_m}\Gamma_0(x) \d x$ vanishes, then $\int_{D_m} \Gamma(x,t) \d x = 0$ for any $t\in[0,T]$.
\end{prop}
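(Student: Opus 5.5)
The plan is to treat Proposition \ref{Prop, local soln in ad} as a well-posedness result on a fixed domain $D_m$ that stays away from the symmetry axis. On $D_m$ we have $\rho\geq 1/m$ and $\phi\in(\frac{\pi}{2}-\al,\frac{\pi}{2}+\al)$, so $r=\rho\sin\phi\geq \frac{\cos\al}{m}$. All coefficients $1/\rho$, $\cot\phi$, $1/(\rho\sin\phi)$ in \eqref{NS} are therefore bounded by constants depending on $m$. In this regime the axisymmetric system behaves like a two-dimensional problem in the meridian variables $(\rho,\phi)$ on the curvilinear rectangle $(1/m,1)\times(\frac{\pi}{2}-\al,\frac{\pi}{2}+\al)$. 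The proof has four parts:
\begin{itemize}
\item local existence by a Galerkin scheme;
\item global a priori bounds using two-dimensional tools;
\item uniqueness by an energy argument;
\item conservation of angular momentum by direct integration.
\end{itemize}

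\emph{Local existence.} We use a Galerkin scheme built on the eigenfunctions of the Stokes operator associated with \eqref{bdry for Dm}, i.e. the self-adjoint operator generated by the quadratic form $\int_{D_m}|\nabla\bm{v}|^2$, corrected by the boundary curvature terms coming from the slip conditions on the arcs and rays. First we check that this operator has a compact resolvent. Next we check an $H^2$ elliptic regularity estimate for the corresponding Stokes system on $D_m$.

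The corners of $D_m$, where the rays meet the arcs, have interior angle exactly $\pi/2$ in the meridian plane. For slip-type conditions we can then reflect evenly or oddly across each boundary piece, so the $H^2$ regularity at the corners follows from regularity on a smooth extended region. The axisymmetric structure is preserved by the Galerkin approximations.

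The approximate solutions satisfy the basic energy inequality. On the bounded domain $D_m$, the boundary terms produced by integration by parts are lower order: they are traces of $\bm{v}$ times bounded curvature factors. We control them by the trace inequality and Gronwall, which yields a uniform $\mathbf{E}$-bound on $[0,T]$.

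\emph{Global regularity.} This is the main obstacle. To pass to $H^1$ and $H^2$ bounds globally in time, we test the equation with the Stokes operator applied to $\bm{v}$. We then control the nonlinearity by an anisotropic Ladyzhenskaya inequality. Since all functions depend only on $(\rho,\phi)$ and $r\geq c/m$, the 3D $L^4$ norm is bounded by the 2D meridian $L^4$ norm with $m$-dependent weights, so $\|\bm{v}\|_{L^4}^2\leq C_m\|\bm{v}\|_{L^2}\|\bm{v}\|_{H^1}$.

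This gives the differential inequality $\frac{d}{dt}\|\bm{v}\|_{H^1}^2\leq C_m\|\bm{v}\|_{L^2}^2\|\bm{v}\|_{H^1}^4+C_m\|\bm{v}\|_{H^1}^2$, which closes globally by Gronwall together with the energy bound, exactly as in 2D. The care needed here is again in the boundary terms, this time at the $H^1$ level, which we handle using \eqref{NTSR}--\eqref{NTS-NHL}.

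Next, $\bm{v}\in L^2_tH^2_x\cap H^1_tL^2_x$ follows from Stokes regularity applied to $\p_t\bm{v}+\bm{v}\cdot\nabla\bm{v}\in L^2_{tx}$, and this also recovers $P\in L^2_tH^1_x$. For the $L^\infty_{tx}$ bound we use the $C^2$ data and differentiate in time once more, obtaining $\p_t\bm{v}\in L^\infty_tL^2_x$. Then $\bm{v}\in L^\infty_tH^2_x\subset L^\infty_{tx}$, using the $H^2$ Stokes estimate together with $\bm{v}\in L^\infty_tH^1_x$.

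\emph{Uniqueness.} Let $\bm{w}=\bm{v}-\hat{\bm{v}}$. Subtracting the two systems and testing with $\bm{w}$ gives
\[
\frac{d}{dt}\|\bm{w}\|_{L^2}^2+\|\nabla\bm{w}\|_{L^2}^2\leq C\big(\|\nabla\bm{v}\|_{L^2}^2+C_m\big)\|\bm{w}\|_{L^2}^2,
\]
after absorbing the boundary trace terms by the trace inequality. Gronwall then gives $\bm{w}\equiv0$.

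\emph{Conservation of angular momentum.} Since $\Gamma$ satisfies \eqref{eqvth}, we compute $\frac{d}{dt}\int_{D_m}\Gamma\,\d x$ by integrating the $v_\th$ equation multiplied by $\rho\sin\phi$. Using $\nabla\cdot\bm{b}=0$ and $\bm{b}\cdot\bm{n}=0$, the transport term integrates to zero.

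The remaining viscous term becomes a boundary integral of $r\,\p_n v_\th-v_\th\,\p_n r$ (plus terms that cancel with the $-1/r^2$ part). On the rays, $\p_\phi v_\th=\cot\phi\, v_\th$ together with $\p_\phi r=\rho\cos\phi$ makes the integrand vanish. On both arcs, $\p_\rho v_\th=v_\th/\rho$ together with $\p_\rho r=\sin\phi$ makes it vanish as well. Hence \eqref{aam_con} holds.
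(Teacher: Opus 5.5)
Your proposal is correct and follows essentially the paper's route. The paper also treats existence and uniqueness on $D_m$ as standard because $D_m$ stays away from the axis (citing \cite{Benes} and \cite[Section 3]{LPYZZZ24}), and it proves \eqref{aam_con} by integrating the $\Gamma$ equation \eqref{EGMA}: the transport term vanishes by $\na\cdot\bm{b}=0$ and $\bm{b}\cdot\bm{n}=0$, and the viscous boundary terms vanish by $\p_\rho\Gamma=\frac{2}{\rho}\Gamma$ on the arcs and $\p_\phi\Gamma=2\cot\phi\,\Gamma$ on the rays, which your Green's-identity computation (using $\Delta r=1/r$) packages in one line; the one slip is that the hypothesis is $\bm{v}_0\in H^2(D_m)$, not $C^2$, but $H^2$ data satisfying \eqref{bdry for Dm} already give $\p_t\bm{v}(\cdot,0)\in L^2$, so your $L^\infty_{tx}$ argument is unaffected.
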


\begin{proof}
	Since the domain $D_m$ is away from the symmetric axis and is a polyhedral domain, the existence of a strong solution $(\bm{v},P)$ and the uniqueness of the velocity $\bm{v}$ are standard. See for example the paper \cite{Benes}, where existence of strong solutions with a related boundary value problems on more general polyhedral domains was proven. For another instance, one can also follow the strategy in \cite[Section 3]{LPYZZZ24} to justify these results. We omit the details here.
	
	Next, we will verify the conservation property (\ref{aam_con}) of the azimuthal angular momentum. Recalling \eqref{NS}$_3$, the equation of $v_\th$, one has $\Gamma\ed \rho\sin\phi v_\theta$ satisfies
	\be\l{EGMA}
	\Delta \Gamma-\bm{b} \cdot \nabla \Gamma-\frac{2}{\rho} \partial_{\rho} \Gamma-\frac{2 \cot \phi}{\rho^{2}} \partial_{\phi} \Gamma-\partial_{t} \Gamma=0\,.
	\ee
	Integrating \eqref{EGMA} over $D_m$, one has
	\[
	\f{\d}{\d t}\int_{D_m}\Gamma\d x = 
	-\un{\int_{D_m} \Big(v_{\rho} \partial_{\rho} + \frac{1}{\rho} v_{\phi} \partial_{\phi}\Big)\Gamma\d x}_{I_1}
	+ \un{\int_{D_m}\Big(\Delta-\f{2}{\rho}\p_\rho-\f{2\cot\phi}{\rho^2}\p_\phi\Big)\Gamma\d x}_{I_2}\,.
	\]
	We will show both $I_1$ and $I_2$ vanish in order to justify the lemma.
	Noticing that $v_\rho=0$ on $A_{1,m}\cup A_{2,m}$, $v_\phi=0$ on $R_{1,m}\cup R_{2,m}$, it follows by integration by parts that
	\[
	I_1 = -\int_{D_m}\Gamma\Big(\frac{1}{\rho^2} \partial_\rho\left(\rho^2 v_\rho\right)+\frac{1}{\rho \sin \phi} \partial_\phi\left(\sin \phi v_\phi\right)\Big) \d x = 0\,.
	\]
	Moreover, recalling
	\[
	\Dl=\frac{1}{\rho^2}\p_\rho(\rho^2\p_\rho\,\cdot)+\frac{1}{\rho^2\sin\phi}\p_\phi(\sin\phi \, \p_\phi\,\cdot)
	\]
	in spherical coordinates, we have the following by integrating by parts
	\bn
	I_2 = & 2\pi\int_{{\f{\pi}{2}-\al}}^{{\f{\pi}{2}+\al}}\rho^2 \p_\rho\G \sin\phi\Big|_{\rho=\f{1}{m}}^1\d\phi 
	+ 2\pi\int_{\f{1}{m}}^1\sin\phi \, \p_\phi\G \Big|_{\phi={\f{\pi}{2}-\al}}^{{\f{\pi}{2}+\al}}\d\rho
	+4\pi\int_{\f{1}{m}}^1\int_{{\f{\pi}{2}-\al}}^{{\f{\pi}{2}+\al}}\sin\phi \, \G\d\phi\d\rho\\
	& -4\pi\int_{{\f{\pi}{2}-\al}}^{{\f{\pi}{2}+\al}}\sin\phi \, \G\Big|_{\rho=\f{1}{m}}^1\d\phi
	-4\pi\int_{\f{1}{m}}^1\int_{{\f{\pi}{2}-\al}}^{{\f{\pi}{2}+\al}}\sin\phi \, \G\d\phi\d\rho-4\pi\int_{\f{1}{m}}^1\G\cos\phi\Big|_{\phi={\f{\pi}{2}-\al}}^{{\f{\pi}{2}+\al}}\d\rho\\
	=&2\pi\int_{{\f{\pi}{2}-\al}}^{{\f{\pi}{2}+\al}}\rho^2\p_\rho\G\sin\phi \, \Big|_{\rho=\f{1}{m}}^1\d\phi+2\pi\int_{\f{1}{m}}^1\sin\phi \, \p_\phi\G\Big|_{\phi={\f{\pi}{2}-\al}}^{{\f{\pi}{2}+\al}}\d\rho
	-4\pi\int_{{\f{\pi}{2}-\al}}^{{\f{\pi}{2}+\al}}\sin\phi \, \G\Big|_{\rho=\f{1}{m}}^1\d\phi\\
	&-4\pi\int_\f{1}{m}^1\G\cos\phi\Big|_{\phi={\f{\pi}{2}-\al}}^{{\f{\pi}{2}+\al}}\d\rho\,.\\
	\en
	Finally, owing to $\p_\rho\G=\f{2}{\rho}\G$ on $A_{1,m}\cup A_{2,m}$ and $\p_\phi\G=2\cot\phi \, \G$ on $R_{1,m}\cup R_{2,m}$, we conclude
	\bn
	I_2=0\,.
	\en
	This completes the proof of the proposition.
\end{proof}

In the rest of this paper, all the discussions on the approximating domain $D_m$ assume that the velocity $\bm{v}$ is the strong solution of the problem (\ref{NS}) on $D_m$ under the boundary condition (\ref{bdry for Dm}) as stated in Proposition \ref{Prop, local soln in ad} with zero initial azimuthal angular momentum, that is 
\[
\int_{D_m} \Gamma_0(x) \d x = \int_{D_m} \rho \sin\phi \, v_{\th,0}(x) \d x = 0.
\]
Then thanks to the conservation property (\ref{aam_con}), the following identity holds for any time $t$:
\[
\int_{D_m} \Gamma(x,t) \d x = \int_{D_m} \rho \sin\phi \, v_{\th}(x,t) \d x = 0.
\]
Moreover, the following lemma shows $v_\rho$ has zero mean value for any $\rho$. This property is crucial to apply the Poincar\'e inequality in Lemma \ref{Poin0}.

\begin{lemma}\label{v_rho_mean0}
	Let
	\[
	H(\rho) = \int_{\f{\pi}{2}-\al}^{\f{\pi}{2}+\al} v_\rho(\rho,\phi)\sin\phi\d\phi, \quad \forall \, \rho\in \Big[\frac{1}{m}, 1\Big].
	\]
	Then $H(\rho)=0$ for any $\rho\in[\frac{1}{m}, 1]$.
\end{lemma}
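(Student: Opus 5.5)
The plan is to use the divergence-free condition together with the boundary conditions on the rays. Multiply the incompressibility equation \eqref{NS}$_4$ by $\rho^2\sin\phi$ to get
\[
\sin\phi\,\p_\rho(\rho^2 v_\rho) + \rho\,\p_\phi(\sin\phi\, v_\phi) = 0 \quad \text{in } D_m.
\]
Integrating this identity in $\phi$ over $\big(\f{\pi}{2}-\al,\f{\pi}{2}+\al\big)$ for fixed $\rho$, the second term becomes the boundary term
\[
\rho\,\sin\phi\, v_\phi\Big|_{\phi=\f{\pi}{2}-\al}^{\f{\pi}{2}+\al}.
\]
This term vanishes because $v_\phi=0$ on $R_{1,m}\cup R_{2,m}$ by \eqref{NTSR}, or equivalently by $\bm{v}\cdot\bm{n}=0$ on the rays. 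Hence
\[
\f{\d}{\d\rho}\big(\rho^2 H(\rho)\big) = 0 \quad \text{for } \rho\in\big(\tfrac1m,1\big),
\]
so $\rho^2 H(\rho)$ is constant in $\rho$.

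To identify the constant, evaluate at $\rho=1$, the outer arc $A_{2,m}$. There $v_\rho=0$ by \eqref{NTSA}, so $H(1)=0$; the inner arc $A_{1,m}$ gives $H(1/m)=0$ in the same way via \eqref{NTS-NHL}. Therefore $\rho^2H(\rho)\equiv 0$, and since $\rho\geq 1/m>0$, it follows that $H\equiv 0$ on $[1/m,1]$.

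The only point needing care is regularity: the argument must be justified for a strong solution, $\bm{v}\in L^2_tH^2_x$, at a.e.\ time. I would do this as follows.
\begin{itemize}
\item For a.e.\ $t$, $\bm{v}(\cdot,t)\in H^2(D_m)\subset C(\ol{D_m})$, so $H$ is continuous and its traces make sense.
\item The integrated identity holds in the distributional sense in $\rho$, so $\rho^2H$ is a constant function.
\item For the remaining times, use continuity in time, since $\bm{v}\in C_tH^1_x$; alternatively, test against smooth functions of $\rho$.
\end{itemize}
This is routine, so I expect no genuine obstacle.
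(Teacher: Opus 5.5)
Your proof is correct, and it takes a genuinely different and more elementary route than the paper.

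\textbf{The paper's route.} The paper works with $f=\rho v_\rho$ and the second-order Biot--Savart relation $\Delta f=-\frac{\rho}{\sin\phi}\partial_\phi\bigl(\mathcal{O}\sin^2\phi+\frac{2v_\phi\sin\phi\,\eta}{\rho^2}\bigr)$. It integrates this against $\sin\phi$, and the ray conditions $\partial_\phi f=\mathcal{O}=v_\phi=0$ remove all angular boundary terms. What remains is the Euler equation $\widetilde H''+\frac{2}{\rho}\widetilde H'=0$ for $\widetilde H=\int f\sin\phi\,\mathrm{d}\phi$. (This quantity is really $\rho H$, although the paper writes $H$.) Its solutions are $A+B/\rho$, and both constants are killed by the two Dirichlet values at $\rho=\frac1m$ and $\rho=1$.

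\textbf{Your route.} You integrate the incompressibility condition itself and obtain the first-order flux law $(\rho^2H)'=0$. This already excludes the constant mode, since it forces $\rho H=c/\rho$, so a single arc condition suffices.

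\textbf{What each buys.} Your argument uses only $\nabla\cdot\boldsymbol{v}=0$ and $\boldsymbol{v}\cdot\boldsymbol{n}=0$. It needs $H^1$ regularity rather than control of $\partial_\rho^2 v_\rho$. It is also independent of the vorticity-based good unknown $\mathcal{O}$ and of the boundary behaviour of $\mathcal{O}$ and $\partial_\phi v_\rho$ on the rays. You can even bypass your regularity discussion entirely: test $\nabla\cdot\boldsymbol{v}=0$ against functions $\Psi(\rho)$ and use $\boldsymbol{v}\cdot\boldsymbol{n}=0$. This gives $\int_{1/m}^{1}\rho^2H\,\Psi'\,\mathrm{d}\rho=0$ for every smooth $\Psi$, hence $\rho^2H\equiv 0$ directly, with no constant to identify. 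The paper's route yields nothing beyond this; its only merit is that it stays within the Biot--Savart framework used later for the estimates of $v_\rho/\rho$.
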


\begin{proof}
	Denote
	\[
	f\ed\rho v_\rho\,.
	\]
	Direct calculation from \eqref{DO} shows
	\[
	\left\{\begin{aligned}
		&\Delta f = -\dfrac{\rho}{\sin\phi}\partial_\phi \Big(\m{O} \sin^2\phi+\f{2v_\phi\sin\phi \,\eta(\rho)}{\rho^2} \Big), \quad \text{ in } D_m,\\
		&\partial_\phi f = 0, \quad \text{ on } R_{1,m} \cup R_{2,m},\\
		&f = 0,  \quad \text{ on } A_{1,m} \cup A_{2,m}.
	\end{aligned}\right.
	\]
	This is equivalent to
	\be\label{f_eq2}
	\left\{\begin{aligned}
		&\left(\partial_\rho^2+\dfrac{2}{\rho}\partial_\rho
		+\dfrac{1}{\rho^2}\partial_\phi^2
		+\dfrac{\cot\phi}{\rho^2}\partial_\phi\right)f
		= -\dfrac{\rho}{\sin\phi}\p_\phi \Big(\m{O} \sin^2\phi+\f{2v_\phi\sin\phi \,\eta(\rho)}{\rho^2} \Big), \quad \text{ in }  D_m,\\
		&\partial_\phi f = 0,  \quad \text{ on }  R_{1,m} \cup R_{2,m},\\
		&f = 0, \quad \text{ on } A_{1,m} \cup A_{2,m}.
	\end{aligned}\right.
	\ee
	Multiplying (\ref{f_eq2}) by $\sin\phi$ and then integrating from $\f{\pi}{2}-\al$ to $\f{\pi}{2}+\al$,
	\begin{align*}
		& \quad H''(\rho)+\frac{2}{\rho}H'(\rho)
		+\frac{1}{\rho^2}
		\int_{\f{\pi}{2}-\al}^{\f{\pi}{2}+\al}
		(\sin\phi\,\partial_\phi^2 f+\cos\phi\,\partial_\phi f)\d\phi \\
		&= -\rho \bigg(\m{O} \sin^2\phi+\f{2v_\phi\sin\phi \,\eta(\rho)}{\rho^2} \bigg) \bigg|_{\phi = \f{\pi}{2}-\al}^{\f{\pi}{2}+\al}\,.
	\end{align*}
	Since
	\[
	\sin\phi\,\partial_\phi^2 f+\cos\phi\,\partial_\phi f = \partial_\phi(\sin\phi\,\partial_\phi f),
	\]
	and $\partial_\phi f=\m{O}=v_\phi=0$ on $\phi = {\f{\pi}{2}\pm\al}$, we have  
	\[
	H''(\rho) + \frac{2}{\rho} H'(\rho) = 0, \quad \forall\, \rho\in \Big( \frac{1}{m}, 1\Big).
	\]
	In addition, $H(\frac{1}{m}) = H(1) = 0$ since $v_\rho=0$ on $A_{1,m} \cup A_{2,m}$. 
	Hence, $H(\rho)=0$ for any $ \rho\in [\frac{1}{m}, 1]$.
\end{proof}

\subsection{A system for the triple of good unknowns $(\m{K}, \m{F}, \m{O})$}
In this subsection, we present the key system for three new functions $\m{K}, \m{F}$ and $\m{O}$ in the approximating domain $D_m$. Firstly, the authors in \cite{LPYZZZ24} considered a system consisting of the triple $(K, F, \O)$ defined as 
\[
K\ed\f{\o_\rho}{\rho}\,,\q F\ed\f{\o_\phi}{\rho}\,, \q \O\ed\f{\o_\th}{\rho\sin\phi}\,.
\] 
This triple satisfies a system as below (see \cite[(2.15)]{LPYZZZ24}):
\be\label{eq of K-F-O}
\left\{\begin{array}{ll}
	\big(\Delta +\frac{4}{\rho}\p_\rho +\frac{6}{\rho^2} \big) K - b\cdot\nabla K + \o\cdot\nabla \big(\frac{v_\rho}{\rho}\big)-\p_{t}K = 0,  \vspace{0.1in}\\
	\big(\Delta + \frac{2}{\rho}\p_\rho + \frac{1-\cot^2 \phi}{\rho^2}\big) F - b\cdot \nabla F +\frac{2}{\rho^2}\p_{\phi}K + \o \cdot \nabla \big(\frac{v_\phi}{\rho}\big)-\p_{t}F = 0, \vspace{0.1in}\\
	\big(\Delta +\frac{2}{\rho}\p_\rho +\frac{2\cot\phi}{\rho^2}\p_\phi \big) \O-b\cdot\nabla\O-\frac{2v_{\th}}{\rho\sin\phi}\,(K+\cot\phi\, F)-\p_{t}\O = 0.
\end{array}\right.\ee
As discussed in the introduction, this triple does not work well in the current paper due to the boundary conditions, so we modify them as in (\ref{DO}) and (\ref{DmK}) to adapt to the current boundary condition (\ref{bdry for Dm}). We collect these definitions as below.
\be\label{key-triple}
\left\{\begin{array}{ll}
	\m{K} = \f{\sin\phi}{\rho^2}\p_\phi\big(\f{v_\th}{\sin\phi}\big)= K - \frac{2 v_\th \cot\phi}{\rho^2}\,, \vspace{0.1in}\\
	\mF = -\p_\rho\big(\f{v_\th}{\rho}\big)= F + \frac{2 v_\th}{\rho^2}\, , \vspace{0.1in} \\ \m{O} = \f{1}{\rho\sin\phi}\big(\o_\th-\f{2v_\phi \eta}{\rho} \big)= \O - \frac{2 v_\phi \eta}{\rho^2 \sin\phi} \,,
\end{array}\right.
\ee
where $\eta$ is the cut-off function defined in (\ref{DCUT}).

Based on (\ref{eq of K-F-O}) and (\ref{key-triple}), we can derive the system (\ref{EOKF}) for the new triple $(\m{K}, \m{F}, \m{O})$. 
\be\l{EOKF}
\left\{
\begin{aligned}
	&\left(\Delta+\frac{4}{\rho} \partial_\rho+\frac{2-4\cot^2\phi}{\rho^2}\right) \m{K}-\bm{b} \cdot \nabla \m{K}+\bm{\omega} \cdot \nabla\Big(\frac{v_\rho}{\rho}\Big)-\partial_t \m{K}=f_{\m{K}}\,,\\[3mm]
	&\left(\Delta+\frac{2}{\rho} \partial_\rho-\frac{3+\cot^2\phi}{\rho^2}\right) \m{F}-\bm{b} \cdot \nabla \m{F}+\frac{2}{\rho^2} \partial_\phi \m{K}+\f{4\cot\phi}{\rho^2}\m{K}+\bm{\omega} \cdot \nabla\Big(\frac{v_\phi}{\rho}\Big)-\p_t\m{F}=f_{\mF}\,,\\[3mm]
	&\left(\Delta+\frac{2}{\rho} \partial_{\rho}+\frac{2 \cot \phi}{\rho^{2}} \partial_{\phi}\right) \m{O}-\bm{b} \cdot \nabla \m{O}-\frac{2 v_{\theta}}{\rho \sin \phi}\m{K}-\f{2v_\th\cos\phi}{\rho\sin^2\phi}\mF-\partial_{t} \m{O}=f_{\m{O}}\,.
\end{aligned}
\right.
\ee
Where
\be\label{rhs_KFO}\left\{
\begin{aligned}
	&f_{\m{K}}=-\frac{6\cot\phi}{\rho^3}v_\rho v_\th-\frac{2+2\cos^2\phi}{\rho^3\sin\phi}v_\phi v_\th\,,\\[3mm]
	&f_{\mF}=\frac{6}{\rho^3}v_\rho v_\th+\frac{2\cot\phi}{\rho^3}v_\phi v_\th\,, \\[3mm]
	&f_{\m{O}}=-\frac{2\eta^{\prime\prime}(\rho)}{\rho^2\sin\phi}v_\phi+\f{4\eta(\rho)}{\rho^3\sin\phi}\f{\p_\phi v_\rho}{\rho}+\frac{4}{\rho^2\sin\phi}\left(\f{\eta(\rho)}{\rho}-\eta^\prime(\rho)\right)\p_\rho v_\phi-\frac{2\eta(\rho)}{\rho^2\sin\phi}\f{\p_\phi P}{\rho}\\
	&\hskip 1cm+\frac{2}{\rho^2\sin\phi}\left(\eta^\prime(\rho)-\f{3\eta(\rho)}{\rho}\right)v_\phi v_\rho+\frac{2\eta(\rho)\cos\phi}{\rho^3\sin^2\phi}\left(v_\th^2-v_\phi^2\right)\,.
\end{aligned}
\right.\ee
We emphasize that the pressure term $P$ is involved in $f_O$ because the correction term (based on $\O$) for $\m{O}$ in (\ref{key-triple}) contains $v_\phi$ whose equation in (\ref{NS})$_{2}$ involves $P$. This forces us to control the pressure term $P$ which will be provided in Section \ref{Sec, pressure}. We also point out that although the expression of $f_{\m{O}}$ looks formidable, its support is away from the origin thanks to the cut-off function $\eta$. 

In addition to the system (\ref{EOKF}), we obtain the boundary conditions (\ref{BCOKF}) for the triple $(\m{K}, \m{F}, \m{O})$ on $D_m$ according to \eqref{NTSR}--\eqref{NTSA}--\eqref{NTS-NHL}--\eqref{vor-sph}.
\be\l{BCOKF}
\left\{
\begin{aligned}
	&\m{K}=0\,,\q\text{on}\q R_{1,m}\cup R_{2,m}\,;\q\p_\rho\m{K}=-\f{1}{\rho}\m{K}\,,\q\text{on}\q A_{1,m}\cup A_{2,m}\,;\\[2mm]
	&\p_\phi\m{F}=\cot\phi \, \m{F}\q\text{on}\q R_{1,m}\cup R_{2,m}\,;\q\m{F}=0\,,\q\text{on}\q A_{1,m}\cup A_{2,m}\,;\\[2mm]
	&\m{O}=0\,,\q\text{on}\q \p D_m\,.
\end{aligned}
\right.
\ee

\section{Estimates of the pressure $P$ in $D_m$}\label{Sec, pressure}

Unlike the case studied in \cite{LPYZZZ24} with the NHL boundary condition, the modified good unknown $\m{O}$, which was introduced in (\ref{key-triple}) to adapt to the NTS boundary condition, contains $v_\phi$. Consequently, the pressure term $P$ appearing in the $v_\phi$ equation must be estimated before a closed energy estimate for $\m{O}$ can be obtained. In this section, we first establish the boundary value problem of $P$ in the following Lemma \ref{Lemma, BVPP}, and then solve this problem by giving a gradient $L^2$ bound of $P$ in Lemma \ref{EPs}.

\subsection{Boundary value problem for the pressure}
\begin{lemma}[BVP of pressure]\l{Lemma, BVPP}
	The pressure stated in the \eqref{NS} satisfies the following Neumann boundary value problem:
	\be\label{BVPP}
	\left\{
	\begin{aligned}
		&-\Dl P=F\,,&\q\text{in}\q D_m\,;\\
		&\p_\rho P=P_A\,,&\q\text{on}\q A_{1,m}\cup A_{2,m}\,;\\
		&\frac{1}{\rho}\p_\phi P=P_R\,,&\q\text{on}\q R_{1,m}\cup R_{2,m}\,.\\
	\end{aligned}
	\right.
	\ee
	Where
	\be\label{F}
	\begin{aligned}
		F=&\frac{1}{\rho^{2}} \partial_{\rho}\left[\rho^{2}\left(v_{\rho} \partial_{\rho}+\frac{1}{\rho} v_{\phi} \partial_{\phi}\right) v_{\rho}\right]+\frac{1}{\rho \sin \phi} \partial_{\phi}\left(\sin \phi\left[v_{\rho}\left(\partial_{\rho}+\frac{1}{\rho}\right)+\frac{1}{\rho} v_{\phi} \partial_{\phi}\right] v_{\phi}\right)\\
		&-\frac{1}{\rho} \partial_{\rho}\left(v_{\phi}^{2}+v_{\theta}^{2}\right)-\frac{\cot \phi}{\rho^{2}} \partial_{\phi}\left(v_{\theta}^{2}\right)-\frac{1}{\rho^{2}} v_{\phi}^{2}\,;\\
	\end{aligned}
	\ee
	and
	\ba\l{0428-3}
	\left\{
	\begin{aligned}
		&P_A = \frac{1}{\rho}\left(v_{\phi}^{2}+v_{\theta}^{2}\right) - \frac{2\eta(\rho)}{\rho^{2}\sin\phi}\p_\phi\left(\sin\phi \, v_{\phi}\right)\,;\\
		&P_R = \frac{\cot\phi}{\rho} v_\th^2\,,\\
	\end{aligned}
	\right.
	\ea
	where $\eta$ is the cut-off function in (\ref{DCUT}).
\end{lemma}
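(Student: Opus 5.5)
The plan is to derive the interior equation by taking the divergence of the momentum equation, and the two boundary conditions by restricting the normal component of the momentum equation to each boundary piece, using \eqref{NTSR}, \eqref{NTSA}, \eqref{NTS-NHL} together with the divergence-free condition \eqref{NS}$_4$. I will do the computation for a smooth solution and then pass to the strong solution of Proposition \ref{Prop, local soln in ad} by the usual density/trace argument. Since $D_m$ stays away from the axis, the identities hold in $L^2_tH^{-1/2}$ on each smooth boundary face. I expect this regularity step to be routine.

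\emph{Interior equation.} Write \eqref{NS1} as $\partial_t\bm v+\nabla P=\Delta\bm v-(\bm v\cdot\nabla)\bm v$ and apply $\operatorname{div}$. Since $\operatorname{div}\partial_t\bm v=0$ and $\operatorname{div}\Delta\bm v=\Delta\operatorname{div}\bm v=0$, this gives $-\Delta P=\operatorname{div}((\bm v\cdot\nabla)\bm v)$. By axisymmetry, the spherical components of $(\bm v\cdot\nabla)\bm v$ are
\[
\Big(v_\rho\partial_\rho+\tfrac{v_\phi}{\rho}\partial_\phi\Big)v_\rho-\tfrac{v_\phi^2+v_\theta^2}{\rho}
\quad\text{and}\quad
\Big(v_\rho\partial_\rho+\tfrac{v_\phi}{\rho}\partial_\phi\Big)v_\phi+\tfrac{v_\rho v_\phi}{\rho}-\tfrac{\cot\phi}{\rho}v_\theta^2 .
\]
Apply $\operatorname{div}=\rho^{-2}\partial_\rho(\rho^2\cdot)+(\rho\sin\phi)^{-1}\partial_\phi(\sin\phi\,\cdot)$. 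The term $-\rho^{-2}\partial_\rho(\rho(v_\phi^2+v_\theta^2))$ produces $-\rho^{-1}\partial_\rho(v_\phi^2+v_\theta^2)-\rho^{-2}(v_\phi^2+v_\theta^2)$. The term $-(\rho\sin\phi)^{-1}\partial_\phi(\cos\phi\, v_\theta^2/\rho)$ produces $-\rho^{-2}\cot\phi\,\partial_\phi(v_\theta^2)+\rho^{-2}v_\theta^2$. The two $v_\theta^2/\rho^2$ contributions cancel, and what remains is exactly $F$ in \eqref{F}.

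\emph{Arcs.} On $A_{1,m}\cup A_{2,m}$, use \eqref{NS}$_1$. There $v_\rho\equiv0$ along the arc, so $\partial_\phi v_\rho=\partial_\phi^2v_\rho=\partial_tv_\rho=0$ and $\bm b\cdot\nabla v_\rho=0$. The viscous part therefore reduces to
\[
\partial_\rho^2v_\rho+\tfrac4\rho\partial_\rho v_\rho=\rho^{-2}\partial_\rho^2(\rho^2v_\rho).
\]
Next substitute $\partial_\rho(\rho^2v_\rho)=-\frac{\rho}{\sin\phi}\partial_\phi(\sin\phi\,v_\phi)$ from \eqref{NS}$_4$. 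This turns the viscous part into $-\frac{1}{\rho^2\sin\phi}\partial_\phi\big(\sin\phi\,\partial_\rho(\rho v_\phi)\big)$. On $A_{2,m}$, \eqref{NTSA} gives $\partial_\rho(\rho v_\phi)=2v_\phi$; on $A_{1,m}$, \eqref{NTS-NHL} gives $\partial_\rho(\rho v_\phi)=0$. Because $\eta(1)=1$ and $\eta(1/m)=0$ by \eqref{DCUT}, both cases combine into the single expression $-\frac{2\eta}{\rho^2\sin\phi}\partial_\phi(\sin\phi\,v_\phi)$. Adding the term $\frac1\rho(v_\phi^2+v_\theta^2)$ gives $P_A$.

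\emph{Rays.} On $R_{1,m}\cup R_{2,m}$, use \eqref{NS}$_2$. By \eqref{NTSR}, $v_\phi\equiv0$ along the ray, so $\partial_\rho v_\phi=\partial_\rho^2v_\phi=\partial_tv_\phi=0$; also $\partial_\phi v_\rho=0$ and $\bm b\cdot\nabla v_\phi=v_\rho v_\phi=0$. The viscous part then reduces to $\rho^{-2}\partial_\phi\big(\frac{1}{\sin\phi}\partial_\phi(\sin\phi\,v_\phi)\big)$. Apply $\partial_\phi$ to \eqref{NS}$_4$: the result involves only $\partial_\phi v_\rho$ and $\partial_\rho\partial_\phi v_\rho$, both of which vanish on the ray. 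Hence the viscous part is zero, and what remains is $\frac1\rho\partial_\phi P=\frac{\cot\phi}{\rho}v_\theta^2=P_R$.

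The only delicate points are the second-order normal traces, such as $\partial_\rho^2v_\rho$ on arcs and $\partial_\phi^2v_\phi$ on rays. I handle these by converting them, via \eqref{NS}$_4$, into tangential derivatives of first-order quantities whose boundary values are prescribed. This conversion is exactly where the mixed condition on $A_{1,m}$ and the cut-off $\eta$ enter, and it is the step I expect to be the main obstacle.
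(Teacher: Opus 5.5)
Your proposal is correct and follows the paper's proof. You take the divergence of the momentum equation for the interior, then restrict the $v_\rho$-equation to the arcs and the $v_\phi$-equation to the rays, using $\nabla\cdot\bm{v}=0$ to convert the second-order traces $\partial_\rho^2 v_\rho$ and $\partial_\phi^2 v_\phi$ into tangential derivatives fixed by \eqref{NTSR}, \eqref{NTSA} and \eqref{NTS-NHL}. Writing the viscous parts as $\rho^{-2}\partial_\rho^2(\rho^2 v_\rho)$ and $\rho^{-2}\partial_\phi\big(\csc\phi\,\partial_\phi(\sin\phi\, v_\phi)\big)$ is only a tidier packaging of the paper's identities \eqref{2nd_rho_deri} and \eqref{2nd_phi_deri}, and your remark on justifying the traces for strong solutions covers a point the paper leaves implicit.
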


\begin{rem}
	We point out that $P_A$ is only evaluated at $\rho=\f{1}{m}, 1$ in \eqref{BVPP}. As a result of (\ref{DCUT})$_1$, 
	\[
	P_{A} = \frac{1}{\rho}(v_{\phi}^{2}+v_{\theta}^{2}) \,\text{ on }\, A_{1,m}, 	\quad\text{and}\quad 
	P_{A} = \frac{1}{\rho}(v_{\phi}^{2}+v_{\theta}^{2}) - 	\frac{2}{\rho^{2}\sin\phi}\p_\phi\left(\sin\phi \, v_{\phi}\right) 
	\,\text{ on } \, A_{2,m}.
	\]
	In addition, we note that the derivatives $\p_\rho P$ and $\frac{1}{\rho}\p_\phi P$ in (\ref{BVPP}) may differ from the normal derivative $\p_{n} P$ by a sign. More precisely, 
	\[
	\p_\rho P = -\p_{n} P \,\text{ on }\, A_{1,m}, 	\quad\text{and}\quad 
	\frac{1}{\rho}\p_\phi P = -\p_{n} P \,\text{ on } \, R_{1,m}.
	\]
\end{rem}

\begin{proof}
	Performing the divergence operator on the momentum equation, one derives
	\[
	-\Delta P=\operatorname{div}((\bm{v} \cdot \nabla) \bm{v})\,.
	\]
	In view of \eqref{NS}$_{1,2}$, it follows that
	\[
	\begin{aligned}
		\operatorname{div}((\bm{v} \cdot \nabla) \bm{v}) = &\operatorname{div}\bigg\{ \Big[\Big(v_\rho\p_\rho + \f{v_\phi}{\rho}\p_\phi\Big)v_\rho - \f{1}{\rho}(v_\phi^2+v_\th^2)\Big]\bm{e_\rho}\\
		& + \bigg( \Big[ v_\rho \Big(\p_\rho + \f{1}{\rho}\Big) + \f{v_\phi}{\rho}\p_\phi\Big] v_\phi - \f{\cot \phi}{\rho}v_\th^2\bigg) \bm{e_\phi}\bigg\}\\
		= & \frac{1}{\rho^{2}} \partial_{\rho}\left[\rho^{2}\left(v_{\rho} \partial_{\rho}+\frac{1}{\rho} 	v_{\phi} \partial_{\phi}\right) v_{\rho}\right]+\frac{1}{\rho \sin \phi} \partial_{\phi}\left(\sin \phi\left[v_{\rho}\left(\partial_{\rho}+\frac{1}{\rho}\right)+\frac{1}{\rho} v_{\phi} \partial_{\phi}\right] v_{\phi}\right)\\
		&-\frac{1}{\rho} \partial_{\rho}\left(v_{\phi}^{2}+v_{\theta}^{2}\right)-\frac{\cot \phi}{\rho^{2}} \partial_{\phi}\left(v_{\theta}^{2}\right)-\frac{1}{\rho^{2}} v_{\phi}^{2}\,.
	\end{aligned}
	\]
	See also \cite[(A.3) and (A.10)]{LPYZZZ24} for detailed calculations. This infers the equation of pressure in \eqref{BVPP}$_1$. In the following, we will derive the boundary conditions of $P$ from equations \eqref{NS}$_{1,2}$.\\[2mm]
	
	\noindent\textbf{Boundary condition on arcs:}
	First, we recall the Laplacian operator in the spherical coordinates (see \cite[(A.5)]{LPYZZZ24}):
	\[
	\Dl = \frac{1}{\rho^2}\p_\rho(\rho^2\p_\rho\,\cdot) + \frac{1}{\rho^2\sin\phi}\p_\phi(\sin\phi 	\,\p_\phi\,\cdot) + \frac{1}{\rho^2\sin^2\phi} \p_\th^2(\cdot).
	\]
	In particular, for axisymmetric functions which are independent of $\th$, the Laplacian operator is reduced to 
	\be\label{Lap_sph}
	\Dl = \frac{1}{\rho^2}\p_\rho(\rho^2\p_\rho\,\cdot) + \frac{1}{\rho^2\sin\phi}\p_\phi(\sin\phi 	\,\p_\phi\,\cdot).
	\ee
	Then it follows from the equation of $v_\rho$ in \eqref{NS} that
	\[
	\begin{aligned}
		\partial_{\rho} P &= \left(\partial_{\rho}^{2} v_{\rho} + \frac{4}{\rho} \partial_{\rho}+\frac{1}{\rho^{2}} \partial_{\phi}^{2}+\frac{\cot \phi}{\rho^{2}} \partial_{\phi}+\frac{2}{\rho^{2}}\right) v_{\rho}
		- \Big( v_\rho \p_\rho + \frac{v_\phi}{\rho} \p_\phi \Big) v_\rho \\
		& \quad +\frac{1}{\rho}\left(v_{\phi}^{2}+v_{\theta}^{2}\right)-\left(v_{\rho} \partial_{\rho}+\frac{1}{\rho} v_{\phi} \partial_{\phi}\right) v_{\rho} - \partial_{t} v_{\rho}\,.
	\end{aligned}
	\]
	On $A_{1,m}\cup A_{2,m}$, $v_\rho$, its tangent derivative $\p_\phi v_\rho$, and its time derivative $\p_t v_\rho$ vanish owing to boundary conditions \eqref{NTSA} and \eqref{NTS-NHL}. Therefore,
	\be\label{Bpp}
	\partial_{\rho} P= \partial_{\rho}^{2} v_{\rho} + \frac{4}{\rho} \partial_{\rho} v_{\rho} + \frac{1}{\rho}\left(v_{\phi}^{2}+v_{\theta}^{2}\right)\,.
	\ee
	The second derivative term $\p_\rho^2 v_\rho$ on the boundary is difficult to control, we will take advantage of the divergence-freeness of $\bm{v}$ to reduce this term to be lower-order terms (see (\ref{2nd_rho_deri}) in the sequel). Since 
	\[
	\mathrm{div\,}\bm{v} = \frac{1}{\rho^2}\p_\rho(\rho^2 v_\rho) + \frac{1}{\rho \sin\phi} \p_\phi(\sin\phi \, v_\phi),
	\]
	it then follows from the divergence-freeness of $\bm{v}$ that $\mathrm{div\,}\bm{v}=0$ and $ \p_\rho\mathrm{div\,}\bm{v}=0$ in $D_m$, which implies
	\be\label{div-free}
	\frac{1}{\rho^2}\p_\rho(\rho^2 v_\rho) = - \frac{1}{\rho \sin\phi} \p_\phi(\sin\phi \, v_\phi)
	\ee
	and
	\be\label{2nd_rho_deri}
	\partial_{\rho}^{2} v_{\rho}=\frac{2}{\rho^{2}} v_{\rho}-\frac{2}{\rho} \partial_{\rho} 	v_{\rho}+\frac{1}{\rho^{2}\sin\phi}\p_\phi\left(\sin\phi \, v_{\phi}\right) - \frac{1}{\rho\sin\phi} \p_\phi\left(\sin\phi \, \p_{\rho} v_{\phi}\right)\,.
	\ee
	Thus on $A_{1,m}$, the combination of the boundary conditions $v_\rho=0$, $\partial_{\rho} v_{\phi}=-\frac{1}{\rho} v_{\phi}$ and the divergence-freeness of $\bm{v}$ indicates
	\be\label{vpp1}
	\partial_{\rho}^{2} v_{\rho} = -\frac{2}{\rho} \partial_{\rho} v_{\rho}+\frac{2}{\rho^{2}\sin\phi}\p_\phi\left(\sin\phi \, v_{\phi}\right) = -\frac{4}{\rho} \partial_{\rho} v_{\rho}\,.
	\ee
	Meanwhile, on $A_{2,m}$, we apply the boundary conditions $v_\rho = 0$ and $\partial_{\rho} v_{\phi}=\frac{1}{\rho} v_{\phi}$ to get
	\be\label{vpp2}
	\partial_{\rho}^{2} v_{\rho} = -\frac{2}{\rho} \partial_{\rho} v_{\rho}\,.
	\ee
	Substituting \eqref{vpp1} and \eqref{vpp2} in \eqref{Bpp}, one finds
	\[
	\p_\rho P=\left\{
	\begin{aligned}
		&\frac{1}{\rho}\left(v_{\phi}^{2}+v_{\theta}^{2}\right),\q\text{on}\q A_{1,m}\,;\\[2mm]
		&\frac{1}{\rho}\left(v_{\phi}^{2}+v_{\theta}^{2}\right) + \frac{2}{\rho} \partial_{\rho} v_{\rho} \q\text{on}\q A_{2,m} \,.
	\end{aligned}
	\right.
	\]
	Using the divergence-freeness (\ref{div-free}) again, we know 
	$\frac{2}{\rho} \partial_{\rho} v_{\rho} = -\frac{2}{\rho^{2}\sin\phi}\p_\phi\left(\sin\phi \, v_{\phi}\right)$ on $A_{2,m}$. This relation is crucial to apply the integration by parts to the terms $I_{121}$ and $I_{122}$ in (\ref{I12}) in order to estimate the term $I_{12}$ in (\ref{EPP1}). So we adopt the following expression for $\p_\rho P$.
	\[
	\p_\rho P=\left\{
	\begin{aligned}
		&\frac{1}{\rho}\left(v_{\phi}^{2}+v_{\theta}^{2}\right),\q\text{on}\q A_{1,m}\,;\\[2mm]
		&\frac{1}{\rho}\left(v_{\phi}^{2}+v_{\theta}^{2}\right)-\frac{2}{\rho^{2}\sin\phi}\p_\phi\left(\sin\phi \, v_{\phi}\right),\q\text{on}\q A_{2,m}\,.
	\end{aligned}
	\right.
	\]
	This concludes the boundary condition (\ref{BVPP})$_{2}$ of $\p_\rho P$ on arcs $A_{1,m} \cup A_{2,m}$. \\[2mm]
	
	\noindent\textbf{Boundary condition on rays:}
	
	By the equation of $v_\phi$ in \eqref{NS},
	\[
	\begin{aligned}
		\frac{1}{\rho} \partial_{\phi} P &= \left( \p_\rho^2 + \frac{2}{\rho}\p_\rho + \frac{1}{\rho^2}\p_\phi^2 + \frac{\cot\phi}{\rho^2}\p_\phi - \frac{1}{\rho^2 \sin ^2 \phi}\right) v_\phi 
		- \Big( v_\rho \p_\rho + \frac{v_\phi}{\rho} \p_\phi \Big) v_\phi+\frac{2}{\rho^2} \partial_\phi v_\rho \\
		&\quad -\frac{1}{\rho} v_\rho v_\phi + \frac{\cot \phi}{\rho} v_\theta^2 - \p_t v_\phi.
	\end{aligned}
	\]
	On $R_{1,m}\cup R_{2,m}$, $v_\phi$ and its tangent derivative $\p_\rho v_\phi$ and temporal derivative $\p_t v_\phi$ vanish owing to boundary conditions \eqref{NTSR}. Therefore,
	\be\label{Bpp2}
	\frac{1}{\rho} \p_{\phi} P = \frac{1}{\rho^2} \p_{\phi}^{2} v_{\phi} + \frac{\cot\phi}{\rho^2} \p_{\phi} v_{\phi} + \frac{\cot \phi}{\rho} v_\theta^2\,.
	\ee
	Again, we will take advantage of the divergence-freeness of $\bm{v}$ to reduce the second-order derivative $\p_\phi^2 v_\phi$ to be lower-order terms (see (\ref{2nd_phi_deri}) in the sequel). Since 
	\[
	\mathrm{div\,}\bm{v} = \frac{1}{\rho^2}\p_\rho(\rho^2 v_\rho) + \frac{1}{\rho \sin\phi} \p_\phi(\sin\phi \, v_\phi),
	\]
	it then follows from the divergence-freeness of $\bm{v}$ that $\mathrm{div\,}\bm{v}=0$ and $ \p_\phi\mathrm{div\,}\bm{v}=0$ in $D_m$, which implies
	\bn
	\frac{1}{\rho \sin\phi} \p_\phi(\sin\phi \, v_\phi) = -\frac{1}{\rho^2}\p_\rho(\rho^2 v_\rho)
	\en
	and
	\be\label{2nd_phi_deri}
	\frac{1}{\rho} \p_{\phi}^{2} v_{\phi} = -\frac{1}{\rho} \p_\phi(\cot\phi \, v_\phi) - \p_\rho \p_\phi v_\rho - \frac{2}{\rho} \p_\phi v_\rho\,.
	\ee
	Then the combination of (\ref{2nd_phi_deri}) and \eqref{NTSR} yields 
	\be\label{vtt2}
	\frac{1}{\rho}\p_\phi^2v_\phi = -\f{\cot\phi}{\rho}\p_\phi v_\phi,\q\text{on}\q R_{1,m}\cup R_{2,m}\,.
	\ee
	Substituting \eqref{vtt2} in \eqref{Bpp2}, one concludes
	\[
	\frac{1}{\rho}\p_\phi P=\frac{\cot\phi}{\rho} v_\th^2,\q\text{on}\q R_{1,m}\cup R_{2,m}\,.
	\]
	This finishes the proof of the lemma.
\end{proof}

\subsection{Gradient $L^2$ bound of the pressure}
The following lemma is the main ingredient of this section. Note that there are no boundary integrals of the pressure $P$ in the estimate (\ref{EPS}). It is well known that the analysis of the boundary behavior of the pressure $P$ is tricky. Fortunately, we manage to control them by subtracting low-order auxiliary functions given in \eqref{0428-3}, see $I_1$ in (\ref{EPP1}) and $I_2$ in (\ref{EPP2}).

\begin{lemma}[Gradient $L^2$ bound of pressure]\l{EPs}
	The pressure given in \eqref{NS} satisfies
	\be\label{EPS}
	\begin{aligned}
		\|\nabla P\|_{L^{2}} \leq & (1+\cot\al)\Big\|\frac{1}{\rho}\left(v_{\phi}^{2}+v_{\theta}^{2}\right)\Big\|_{L^{2}}+6\|\p_\rho v_\phi\|_{L^2}+6\Big\|\frac{\partial_{\phi}v_{\phi}}{\rho} \Big\|_{L^{2}}+18(2+\cot\al)\|v_\phi\|_{L^2}\\
		& + \Big\| \Big(v_{\rho} \partial_{\rho}+\frac{1}{\rho} v_{\phi} \partial_{\phi}\Big) v_{\rho}\Big\|_{L^{2}}+
		\Big\| \Big[v_{\rho}\Big(\partial_{\rho}+\frac{1}{\rho}\Big)+\frac{1}{\rho} v_{\phi} \partial_{\phi}\Big] v_{\phi}\Big\|_{L^{2}}\,,
	\end{aligned}
	\ee
	where $L^2$ means $L^2(D_m)$.
\end{lemma}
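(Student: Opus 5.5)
We test the Neumann problem of Lemma \ref{Lemma, BVPP} against $P$ itself and rewrite $F$ in divergence form, so that no boundary values of $P$ survive. The forcing is
\[
F=\operatorname{div}\bm{G},\qquad \bm{G}=\Big[\Big(v_\rho\p_\rho+\tfrac{v_\phi}{\rho}\p_\phi\Big)v_\rho-\tfrac1\rho(v_\phi^2+v_\th^2)\Big]\bm{e_\rho}+\Big(\Big[v_\rho\Big(\p_\rho+\tfrac1\rho\Big)+\tfrac{v_\phi}{\rho}\p_\phi\Big]v_\phi-\tfrac{\cot\phi}{\rho}v_\th^2\Big)\bm{e_\phi},
\]
as in the proof of that lemma. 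Note that $F$ is in fact $-\operatorname{div}((\bm{v}\cdot\nabla)\bm{v})$, so the sign must be tracked. Since both $-\Dl P=F$ and the boundary data are determined only up to constants, we may normalize $\int_{D_m}P\,\dx=0$ if needed.

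\textbf{Step 1: the basic identity.} Multiply $-\Dl P=\operatorname{div}\bm{G}$ by $P$ and integrate by parts on both sides:
\[
\|\nabla P\|_{L^2}^2=\int_{\p D_m}P\,(\p_nP-\bm{G}\cdot\bm{n})\,\d S-\int_{D_m}\bm{G}\cdot\nabla P\,\dx .
\]
Next evaluate the boundary traces of $\bm{G}\cdot\bm{n}$.
\begin{itemize}
\item On the arcs, $v_\rho=\p_\phi v_\rho=0$, so $\bm{G}\cdot\bm{e_\rho}=-\frac1\rho(v_\phi^2+v_\th^2)$.
\item On the rays, $v_\phi=0$, so $\bm{G}\cdot\bm{e_\phi}=-\frac{\cot\phi}{\rho}v_\th^2$.
\end{itemize}
Comparing with \eqref{0428-3}, the quadratic parts of $P_A$ and $P_R$ cancel exactly against $\bm{G}\cdot\bm{n}$; this is the "subtraction of low-order auxiliary functions" mentioned before the lemma. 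The only boundary term left is
\[
\pm\int_{A_{2,m}}P\,\frac{2}{\rho^2\sin\phi}\p_\phi(\sin\phi\,v_\phi)\,\d S
\]
(on $A_{1,m}$ it is absent because $\eta=0$ there).

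\textbf{Step 2: the remaining arc term.} This is the main obstacle, since $P$ on $A_{2,m}$ has no a priori control. We convert it into a volume integral with the cutoff $\eta$ of \eqref{DCUT} (the paper's $I_{121},I_{122}$). Write the integral as
\[
\int_{\p D_m}\eta\,P\,\frac{2}{\rho^2\sin\phi}\p_\phi(\sin\phi\,v_\phi)\,\bm{e_\rho}\cdot\bm{n}\,\d S
\]
and apply the divergence theorem to the vector field $\eta P\frac{2}{\rho^2\sin\phi}\p_\phi(\sin\phi\,v_\phi)\bm{e_\rho}$. Because $\eta=0$ near $A_{1,m}$ and $\bm{e_\rho}\cdot\bm{n}=0$ on the rays, no other boundary contributes. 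Alternatively, integrate by parts in $\phi$ first (using $v_\phi=0$ on the rays) to move $\p_\phi$ onto $P$.

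The resulting volume terms are of the following types:
\begin{itemize}
\item $\eta P\,\p_\rho(\cdots)$, where $\p_\rho$ hits $v_\phi$;
\item $\eta'P\,(\cdots)$;
\item $\eta\,\p_\rho P\,(\cdots)$ or $\eta\,\frac1\rho\p_\phi P\,v_\phi$-type terms.
\end{itemize}
To keep undifferentiated $P$ out of the estimate, we first move $\p_\phi$ onto $P$, obtaining $\int\eta\,\frac{\p_\phi P}{\rho}\,\frac{2v_\phi}{\rho}$ plus the analogous $\rho$-integration. After repeating this and using $\rho\ge1/3$ on the support of $\eta$ (so every power of $1/\rho$ is at most $3^k$), the terms are bounded by
\[
\|\nabla P\|_{L^2}\big(c_1\|\p_\rho v_\phi\|_{L^2}+c_2\|\rho^{-1}\p_\phi v_\phi\|_{L^2}+c_3\|v_\phi\|_{L^2}\big).
\]
Here $|\eta'|\le6$, and the $\cot\phi\le\cot\al$ factors produce the $(2+\cot\al)$ coefficient.

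\textbf{Step 3: conclusion.} By Cauchy--Schwarz,
\[
\Big|\int\bm{G}\cdot\nabla P\Big|\le\|\bm{G}\|_{L^2}\|\nabla P\|_{L^2},
\]
with
\[
\|\bm{G}\|_{L^2}\le\|(v\cdot\nabla)v_\rho\text{-term}\|+\|\phi\text{-convective term}\|+(1+\cot\al)\Big\|\tfrac1\rho(v_\phi^2+v_\th^2)\Big\|,
\]
using $|\cot\phi|\le\cot\al$ on $D_m$. Dividing by $\|\nabla P\|_{L^2}$ yields \eqref{EPS}. The explicit constants $6$ and $18$ come from bookkeeping in Step 2.
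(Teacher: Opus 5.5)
Your argument is essentially the paper's. Writing $F=\operatorname{div}\bm{G}$ and observing that $\p_nP+\bm{G}\cdot\bm{n}$ vanishes on $\p D_m$ except for the $\eta$-term on $A_{2,m}$ is exactly what the paper's subtraction of $P_A$ and $P_R$ accomplishes. Your divergence-theorem extension of that arc term, followed by moving $\p_\phi$ onto $P$ in the resulting volume terms, reproduces the paper's $I_{11}$, $I_{121}$, $I_{122}$ with the same constants. Do that integration by parts in the volume terms, not on $A_{2,m}$ itself, where it would leave an uncontrolled trace of $\p_\phi P$.

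The one slip is a sign. In fact $F=\operatorname{div}\bm{G}=\operatorname{div}((\bm{v}\cdot\nabla)\bm{v})$, not its negative, so Step 1 should read $\|\nabla P\|_{L^2}^2=\int_{\p D_m}P\,(\p_nP+\bm{G}\cdot\bm{n})\d S-\int_{D_m}\bm{G}\cdot\nabla P\dx$. With your minus sign, the quadratic boundary terms would double rather than cancel.
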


\begin{proof}
	Multiplying $P$ on both sides of \eqref{BVPP}$_1$ and integrating over $D_m$, we derive
	\be\label{EP1}
	-\int_{D_{m}}(\Delta P) P \dx =\int_{D_m}PF\dx\,.
	\ee
	Based on the formula (\ref{Lap_sph}) for the Laplacian in spherical coordinates, the left hand side of \eqref{EP1} reads as
	\be\label{Lap}
	-\int_{D_{m}}(\Delta P) P \dx=\un{-\int_{D_{m}} \frac{1}{\rho^{2}} \partial_{\rho}\left(\rho^{2} \partial_{\rho} P\right) P \dx}_{I_1}\un{-\int_{D}\frac{1}{\rho^{2} \sin \phi} \partial_{\phi}\left(\sin \phi \, \partial_{\phi} P\right) P \dx}_{I_2}\,.
	\ee
	For $I_1$ and $I_2$, the absence of boundary values of the pressure $P$ makes it ineligible to perform the integration by parts directly, so we will subtract $P_{A}$ from $\p_\rho P$ in $I_1$ and subtract $\rho P_{R}$ from $\p_\phi P$ in $I_2$ to justify the integration by parts without boundary integrals, see (\ref{BVPP}) and (\ref{0428-3}) for the definitions of $P_A$ and $P_{R}$. More precisely, 
	\be\label{EPP1}
	\begin{aligned}
		I_{1}= & -2 \pi \int_{\frac{\pi}{2}-\alpha}^{\frac{\pi}{2}+\alpha} \bigg[ \int_{\frac{1}{m}}^{1}\Big( \partial_{\rho}\big[ \rho^{2}\left( \partial_{\rho} P-P_{A}\right)\big] P + \partial_{\rho}\left(\rho^2P_{A}\right) P \Big) \d \rho\bigg] \sin \phi \d\phi \\
		= & 2 \pi \int_{\frac{\pi}{2}-\alpha}^{\frac{\pi}{2}+\alpha} \bigg( \int_{\frac{1}{m}}^{1} \rho^{2} \left(  \partial_{\rho} P-P_{A}\right) (\p_{\rho} P) \d \rho\bigg)  \sin \phi  \d \phi \\
		& - 2 \pi \int_{\frac{\pi}{2}-\alpha}^{\frac{\pi}{2}+\alpha} \bigg(\int_{\frac{1}{m}}^{1}\partial_{\rho}\left( \rho^2 P_{A}\right) P \d \rho\bigg) \sin \phi \d \phi \\
		= & \int_{D_{m}}\left|\partial_{\rho} P\right|^{2} \dx-\un{\int_{D_{m}}P_{A} \partial_{\rho} P \dx}_{I_{11}}-\un{\int_{D_m} \frac{1}{\rho^{2}}\partial_{\rho}\left(\rho^2P_{A}\right) P \dx}_{I_{12}}\,, \\
	\end{aligned}
	\ee
	and
	\be\label{EPP2}
	\begin{aligned}
		I_{2}= & -2 \pi \int_{\frac{1}{m}}^{1} \bigg[ \int_{\frac{\pi}{2}-\alpha}^{\frac{\pi}{2}+\alpha} \Big( \partial_{\phi}\big[ \sin \phi \left(\partial_{\phi} P-\rho P_{R}\right)\big] P + \rho\p_{\phi}\left(\sin\phi P_{R}\right) P \Big)\d\phi \bigg] \d\rho \\
		= & 2 \pi \int_{\frac{1}{m}}^{1} \int_{\frac{\pi}{2}-\alpha}^{\frac{\pi}{2}+\alpha} \sin \phi  \left(\partial_{\phi} P-\rho P_{R}\right) \partial_{\phi} P \d \phi\d\rho
		-2 \pi \int_{\frac{1}{m}}^{1} \int_{\frac{\pi}{2}-\alpha}^{\frac{\pi}{2}+\alpha}\rho\p_{\phi}\left(\sin\phi P_{R}\right) P \d\phi\d\rho \\
		= & \int_{D_{m}} \Big|\frac{1}{\rho} \partial_{\phi} P\Big|^{2} \d x - \un{\int_{D_{m}} \frac{1}{\rho} P_{R} \partial_{\phi} P \d x}_{I_{21}} - \un{\int_{D_{m}} \frac{1}{\rho\sin \phi}\partial_{\phi}\left(\sin\phi P_{R}\right) P \d x}_{I_{22}}\,.
	\end{aligned}
	\ee
	The terms $I_{12}$ and $I_{22}$ involve derivatives of $P_{A}$ and $P_{R}$ respectively, so we discuss them first. Recalling the expressions for $P_{A}$ and $P_{R}$ in (\ref{0428-3}):
	\[	P_A=\frac{1}{\rho}\left(v_{\phi}^{2}+v_{\theta}^{2}\right)-\frac{2\eta(\rho)}{\rho^{2}\sin\phi}\p_\phi\left(\sin\phi v_{\phi}\right)\,,\qq P_R=\frac{\cot\phi}{\rho}v_\th^2\,,
	\]
	one has
	\be\label{I12}
	\begin{aligned}
		I_{12}= & \int_{D_{m}}\left[\frac{1}{\rho^{2}}\left(v_{\phi}^{2}+v_{\theta}^{2}\right)+\frac{1}{\rho} \partial_{\rho}\left(v_{\phi}^{2}+v_{\theta}^{2}\right)\right] P \d x-2 \un{\int_{D_{m}} \frac{\eta(\rho) }{\rho^{2}\sin\phi}\p_\phi\left(\sin\phi \, \partial_{\rho} v_{\phi}\right) P \d x}_{I_{121}} \\
		& -2 \un{\int_{D_{m}} \frac{\eta'(\rho)}{\rho^{2}\sin\phi}\p_\phi\left(\sin\phi \, v_\phi\right)P \d x}_{I_{122}}\,,
	\end{aligned}
	\ee
	and
	\be\label{I22}
	\begin{aligned}
		I_{22} = -\int_{D_{m}} \frac{v_{\theta}^{2}}{\rho^{2}} P \d x+\int_{D_{m}} \frac{\cot\phi}{\rho^{2}} \partial_{\phi}\left(v_{\theta}^{2}\right) P \d x\,.
	\end{aligned}
	\ee
	
	Substituting \eqref{EPP1}--\eqref{I22} into \eqref{Lap} yields
	\[\begin{split}
		-\int_{D_{m}}(\Delta P) P \dx &= \int_{D_m}\Big(|\p_\rho P|^2+\big|\frac{1}{\rho}\p_\phi P\big|^2\Big)\d x - (I_{11} + 2I_{121} + 2I_{122} + I_{21})\\
		& \quad - \int_{D_m} \bigg[ \frac{1}{\rho^{2}} v_{\phi}^{2} + \frac{1}{\rho} \partial_{\rho}\left(v_{\phi}^{2} + v_{\theta}^{2} \right)
		+ \frac{\cot\phi}{\rho^{2}} \partial_{\phi}\left(v_{\theta}^{2}\right) \bigg] \, P \d x \,.
	\end{split}\]	
	Combining the above identity with \eqref{EP1} and \eqref{F}, one finds
	\be\label{ESSS}
	\int_{D_m}\Big(|\p_\rho P|^2+\big|\frac{1}{\rho}\p_\phi P\big|^2\Big)\d x = I_{11}+2I_{121}+2I_{122}+I_{21}+\int_{D_m}F_1P\d x\,,
	\ee
	where 
	\[
	F_1=\frac{1}{\rho^{2}} \partial_{\rho}\left[\rho^{2}\left(v_{\rho} \partial_{\rho}+\frac{1}{\rho} v_{\phi} \partial_{\phi}\right) v_{\rho}\right]+\frac{1}{\rho \sin \phi} \partial_{\phi}\left(\sin \phi\left[v_{\rho}\left(\partial_{\rho}+\frac{1}{\rho}\right)+\frac{1}{\rho} v_{\phi} \partial_{\phi}\right] v_{\phi}\right)\,.
	\]
	
	Now we estimate the right hand side of \eqref{ESSS} term by term. First, we see
	\[
	\left|I_{11}\right| \leq\left|\int_{D_{m}} \frac{1}{\rho}\left(v_{\phi}^{2}+v_{\th}^{2}\right)\partial_{\rho} P \d x\right|+2\left|\int_{D_{m}} \frac{\eta(\rho)}{\rho^2}\left(\p_\phi v_\phi+\cot\phi\, v_\phi\right) \p_\rho P \d x\right|.
	\]
	Since the support of $\eta$ is in $\{\rho: \rho \geq \frac13\}$, then 
	$\frac{\eta(\rho)}{\rho^2} \leq \min\{ \frac{3}{\rho}, 9 \}$.
	Consequently, 
	\be\label{EI11}
	\begin{aligned}
		I_{11} & \leq\left\|\partial_{\rho} P\right\|_{L^{2}}\Big\|\frac{1}{\rho}\left(v_{\phi}^{2}+v_{\theta}^{2}\right)\Big\|_{L^{2}}
		+ 6\left\|\partial_{\rho} P\right\|_{L^{2}}\Big\|\frac{\partial_{\phi} v_\phi}{\rho} \Big\|_{L^{2}}+18\cot\al\|\p_\rho P\|_{L^2}\|v_\phi\|_{L^2}\,.
	\end{aligned}
	\ee
	Next, we write $I_{121}$ in polar coordinates as
	\[
	I_{121} = 2\pi \int_{\frac1m}^{1} \int_{\pi/2-\al}^{\pi/2+\al} \eta(\rho)\, \p_\phi(\sin\phi \, \p_\rho v_\phi) P \d\phi \d\rho. 
	\]
	By noticing $\p_\rho v_\phi=0$ on rays, we apply the integration by parts to obtain
	\[\begin{split}
		I_{121} &= -2\pi \int_{\frac1m}^{1} \int_{\pi/2-\al}^{\pi/2+\al} \eta(\rho) \sin\phi \, (\p_\rho v_\phi) \, (\p_\phi P) \d\phi \d\rho \\
		&= -\int_{D_m}\frac{\eta(\rho)}{\rho} (\p_\rho v_\phi) \Big(\frac{1}{\rho}\p_\phi P\Big) \d x\,.
	\end{split}\]
	This indicates
	\be\label{EI121}
	|I_{121}|\leq 3\|\p_\rho v_\phi\|_{L^2} \Big\|\frac{1}{\rho}\p_\phi P \Big\|_{L^2}\,.
	\ee
	Similarly, 
	\[
	I_{122} = 2\pi \int_{\frac1m}^{1} \int_{\pi/2-\al}^{\pi/2+\al}\eta'(\rho)\p_\phi(\sin\phi \, v_\phi)P \d\phi \d\rho
	= -\int_{D_m} \Big(\frac{\eta'(\rho)}{\rho}\Big) v_\phi \Big(\frac{1}{\rho}\p_\phi P\Big) \d x\,,
	\]
	which follows that
	\be\label{EI122}
	|I_{122}|\leq 18\|v_\phi\|_{L^2}\Big\|\frac{1}{\rho}\p_\phi P\Big\|_{L^2}\,.
	\ee
	
	On the other hand, 
	\be\label{EI21}
	\begin{aligned}
		|I_{21}|\leq& \cot\alpha\left|\int_{D_{m}} \frac{v_{\theta}^{2}}{\rho} \cdot \frac{1}{\rho} \partial_{\phi}P \d x\right|\leq\cot \alpha\cdot \Big\|\frac{1}{\rho}\partial_{\phi}P\Big\|_{L^{2}}\Big\| \frac{v_{\theta}^{2}}{\rho}\Big\|_{L^{2}}\,.
	\end{aligned}
	\ee
	Now it remains to bound the last term in \eqref{ESSS}. Indeed
	\[
	\begin{aligned}
		\int_{D_m}F_1P\d x=&\un{\int_{D_{m}} \frac{1}{\rho^{2}} \partial_{\rho}\left[\rho^{2}\left(v_{\rho} \partial_{\rho}+\frac{1}{\rho} v_{\phi} \partial_{\phi}\right) v_{\rho}\right] P \d x}_{I_3}\\
		&+\un{\int_{D_{m}} \frac{1}{\rho \sin \phi} \partial_{\phi}\left(\sin \phi\left[v_{\rho}\left(\partial_{\rho}+\frac{1}{\rho}\right)+\frac{1}{\rho} v_{\phi} \partial_{\phi}\right] v_{\phi}\right) P \d x}_{I_4}\,.
	\end{aligned}
	\]
	Representing $I_3$ in the spherical coordinates and using integration by parts, noticing that $v_\rho=\p_\phi v_\rho=0$ on arcs, one deduces
	\[
	\begin{aligned}
		I_3&=2 \pi \int_{\frac{\pi}{2}-\alpha}^{\frac{\pi}{2}+\alpha} \int_{\frac{1}{m}}^{1} \sin \phi \, \partial_{\rho}\left[\rho^{2}\left(v_{\rho} \partial_{\rho}+\frac{1}{\rho} v_{\phi} \partial_{\phi}\right) v_{\rho}\right] P \d \rho \d \phi \\
		& =-2 \pi \int_{\frac{\pi}{2}-\alpha}^{\frac{\pi}{2}+\alpha} \int_{\frac{1}{m}}^{1} \sin \phi \, \rho^{2}\left(v_{\rho} \partial_{\rho}+\frac{1}{\rho} v_{\phi} \partial_{\phi}\right) v_{\rho} \partial_{\rho} P \d \rho \d \phi \\
		& =-\int_{D_{m}}\left(v_{\rho} \partial_{\rho}+\frac{1}{\rho} v_{\phi} \partial_{\phi}\right) v_{\rho} \partial_{\rho} P \d x\,.
	\end{aligned}
	\]
	This indicates
	\be\label{EI3}
	\left|I_3\right| \leq\left\|\partial_{\rho} P\right\|_{L^{2}}\left\|\left(v_{\rho} \partial_{\rho}+\frac{1}{\rho} v_{\phi} \partial_{\phi}\right) v_{\rho}\right\|_{L^{2}}\,.
	\ee
	Similarly, it holds for $I_4$ that
	\[
	\begin{aligned}
		I_{4} & =2 \pi \int_{\frac{1}{m}}^{1} \int_{\frac{\pi}{2}-\alpha}^{\frac{\pi}{2}+\alpha} \rho \partial_{\phi}\left(\sin \phi\left[v_{\rho}\left(\partial_{\rho}+\frac{1}{\rho}\right)+\frac{1}{\rho} v_{\phi} \partial_{\phi}\right] v_{\phi}\right) P \d \phi \d \rho \\
		& =-2 \pi \int_{\frac{1}{m}}^{1} \int_{\frac{\pi}{2}-\alpha}^{\frac{\pi}{2}+\alpha} \rho \sin \phi\left[v_{\rho}\left(\partial_{\rho}+\frac{1}{\rho}\right)+\frac{1}{\rho} v_{\phi} \partial_{\phi}\right] v_{\phi} \partial_{\phi} P \d \phi \d \rho \\
		& =-\int_{D_{m}}\left[v_{\rho}\left(\partial_{\rho}+\frac{1}{\rho}\right)+\frac{1}{\rho} v_{\phi} \partial_{\phi}\right] v_{\phi} \Big(\frac{1}{\rho} \partial_{\phi} P\Big) \d x.
	\end{aligned}
	\]
	This follows that
	\be\label{EI4}
	|I_4|\leq\Big\|\frac{1}{\rho} \partial_{\phi} P\Big\|_{L^{2}}\Big\|\Big[v_{\rho}\Big(\partial_{\rho}+\frac{1}{\rho}\Big)+\frac{1}{\rho} v_{\phi} \partial_{\phi}\Big] v_{\phi}\Big\|_{L^{2}}\,.
	\ee
	
	Finally, substituting \eqref{EI11}--\eqref{EI121}--\eqref{EI122}--\eqref{EI21}--\eqref{EI3}--\eqref{EI4} into \eqref{ESSS}, one deduces
	\[
	\begin{aligned}
		&\|\p_\rho P\|_{L^2}^2+\big\|\frac{1}{\rho}\p_\phi P\big\|_{L^2}^2\\
		\leq& \left\|\partial_{\rho} P\right\|_{L^{2}}\left(\Big\|\frac{1}{\rho}\left(v_{\phi}^{2}+v_{\theta}^{2}\right)\Big\|_{L^{2}}+6\Big\|\frac{\partial_{\phi}}{\rho} v_{\phi}\Big\|_{L^{2}}+18\cot\al\|v_\phi\|_{L^2} + \Big\|\Big(v_{\rho} \partial_{\rho}+\frac{1}{\rho} v_{\phi} \partial_{\phi}\Big) v_{\rho}\Big\|_{L^{2}}\right)\\
		& + \Big\|\frac{1}{\rho}\p_\phi P\Big\|_{L^2}\left(6\|\p_\rho v_\phi\|_{L^2}+36\|v_\phi\|_{L^2}+\cot\al\Big\| \frac{v_{\theta}^{2}}{\rho}\Big\|_{L^{2}}+\Big\|\Big[v_{\rho}\Big(\partial_{\rho}+\frac{1}{\rho}\Big)+\frac{1}{\rho} v_{\phi} \partial_{\phi}\Big] v_{\phi}\Big\|_{L^{2}}\right)\,.
	\end{aligned}
	\]
	Thus, we conclude \eqref{EPS} since 
	\[
	\|\nabla P\|_{L^2}^2 = \|\p_\rho P\|_{L^2}^2 + \Big\|\frac{1}{\rho}\p_\phi P \Big\|_{L^2}^2.
	\]
\end{proof}

\section{Poincar\'e and Anisotropic Hardy's inequalities}\l{Sec4}

In this section, we will introduce some important inequalities which will be heavily used in later energy estimates.

\subsection{Poincar\'e inequalities}\label{Subsec, Poin}
We first recall two classical Poincar\'e inequalities. 

\begin{lemma}[Poincar\'e inequality with zero mean value, Corollary 2.4 in \cite{LPYZZZ24}]\l{Poin0}
	Let $0<\alpha<\frac{\pi}{2}$, $a=\frac{\pi}{2}-\alpha$, $b=\frac{\pi}{2}+\alpha$. Then for any $f \in H^1(a, b) \backslash\{0\}$ with $\int_a^b f(\phi)  \sin \phi \d \phi = 0$, we have
	\ba\l{Poin0E}
	\int_a^b f^2(\phi)  \sin \phi \d\phi \leq C_{\alpha, A} \int_a^b [f^{\prime}(\phi) ]^2 \sin \phi \d \phi,
	\ea
	where
	$$
	C_{\alpha, A}=\frac{(b-a)^2}{\pi^2+2 \alpha^2}=\frac{4 \alpha^2}{\pi^2+2 \alpha^2}\,.
	$$
	In particular, $C_{\al,A}$ is an increasing function in $\al$ and $C_{\pi/6, A} = \frac{2}{19}$.
\end{lemma}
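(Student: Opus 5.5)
The plan is to identify the best constant in \eqref{Poin0E} with the reciprocal of the first nonzero eigenvalue of a regular Sturm--Liouville problem. I will then bound that eigenvalue from below by differentiating the eigen-equation, which turns the Neumann problem into a Dirichlet problem where the classical Wirtinger inequality applies.

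\textbf{Step 1 (reduction to an eigenvalue bound).} Since $\sin\phi\geq\cos\alpha>0$ on $[a,b]$, the operator $f\mapsto -\frac{1}{\sin\phi}(\sin\phi\, f')'$ with Neumann conditions $f'(a)=f'(b)=0$ is a regular Sturm--Liouville operator. It is self-adjoint in $L^2((a,b),\sin\phi\d\phi)$ and has compact resolvent, since $H^1(a,b)$ embeds compactly into $L^2(a,b)$. Its spectrum is therefore discrete: $0=\lambda_0<\lambda_1\leq\cdots$, with $\lambda_0$ corresponding to the constants. The weighted mean-zero condition is exactly orthogonality to the constants in this space, so by the min--max principle the Rayleigh quotient satisfies
\[
\int_a^b (f')^2\sin\phi\d\phi\geq \lambda_1\int_a^b f^2\sin\phi\d\phi .
\]
Moreover $\lambda_1$ is attained by a smooth eigenfunction $f$, smooth by ODE regularity. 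It therefore suffices to show
\[
\lambda_1\geq \frac{\pi^2}{(b-a)^2}+\frac12=\frac{\pi^2+2\alpha^2}{4\alpha^2}=\frac{1}{C_{\alpha,A}}.
\]

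\textbf{Step 2 (differentiate the eigen-equation).} Write $s=\sin\phi$ and set $h=f'$, so that $h(a)=h(b)=0$ by the Neumann conditions. The eigen-equation reads $-f''-\cot\phi\, f'=\lambda_1 f$. Differentiating it in $\phi$ gives
\[
-h''-\cot\phi\, h'+\frac{h}{s^2}=\lambda_1 h, \qquad\text{that is,}\qquad -(s h')'+\frac{h}{s}=\lambda_1 s h .
\]
Multiplying by $h$ and integrating by parts, with no boundary terms because $h$ satisfies Dirichlet conditions, yields
\[
\int_a^b s (h')^2 + \int_a^b \frac{h^2}{s} = \lambda_1\int_a^b s h^2 .
\]
Here $h\not\equiv0$, since otherwise $f$ would be constant and hence zero by the mean-zero condition.

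\textbf{Step 3 (Liouville substitution).} Put $h=k/\sqrt{s}$, so that $k(a)=k(b)=0$. Then $\sqrt{s}\,h'=k'-\tfrac12\cot\phi\, k$. Using $-\int\cot\phi\, k k'=\tfrac12\int(\cot\phi)'k^2=-\int \frac{k^2}{2s^2}$, which again has no boundary terms, we get
\[
\int s(h')^2+\int\frac{h^2}{s}=\int (k')^2+\int k^2\Big(\frac{\cot^2\phi}{4}+\frac{1}{2s^2}\Big)=\int (k')^2+\int k^2\Big(\frac{3}{4s^2}-\frac14\Big).
\]
Since $s\leq1$, the potential is at least $\tfrac12$. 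The classical Dirichlet Wirtinger inequality gives $\int (k')^2\geq \frac{\pi^2}{(b-a)^2}\int k^2$, and we also have $\int k^2=\int s h^2$. Combining these,
\[
\lambda_1\int s h^2\geq\Big(\frac{\pi^2}{(b-a)^2}+\frac12\Big)\int s h^2,
\]
which is the required bound.

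The monotonicity of $C_{\alpha,A}=4/(\pi^2\alpha^{-2}+2)$ in $\alpha$ and its value $\frac{4\pi^2/36}{\pi^2+2\pi^2/36}=\frac{2}{19}$ at $\alpha=\pi/6$ are immediate. The main point needing care is Step 1, namely justifying that $\lambda_1$ is attained by a smooth eigenfunction with $f'(a)=f'(b)=0$, so that Step 2 is legitimate. If one prefers to avoid spectral theory, one can instead take a minimizer of the Rayleigh quotient over the mean-zero class, which exists by weak compactness in $H^1$. Its Euler--Lagrange equation then gives the same ODE, with the natural Neumann conditions; the Lagrange multiplier for the mean-zero constraint vanishes after integrating the equation against $\sin\phi$.
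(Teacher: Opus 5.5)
Your proof is correct: the differentiated equation $-(\sin\phi\, h')'+h/\sin\phi=\lambda_1\sin\phi\, h$ with $h(a)=h(b)=0$, the substitution $h=k/\sqrt{\sin\phi}$, the potential $\frac{3}{4\sin^2\phi}-\frac14\geq\frac12$, and the Dirichlet Wirtinger step all check out, and your minimizer-based justification of Step 1 (the multiplier vanishes on testing against $1$) is sound. The paper gives no proof of its own here and simply cites \cite{LPYZZZ24}; the bound it quotes from Lemma 2.3 there for a weight $p$, namely $\lambda\geq\frac{\pi^2}{(b-a)^2}-\max\big(\frac{p''}{2p}-\frac{3(p')^2}{4p^2}\big)$, is exactly what your differentiate-then-Liouville argument gives for general $p$, so your route is essentially the cited one specialized to $p=\sin\phi$.
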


\begin{lemma}[Poincar\'e inequality with zero boundary, Corollary 2.6 in \cite{LPYZZZ24}]\l{PoinB}
	Let $0<\alpha \leq \frac{\pi}{4}$, $a=\frac{\pi}{2}-\alpha$, $b=\frac{\pi}{2}+\alpha$. Then for any $f \in H_0^1(a, b) \backslash\{0\}$, we have
	\ba\l{PoinBE}
	\int_a^b f^2(\phi)  \sin \phi \d\phi  \leq C_{\alpha, B} \int_a^b [f^{\prime}(\phi) ]^2 \sin \phi \d \phi,
	\ea
	where
	$$
	C_{\alpha, B}=\frac{(b-a)^2}{\pi^2-\frac{2 \alpha^2}{\cos ^2 \alpha}}=\frac{4 \alpha^2}{\pi^2-\frac{2 \alpha^2}{\cos ^2 \alpha}}\,.
	$$
	In particular, $C_{\al,B}$ is an increasing function in $\al$ and $C_{\pi/6, B} = \frac{3}{25}$.
\end{lemma}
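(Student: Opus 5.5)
The plan is to remove the weight $\sin\phi$ by the substitution $f=g/\sqrt{\sin\phi}$, and then apply the classical Dirichlet Poincaré (Wirtinger) inequality on $(a,b)$ to $g$. By density it suffices to treat $f\in C_c^\infty(a,b)$. On $[a,b]$ we have $\sin\phi\ge\cos\alpha>0$, so $g=\sqrt{\sin\phi}\,f$ also lies in $H_0^1(a,b)$ and vanishes at both endpoints. The left-hand side of \eqref{PoinBE} is simply $\int_a^b g^2\d\phi$.

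\textbf{Step 1 (transform the right-hand side).} We have $f'=(\sin\phi)^{-1/2}\big(g'-\tfrac12\cot\phi\,g\big)$, so
\[
\int_a^b (f')^2\sin\phi\d\phi=\int_a^b (g')^2\d\phi-\int_a^b\cot\phi\, g g'\d\phi+\frac14\int_a^b\cot^2\phi\, g^2\d\phi .
\]
Write the cross term as $-\tfrac12\int\cot\phi\,(g^2)'$ and integrate by parts; the boundary terms vanish since $g(a)=g(b)=0$. Using $(\cot\phi)'=-1/\sin^2\phi$, the cross term equals $-\tfrac12\int g^2/\sin^2\phi$. Collecting terms via $\tfrac14\cot^2\phi-\tfrac{1}{2\sin^2\phi}=-\tfrac14-\tfrac{1}{4\sin^2\phi}$ gives the identity
\[
\int_a^b (f')^2\sin\phi\d\phi=\int_a^b (g')^2\d\phi-\int_a^b\Big(\frac14+\frac1{4\sin^2\phi}\Big)g^2\d\phi .
\]

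\textbf{Step 2 (estimate).} The Dirichlet Poincaré inequality on an interval of length $b-a=2\alpha$ gives $\int (g')^2\ge \frac{\pi^2}{4\alpha^2}\int g^2$. Since $\sin\phi\ge\cos\alpha$ on $[a,b]$, the potential satisfies $\frac14+\frac{1}{4\sin^2\phi}\le\frac14+\frac1{4\cos^2\alpha}\le\frac{1}{2\cos^2\alpha}$. Hence
\[
\int_a^b (f')^2\sin\phi\d\phi\ge\Big(\frac{\pi^2}{4\alpha^2}-\frac1{2\cos^2\alpha}\Big)\int_a^b f^2\sin\phi\d\phi=\frac{\pi^2-\frac{2\alpha^2}{\cos^2\alpha}}{4\alpha^2}\int_a^b f^2\sin\phi\d\phi .
\]
The prefactor is positive because $\alpha\le\pi/4$ gives $2\alpha^2/\cos^2\alpha\le\pi^2/4<\pi^2$. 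Dividing through by it yields \eqref{PoinBE} with the stated constant $C_{\alpha,B}$.

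\textbf{Step 3 (remaining claims).} Monotonicity of $C_{\alpha,B}$ in $\alpha$ is immediate: the numerator $4\alpha^2$ increases, and $\alpha^2/\cos^2\alpha$ increases, so the denominator decreases while staying positive. For the value at $\alpha=\pi/6$, we have $4\alpha^2=\pi^2/9$ and $2\alpha^2/\cos^2\alpha=\frac{\pi^2/18}{3/4}=\frac{2\pi^2}{27}$, so $C_{\pi/6,B}=\frac{1/9}{25/27}=\frac3{25}$.

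The only delicate point is the sign bookkeeping in the integration by parts in Step 1 and the justification that the boundary terms vanish. Both are handled by working first with smooth compactly supported $f$ and then passing to the limit.
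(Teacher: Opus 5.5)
Your proof is correct and complete. The substitution $g=\sqrt{\sin\phi}\,f$, the integration by parts producing the potential $\frac14+\frac{1}{4\sin^2\phi}$, the Dirichlet Wirtinger bound on an interval of length $2\alpha$, and the estimate $\frac14+\frac{1}{4\cos^2\alpha}\le\frac{1}{2\cos^2\alpha}$ together give exactly the stated $C_{\alpha,B}$, and your monotonicity argument and the value $3/25$ check out.

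The paper gives no proof of this lemma and defers to \cite{LPYZZZ24}. The eigenvalue bounds from that reference, quoted in the proofs of Lemmas \ref{Poin3D} and \ref{Lemma, Hardy_Dm} for the mean-zero case, have the same form: $\pi^2/(b-a)^2$ minus the maximum of a potential created by removing the weight. Your argument is essentially that route, adapted to the Dirichlet case.
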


We note that Lemma \ref{Poin0} will be used to control $\|v_\rho/\rho\|_{L^2(D_m)}$ by $\|\nabla v_\rho\|_{L^2(D_m)}$ thanks to the zero-average condition for $v_\rho$ in Lemma \ref{v_rho_mean0}, and Lemma \ref{PoinB} will be applied to control $\|v_\phi/\rho\|_{L^2(D_m)}$ by $\|\nabla v_\phi\|_{L^2(D_m)}$ according to the boundary condition $v_\phi=0$ on rays $R_{1,m}\cup R_{2,m}$, see Corollary \ref{Cor, vSob} for more details. Previously in \cite{LPYZZZ24}, Lemma \ref{Poin0} was also used to control $\|v_\th/\rho\|_{L^2(D_m)}$ by $\|\nabla v_\th\|_{L^2(D_m)}$ due to the odd symmetry assumption of $v_\th$. But this symmetry assumption is no longer available in this paper, therefore we have to develop new tool, the anisotropic Hardy's inequality, to deal with $\|v_\th/\rho\|_{L^2(D_m)}$, see Lemma \ref{Lemma, Hardy_Dm} and Corollary \ref{Cor, vth}. 

Next, based on the conservation property (\ref{aam_con}) of the azimuthal angular momentum, we know $\int_{D_m} \Gamma(x,t) \d x = 0$ for any $t$ as long as the initial data satisfies $\int_{D_m} \Gamma_0(x) \d x = 0$. By taking advantage of this property, we will derive a Poincar\'e inequality for functions whose integral on $D_m$ vanish.

\begin{lemma}\l{Poin3D}
	Let $m \geq 10^3$ and $0<\alpha \leq \frac{\pi}{6}$. Then for any axisymmetric function $f \in H^1(D_m) \backslash\{0\}$ with 
	\[
	\int_{D_m} f(x) \d x = 0\,,
	\]
	we have
	\be\label{Poin_Gamma}
	\int_{D_m} f^2\d x \leq \frac{1}{11} \int_{D_m} |\p_\rho f|^2  \d x + \frac{2}{19} \int_{D_m} |\p_\phi f|^2  \d x \leq \frac{2}{19} \int_{D_m} |\nabla f|^2  \d x.
	\ee
\end{lemma}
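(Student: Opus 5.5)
The plan is to split $f$ into its weighted angular average and a fluctuation, and to apply a one-dimensional Poincaré inequality to each piece. Write $f(\rho,\phi)=g(\rho)+h(\rho,\phi)$, where
\[
g(\rho)=\frac{\int_{\frac{\pi}{2}-\al}^{\frac{\pi}{2}+\al} f(\rho,\phi)\sin\phi\d\phi}{\int_{\frac{\pi}{2}-\al}^{\frac{\pi}{2}+\al}\sin\phi\d\phi}.
\]
Then for each fixed $\rho$ the function $h(\rho,\cdot)$ has zero $\sin\phi$-weighted mean, and $\p_\phi h=\p_\phi f$. Since $\d x=2\pi\rho^2\sin\phi\d\rho\d\phi$, the cross term $\int_{D_m} g h\dx$ vanishes by integrating in $\phi$ first. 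Hence
\[
\int_{D_m}f^2\dx=\int_{D_m}g^2\dx+\int_{D_m}h^2\dx .
\]

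\textbf{The angular part.} For each $\rho$, apply Lemma \ref{Poin0} (with $C_{\al,A}\le C_{\pi/6,A}=\frac{2}{19}$ by monotonicity in $\al$) to $h(\rho,\cdot)$. Multiplying by $2\pi\rho^2$ and integrating in $\rho$ gives
\[
\int_{D_m}h^2\dx\le\frac{2}{19}\int_{D_m}|\p_\phi f|^2\dx .
\]

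\textbf{The radial part.} The hypothesis $\int_{D_m}f\dx=0$ says exactly that $\int_{1/m}^1 g(\rho)\rho^2\d\rho=0$. Also, $|g'(\rho)|^2\le \frac{\int (\p_\rho f)^2\sin\phi\d\phi}{\int\sin\phi\d\phi}$ by Cauchy--Schwarz (Jensen), so
\[
\int_{D_m}|g'|^2\dx\le\int_{D_m}|\p_\rho f|^2\dx .
\]
It therefore suffices to prove a one-dimensional weighted Poincaré inequality
\[
\int_{a}^1 g^2\rho^2\d\rho\le\frac1{11}\int_a^1|g'|^2\rho^2\d\rho
\qquad\text{whenever}\qquad \int_a^1 g\rho^2\d\rho=0,
\]
uniformly for $a=1/m\le 10^{-3}$. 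Once this holds, combining the two parts gives the first inequality in \eqref{Poin_Gamma}. The second inequality follows from $\frac1{11}<\frac{2}{19}$, from $|\p_\phi f|\le|\frac1\rho\p_\phi f|$ on $D_m$ (since $\rho<1$), and from $|\nabla f|^2=|\p_\rho f|^2+|\frac1\rho\p_\phi f|^2$.

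\textbf{Main obstacle: the radial constant $\frac1{11}$.} Crude Cauchy--Schwarz/Hardy arguments, such as comparing with $g(1)$, only give constants like $\frac16$ or $\frac49$. So I would instead compute the first nonzero Neumann eigenvalue $\mu_1(a)$ of the Sturm--Liouville operator $-\rho^{-2}(\rho^2 g')'$ on $(a,1)$. The eigenfunctions are $(A\sin k\rho+B\cos k\rho)/\rho$ with $\mu=k^2$. The Neumann conditions at $\rho=a$ and $\rho=1$ give an explicit $2\times2$ determinant condition in $k$.

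For $a=0$ this reduces to the radial Neumann problem on the unit ball of $\mR^3$: $\tan k=k$, so $k\approx4.493$ and $\mu_1\approx20.19$. I would then show, by continuity of the determinant in $a$, that for $a\le10^{-3}$ it has no root with $k\le\sqrt{11}\approx3.32$. This gives $\mu_1(a)\ge 11$, and the variational characterization of $\mu_1(a)$ yields the desired inequality.

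Concretely, I would check that the determinant stays bounded away from zero on $k\in(0,\sqrt{11}]$, uniformly for small $a$. Near $k=0$ it has to be handled by expanding (the leading behaviour is $\sim k^3/3$), and I expect this uniform-in-$a$ control near $k=0$ to be the delicate part.
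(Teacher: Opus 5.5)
Your proof is correct. Most of it coincides with the paper's proof: the same splitting $f=g+h$ into the $\sin\phi$-weighted angular mean and the fluctuation, the same orthogonality, and the same angular bound $\lambda_\phi\ge\frac{\pi^2}{4\al^2}+\frac12\ge\frac{19}{2}$, which is Lemma \ref{Poin0}. You get $\int|g'|^2\,dx\le\int|\p_\rho f|^2\,dx$ by Jensen, while the paper uses $\rho\p_\rho g\perp\rho\p_\rho h$; these amount to the same thing.

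The real difference is the radial constant. The paper does not compute the Neumann eigenvalue. It applies the comparison bound of Lemma 2.3 in \cite{LPYZZZ24} with weight $p_1=\rho^2$:
\[
\lambda_\rho\ \ge\ \frac{\pi^2}{(1-\frac1m)^2}-\max_{\rho\in[\frac1m,1]}\Big(\frac{p_1''}{2p_1}-\frac{3(p_1')^2}{4p_1^2}\Big)=\frac{\pi^2}{(1-\frac1m)^2}+2>11 .
\]
Concretely, this is what you get by passing to the flux $\rho^2 g'$ of a Neumann eigenfunction. The flux solves a Dirichlet problem whose Liouville normal form has potential $2/\rho^2\ge 2$. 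This bound is uniform in $m$ without any case analysis, and it is where the number $\frac1{11}$ comes from.

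Your exact computation is self-contained and gives a sharper constant ($\mu_1(a)\approx 20.2$), but it needs a verification. Continuity in $a$ alone only gives some unspecified threshold, not $a\le 10^{-3}$, so the check has to be quantitative. This is easy once the determinant is written in closed form. Up to sign,
\[
D(k,a)=(1+ak^2)\sin x-x\cos x,\qquad x=k(1-a).
\]
\begin{itemize}
\item For $x\in(0,\pi]$: $\sin x-x\cos x>0$, since its derivative is $x\sin x$, and $ak^2\sin x\ge 0$.
\item For $x\in(\pi,\sqrt{11}\,]$: $\sin x-x\cos x>3$, while $|ak^2\sin x|\le 11a\le 0.011$.
\end{itemize}
Hence $D(k,a)>0$ for all $k\in(0,\sqrt{11}\,]$ and $a\le 10^{-3}$, so $\mu_1(a)>11$. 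The near-$k=0$ expansion you expected to be the delicate part is not needed.
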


\begin{proof}
	For convenience of notations, we denote $\phi_1 = \frac{\pi}{2}-\al$ and $\phi_2 = \frac{\pi}{2}+\al$. Using spherical coordinates, (\ref{Poin_Gamma}) is equivalent to 
	\[\begin{split}
		& \int_{\frac1m}^1 \int_{\phi_1}^{\phi_2} |f|^2 \rho^2\sin\phi \d\phi\d\rho \\
		\leq\,\, & \frac{1}{11	} \int_{\frac1m}^1 \int_{\phi_1}^{\phi_2} |\partial_\rho f|^2 \rho^2\sin\phi \d\phi\d\rho
		+ \frac{2}{19}	\int_{\frac1m}^1 \int_{\phi_1}^{\phi_2}
		|\partial_\phi f|^2 \rho^2 \sin\phi \d\phi\d\rho.
	\end{split}\]
	We introduce the following notations:
	\[
	\d\mu \ed \rho^2\sin\phi\d\phi\d\rho,
	\qquad
	\|f\|_{L^2(\d\mu)}^2 \ed \int_{\frac1m}^{1} \int_{\phi_1}^{\phi_2} |f|^2 \rho^2\sin\phi\d\phi \d\rho\,,
	\]
	\[
	E_{rho}(f)
	\ed
	\int_{\frac1m}^1 \int_{\phi_1}^{\phi_2}
	|\partial_\rho f|^2 \rho^2\sin\phi\d\phi\d\rho,
	\qquad 
	E_{phi}(f)
	\ed
	\int_{\frac1m}^1 \int_{\phi_1}^{\phi_2}
	|\partial_\phi f|^2 \rho^2 \sin\phi\d\phi\d\rho.
	\]
	Then the proof is reduced to justifying 
	\be\label{reduction0}
	\|f\|_{L^2(\d\mu)}^2 \leq \frac{1}{11} E_{rho}(f) + \frac{2}{19} E_{phi}(f), \quad\forall\, f\in\m{A}(\al),
	\ee
	where
	\[
	\m A(\alpha)
	\ed
	\left\{
	f \in H^1(D_m) \setminus\{0\}
	:
	\int_{\frac1m}^{1}\int_{\phi_1}^{\phi_2}
	f \rho^2\sin\phi \d\phi \d\rho = \frac{1}{2\pi} \int_{D_m} f(x) \d x = 0
	\right\}.
	\]
	
	For any $f\in \m{A}(\alpha)$, we define
	\[
	g(\rho)
	\ed
	\frac{1}{2\sin\al}
	\int_{\phi_1}^{\phi_2} f(\rho,\phi)\sin\phi\d\phi,
	\qquad
	h(\rho,\phi) \ed f(\rho,\phi) - g(\rho),
	\]
	where $2\sin\al = \int_{\phi_1}^{\phi_2} \sin\phi \d \phi$. Then $g$ depends only on $\rho$ and $g\in \m{A}_{\rho}$, where
	\[
	\m A_\rho
	=
	\Big\{
	R \in H^1 \big(\frac1m, 1\big) \setminus\{0\}
	\;:\;
	\int_{\frac1m}^1 R(\rho)\,\rho^2\d\rho = 0
	\Big\}.
	\]
	As a result,
	\bn
	E_{rho}(g)&=\int_{\frac1m}^1 \int_{\phi_1}^{\phi_2}
	|g^\prime|^2 \rho^2\sin\phi\d\phi\d\rho\\
	&\geq\int_{\phi_1}^{\phi_2}\left(\la_\rho\int_{\f{1}{m}}^1|g|^2\rho^2\d\rho\right)\sin\phi\d\phi= \lambda_\rho \|g\|_{L^2(\d\mu)}^2,
	\en
	where 
	\be\label{lam_rho_eigen1}
	\lambda_{rho}
	\ed
	\inf_{R \in \m A_\rho}
	\frac{\displaystyle \int_{\frac1m}^1 |R'(\rho)|^2 \rho^2 \d\rho}
	{\displaystyle \int_{\frac1m}^1 |R(\rho)|^2 \rho^2 \d\rho}\,.
	\ee
	According to Lemma 2.3 in \cite{LPYZZZ24}, 
	\[
	\lambda_{rho} \geq \frac{\pi^2}{(1 - \frac1m)^2} - \max_{\rho\in [\f1m, 1]} 	\bigg( \frac{p_1''}{2p_1} - \frac{3(p_1')^2}{4p_1^2}  \bigg),
	\]
	where $p_1(\rho) := \rho^2$. By direct computation, we find 
	\[
	\max_{\rho\in [\f1m, 1]} \bigg( \frac{p_1''}{2p_1} - \frac{3(p_1')^2}{4p_1^2}  \bigg) = \max_{\rho\in [\f1m, 1]} -\frac{2}{\rho^2} = -2,
	\]
	which implies that 
	\[
	\lambda_{rho} \geq \pi^2 + 2 > 11\,. 
	\]
	Hence, 
	\[
	\|g\|_{L^2(\d\mu)}^2 \leq \frac{1}{11} E_{rho}(g)\,.
	\]
	
	For any fixed $\rho$, $h(\rho, \cdot)$, as a function in $\phi$, satisfies
	\[\int_{\phi_1}^{\phi_2} h(\rho, \phi) \sin\phi \d\phi = 0\,.\] 
	So $h(\rho, \cdot)$ belongs to $\m{A}_{\phi}(\alpha)$, where 
	\[
	\m A_\phi(\alpha)
	\ed
	\left\{
	Q \in H^1(\phi_1,\phi_2)\setminus\{0\}
	\;:\;
	\int_{\phi_1}^{\phi_2} Q(\phi)\sin\phi\d\phi = 0
	\right\}.
	\]
	Then
	\bn
	E_{phi}(h)&=\int_{\frac1m}^1 \int_{\phi_1}^{\phi_2}
	|\partial_\phi h|^2 \rho^2 \sin\phi\d\phi\d\rho\\
	&\geq\int_{\frac1m}^1\left(\la_\phi(\al)\int_{\phi_1}^{\phi_2}|h|^2\sin\phi\d\phi\right)\rho^2\d\rho= \lambda_\phi(\alpha) \|h\|_{L^2(\d\mu)}^2\,,
	\en
	where
	\bn
	\lambda_{phi}(\alpha)
	\ed
	\inf_{Q \in \m A_\phi(\alpha)}
	\frac{\displaystyle \int_{\phi_1}^{\phi_2} |Q'(\phi)|^2 \sin\phi \d\phi}
	{\displaystyle \int_{\phi_1}^{\phi_2} |Q(\phi)|^2 \sin\phi \d\phi}.
	\en
	According to Lemma 2.3 in \cite{LPYZZZ24}, 
	\[
	\lambda_{phi}(\alpha) \ge \frac{\pi^2}{4\alpha^2} + \frac12 \geq \frac{19}{2},
	\]
	where the last inequality is due to the assumption that $\al \leq \pi/6$.
	
	One can directly check that $g \perp h$ in $ L^2(\d\mu)$, 
	$\rho\p_\rho g \perp \rho\p_\rho h $ in $L^2(\d\mu)$ and $\partial_\phi g = 0$. 
	Therefore, 
	$\|f\|_{L^2(\d\mu)}^2=\|g\|_{L^2(\d\mu)}^2 + \|h\|_{L^2(\d\mu)}^2$ and
	\[ 
	E_{rho}(f) = E_{rho}(g) + E_{rho}(h) \geq E_{rho}(g), \qquad E_{phi}(f) = E_{phi}(h)\,.
	\]			
	Taking advantage of the above argument leads to
	\begin{align*}
		\|f\|_{L^2(\d\mu)}^2 = \|g\|_{L^2(\d\mu)}^2 + \|h\|_{L^2(\d\mu)}^2 
		& \leq \frac{1}{\lambda_{rho}} E_{rho}(g) + \frac{1}{\lambda_{phi}} E_{phi}(h) \\
		& \leq \frac{1}{11} E_{rho}(f) + \frac{2}{19} E_{phi}(f)\,,
	\end{align*}
	which justifies (\ref{reduction0}).
\end{proof}


\subsection{An anisotropic Hardy's inequality}\l{HardySec}
This paper will need to control norms related to $\frac{v_\th}{\rho}$ in the energy estimate later, however, the previous Poincar\'e inequalities in Section \ref{Subsec, Poin} do not apply to it. Noticing 
\[
\int_{D_m} \Big(\frac{v_\theta}{\rho}\Big) \rho^2\sin\phi\d x = \int_{D_m} \Gamma \,\d x = 0, 
\]
we will take advantage of this constraint to derive a Hardy's inequality in Lemma \ref{Lemma, Hardy_Dm}, the significance of this result is that the constants in (\ref{Hardy_Dm}) are uniform in $m$.
\begin{lemma}\label{Lemma, Hardy_Dm}
	Let $m\geq 10^3$ and $\al\in (0,\pi/6]$. Let $i$, $j$ and $k$ be integers such that $i\geq -2$, $k\geq -1$ and $0\leq j \leq i+2$. Then for any $f\in H^{1}(D_m)$ with 
	\be\label{Hardy_m0}
	\int_{D_m} f(\rho,\phi) \rho^{i} \sin^{k}\phi \d x = 0,
	\ee
	we have 
	\be\label{Hardy_Dm}
	\int_{D_m} \Big(\f{f}{\rho}\Big)^2 \rho^{j} \sin^{k}\phi \d x \leq \frac{4}{(j+1)^2} \int_{D_m} |\p_\rho f|^2 \rho^{j} \sin^{k}\phi \d x + \frac{2}{19+k} \int_{D_m} \Big|\f{ \p_\phi f}{\rho}\Big|^2 \rho^{j} \sin^{k}\phi \d x.
	\ee
\end{lemma}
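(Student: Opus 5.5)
The plan is to reproduce the orthogonal splitting used in Lemma \ref{Poin3D}, now with the weight $\rho^{j}\sin^{k}\phi$ in place of $\rho^2\sin\phi$. Write $\d x=2\pi\rho^2\sin\phi\d\rho\d\phi$. Then the left side of (\ref{Hardy_Dm}) is $2\pi\int\!\!\int f^2\rho^{j}\sin^{k+1}\phi\d\phi\d\rho$. The right side has weights $\rho^{j+2}\sin^{k+1}\phi$ for $|\p_\rho f|^2$ and $\rho^{j}\sin^{k+1}\phi$ for $|\p_\phi f|^2$. Set
\[
g(\rho)\ed\frac{\int_{\phi_1}^{\phi_2} f(\rho,\phi)\sin^{k+1}\phi\d\phi}{\int_{\phi_1}^{\phi_2}\sin^{k+1}\phi\d\phi},\qquad h\ed f-g .
\]
For each fixed $\rho$, $h(\rho,\cdot)$ has zero mean with respect to $\sin^{k+1}\phi\d\phi$, and $g$ does not depend on $\phi$. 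This gives the orthogonality relations $g\perp h$ and $\p_\rho g\perp \p_\rho h$ in every weighted space of the form $\rho^{\beta}\sin^{k+1}\phi\d\phi\d\rho$, together with $\p_\phi g=0$. Hence the left side splits as $\|g\|^2+\|h\|^2$, the $\p_\rho$-energy of $f$ dominates that of $g$, and the $\p_\phi$-energy of $f$ equals that of $h$. The constraint (\ref{Hardy_m0}) transfers to $g$ alone: $\int_{1/m}^1 g(\rho)\rho^{i+2}\d\rho=0$.

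\emph{Angular part.} For each fixed $\rho$ I apply Lemma 2.3 of \cite{LPYZZZ24} with weight $p=\sin^{k+1}\phi$ to the mean-zero function $h(\rho,\cdot)$. A direct computation gives
\[
\frac{p''}{2p}-\frac{3(p')^2}{4p^2}=-\frac{k+1}{2}-\Big(\frac{(k+1)^2}{4}+\frac{k+1}{2}\Big)\cot^2\phi .
\]
For $k\ge -1$ this is at most $-\frac{k+1}{2}$. Using $\frac{\pi^2}{4\al^2}\ge 9$ for $\al\le\pi/6$, the first eigenvalue satisfies $\lambda_\phi\ge 9+\frac{k+1}{2}=\frac{19+k}{2}$. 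Multiplying the resulting inequality by $\rho^{j}$ and integrating in $\rho$ yields exactly the coefficient $\frac{2}{19+k}$ in front of $\int|\p_\phi f/\rho|^2\rho^j\sin^k\phi\d x$.

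\emph{Radial part.} It remains to show the one-dimensional weighted Hardy inequality
\[
\int_{1/m}^1 g^2\rho^{j}\d\rho\le\frac{4}{(j+1)^2}\int_{1/m}^1 |g'|^2\rho^{j+2}\d\rho \qquad\text{whenever } \int_{1/m}^1 g\rho^{i+2}\d\rho=0,\ 0\le j\le i+2 .
\]
The basic tool is integration by parts against $\big(\rho^{j+1}/(j+1)\big)'=\rho^{j}$, followed by Cauchy--Schwarz. For a function $G$ that vanishes at $\rho=1$, the boundary term at $\rho=1/m$ enters with a good sign, and one obtains $\int G^2\rho^j\le \frac{4}{(j+1)^2}\int|G'|^2\rho^{j+2}$ uniformly in $m$.

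\emph{Main obstacle.} The difficulty is to use the mean-zero condition to pass from such a $G$ to $g$ itself. There are two natural routes, and I would try them in this order.
\begin{itemize}
\item[(a)] Take $G=g-g(1)$. The claim then reduces to showing $g(1)^2\int\rho^j-2g(1)\int g\rho^j\le 0$, which is where the constraint must enter.
\item[(b)] Let $\rho_0$ be a zero of $g$, which exists by the mean-zero condition. Run the integration by parts on $(1/m,\rho_0)$ and $(\rho_0,1)$ separately. On $(\rho_0,1)$ one has to control the boundary term $g(1)^2/(j+1)$, which has the bad sign; I would attempt to absorb it using the monotonicity of $\rho^{i+2-j}$ (this is where $j\le i+2$ is needed), via a Chebyshev-type rearrangement.
\end{itemize}
If both routes fail, the fallback is to study the constrained Rayleigh quotient directly. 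A minimizer satisfies the Euler--Lagrange equation $-(\rho^{j+2}g')'=\lambda\rho^{j}g+\mu\rho^{i+2}$ with natural Neumann conditions at both endpoints. I would then compare $\lambda$ with the Hardy threshold $(j+1)^2/4$ using explicit power-type solutions $\rho^{s}$, with $s=-\frac{j+1}{2}\pm i\tau$.

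Finally, summing the angular and radial estimates via the orthogonal splitting gives (\ref{Hardy_Dm}).
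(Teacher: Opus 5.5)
Your splitting $f=g+h$, the orthogonality relations, the transfer of the constraint to $\int_{1/m}^1 g\rho^{i+2}\d\rho=0$, and the angular estimate with $p=\sin^{k+1}\phi$ are exactly the paper's and are correct. The gap is the radial inequality. It is the real content of the lemma, you leave it unproved, and neither of your routes can close it.

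\emph{Route (a).} Incidentally, the condition you need there is $g(1)^2\int\rho^j-2g(1)\int g\rho^j\ge 0$, not $\le 0$. This route works only for $j=i+2$, when $\int g\rho^j\d\rho=0$. Every application in the paper has $j<i+2$, namely $(i,j)=(1,0),(2,0),(2,2)$. In that case take $g$ positive near $\rho=1/m$, negative in the middle, with small $g(1)>0$. Since $\rho^{j-i-2}$ is decreasing, $\int g\rho^j\d\rho>0$, and the condition fails.

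\emph{Route (b).} This route is vacuous. Because $g(\rho_0)=0$, the integrations by parts on $(1/m,\rho_0)$ and $(\rho_0,1)$ add up to the one on $(1/m,1)$, so the bad term $g(1)^2/(j+1)$ is untouched.

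\emph{Why no lossy scheme works.} Write $X=\int g^2\rho^j\d\rho$ and $Y=\int|g'|^2\rho^{j+2}\d\rho$. Once you apply Cauchy--Schwarz to $\frac{2}{j+1}\int gg'\rho^{j+1}\d\rho$, you have used up all of $Y$. Any further bound $g(1)^2\le cY$ with $c>0$ only gives
\[
X\le\Big(\frac{1}{j+1}+\sqrt{\frac{1}{(j+1)^2}+\frac{c}{j+1}}\Big)^2Y,
\]
whose constant is strictly larger than $\frac{4}{(j+1)^2}$. There is no hidden slack to recover this loss, because $\frac{4}{(j+1)^2}$ is asymptotically sharp as $m\to\infty$. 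In the variable $x=-\ln\rho$, take a bump of width $\sim\ln m$ in the middle of $(0,\ln m)$, corrected by an exponentially small multiple of a fixed bump to meet the constraint. Its Rayleigh quotient is $\frac{(j+1)^2}{4}+O((\ln m)^{-2})$. Your Euler--Lagrange fallback is only a program.

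The missing idea is to keep the Cauchy--Schwarz defect instead of discarding it. Set $\beta=\frac{j+1}{2}$ and $w=\rho^{\beta}g$. Then one has the exact identity
\[
Y=\beta^2X+\int_{1/m}^1\rho\,|w'|^2\d\rho+\beta\big(w(1/m)^2-w(1)^2\big),
\]
so it suffices to prove $\beta w(1)^2\le\beta w(1/m)^2+\int_{1/m}^1\rho|w'|^2\d\rho$. This is where the constraint enters. It reads $\int_{1/m}^1 w\rho^{c-1}\d\rho=0$ with $c=\frac{5+2i-j}{2}>0$. Integrating by parts gives $w(1)=m^{-c}w(1/m)+\int_{1/m}^1 w'\rho^{c}\d\rho$. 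Hence, using $m\ge 10^3$,
\[
w(1)^2\le 2m^{-2c}w(1/m)^2+\frac1c\int_{1/m}^1\rho|w'|^2\d\rho\le w(1/m)^2+\frac1c\int_{1/m}^1\rho|w'|^2\d\rho.
\]
Multiply by $\beta$ and note that $\beta\le c$ is exactly the hypothesis $j\le i+2$; this finishes the proof. This is the paper's argument, written there in the variable $x=-\ln\rho$ with $u(x)=w(e^{-x})$.
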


\begin{rem}
	We point out two comments about the above result.
	\begin{itemize}
		\item The idea of proof is to convert the Hardy inequality to a weighted eigenvalue problem. The general structure of this proof is analogous to that for Lemma \ref{Poin3D}, but the treatment of the $\p_\rho$ part in (\ref{Hardy_Dm}) is different from that since the weight $\rho^{j}$ for $|\p_\rho f|^2$ is smaller than the weight $\rho^{j-2}$ for $f^2$ and $|\p_\phi f|^2$ by the factor $\rho^2$. As a compensate for the smaller weight on $|\p_\rho f|^2$, the size constant $\frac{4}{(j+1)^2}$ is much larger than $\frac{1}{20}$ in (\ref{Poin_Gamma}) when $j$ is small. For the application of Lemma \ref{Lemma, Hardy_Dm} in this paper, $j$ will be taken as $0$ and $2$, see Corollary \ref{Cor, vth} and Corollary \ref{Cor, Korn_ineq}. 
		
		\item The size constant $\frac{2}{19+k}$ in (\ref{Hardy_Dm}) can be replaced by $C_\al$, where $C_\al = \frac{4\al^2}{\pi^2 + 2(k+1)\al^2}$. With this replacement, (\ref{Hardy_Dm}) is valid for any $\al\in(0,\pi/2)$. The constant $C_\al$ decays to $0$ as $\al\to 0$ and it is an increasing function on $(0, \pi/6]$, so its maximum on $(0,\pi/6]$ is 
		\[
		C_{\pi/6} = \frac{4(\pi/6)^2}{\pi^2 + 2(k+1)(\pi/6)^2} = \frac{2}{19+k},
		\]
		which is chosen as the size constant in (\ref{Hardy_Dm}).
	\end{itemize}
\end{rem}

\begin{proof}
	Using spherical coordinates, (\ref{Hardy_Dm}) is equivalent to 
	\bn
	& \int_{\frac1m}^1 \int_{\phi_1}^{\phi_2} f^2 \rho^{j} \sin^{k+1}\phi \, \d\phi\d\rho \\
	\leq\,\, & \frac{4}{(j+1)^2} \int_{\frac1m}^1 \int_{\phi_1}^{\phi_2} |\partial_\rho f|^2 \rho^{j+2}\sin^{k+1}\phi \, \d\phi\d\rho
	+ \frac{2}{19+k} \int_{\frac1m}^1 \int_{\phi_1}^{\phi_2}
	|\partial_\phi f|^2 \rho^{j} \sin^{k+1}\phi \, \d\phi\d\rho,
	\en
	where $\phi_1 := \frac{\pi}{2}-\al$ and $\phi_2 := \frac{\pi}{2}+\al$.
	Meanwhile, the constraint (\ref{Hardy_m0}) becomes 
	\be\label{Hardy_m0_sph}
	\int_{\frac1m}^1 \int_{\phi_1}^{\phi_2} f(\rho,\phi) \rho^{i+2} \sin^{k+1}\phi \, \d\phi\d\rho = 0.
	\ee
	We introduce the following notations:
	\[
	\d\mu \ed \rho^{j} \sin^{k+1} \phi\d\phi\d\rho,
	\qquad
	\|f\|_{L^2(\d\mu)}^2 \ed \int_{\frac1m}^{1} \int_{\phi_1}^{\phi_2} f^2 \rho^{j} \sin^{k+1} \phi\d\phi \d\rho\,,
	\]
	\[
	E_{rho}(f)
	\ed
	\int_{\frac1m}^1 \int_{\phi_1}^{\phi_2}
	|\partial_\rho f|^2 \rho^{j+2}\sin^{k+1}\phi\d\phi\d\rho\,,
	\quad 
	E_{phi}(f)
	\ed
	\int_{\frac1m}^1 \int_{\phi_1}^{\phi_2}
	|\partial_\phi f|^2 \rho^{j} \sin^{k+1} \phi\d\phi\d\rho\,.
	\]
	Then the goal is to justify the following (\ref{reduction1}) for all $f\in H^{1}(D_m)$ that satisfies  (\ref{Hardy_m0_sph}).
	\be\label{reduction1}
	\|f\|_{L^2(\d\mu)}^2 \leq \frac{4}{(j+1)^2} E_{rho}(f) + \frac{2}{19+k} E_{phi}(f)\,.
	\ee

	Based on the structure of (\ref{Hardy_m0_sph}), we split
	\ba\l{042501}
	f(\rho,\phi)\ed g(\rho)+h(\rho,\phi)\,,
	\ea
	where
	\[
	g(\rho)
	\ed
	\frac{1}{L_{k}(\al)}
	\int_{\phi_1}^{\phi_2} f(\rho,\phi)\sin^{k+1}\phi\d\phi\,,\q h\ed f-g\,.
	\]
	Here $L_{k}(\al) \ed \int_{\phi_1}^{\phi_2} \sin^{k+1}\phi \d \phi$. Thanks to (\ref{Hardy_m0_sph}), we have
	\[
	g\in\m A_\rho
	\ed
	\Big\{
	R \in H^1 \big(\frac1m, 1\big) \setminus\{0\}
	\;:\;
	\int_{\frac1m}^1 R(\rho)\,\rho^{i+2}\d\rho = 0
	\Big\}.
	\]
	We define
	\be\label{lam_rho_eigen2}
	\lambda_{rho}
	\ed
	\inf_{R \in \m A_\rho}
	\frac{\displaystyle \int_{\frac1m}^1 |R'(\rho)|^2 \rho^{j+2} \d\rho}
	{\displaystyle \int_{\frac1m}^1 |R(\rho)|^2 \rho^{j} \d\rho}\,.
	\ee
	Clearly, we have
	\bn
	E_{rho}(g)=&\int_{\phi_1}^{\phi_2} \bigg( \int_{\f{1}{m}}^1|g^\prime(\rho)|^2\rho^{j+2}\d\rho \bigg) \sin^{k+1}\phi\d\phi\\
	\geq&\int_{\phi_1}^{\phi_2}\left(\la_{rho}\int_{\f{1}{m}}^1|g(\rho)|^2\rho^{j}\d\rho\right)\sin^{k+1}\phi\d\phi=\lambda_{rho} \|g\|_{L^2(\d\mu)}^2\,.
	\en
	Comparing (\ref{lam_rho_eigen2}) with (\ref{lam_rho_eigen1}), the weights for both the numerator and the denominator in (\ref{lam_rho_eigen1}) are $\rho^2$, while the weight $\rho^{j+2}$ for the numerator is smaller than the weight $\rho^{j}$ for the denominator in (\ref{lam_rho_eigen2}) by the factor $\rho^2$, so it is more challenging to obtain a lower bound for $\la_\rho$ in (\ref{lam_rho_eigen2}) and we need to adopt a different method. 
	
	Applying the change of variable: $u(x) := e^{- \beta x} R(e^{-x})$ for $x = -\ln \rho\in(0, \ln m)$, where $R$ is the function in (\ref{lam_rho_eigen2}) and 
	\[\beta := \frac{j+1}{2},\]
	we find 
	\[
	\int_{\frac1m}^1 |R(\rho)|^2 \rho^{j} \d\rho
	=\int_0^{\ln m} u^2(x) \d x,
	\qquad
	\int_{\frac1m}^1 |R'(\rho)|^2 \rho^{j+2} \d\rho
	=\int_0^{\ln m} \bigl|u'(x)+\beta u(x)\bigr|^2\d x\,,	
	\]
	and the constraint $\int_{\frac1m}^1 R(\rho)\,\rho^{i+2}\d\rho = 0$ becomes 
	\[
	\int_0^{\ln m} e^{-\frac{5+2i-j}{2}x}u(x)\dx=0\,.
	\]
	Hence
	\[
	\lambda_{rho}
	=
	\inf_{u\in\m{B}_\rho}
	\frac{\int_0^{\ln m} |u' + \beta u|^2\d x}
	{\int_0^{\ln m} u^2\d x}\,,
	\]
	where 
	\[
	\m B_\rho
	\ed
	\left\{
	u \in H^1(0, \ln m)\setminus\{0\}
	\;:\;
	\int_0^{\ln m} e^{-\frac{5+2i-j}{2}x} u(x) \d x = 0
	\right\}.
	\]
	Noticing that 
	\[
	\int_0^{\ln m} \bigl| u' + \beta u\bigr|^2\d x
	=
	\int_0^{\ln m} |u'|^2\d x
	+ \beta^2 \int_0^{\ln m} u^2\d x
	+ \beta |u(\ln m)|^2 
	- \beta |u(0)|^2\,,
	\]
	so
	\begin{equation}\label{lam_rho}
		\lambda_{rho} = \beta^2 + \un{\inf_{u \in \m{B}_\rho} \frac{\int_0^{\ln m} |u'|^2 \d x 
				+ \beta |u(\ln m)|^2 - \beta |u(0)|^2}{\int_0^{\ln m} u^2\d x}}_{I}\,.
	\end{equation}
	In the following we will show $I\geq0$. Since \(\int_0^{\ln m} e^{-\frac{5+2i-j}{2}x}u(x)\d x = 0\), then applying the integration by parts yields
	\[
	u(0) = m^{-\frac{5+2i-j}{2}} u(\ln m) - \int_0^{\ln m} e^{-\frac{5+2i-j}{2}x} u'(x) \d x.
	\]
	This implies that
	\[
	|u(0)|^2
	\le
	2 m^{-(5+2i-j)} |u(\ln m)|^2 + 2 \bigg( \int_0^{\ln m} e^{-(5+2i-j)x} \d x \bigg) \bigg( \int_0^{\ln m} |u'(x)|^2 \d x \bigg).
	\]
	By the assumption $m\geq 10^3$, $j\leq i+2$ and $i\geq -2$, we know 
	\[
	m^{-(5+2i-j)} \leq m^{-(3+i)} \leq m^{-1} \leq \frac12,
	\]
	and 
	\[
	\int_0^{\ln m} e^{-(5+2i-j)x} \d x \leq \frac{1}{5+2i-j}.
	\]
	As a result, 
	\[
	|u(0)|^2
	\le
	|u(\ln m)|^2 + \frac{2}{5+2i-j} \int_0^{\ln m} |u'(x)|^2 \d x.
	\]
	On the other hand, since $j\geq 0$, we have $\beta = (j+1)/2 > 0$, so multiplying the above inequality by $\beta$ yields
	\[
	\beta |u(0)|^2
	\le
	\beta |u(\ln m)|^2 + \frac{2\beta}{5+2i-j} \int_0^{\ln m} |u'(x)|^2 \d x.
	\]
	Again, one can check that $2\beta \leq 5+2i-j$ due to the assumption that $j\leq i+2$, so 
	\[
	\beta |u(0)|^2
	\le
	\beta |u(\ln m)|^2 + \int_0^{\ln m} |u'(x)|^2 \d x.
	\]
	Plugging this estimate into (\ref{lam_rho}), we justify $I\geq0$. This leads to 
	\[
	\lambda_{rho} \geq \beta^2 = \frac{(j+1)^2}{4}.
	\]
	which implies that 
	\be\label{energy_rho}
	\|g\|_{L^2(\d\mu)}^2 \leq \frac{4}{(j+1)^2} E_{rho}(g).
	\ee

	Now we estimate term $h$ in \eqref{042501}. For any fixed $\rho$, one can check that
	\[
	\int_{\phi_1}^{\phi_2} h(\rho, \phi) \sin^{k+1} \phi \d\phi = \int_{\phi_1}^{\phi_2} \big[f(\rho, \phi) - g(\rho)\big] \sin^{k+1}\phi \d\phi = 0\,,
	\]
	so $h(\rho, \cdot)$ belongs to $\m{A}_{\phi}(\alpha)$, where
	\[
	\m A_\phi(\alpha)
	\ed
	\left\{
	Q \in H^1(\phi_1,\phi_2)\setminus\{0\}
	\;:\;
	\int_{\phi_1}^{\phi_2} Q(\phi)\sin^{k+1}\phi\d\phi = 0
	\right\}\,.
	\]
	Consequently, 
	\bn
	E_{phi}(h) = & \int_{\frac1m}^1 \bigg( \int_{\phi_1}^{\phi_2}
	|\partial_\phi h|^2 \rho^{j} \sin^{k+1} \phi\d\phi \bigg) \d\rho\\
	\geq&\int_{\frac1m}^1 \left(\la_{phi}(\al)\int_{\phi_1}^{\phi_2}
	| h|^2 \sin^{k+1} \phi\d\phi\right)\rho^{j}\d\rho= \lambda_{phi}(\alpha) \|h\|_{L^2(\d\mu)}^2,
	\en
	where 
	\bn
	\lambda_{phi}(\alpha)
	\ed
	\inf_{Q \in \m A_\phi(\alpha)}
	\frac{\displaystyle \int_{\phi_1}^{\phi_2} |Q'(\phi)|^2 \sin^{k+1}\phi \d \phi}
	{\displaystyle \int_{\phi_1}^{\phi_2} |Q(\phi)|^2 \sin^{k+1} \phi \d \phi}.
	\en
	According to Lemma 2.3 in \cite{LPYZZZ24}, 
	\[
	\lambda_{phi}(\al) \geq \frac{\pi^2}{(\phi_2 - \phi_1)^2} - \max_{\phi\in [\phi_1, \phi_2]} 	\bigg( \frac{p_2''}{2p_2} - \frac{3(p_2')^2}{4p_2^2}  \bigg),
	\]
	where $p_2(\phi) := \sin^{k+1}\phi$. By direct computation and the assumption that $k\geq -1$, we find 
	\[
	\max_{\phi\in [\phi_1, \phi_2]} \bigg( \frac{p_2''}{2p_2} - \frac{3(p_2')^2}{4p_2^2}  \bigg) = -\frac{k+1}{2},
	\]
	which implies that 
	\[
	\lambda_{phi}(\al) \geq \frac{\pi^2}{4\al^2} + \frac{k+1}{2} = \frac{\pi^2 + 	2(k+1)\al^2}{4\al^2}\,. 
	\]
	Hence, 
	\[
	\|h\|_{L^2(\d\mu)}^2 \leq \frac{4\al^2}{\pi^2 + 2(k+1)\al^2} E_{phi}(h)\,.
	\]
	When $\al$ is restricted on $(0,\frac{\pi}{6}]$, we obtain 
	\be\label{energy_phi}
	\|h\|_{L^2(\d\mu)}^2 \leq \frac{2}{19+k} E_{phi}(h)\,.
	\ee
	
	Finally, one can directly check that $g \perp h$ in $L^2(\d\mu)$,
	$\rho\p_\rho g \perp \rho\p_\rho h$ in $L^2(\d\mu)$ and $\partial_\phi g = 0$. 
	Therefore, 
	$\|f\|_{L^2(\d\mu)}^2=\|g\|_{L^2(\d\mu)}^2 + \|h\|_{L^2(\d\mu)}^2$ and 
	\ba\l{042601}
	E_{rho}(f) = E_{rho}(g) + E_{rho}(h) \geq E_{rho}(g), \qquad E_{phi}(f) = E_{phi}(h).
	\ea			
	Taking advantage of \eqref{energy_rho} and \eqref{energy_phi}, and then using \eqref{042601}, we conclude that
	\begin{align*}
		\|f\|_{L^2(\d\mu)}^2 = \|g\|_{L^2(\d\mu)}^2 + \|h\|_{L^2(\d\mu)}^2 
		& \leq \frac{4}{(j+1)^2} E_{rho}(g) + \frac{2}{19+k} E_{phi}(h) \\
		& \leq \frac{4}{(j+1)^2} E_{rho}(f) + \frac{2}{19+k} E_{phi}(f).
	\end{align*}
	This proves \eqref{reduction1}, thereby completing the proof of the lemma.
\end{proof}

The inequalities established in this section will be applied in Section \ref{Sec, App_Hardy} to obtain estimates for $\|v_\th/\rho\|_{L^2}$, $\|v_\th/\rho^2\|_{L^2}$ and a Korn inequality for $v_\th \bm{e_\th}$. These estimates are the key ingredients for the energy estimate in Proposition \ref{Funden} and for controlling the good unknowns $\m{K}$ and $\m{F}$ in Section \ref{Subsec, K and F}.

\section{Applications of the anisotropic Hardy's inequality in $D_m$}
\label{Sec, App_Hardy}

\subsection{Hardy's inequality for $v_\th$ and $v_\th/\rho$}
In this section, we will present several applications of the anisotropic Hardy's inequality introduced earlier in Lemma \ref{Lemma, Hardy_Dm}. The main purpose of our first result is to control the singular quantities $v_\th/\rho$ and $v_\th/\rho^2$ by $\nabla v_\th$, which will later be used in the Biot–Savart estimates and the energy estimates for $(\m{K},\m{F},\m{O})$.

\begin{cor}\label{Cor, vth}
	Let $m\geq 10^3$ and $\alpha \in (0,\pi/6]$. Then for any $v_\th\in H^{1}(D_m)$ such that 
	\[
	\int_{D_m} \rho \sin\phi \, v_\th\d x = 0,
	\]
	we have 
	\be\label{wP2}
	\int_{D_m} 
	\frac{f^2}{\rho^2} \d x
	\leq 
	\frac{8}{\sqrt{3}} \int_{D_m}
	|\partial_\rho f|^2 \dx
	+ \frac{1}{5\sqrt{3}} \int_{D_m} 
	\frac{1}{\rho^2} |\partial_\phi f|^2
	\dx\,,
	\ee
	where $f = v_\th$ or $\frac{v_\th}{\rho}$.
\end{cor}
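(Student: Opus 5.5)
The plan is to apply Lemma \ref{Lemma, Hardy_Dm} with the weight $\sin\phi$, i.e. with $k=1$, because this is exactly the weight that already appears in the constraint. Then I remove the weight $\sin\phi$ using the two-sided bound $\cos\al\leq\sin\phi\leq 1$ on $D_m$.

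\textbf{Step 1: choice of indices.} The hypothesis $\int_{D_m}\rho\sin\phi\, v_\th\dx=0$ has the form \eqref{Hardy_m0} in both cases:
\begin{itemize}
\item for $f=v_\th$, take $i=1$, $k=1$;
\item for $f=v_\th/\rho$, take $i=2$, $k=1$, since $\rho\sin\phi\,v_\th=\frac{v_\th}{\rho}\,\rho^2\sin\phi$.
\end{itemize}
In both cases choose $j=0$. The requirements $i\geq-2$, $k\geq-1$ and $0\leq j\leq i+2$ all hold. Lemma \ref{Lemma, Hardy_Dm} then gives
\[
\int_{D_m}\frac{f^2}{\rho^2}\sin\phi\dx\leq 4\int_{D_m}|\p_\rho f|^2\sin\phi\dx+\frac{1}{10}\int_{D_m}\frac{|\p_\phi f|^2}{\rho^2}\sin\phi\dx .
\]
For $f=v_\th/\rho$, the condition $f\in H^1(D_m)$ holds because $\rho\geq 1/m$ on $D_m$.

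\textbf{Step 2: removing the weight.} On $D_m$ we have $\phi\in(\frac{\pi}{2}-\al,\frac{\pi}{2}+\al)$ with $\al\leq\pi/6$. Hence
\[
\frac{\sqrt3}{2}\leq\cos\al\leq\sin\phi\leq1 .
\]
\begin{itemize}
\item On the left-hand side, use $1\leq\frac{2}{\sqrt3}\sin\phi$, so that $\int_{D_m} f^2/\rho^2\dx\leq\frac{2}{\sqrt3}\int_{D_m} (f^2/\rho^2)\sin\phi\dx$.
\item On the right-hand side, use $\sin\phi\leq1$ to drop the weight.
\end{itemize}
Combining these with Step 1 gives the constants
\[
\frac{2}{\sqrt3}\cdot4=\frac{8}{\sqrt3},\qquad \frac{2}{\sqrt3}\cdot\frac1{10}=\frac{1}{5\sqrt3},
\]
which is \eqref{wP2}.

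\textbf{Main obstacle.} The only real point is choosing $k=1$. The natural alternative $k=0$ would require the unweighted constraint $\int_{D_m} f\rho^i\dx=0$, which the hypothesis does not provide. The choice $k=-1$ gives a worse angular constant, namely $\frac{2}{\sqrt3}\cdot\frac19$. With $k=1$ the argument is purely bookkeeping.
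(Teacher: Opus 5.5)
Your proof is correct and is the same as the paper's: apply Lemma~\ref{Lemma, Hardy_Dm} with $k=1$ and $j=0$, taking $i=1$ for $f=v_\th$ and $i=2$ for $f=v_\th/\rho$, then use $\sin\phi\geq\sqrt{3}/2$ on the left-hand side and $\sin\phi\leq 1$ on the right-hand side. One small correction to your side remark: $k=-1$ is not just worse but unavailable, since it would need the constraint $\int_{D_m} f\rho^{i}(\sin\phi)^{-1}\dx=0$, which the hypothesis does not give, so $k=1$ is forced rather than merely the better choice.
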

\begin{proof}
	We first treat the case when $f = \frac{v_\th}{\rho}$. Plugging $f = \frac{v_\theta}{\rho}$, $i=2$, $k=1$ and $j=0$ into Lemma \ref{Lemma, Hardy_Dm} yields
	\[
	\int_{D_m} 
	\Big| \frac{1}{\rho} \frac{v_\theta}{\rho} \Big|^2 \sin\phi \dx
	\leq 
	4 \int_{D_m}
	\Big|\partial_\rho \Big(\frac{v_\theta}{\rho}\Big) \Big|^2 \sin\phi \dx
	+ \frac{1}{10}	\int_{D_m} 
	\Big| \frac{1}{\rho}\partial_\phi \Big(\frac{v_\theta}{\rho} \Big) \Big|^2 \sin\phi
	\dx\,.
	\]
	Since $\alpha\in (0, \pi/6]$, then $\sin\phi \geq \frac{\sqrt{3}}{2}$ for any $\phi\in [\phi_1, \phi_2]$. As a result, 
	\[\begin{split}
		\int_{D_m} \Big| \frac{1}{\rho} \frac{v_\theta}{\rho} \Big|^2 \dx & \leq
		\frac{2}{\sqrt{3}} \int_{D_m} \Big| \frac{1}{\rho} \frac{v_\theta}{\rho} \Big|^2 \sin\phi \dx \\
		&\leq \frac{2}{\sqrt{3}} \bigg(4 \int_{D_m}
		\Big|\partial_\rho \Big(\frac{v_\theta}{\rho}\Big) \Big|^2 \dx
		+ \frac{1}{10} 	\int_{D_m} 
		\Big| \frac{1}{\rho}\partial_\phi \Big(\frac{v_\theta}{\rho} \Big) \Big|^2 \d x \bigg)\,.
	\end{split}\]
	which justifies (\ref{wP2}) for the case $f=v_\th/\rho$.
	
	The proof for the case $f=v_\th$ can be carried out similarly by plugging $f = v_\theta$, $i=1$, $k=1$ and $j=0$ into Lemma \ref{Lemma, Hardy_Dm}, we omit the details.
\end{proof}

\begin{cor}\label{Cor, vSob}
	Let $m\geq 10^3$ and $\alpha \in (0,\pi/6]$. Let $\bm{v}$ be the strong solution of the problem (\ref{NS}) on $D_m$ under the boundary condition (\ref{bdry for Dm}) as stated in Proposition \ref{Prop, local soln in ad} with 
	\[\int_{D_m} \rho\sin\phi\, v_{\th,0}(x) \d x = 0.\]
	Then 
	\be\label{vSob}\left\{\begin{array}{ll}
		\big\| \frac{v_\rho}{\rho} \big\|_{L^2(D_m)} &\leq \sqrt{\frac{2}{19}} \big\| \frac{1}{\rho} \p_\phi v_\rho \big\|_{L^2(D_m)} \leq \sqrt{\frac{2}{19}} \| \nabla v_\rho\|_{L^2(D_m)}, 
		\vspace{0.1in}\\
		\big\| \frac{v_\phi}{\rho} \big\|_{L^2(D_m)} &\leq \frac{\sqrt{3}}{5} \big\| \frac{1}{\rho} \p_\phi v_\phi \big\|_{L^2(D_m)} \leq  \frac{\sqrt{3}}{5} \| \nabla v_\phi\|_{L^2(D_m)}, \vspace{0.1in} \\
		\big\| \frac{v_\th}{\rho} \big\|_{L^2(D_m)} &\leq \sqrt{5} \| \nabla v_\th\|_{L^2(D_m)}. \\
	\end{array}\right.
	\ee
\end{cor}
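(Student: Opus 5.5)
We prove the three bounds in (\ref{vSob}) one at a time. In each case we work in spherical coordinates with $\d x = 2\pi\rho^2\sin\phi\d\phi\d\rho$ and apply a one-dimensional inequality in $\phi$ for each fixed $\rho$. The second inequality in each of the first two lines is then immediate from $|\nabla f|^2 = |\p_\rho f|^2 + \rho^{-2}|\p_\phi f|^2$.

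\emph{First line.} By Lemma \ref{v_rho_mean0}, $\int_{\phi_1}^{\phi_2} v_\rho(\rho,\phi)\sin\phi\d\phi = 0$ for every $\rho$. Lemma \ref{Poin0} with $\al\le\pi/6$ therefore gives
\[
\int_{\phi_1}^{\phi_2} v_\rho^2\sin\phi\d\phi \le \tfrac{2}{19}\int_{\phi_1}^{\phi_2} |\p_\phi v_\rho|^2\sin\phi\d\phi .
\]
We multiply by $2\pi\,\rho^{2}\cdot\rho^{-2}$ and integrate over $\rho\in(1/m,1)$. This gives $\|v_\rho/\rho\|_{L^2}^2 \le \frac{2}{19}\|\rho^{-1}\p_\phi v_\rho\|_{L^2}^2$, and taking square roots finishes the first line.

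\emph{Second line.} The boundary condition (\ref{NTSR}) gives $v_\phi = 0$ on $R_{1,m}\cup R_{2,m}$, so $v_\phi(\rho,\cdot)\in H^1_0(\phi_1,\phi_2)$ for each $\rho$. Lemma \ref{PoinB} with constant $C_{\pi/6,B} = 3/25$ applies. Integrating in $\rho$ exactly as in the first line yields
\[
\|v_\phi/\rho\|_{L^2} \le \sqrt{3/25}\,\|\rho^{-1}\p_\phi v_\phi\|_{L^2} = \tfrac{\sqrt3}{5}\,\|\rho^{-1}\p_\phi v_\phi\|_{L^2}.
\]

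\emph{Third line.} Proposition \ref{Prop, local soln in ad} and the zero initial angular momentum give $\int_{D_m}\rho\sin\phi\, v_\th\d x = 0$ for all $t$. We then invoke the case $f = v_\th$ of Corollary \ref{Cor, vth}:
\[
\int_{D_m}\frac{v_\th^2}{\rho^2}\d x \le \frac{8}{\sqrt3}\int_{D_m}|\p_\rho v_\th|^2\d x + \frac{1}{5\sqrt3}\int_{D_m}\rho^{-2}|\p_\phi v_\th|^2\d x \le \frac{8}{\sqrt3}\,\|\nabla v_\th\|_{L^2}^2 .
\]
Since $8/\sqrt3\approx 4.62 < 5$, this gives $\|v_\th/\rho\|_{L^2}\le\sqrt5\,\|\nabla v_\th\|_{L^2}$.

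The only step that needs care is checking that the case $f = v_\th$ of Corollary \ref{Cor, vth}, whose proof was omitted, really yields the constant $8/\sqrt3$. Set $f = v_\th$, $i=1$, $k=1$, $j=0$ in Lemma \ref{Lemma, Hardy_Dm}. The hypotheses hold: $i\ge -2$, $k\ge -1$, $0\le j\le i+2$, and the constraint $\int f\rho\sin\phi\d x = 0$ is exactly (\ref{Hardy_m0}). The lemma gives constants $4$ and $1/10$ with the weight $\sin\phi$. Removing the weight using $\sin\phi\ge\sqrt3/2$ multiplies both constants by $2/\sqrt3$, giving $8/\sqrt3$ and $1/(5\sqrt3)$, as stated. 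No step is a real obstacle; the corollary is bookkeeping on top of Lemmas \ref{Poin0}, \ref{PoinB}, \ref{v_rho_mean0} and Corollary \ref{Cor, vth}.
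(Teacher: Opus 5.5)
Your proposal is correct and follows essentially the same route as the paper: Lemma \ref{v_rho_mean0} with Lemma \ref{Poin0} for $v_\rho$, the condition $v_\phi=0$ on the rays with Lemma \ref{PoinB} for $v_\phi$, and the case $f=v_\theta$ of Corollary \ref{Cor, vth} for $v_\theta$. Your check that this case comes from Lemma \ref{Lemma, Hardy_Dm} with $i=1$, $k=1$, $j=0$, with constants $8/\sqrt3$ and $1/(5\sqrt3)$, is also correct.
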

\begin{proof}
	Firstly, (\ref{vSob})$_{3}$ follows directly from Corollary \ref{Cor, vth} by choosing $f=v_\th$ in (\ref{wP2}) since $\max\Big\{ \frac{8}{\sqrt{3}},\, \frac{1}{5\sqrt{3}} \Big\} < 5$. Secondly, 
	\[
	\Big\| \frac{v_\rho}{\rho} \Big\|_{L^2(D_m)}^2 = 2\pi \int_{\frac1m}^{1}\int_{\frac{\pi}{2}-\al}^{\frac{\pi}{2}+\al} v_\rho^2 \sin\phi\, \d\phi \d\rho.
	\]
	Thanks to Lemma \ref{v_rho_mean0}, we can apply Poincar\'e inequality (\ref{Poin0E}) to obtain 
	\[
	\int_{\frac{\pi}{2}-\al}^{\frac{\pi}{2}+\al} v_\rho^2 \sin\phi\, \d\phi \leq \frac{2}{19} \int_{\frac{\pi}{2}-\al}^{\frac{\pi}{2}+\al} (\p_\phi v_\rho)^2 \sin\phi\, \d\phi.
	\]
	As a result, 
	\[
	\Big\| \frac{v_\rho}{\rho} \Big\|_{L^2(D_m)}^2  \leq \frac{2}{19} \cdot 2\pi \int_{\frac1m}^{1}\int_{\frac{\pi}{2}-\al}^{\frac{\pi}{2}+\al} (\p_\phi v_\rho)^2 \sin\phi\, \d\phi \d\rho = \frac{2}{19} \| \nabla v_\phi\|_{L^2(D_m)}^2,
	\]
	which implies (\ref{vSob})$_{1}$. Finally, similar to the derivation of (\ref{vSob})$_{1}$, we take advantage of the fact that $v_\phi = 0$ on rays $R_{1,m}\cup R_{2,m}$ to apply Poincar\'e inequality (\ref{PoinB}) to justify (\ref{vSob})$_{2}$.
\end{proof}

Corollary \ref{Cor, vSob} shows that all three velocity components enjoy uniform $L^2$
control after division by $\rho$. This will be repeatedly used in Section \ref{Sec, v_energy_est} through Section \ref{Sec9} when estimating singular coefficients arising in spherical coordinates.

The following corollary plays a crucial role in the proof of Lemma \ref{Lem35} (see the estimate after (\ref{0319EE})). We emphasize that it is absolutely necessary for the constant appearing in the inequality to be strictly less than $1$, which will be achieved thanks to Corollary \ref{Cor, vth}.

\begin{cor}\l{COR0416-1}
	Let $g = \frac{v_\th}{\rho}$, where $v_\th$ is the function in Corollary \ref{Cor, vth}. Then for any $m\geq 10^3$ and $\al\in (0, \pi/6]$,
	\[
	\bigg| \int_{D_m} \cot\phi \, \frac{\partial_\phi g}{\rho} \frac{g}{\rho}  \bigg| \,dx \leq 0.7 \|\nabla g\|_{L^2(D_m)}^2\,.
	\]
\end{cor}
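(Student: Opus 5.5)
The plan is to combine a pointwise bound on $\cot\phi$ with Cauchy--Schwarz, control the singular factor $g/\rho$ by Corollary \ref{Cor, vth}, and then split the product with a weighted AM--GM inequality so that the total constant in front of $\|\nabla g\|^2_{L^2}$ comes out below $0.7$.

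First I set notation. Since $\al\in(0,\pi/6]$, on $[\f{\pi}{2}-\al,\f{\pi}{2}+\al]$ we have $|\cot\phi|\leq \cot(\pi/3)=1/\sqrt3$. Let
\[
a\ed\|\p_\rho g\|_{L^2(D_m)},\qquad b\ed\Big\|\f{\p_\phi g}{\rho}\Big\|_{L^2(D_m)},
\]
so that $a^2+b^2=\|\nabla g\|_{L^2(D_m)}^2$ for axisymmetric $g$. Cauchy--Schwarz then gives
\[
\int_{D_m}\Big|\cot\phi\,\f{\p_\phi g}{\rho}\,\f{g}{\rho}\Big|\dx\leq \f{1}{\sqrt3}\, b\,\Big\|\f{g}{\rho}\Big\|_{L^2(D_m)}.
\]

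Second, since $g=v_\th/\rho$ and $\int_{D_m}\rho\sin\phi\, v_\th\dx=0$, Corollary \ref{Cor, vth} applies to $f=v_\th/\rho$ and yields
\[
\Big\|\f{g}{\rho}\Big\|_{L^2}^2\leq \f{8}{\sqrt3}a^2+\f{1}{5\sqrt3}b^2 .
\]
Hence the integral is at most $\f{1}{\sqrt3}\, b\,\big(\f{8}{\sqrt3}a^2+\f{1}{5\sqrt3}b^2\big)^{1/2}$.

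Third, I split this product with the weighted inequality $bX^{1/2}\leq \f12(\ep b^2+X/\ep)$. The bound becomes
\[
\f{1}{2\sqrt3}\Big[\Big(\ep+\f{1}{5\sqrt3\,\ep}\Big)b^2+\f{8}{\sqrt3\,\ep}a^2\Big].
\]
The coefficient of $a^2$ equals $\f{4}{3\ep}$, which is at most $0.7$ once $\ep\geq 40/21\approx1.905$. The coefficient of $b^2$ equals $\f{1}{2\sqrt3}\big(\ep+\f{1}{5\sqrt3\ep}\big)$. With $\ep=1.91$ it is about $0.2887\times(1.91+0.0605)\approx0.569<0.7$. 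Both coefficients are therefore at most $0.7$, which gives the claim with constant $0.7$.

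The only delicate point is the numerical balance. The $\p_\rho$ constant $8/\sqrt3$ in Corollary \ref{Cor, vth} is large, so the weight $\ep$ must be pushed up to about $1.9$. This is affordable only because the $\p_\phi$ constant $\f{1}{5\sqrt3}$ is tiny and $|\cot\phi|\leq 1/\sqrt3$. I would double-check the arithmetic with exact fractions at the chosen value of $\ep$.
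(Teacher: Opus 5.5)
Your proof is correct, and it uses the same ingredients as the paper: $|\cot\phi|\le 1/\sqrt3$, Cauchy--Schwarz, and Corollary \ref{Cor, vth} applied to $f=g$. The two arguments differ only in the final splitting, but that difference is consequential.

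The paper first takes the square root of the Hardy bound and splits it with $\sqrt{A+B}\le\sqrt A+\sqrt B$. In your notation this gives $2^{3/2}3^{-3/4}ab+5^{-1/2}3^{-3/4}b^2$, and the paper then applies Young's inequality with $\lambda=0.7$ to the cross term. This route loses too much. In fact $5^{-1/2}3^{-3/4}\approx 0.196$, so the paper's bound of it by $\frac19$ is a slip. At $\lambda=0.7$ the coefficient of $b^2$ is about $0.385/0.7+0.196\approx 0.746$, and no choice of $\lambda$ in that route brings both coefficients below $0.7$; the best achievable is about $0.726$.

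Your weighted AM--GM applied directly to $b\,X^{1/2}$ avoids the triangle-inequality loss. Your arithmetic checks out: at $\epsilon=1.91$ the coefficient of $a^2$ is $\frac{4}{3\epsilon}=\frac{400}{573}<0.7$. The coefficient of $b^2$ simplifies to $\frac{\epsilon}{2\sqrt3}+\frac{1}{30\epsilon}\approx 0.569$. The cleaner choice $\epsilon=2$ already gives the overall constant $\frac23$, since then the $b^2$ coefficient is $\frac{1}{\sqrt3}+\frac1{60}\approx 0.594$. The optimal $\epsilon$ brings it down to about $0.63$.

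So your route actually establishes the stated constant $0.7$, whereas the paper's route, done correctly, only yields roughly $0.73$. This does not damage the later application in Lemma \ref{Lem35}, which only needs a constant strictly below $1$, though it would slightly change the factors $\frac{3}{10}$ and $\frac{10}{3}$ there.
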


\begin{proof}
	Firstly, it follows from Corollary \ref{Cor, vth} that 
	\[
	\Big\| \frac{g}{\rho} \Big\|_{L^2(D_m)}^2 \leq \frac{8}{\sqrt{3}} \|\partial_\rho g\|_{L^2(D_m)}^2 + \frac{1}{5\sqrt{3}} \Big\| \frac{1}{\rho} \partial_\phi g \Big\|_{L^2(D_m)}^2\,,
	\]
	which implies that 
	\[
	\Big\| \frac{g}{\rho} \Big\|_{L^2(D_m)} \leq 2^{\frac32} 3^{-\frac14} \|\partial_\rho g\|_{L^2(D_m)} + 5^{-\frac12} 3^{-\frac14} \Big\| \frac{1}{\rho} \partial_\phi g \Big\|_{L^2(D_m)}\,.
	\]
	Consequently, 
	\[\begin{split}
		\bigg| \int_{D_m} \cot\phi \frac{\partial_\phi g}{\rho} \frac{g}{\rho}\dx  \bigg| 
		&\leq 3^{-\frac12} \Big\|\frac{\partial_\phi g}{\rho} \Big\|_{L^2(D_m)} \Big\| \frac{g}{\rho} \Big\|_{L^2(D_m)} \\
		&\leq 2^{\frac32} 3^{-\frac34} \Big\|\frac{\partial_\phi g}{\rho} \Big\|_{L^2(D_m)} \|\partial_\rho g\|_{L^2(D_m)} + 5^{-\frac12} 3^{-\frac34}  \Big\|\frac{\partial_\phi g}{\rho} \Big\|_{L^2(D_m)}^2\,.
	\end{split}\]
	Applying Cauchy-Schwarz inequality, for any $\lambda > 0$,
	\[\begin{split}
		\bigg| \int_{D_m} \cot\phi \frac{\partial_\phi g}{\rho} \frac{g}{\rho}\dx  \bigg| &\leq 
		\lambda \|\partial_\rho g\|_{L^2(D_m)}^2 + \bigg( \frac{2^3 3^{-\frac32}}{4\lambda} + 5^{-\frac12} 3^{-\frac34}  \bigg) \Big\|\frac{\partial_\phi g}{\rho} \Big\|_{L^2(D_m)}^2 \\
		&\leq \lambda \|\partial_\rho g\|_{L^2(D_m)}^2 + \bigg( \frac{0.4}{\lambda} + \frac19 \bigg) \Big\|\frac{\partial_\phi g}{\rho} \Big\|_{L^2(D_m)}^2\,.
	\end{split}\]
	Taking $\lambda = 0.7$ yields 
	\[
	\bigg| \int_{D_m} \cot\phi \frac{\partial_\phi g}{\rho} \frac{g}{\rho}\dx  \bigg| \leq 0.7 \|\partial_\rho g\|_{L^2(D_m)}^2 + 0.7 \Big\|\frac{\partial_\phi g}{\rho} \Big\|_{L^2(D_m)}^2 = 0.7 \|\nabla g\|_{L^2(D_m)}^2\,.
	\]
\end{proof}

\subsection{Korn's inequality for $v_\th e_\th$}

Let $\mathbb{S}(\bm{v})$ be the strain tensor, defined as $\mathbb{S}(\bm{v}) := \frac12 (\nabla \bm{v} + \nabla \bm{v}^{\top})$. In literature, for domains with smooth boundary, the well-known Korn's inequality would be valid under certain conditions. For example, if we pretend that our approximating domain $D_m$ has a smooth boundary (say $C^2$), then based on \cite[Theorem 1]{Falocchi2022}, the kernel of the strain operator in
\[
\{\bm{h}\in H^1(\Omega)\,:\,\bm{h}\cdot\bm{n}=0\text{ on }\p \Omega\}
\]
is
\[
\m{N}\ed\{C\rho\sin\phi \, \bm{e_\th}\,:\,C\in\mathbb{R}\}\,.
\]
Denote $P_\m{N}:\,H^1(D_m)\to \m{N}$ to be the related projection operator, then it follows from \cite[Lemma 4.1 and Lemma 4.2]{WatanabeJCAM} that 
\[
\|\na\bm{v}\|_{L^2(D_m)}\leq C_m\|\mathbb{S}\bm{v}\|_{L^2(D_m)}\,,
\]
as long as $\bm{v}\in\m{N}^\perp$. However, the domain $D_m$ in our paper is only Lipschitz, so the above inequality does not seem to be directly applicable.
Meanwhile, it is not clear if the constants $\{C_m\}_{m\geq 10^3}$ have an upper bound that is uniform in $m$. Next, by taking advantage of the anisotropic Hardy's inequality in Lemma \ref{Lemma, Hardy_Dm}, we manage to establish a Korn's inequality for the $\th$-component of $\bm{v}$ with an absolute constant $\frac83$, see (\ref{Korn_vth}) in Corollary \ref{Cor, Korn_ineq}.

For future references, we introduce some notations below. 
Recalling the appendix A.1 in \cite{LPYZZZ24}, we know that
\ba\l{gradientmatrix}
\na\bm{v}=\left(\begin{array}{ccc}
	\partial_\rho v_\rho & \frac{1}{\rho}\left(\partial_\phi v_\rho-v_\phi\right) & -\frac{1}{\rho} v_\theta \vspace{0.07in}\\
	\partial_\rho v_\phi & \frac{1}{\rho}\left(\partial_\phi v_\phi+v_\rho\right) & -\frac{\cot \phi}{\rho} v_\theta \vspace{0.07in}\\
	\partial_\rho v_\theta & \frac{1}{\rho} \partial_\phi v_\theta & \frac{1}{\rho}\left(v_\rho+\cot \phi v_\phi\right)
\end{array}\right)
\ea
in the spherical coordinates. 
Let $\bm{b} = v_\rho \bm{e_{\rho}} + v_\phi \bm{e_{\phi}}$ be the vector obtained by subtracting the $\th$-component from $\bm{v}$. Then 
\be\label{grad_b}
\na\bm{b} = \left(\begin{array}{ccc}
	\partial_\rho v_\rho & \frac{1}{\rho}\left(\partial_\phi v_\rho-v_\phi\right) & 0 \vspace{0.07in}\\
	\partial_\rho v_\phi & \frac{1}{\rho}\left(\partial_\phi v_\phi+v_\rho\right) & 0 \vspace{0.07in}\\
	0 & 0  & \frac{1}{\rho}\left(v_\rho+\cot \phi v_\phi\right)
\end{array}\right),
\ee
and 
\be\label{grad_vth}
\na (v_\th \bm{e_\th}) = \left(\begin{array}{ccc}
	0 & 0 & -\frac{1}{\rho} v_\theta \vspace{0.07in}\\
	0 & 0 & -\frac{\cot \phi}{\rho} v_\theta \vspace{0.07in}\\
	\partial_\rho v_\theta & \frac{1}{\rho} \partial_\phi v_\theta & 0
\end{array}\right).
\ee
Combining (\ref{gradientmatrix})--(\ref{grad_vth}) leads to 
\be\label{grad_v_decom}\begin{split}
	|\nabla \bm{v}|^2 &=  |\nabla\bm{b}|^2 + |\nabla(v_\th \bm{e_\th})|^2 \\
	&=  |\nabla\bm{b}|^2 + (\partial_\rho v_\theta)^2
	+\left(\frac1\rho\p_\phi v_\theta\right)^2
	+ \frac{1}{\sin^2\phi}\left(\frac{v_\theta}{\rho}\right)^2.
\end{split}\ee
On the other hand, it follows from (\ref{grad_vth}) and the definition of the strain tensor $\mathbb{S}$ that 
\be\label{grad_S_th}
\mathbb{S}(v_\th \bm{e_\th})
=\left(\begin{array}{ccc}
	0 & 0 & \frac{1}{2} \partial_\rho v_\theta-\frac{1}{2 \rho} v_\theta \vspace{0.07in}\\
	0 & 0 & \frac{1}{2 \rho} \partial_\phi v_\theta-\frac{\cot \phi}{2 \rho} v_\theta \vspace{0.07in}\\
	\frac{1}{2} \partial_\rho v_\theta-\frac{1}{2 \rho} v_\theta & \frac{1}{2 \rho} \partial_\phi v_\theta-\frac{\cot \phi}{2 \rho} v_\theta & 0
\end{array}\right)\,.
\ee

\begin{cor}[Korn's inequality for $v_\th\bm{e_\th}$]\label{Cor, Korn_ineq}
	Let $m\geq 10^3$ and $\al\in (0, \pi/6]$. Let \(v_\theta\in H^1(D_m)\) be an axially symmetric function that satisfies
	\begin{equation}\label{eq:vtheta-mean-zero}
		\int_{D_m}\rho\sin\phi \, v_\th\d x=0\,.
	\end{equation}
	Then 
	\be\label{Korn_vth}
	\int_{D_m} |\nabla (v_\th\bm{e_\th})|^2 \d x \leq \frac83 \int_{D_m} |\mathbb{S}(v_\th \bm{e_\th})|^2 \d x.
	\ee
	
\end{cor}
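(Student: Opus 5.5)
The idea is to reduce \eqref{Korn_vth} to a single weighted Hardy bound, which Lemma \ref{Lemma, Hardy_Dm} supplies, applied to the rescaled function $w \ed \frac{v_\th}{\rho\sin\phi}$.

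\textbf{Step 1: rewrite both sides.} By \eqref{grad_v_decom},
\[
|\nabla(v_\th\bm{e_\th})|^2 = (\p_\rho v_\th)^2 + \Big(\frac{1}{\rho}\p_\phi v_\th\Big)^2 + \frac{1}{\sin^2\phi}\Big(\frac{v_\th}{\rho}\Big)^2 .
\]
By \eqref{grad_S_th},
\[
2|\mathbb{S}(v_\th\bm{e_\th})|^2 = A^2 + B^2, \qquad A \ed \p_\rho v_\th - \frac{v_\th}{\rho}, \qquad B \ed \frac{1}{\rho}\big(\p_\phi v_\th - \cot\phi\, v_\th\big).
\]
With $w = \frac{v_\th}{\rho\sin\phi}$ these become $A = \rho\sin\phi\,\p_\rho w$ and $B = \sin\phi\,\p_\phi w$, so $2|\mathbb{S}|^2 = \rho^2\sin^2\phi\,\big(|\p_\rho w|^2 + |\rho^{-1}\p_\phi w|^2\big)$. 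The constraint \eqref{eq:vtheta-mean-zero} reads $\int_{D_m} w\,\rho^2\sin^2\phi \d x = 0$.

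\textbf{Step 2: the key Hardy bound.} Apply Lemma \ref{Lemma, Hardy_Dm} to $f = w$ with $i = 2$, $k = 2$, $j = 2$. The admissibility condition $j \le i+2$ holds. This gives
\[
\int_{D_m} \frac{v_\th^2}{\rho^2}\d x = \int_{D_m} w^2\sin^2\phi \d x \le \frac49\int_{D_m}|\p_\rho w|^2\rho^2\sin^2\phi \d x + \frac{2}{21}\int_{D_m}\Big|\frac{\p_\phi w}{\rho}\Big|^2\rho^2\sin^2\phi \d x \le \frac89\int_{D_m}|\mathbb{S}(v_\th\bm{e_\th})|^2 \d x .
\]
This is the only place the mean-zero hypothesis enters, and it is the reason the constant is uniform in $m$.

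\textbf{Step 3: expand $|\nabla(v_\th\bm{e_\th})|^2$.} Write $\p_\rho v_\th = A + \frac{v_\th}{\rho}$ and $\frac1\rho\p_\phi v_\th = B + \cot\phi\,\frac{v_\th}{\rho}$, so that
\[
|\nabla(v_\th\bm{e_\th})|^2 = 2|\mathbb{S}|^2 + 2A\frac{v_\th}{\rho} + 2B\cot\phi\frac{v_\th}{\rho} + \Big(1 + \cot^2\phi + \frac{1}{\sin^2\phi}\Big)\frac{v_\th^2}{\rho^2}.
\]

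\textbf{Step 4: the $\phi$ cross term.} Since $2B\cot\phi\frac{v_\th}{\rho} = \frac{\cot\phi}{\rho^2}\p_\phi(v_\th^2) - 2\cot^2\phi\frac{v_\th^2}{\rho^2}$, I integrate by parts in $\phi$ against the weight $\sin\phi$. This produces the interior term $\int \frac{v_\th^2}{\rho^2}\d x$ plus the boundary contribution $\big[\cos\phi\, v_\th^2\big]_{\phi_1}^{\phi_2}$. The boundary contribution is $\le 0$ because $\cos\phi_2 < 0 < \cos\phi_1$.

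\textbf{Step 5: the $\rho$ cross term (main obstacle).} The term $2A\frac{v_\th}{\rho} = \rho\,\p_\rho\big(\tfrac{v_\th}{\rho}\big)^2$ leaves a boundary term at $\rho = 1$ with a bad sign. So I will not integrate it by parts; instead I use Young's inequality, $2A\frac{v_\th}{\rho} \le \varepsilon A^2 + \varepsilon^{-1}\frac{v_\th^2}{\rho^2}$. All remaining lower-order terms are bounded by a multiple of $\int \frac{v_\th^2}{\rho^2}$, using $\sin^2\phi \ge \frac34$ and $\cot^2\phi \le \frac13$. The plan is then to feed everything into the Step 2 bound and optimize $\varepsilon$ (or split $A^2$ and $B^2$ asymmetrically) so that the total constant is at most $\frac83$.

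The delicate point is this constant bookkeeping. If the crude Young bound exceeds $\frac83$, I would first try to sharpen Step 2 by using the separate $\frac49$ and $\frac2{21}$ weights to bound $\int \frac{v_\th^2}{\rho^2}$ by $\frac49\int A^2 + \frac2{21}\int B^2$. I would then pair the Young parameter only with the $A^2$ piece, since the $\phi$ cross term of Step 4 already costs nothing beyond an interior $\int \frac{v_\th^2}{\rho^2}$.
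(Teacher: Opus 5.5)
Steps 1--4 of your proposal are correct: you use the same $w$ and the same Hardy parameters $(i,j,k)=(2,2,2)$ as the paper, and your integration by parts in $\phi$ is sound. The gap is Step 5, and it cannot be closed by any choice of constants.

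Write $X=\int_{D_m}A^2\,\mathrm{d}x$, $Y=\int_{D_m}B^2\,\mathrm{d}x$ and $W=\int_{D_m}\rho^{-2}v_\theta^2\,\mathrm{d}x$. The target $\frac83\int_{D_m}|\mathbb{S}(v_\theta\bm{e_\theta})|^2\,\mathrm{d}x$ then equals $\frac43(X+Y)$. After Step 4 the $\cot^2\phi$ terms cancel and you have
\[
\int_{D_m}|\nabla(v_\theta\bm{e_\theta})|^2\,\mathrm{d}x\le X+Y+2\int_{D_m}A\,\frac{v_\theta}{\rho}\,\mathrm{d}x+3W .
\]
So everything beyond $X+Y$ must fit into $\frac13(X+Y)$.

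\begin{itemize}
\item The term $3W$ alone, via your bound $W\le\frac49X+\frac2{21}Y$, already costs $\frac43X$, four times the whole budget.
\item Optimizing Young's inequality on the cross term gives at best $\min_{\varepsilon>0}\bigl(1+\varepsilon+\frac49(3+\varepsilon^{-1})\bigr)=\frac{11}3$ as the coefficient of $X$, i.e.\ a Korn constant $\frac{22}3$.
\item Your fallback does not help, because the separate weights $\frac49$ and $\frac2{21}$ are exactly what produced these numbers.
\end{itemize}

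The structural reason is that the terms you treat as lower order are not. In fact $|\nabla(v_\theta\bm{e_\theta})|^2-2|\mathbb{S}(v_\theta\bm{e_\theta})|^2=\frac1r\partial_r(v_\theta^2)=\mathrm{div}\bigl(r^{-1}v_\theta^2\bm{e_r}\bigr)$. The claim is therefore equivalent to a trace inequality for $v_\theta$ on the outer arc $\rho=1$, which is precisely the bad-sign term you sidestepped. Estimating its pieces separately by Hardy throws away this cancellation.

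Moreover, no argument can give $\frac83$ at $\alpha=\frac\pi6$, because the inequality is false there. Take $v_\theta=rz(z^2-3r^2)=\rho^4\sin\phi\cos3\phi$. It is odd in $z$, so $\int_{D_m}\rho\sin\phi\,v_\theta\,\mathrm{d}x=0$. A direct computation gives
\[
2|\mathbb{S}(v_\theta\bm{e_\theta})|^2=9r^2\rho^4,\qquad \frac1r\partial_r(v_\theta^2)=2z^2(3r^2-z^2)(9r^2-z^2).
\]
Integrate over $D_m$ with $\rho\in(\frac1m,1)$, $\phi\in(\frac\pi3,\frac{2\pi}3)$, and set $R_m=2\pi\int_{1/m}^1\rho^8\,\mathrm{d}\rho$. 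The two integrals are $\frac{33}4R_m$ and $\frac{106}{35}R_m$ respectively. Since $\frac{106}{35}>\frac13\cdot\frac{33}4$, one gets $\int_{D_m}|\nabla(v_\theta\bm{e_\theta})|^2\,\mathrm{d}x\approx 2.73\int_{D_m}|\mathbb{S}(v_\theta\bm{e_\theta})|^2\,\mathrm{d}x>\frac83\int_{D_m}|\mathbb{S}(v_\theta\bm{e_\theta})|^2\,\mathrm{d}x$, for every $m$.

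The paper's proof reaches $\frac83$ only through an invalid pointwise ``Cauchy--Schwarz'' step. It bounds $(\rho\sin\phi\,\partial_\rho w+\sin\phi\,w)^2$ and $(\sin\phi\,\partial_\phi w+\cos\phi\,w)^2$ as if they were cross products, and so loses the squares. With $\lambda_1=\frac13$, $\lambda_2=1$ that pointwise bound already fails for $w\equiv1$.

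What your Steps 1--5 do establish rigorously is the Korn inequality with the $m$-independent constant $\frac{22}3$. That uniformity in $m$ is what Proposition~\ref{Funden} actually uses. The cost is that the constant $\frac34$ there becomes $\frac6{11}$.
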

\begin{proof}
	According to the expressions in (\ref{grad_vth}) and (\ref{grad_S_th}), we know 
	\[
	|\nabla (v_\th\bm{e_\th})|^2 = (\partial_\rho v_\theta)^2
	+\left(\frac1\rho\p_\phi v_\theta\right)^2
	+ (1+\cot^2\phi)\left(\frac{v_\theta}{\rho}\right)^2
	\]
	and 
	\[
	|\mathbb{S}(v_\th \bm{e_\th})|^2 = \frac12 \left(\partial_\rho v_\theta-\frac{v_\theta}{\rho}\right)^2
	+
	\frac12\left(\frac{\partial_\phi v_\theta}{\rho}-\frac{\cot\phi}{\rho}v_\theta\right)^2.
	\]
	Therefore, (\ref{Korn_vth}) boils down to the following estimate.
	\begin{align}
		&\int_{D_m}
		\left[
		(\partial_\rho v_\theta)^2
		+\left(\frac1\rho\p_\phi v_\theta\right)^2
		+ \frac{1}{\sin^2\phi}\left(\frac{v_\theta}{\rho}\right)^2
		\right]\d x \nonumber\\
		&\qquad\leq
		\frac43
		\int_{D_m}
		\left[
		\left(\partial_\rho v_\theta-\frac{v_\theta}{\rho}\right)^2
		+
		\left(\frac{\partial_\phi v_\theta}{\rho}-\frac{\cot\phi}{\rho}v_\theta\right)^2
		\right]\d x\,.
		\label{eq:coercive-vtheta}
	\end{align}
	
	Define
	\begin{equation}\label{eq:def-w-vtheta}
		w(\rho,\phi)\ed\frac{v_\theta(\rho,\phi)}{\rho\sin\phi}.
	\end{equation}
	Since $\al\in (0,\pi/6]$, we know 
	\(\phi\in [\f{\pi}{2}-\al,\f{\pi}{2}+\al] \subseteq [\pi/3, 2\pi/3]\), hence $\sin\phi \geq \frac{\sqrt{3}}{2}$ and \eqref{eq:def-w-vtheta} is well-defined.
	From \(v_\theta=\rho\sin\phi\,w\), a direct computation gives
	\bn
	\partial_\rho v_\theta-\frac{v_\theta}{\rho}
	=\rho\sin\phi\,\partial_\rho w
	\qquad\text{and}\qquad
	\frac1\rho\p_\phi v_\theta-\frac{\cot\phi}{\rho}v_\theta
	=\sin\phi\,\partial_\phi w.
	\en
	Therefore,
	\ba\l{0428-2}
	&\quad \int_{D_m}
	\left[
	\left(\partial_\rho v_\theta-\frac{v_\theta}{\rho}\right)^2
	+
	\left(\frac{\partial_\phi v_\theta}{\rho}-\frac{\cot\phi}{\rho}v_\theta\right)^2
	\right]\d x \\
	&= \int_{D_m}
	\left[
	\rho^2\sin^2\phi\,|\partial_\rho w|^2
	+
	\sin^2\phi\,|\partial_\phi w|^2
	\right]\d x.
	\ea
	Meanwhile, the mean-zero condition \eqref{eq:vtheta-mean-zero} becomes
	\bn
	\int_{D_m}\rho^2\sin^2\phi\,w(\rho,\phi)\d x=0\,.
	\en
	Applying Lemma \ref{Lemma, Hardy_Dm} (with $i=2$, $j=2$ and $k=2$), it follows that
	\begin{equation}\label{eq:weighted-poincare-w}
		\int_{D_m}\sin^2\phi\, w^2\d x
		\leq
		\frac49
		\int_{D_m}
		\rho^2\sin^2\phi\,|\partial_\rho w|^2 \d x
		+
		\frac{2}{21}
		\int_{D_m}
		\sin^2\phi\,|\partial_\phi w|^2
		\d x\,.
	\end{equation}
	
	We now estimate the three terms on the left-hand side of \eqref{eq:coercive-vtheta}. Indeed, since 
	\bn\left\{\begin{array}{crl}
		v_\theta/\rho &= &\sin\phi\,w\,,\\[2mm]
		\partial_\rho v_\theta &= & \rho\sin\phi\,\partial_\rho w + \sin\phi\,w\,,\\[2mm]
		\frac1\rho\p_\phi v_\theta &= & \sin\phi\,\partial_\phi w + \cos\phi\,w\,,
	\end{array}\right.
	\en
	it follows from Cauchy-Schwarz inequality that 
	\bn
	(\p_\rho v_\th)^2 + \Big( \frac{1}{\rho} \p_\phi v_\th \Big)^2 + \frac{1}{\sin^2\phi}\Big( \frac{v_\th}{\rho} \Big)^2 
	&\leq \Big( \la_1 \rho^2 \sin^2\phi\, (\partial_\rho w)^2 + \frac{1}{4\la_1} w^2 \sin^2\phi\Big) \\ 
	& \quad + \Big( \la_2 \sin^2 \phi \,(\p_\phi w)^2 + \frac{1}{4\la_2} w^2 \cos^2\phi \Big) \\
	& \quad + (1+\cot^2\phi) w^2 \sin^2\phi\,,
	\en
	where $\la_1$ and $\la_2$ are arbitrary positive constants. 
	Since $\phi\in [\pi/3, 2\pi/3]$, then $\cot^2\phi \leq 1/3$, so rearranging the above estimate yields 
	\bn
	(\p_\rho v_\th)^2 + \Big( \frac{1}{\rho} \p_\phi v_\th \Big)^2 + \frac{1}{\sin^2\phi}\Big( \frac{v_\th}{\rho} \Big)^2 &\leq \la_1 \rho^2 \sin^2\phi\, (\partial_\rho w)^2 +  \la_2 \sin^2 \phi \,(\p_\phi w)^2 \\
	& \quad + \Big( \frac43 + \frac{1}{4\la_1}  + \frac{1}{12\la_2}  \Big) w^2 \sin^2\phi\,.
	\en
	
	Integrating the above inequality on $D_m$ and applying (\ref{eq:weighted-poincare-w}) leads to 
	\bn
	& \quad \int_{D_m} \Big[(\p_\rho v_\th)^2 + \Big( \frac{1}{\rho} \p_\phi v_\th \Big)^2 + \frac{1}{\sin^2\phi}\Big( \frac{v_\th}{\rho} \Big)^2\Big] \d x \\
	& \leq \Big[ \la_1 + \frac49 \Big( \frac43 + \frac{1}{4\la_1}  + \frac{1}{12\la_2} \Big) \Big] \int_{D_m} \rho^2 \sin^2\phi\, (\partial_\rho w)^2 \d x \\
	& \quad + \Big[ \la_2 + \frac{2}{21} \Big( \frac43 + \frac{1}{4\la_1}  + \frac{1}{12\la_2} \Big) \Big] \int_{D_m} \sin^2 \phi \,(\p_\phi w)^2 \d x.
	\en
	Choosing $\la_1 = \frac13$ and $\la_2 = 1$ in the above estimate yields 
	\bn
	& \quad \int_{D_m} (\p_\rho v_\th)^2 + \Big( \frac{1}{\rho} \p_\phi v_\th \Big)^2 + \frac{1}{\sin^2\phi}\Big( \frac{v_\th}{\rho} \Big)^2 \d x \\
	& \leq \frac43 \int_{D_m} \rho^2 \sin^2\phi\, (\partial_\rho w)^2 \d x + \frac43\int_{D_m} \sin^2 \phi \,(\p_\phi w)^2 \d x \\
	& = \frac43 \int_{D_m} \left(\partial_\rho v_\theta-\frac{v_\theta}{\rho}\right)^2 \d x
	+ \frac43\int_{D_m} \left(\frac{\partial_\phi v_\theta}{\rho}-\frac{\cot\phi}{\rho}v_\theta\right)^2
	\d x\,,
	\en
	which justifies (\ref{eq:coercive-vtheta}). Here we have used \eqref{0428-2}. The proof is finished.
\end{proof}

\section{Fundamental energy estimates for the velocity in $D_m$}\l{Sec, v_energy_est}

In this section, we will establish the fundamental energy estimate for the solution $\bm{v}$ on the approximating domains $D_m$.
\begin{lemma}\label{Lemma, lap_int}
	Let $m\geq 10^3$ and $\al\in (0,\pi/6]$. Let $\bm{v}$ be the strong solution of \eqref{NS} on $D_m \times [0,T]$ under the boundary condition (\ref{bdry for Dm}).
	Then for any fixed $t\in (0,T)$, the following integral identity holds.
	\be\label{lap_int}\begin{split}
		\int_{D_{m}} \bm{v} \cdot \Delta \bm{v} \d x &= - \int_{D_m} |\na\bm{b}|^2\d x 
		- 2 \int_{D_m} |\mathbb{S}(v_\th \bm{e_\th})|^2 \d x \\
		& \quad + \int_{D_m} \frac{1}{\rho^2} v_\phi^2 (2\eta_m + 2\rho\eta_m' - 1) \d x + \int_{D_m} \frac{1}{\rho} (\p_\rho v_\phi^2)(2\eta_m - 1) \d x,
	\end{split}\ee
	where $\eta_m(\rho)\ed\sin \left[\frac{\pi}{2}\left(\frac{m}{m-1} \rho-\frac{1}{m-1}\right)\right]$ is the cut-off function defined in (\ref{cut_fn}) in Lemma \ref{Lemcutoff}.
\end{lemma}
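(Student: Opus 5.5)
The identity splits as $\bm{v}\cdot\Delta\bm{v}=\bm{b}\cdot\Delta\bm{b}+v_\th\,(\Delta\bm{v})_\th$ for axisymmetric fields, because the vector Laplacian in spherical coordinates does not mix the $\th$-component with the $(\rho,\phi)$-components; this is visible in \eqref{NS}, where the swirl equation contains only $(\Delta-\frac{1}{\rho^2\sin^2\phi})v_\th$. I would therefore treat the swirl part and the meridional part separately.

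\textbf{Swirl part.} I would write $\int_{D_m} v_\th\,(\Delta-\frac{1}{\rho^2\sin^2\phi})v_\th\,dx$ in the variables $(\rho,\phi)$ with measure $2\pi\rho^2\sin\phi\,d\rho\,d\phi$ and integrate by parts once in $\rho$ and once in $\phi$. This produces $-\int |\nabla(v_\th\bm{e_\th})|^2$ plus boundary terms:
\begin{itemize}
\item on the arcs, $\int_{A}\rho^2 v_\th\p_\rho v_\th \sin\phi\,d\phi$, which becomes $\int_A \rho\, v_\th^2\sin\phi\,d\phi$ using $\p_\rho v_\th=v_\th/\rho$ from \eqref{NTSA} and \eqref{NTS-NHL};
\item on the rays, $\int_R \sin\phi\, v_\th\p_\phi v_\th\,d\rho$, which becomes $\int_R \cos\phi\, v_\th^2\,d\rho$ using $\p_\phi v_\th=\cot\phi\,v_\th$ from \eqref{NTSR}.
\end{itemize}
Next I would expand $-2\int|\mathbb{S}(v_\th\bm{e_\th})|^2$ using \eqref{grad_S_th}. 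It equals $-\int|\nabla(v_\th\bm{e_\th})|^2$ plus the cross terms $\int\frac{2}{\rho}v_\th\p_\rho v_\th+\int\frac{2\cot\phi}{\rho^2}v_\th\p_\phi v_\th$, minus $\int\frac{1+\cot^2\phi}{\rho^2}v_\th^2$ after cancellation. I would write the cross terms as $\p_\rho(v_\th^2)$ and $\p_\phi(v_\th^2)$ and integrate them by parts. Their boundary contributions should reproduce exactly the boundary terms above, and the interior terms should cancel against the remaining zeroth-order terms. This gives the swirl part of \eqref{lap_int} with no leftover.

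\textbf{Meridional part.} Here there is no exact $|\nabla\bm b|^2$ identity, because on the inner arc the NHL-type condition $\p_\rho v_\phi=-v_\phi/\rho$ differs from the NTS condition $\p_\rho v_\phi=v_\phi/\rho$ on the outer arc. I would compute $\int \bm b\cdot\Delta\bm b$ from the $v_\rho,v_\phi$ operators in \eqref{NS}, integrate by parts, and isolate $-\int|\nabla\bm b|^2$ using \eqref{grad_b}. The remaining boundary terms are:
\begin{itemize}
\item on the rays, terms vanishing by $v_\phi=0$;
\item on the arcs, terms involving $v_\rho=0$ and $\rho^2 v_\phi\p_\rho v_\phi$, which by the arc conditions equal $\pm\rho\,v_\phi^2$, with the sign $-$ on $A_{1,m}$ and $+$ on $A_{2,m}$.
\end{itemize}
Because $\eta_m$ vanishes on $A_{1,m}$ and equals $1$ on $A_{2,m}$, the combination $(2\eta_m-1)\rho\,v_\phi^2$ takes the value $-\rho v_\phi^2$ at $\rho=1/m$ and $+\rho v_\phi^2$ at $\rho=1$, matching both arc terms. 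So I would write the total arc contribution as $2\pi\int\big[\rho\,v_\phi^2(2\eta_m-1)\big]_{\rho=1/m}^{1}\sin\phi\,d\phi$. By Newton–Leibniz this is a volume integral of $\p_\rho\big(\rho(2\eta_m-1)v_\phi^2\big)$ against $d\rho\,\sin\phi\,d\phi$. Rewriting with $dx=\rho^2\sin\phi\,d\rho\,d\phi$ gives $\int\frac{1}{\rho^2}v_\phi^2(2\eta_m+2\rho\eta_m'-1)+\int\frac1\rho\p_\rho(v_\phi^2)(2\eta_m-1)$, which is exactly the last two terms of \eqref{lap_int}.

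The main obstacle will be the bookkeeping of the zeroth-order and cross terms, such as $\frac{2}{\rho^2}\p_\phi v_\rho$ and $-\frac{2}{\rho^2}v_\rho$. After integrating by parts in $\phi$, these must combine exactly into $|\nabla\bm b|^2$ from \eqref{grad_b}. Any mismatch would have to be absorbed by boundary terms that vanish by $v_\phi=0$ on the rays and $v_\rho=\p_\phi v_\rho=0$ on the arcs. I would check this carefully and compare with the corresponding computation in \cite{LPYZZZ24} for the NHL case.
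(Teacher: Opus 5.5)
Your proposal is correct and is essentially the paper's argument. The paper applies Green's identity $\int_{D_m}\bm{v}\cdot\Delta\bm{v}\d x=-\int_{D_m}|\na\bm{v}|^2\d x+\frac12\int_{\p D_m}\p_n|\bm{v}|^2\d S$ to the whole field and splits $|\na\bm{v}|^2=|\na\bm{b}|^2+|\na(v_\th\bm{e_\th})|^2$ via \eqref{grad_v_decom}, which removes the componentwise reassembly of $|\na\bm{b}|^2$ you flag as the main obstacle; it then evaluates the same ray and arc terms, treats the two $v_\phi^2$ arc terms with $\eta_m$ and Newton--Leibniz exactly as your $(2\eta_m-1)$ device does, and completes the square in the swirl part as you do.

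There are two small slips to correct. First, the ray term $\frac{1}{\rho}v_\rho\p_\phi v_\rho$ vanishes because $\p_\phi v_\rho=0$ in \eqref{NTSR}, not because $v_\phi=0$. Second, $-2\int_{D_m}|\mathbb{S}(v_\th\bm{e_\th})|^2\d x$ equals exactly $-\int_{D_m}|\na(v_\th\bm{e_\th})|^2\d x$ plus the two cross terms, with no additional zeroth-order term. When you integrate those cross terms by parts, the interior contributions $-\int_{D_m}\rho^{-2}v_\th^2\d x$ and $+\int_{D_m}\rho^{-2}v_\th^2\d x$ cancel each other.
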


\begin{proof}
	Using integration by parts, we find 
	\be\label{lap_int_ibp}
	\int_{D_{m}} \bm{v} \cdot \Delta \bm{v} \d x = 
	-\int_{D_{m}} |\na\bm{v}|^2 \d x
	+\frac{1}{2}\int_{\p D_{m}}\na (|\bm{v}|^2) \cdot \bm{n}\d S\,.
	\ee
	Based on (\ref{grad_v_decom}), 
	\be\label{grad_int_est1}\begin{split}
		-\int_{D_{m}} |\na\bm{v}|^2 \d x &= -\int_{D_{m}} |\na\bm{b}|^2 \d x 
		- \int_{D_m} \bigg[ (\p_\rho v_\th)^2 + \Big(\frac{1}{\rho} \p_\phi v_\th \Big)^2 + \frac{1}{\sin^2\phi} \Big(\frac{v_\th}{\rho}\Big)^2 \bigg] \d x\,.
	\end{split}\ee
	
	Now we handle the boundary term. 
	\be\label{EB0}
	\begin{aligned}
		\int_{\p D_{m}}\na (|\bm{v}|^2) \cdot\bm{n}\d S = 
		&\un{\int_{R_{1,m}\cup R_{2,m}}\na (|\bm{v}|^2) \cdot \bm{n} \d S}_{B_1}
		+ \un{\int_{A_{1,m}\cup A_{2,m}}\na (|\bm{v}|^2) \cdot \bm{n} \d S}_{B_2}.
	\end{aligned}
	\ee
	Firstly, 
	\[
	\begin{aligned}
		B_{1}= & -\int_{R_{1,m}} \frac{1}{\rho} \partial_{\phi} (|\bm{v}|^{2}) \d S
		+ \int_{R_{2,m}} \frac{1}{\rho} \partial_{\phi} (|\bm{v}|^{2}) \d S \\
		= & -2 \pi \int_{\frac{1}{m}}^{1} \partial_{\phi} (|\bm{v}|^{2})\Big|_{\phi=\frac{\pi}{2}-\alpha} \sin \left(\frac{\pi}{2}-\alpha\right) \d \rho
		+ 2 \pi \int_{\frac{1}{m}}^{1} \partial_{\phi} (|\bm{v}|^{2}) \Big|_{\phi=\frac{\pi}{2}+\alpha} \sin \left(\frac{\pi}{2}+\alpha\right) \d \rho\,. \\
	\end{aligned}
	\]
	Noticing that $v_\phi=\p_\phi v_\rho=0$ and $\partial_{\phi} v_{\theta}=\cot \phi \, v_{\theta}$ on $R_{1,m}\cup R_{2,m}$, one deduces
	\be\label{EB1}
	\begin{aligned}
		B_1= &4 \pi \int_{\frac{1}{m}}^{1} \Big[\cos \left(\frac{\pi}{2}+\alpha\right) v_{\theta}^{2}\left(\rho, \frac{\pi}{2}+\alpha\right)-\cos \left(\frac{\pi}{2}-\alpha\right) v_{\theta}^{2}\left(\rho, \frac{\pi}{2}-\alpha\right)\Big] \d \rho \\
		= & 4 \pi \int_{\frac{1}{m}}^{1} \int_{\frac{\pi}{2}-\alpha}^{\frac{\pi}{2}+\alpha} \partial_{\phi}\left[ \cos \phi \, v_{\theta}^{2}(\rho, \phi)\right] \d \phi \d \rho \\
		= & 4 \int_{D_{m}} \frac{\cot \phi}{\rho^{2}} v_{\theta} \partial_{\phi} v_{\theta} \d x - 2 \int_{D_{m}} \frac{v_{\theta}^{2}}{\rho^{2}} \d x\,.\\
	\end{aligned}
	\ee
	Meanwhile, since $v_\rho=0$, $\partial_{\rho} v_{\phi}=\frac{1}{\rho} v_{\phi,}$ and $\partial_{\rho} v_{\theta}=\frac{1}{\rho} v_{\theta}$ on $A_{2,m}$, while $v_\rho=0$, $\partial_{\rho} v_{\phi}=-\frac{1}{\rho} v_{\phi,}$ and $\partial_{\rho} v_{\theta}=\frac{1}{\rho} v_{\theta}$ on $A_{1,m}$, it follows that 
	\[
	\begin{aligned}
		B_{2}= &2 \pi \int_{\frac{\pi}{2}-\alpha}^{\frac{\pi}{2}+\alpha} \partial_{\rho} (|\bm{v}|^{2}) \rho^{2} \sin \phi \d \phi\Big|_{\rho=1}
		- 2 \pi \int_{\frac{\pi}{2}-\alpha}^{\frac{\pi}{2}+\alpha} \partial_{\rho} (|\bm{v}|^{2}) \rho^{2} \sin \phi \d \phi\Big|_{\rho=\f{1}{m}} \\
		= &2 \pi \int_{\frac{\pi}{2}-\alpha}^{\frac{\pi}{2}+\alpha}\left(2 v_{\phi} \partial_{\rho} v_{\phi}+2 v_{\theta} \partial_{\rho} v_{\theta}\right) \rho^{2} \sin \phi \d \phi\Big|_{\rho=1}\\
		& -2 \pi \int_{\frac{\pi}{2}-\alpha}^{\frac{\pi}{2}+\alpha}\left(2 v_{\phi} \partial_{\rho} v_{\phi}+2 v_{\theta} \partial_{\rho} v_{\theta}\right) \rho^{2} \sin \phi \d \phi\Big|_{\rho=\f{1}{m}} \\
		= & 4 \pi \int_{\frac{\pi}{2}-\alpha}^{\frac{\pi}{2}+\alpha}\left(v_{\phi}^{2}+v_{\theta}^{2}\right)\rho\sin \phi \d \phi\Big|_{\rho=1}+4 \pi \int_{\frac{\pi}{2}-\alpha}^{\frac{\pi}{2}+\alpha}\left(v_{\phi}^{2}-v_{\theta}^{2}\right)\rho\sin \phi \d \phi\Big|_{\rho=\frac1m} \,.
	\end{aligned}
	\]
	Rearranging the above terms leads to 
	\[
	B_2 = \un{4\pi\int_{\f{\pi}{2}-\al}^{\f{\pi}{2}+\al}v_\phi^2 \, \rho\sin\phi\d\phi\Big|_{\rho=1}}_{B_{21}} + \un{4\pi\int_{\f{\pi}{2}-\al}^{\f{\pi}{2}+\al}v_\phi^2 \, \rho\sin\phi\d\phi\Big|_{\rho=\f{1}{m}}}_{B_{22}} + \un{4\pi\int_{\f{\pi}{2}-\al}^{\f{\pi}{2}+\al}v_\th^2 \, \rho\sin\phi\d\phi\Big|_{\rho=\f{1}{m}}^{1}}_{B_{23}}\,.
	\]
	
	To rewrite $B_{21}$ and $B_{22}$ in a form of volume integration, we use the cut-off function $\eta_m(\rho)$ which is defined in Lemma \ref{Lemcutoff}. By the Newton-Leibniz formula, we have, since $\eta_m=1$ at $\rho=1$ while $\eta_m=0$ at $\rho=\f{1}{m}$:
	\be\label{EB2}
	\begin{aligned}
		B_{21}= & 4 \pi \int_{\frac{\pi}{2}-\alpha}^{\frac{\pi}{2}+\alpha} \int_{\frac{1}{m}}^{1} \partial_{\rho}\left[\rho \sin \phi \, v_{\phi}^{2} \,\eta_{m}(\rho)\right] \d \rho \d \phi \\
		= & 2 \int_{D_{m}} \frac{v_{\phi}^{2}}{\rho^{2}}\eta_{m}(\rho) \d x
		+ 2 \int_{D_{m}} \frac{1}{\rho} (\partial_{\rho}v_{\phi}^{2}) \eta_{m}(\rho) \d x
		+ 2 \int_{D_{m}} \frac{v_{\phi}^{2}}{\rho}\eta_{m}^{\prime}(\rho) \d x\,,
	\end{aligned}
	\ee
	and
	\be\label{EB3}
	\begin{aligned}
		B_{22} & = 4 \pi \int_{\frac{\pi}{2}-\alpha}^{\frac{\pi}{2}+\alpha} \rho\sin \phi \, v_{\phi}^{2}\d \phi\Big|_{\rho=\frac1m}-B_{21}+B_{21} \\
		& = -4 \pi \int_{\frac{\pi}{2}-\alpha}^{\frac{\pi}{2}+\alpha} \int_{\frac{1}{m}}^{1} \partial_{\rho}\left(\rho \sin \phi \, v_{\phi}^{2}\right) \d \rho \d \phi + B_{21} \\
		& = -4 \pi \int_{\frac{\pi}{2}-\alpha}^{\frac{\pi}{2}+\alpha} \int_{\frac{1}{m}}^{1}\left[\sin \phi \,v_{\phi}^{2}+\rho \sin \phi \, (\p_{\rho} v_{\phi}^{2})\right] \d \rho \d \phi + B_{21} \\
		& =-2 \int_{D_{m}}\Big[\frac{1}{\rho^{2}}v_{\phi}^{2}+\frac{1}{\rho} (\p_{\rho}v_{\phi}^{2})\Big] \d x+B_{21}\,.
	\end{aligned}
	\ee
	Finally, using Newton-Leibniz formula yields 
	\ba\l{EB23}
	B_{23}=&4\pi\int_{\f{1}{m}}^1\int_{\f{\pi}{2}-\al}^{\f{\pi}{2}+\al}\p_\rho(\rho v_\th^2)\sin\phi\d\phi\d\rho=2\int_{D_m}\f{v_\th^2}{\rho^2}\d x
	+ 4\int_{D_m}\f{1}{\rho}v_\th\p_\rho v_\th\d x\,.
	\ea
	
	Substituting \eqref{EB1}, \eqref{EB2}, \eqref{EB3} and \eqref{EB23} into \eqref{EB0} yields 
	\[\begin{split}
		\int_{\p D_{m}}\na|\bm{v}|^2\cdot\bm{n}\d S &= 4\int_{D_m}\f{1}{\rho}v_\th\p_\rho v_\th\d x
		+  4 \int_{D_{m}} \frac{\cot \phi}{\rho^{2}} v_{\theta} \partial_{\phi} v_{\theta} \d x \\
		& \quad + 2\int_{D_m} \frac{1}{\rho^2} v_\phi^2 (2\eta_m + 2\rho\eta_m' - 1) \d x + 2\int_{D_m} \frac{1}{\rho} (\p_\rho v_\phi^2)(2\eta_m - 1) \d x.
	\end{split}\]
	Combining this identity with (\ref{grad_int_est1}), it then follows from (\ref{lap_int_ibp}) that 
	\be\label{lap_int_est2}\begin{split}
		\int_{D_m} \bm{v} \cdot \Delta \bm{v} \d x &=  -\int_{D_{m}} |\na\bm{b}|^2 \d x 
		- \int_{D_m} \bigg[ (\p_\rho v_\th)^2 + \Big(\frac{1}{\rho} \p_\phi v_\th \Big)^2 + \frac{1}{\sin^2\phi} \Big(\frac{v_\th}{\rho}\Big)^2 \bigg] \d x \\
		&\quad + 2\int_{D_m} \bigg[ \f{1}{\rho}v_\th\p_\rho v_\th +  \frac{\cot \phi}{\rho^{2}} v_{\theta} \partial_{\phi} v_{\theta} \bigg] \d x \\
		& \quad + 2\int_{D_m} \frac{1}{\rho^2} v_\phi^2 (2\eta_m + 2\rho\eta_m' - 1) \d x + 2\int_{D_m} \frac{1}{\rho} (\p_\rho v_\phi^2)(2\eta_m - 1) \d x.
	\end{split}\ee
	Noting that 
	\[\begin{split}
		&\quad - \bigg[ (\p_\rho v_\th)^2 + \Big(\frac{1}{\rho} \p_\phi v_\th \Big)^2 + \frac{1}{\sin^2\phi} \Big(\frac{v_\th}{\rho}\Big)^2 \bigg] + 2 \bigg[ \f{1}{\rho}v_\th\p_\rho v_\th +  \frac{\cot \phi}{\rho^{2}} v_{\theta} \partial_{\phi} v_{\theta} \bigg] \\
		& = - \bigg( \p_\rho v_\th - \frac{v_\th}{\rho} \bigg)^2 - \bigg( \frac{1}{\rho} \p_\phi v_\th - \frac{\cot \phi}{\rho} v_{\theta} \bigg)^2 = -2 |\mathbb{S}(v_\th \bm{e_\th})|^2,
	\end{split}\]
	where the last equality is due to (\ref{grad_S_th}), we have justified (\ref{lap_int}) from (\ref{lap_int_est2}).
	
\end{proof}

Now we are ready for the fundamental energy inequality. The key new ingredient compared with the NHL case is the Korn inequality established in Corollary \ref{Cor, Korn_ineq}, whose constant is independent of $m$. This allows us to absorb the swirl-related terms appearing in the energy identity (\ref{lap_int}).

\begin{prop}\label{Funden}
	Let $m\geq 10^3$ and $\al\in (0,\pi/6]$. Let $\bm{v}$ be the strong solution of \eqref{NS} on $D_m \times [0,T]$ under the boundary condition (\ref{bdry for Dm}) whose initial data satisfies
	\bn
	\int_{D_m}\rho\sin\phi \, v_{\th,0}\d x=0\,.
	\en
	Then the following energy estimate holds:
	\be\label{energy_est_Dm}
	\int_{D_m}|\bm{v}(x, T)|^{2} d x + \frac34\int_{0}^{T} \int_{D_m}|\nabla \bm{v}(x, s)|^{2} \d x \d s \leq \int_{D_m}|\bm{v}_{0}|^2\d x\,.
	\ee
\end{prop}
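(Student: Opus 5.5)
The plan is the standard $L^2$ energy method on $D_m$. I would test the momentum equation in (\ref{NS}) with $\bm{v}$ and integrate over $D_m$, then show that the viscous term dominates $\frac38\|\nabla\bm{v}\|_{L^2}^2$. Since $\nabla\cdot\bm{v}=0$ and $\bm{v}\cdot\bm{n}=0$ on all of $\p D_m$ by (\ref{bdry for Dm}), both the convection term $\int (\bm{v}\cdot\nabla)\bm{v}\cdot\bm{v}$ and the pressure term $\int \nabla P\cdot \bm{v}$ vanish. Hence
\[
\frac12\frac{\d}{\d t}\int_{D_m}|\bm{v}|^2\d x=\int_{D_m}\bm{v}\cdot\Delta\bm{v}\d x .
\]
The right-hand side is given exactly by Lemma \ref{Lemma, lap_int}. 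It therefore suffices to prove, for each fixed $t$,
\[
\int_{D_m}\bm{v}\cdot\Delta\bm{v}\d x\le -\frac38\int_{D_m}|\nabla\bm{v}|^2\d x,
\]
and then integrate over $[0,T]$ to get (\ref{energy_est_Dm}).

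The swirl part is handled by the Korn inequality. By Proposition \ref{Prop, local soln in ad}, the hypothesis on $v_{\th,0}$ gives $\int_{D_m}\rho\sin\phi\,v_\th(x,t)\d x=0$ for all $t$. Corollary \ref{Cor, Korn_ineq} then applies and yields
\[
-2\|\mathbb{S}(v_\th\bm{e_\th})\|_{L^2}^2\le-\frac34\|\nabla(v_\th\bm{e_\th})\|_{L^2}^2 .
\]
By (\ref{grad_v_decom}), $|\nabla\bm{v}|^2=|\nabla\bm{b}|^2+|\nabla(v_\th\bm{e_\th})|^2$, so the swirl contribution is already better than needed. What remains is to show that the two $v_\phi$ error terms in (\ref{lap_int}) are at most $\frac58\|\nabla\bm{b}\|_{L^2}^2$.

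For the first error term, Lemma \ref{Lemcutoff} gives $-1\le 2\eta_m+2\rho\eta_m'-1\le \frac95$. So this term is at most $\frac95\|v_\phi/\rho\|_{L^2}^2$. By (\ref{vSob})$_2$, this is at most $\frac{27}{125}\|\rho^{-1}\p_\phi v_\phi\|_{L^2}^2$.

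For the second error term, I would write $\frac1\rho\p_\rho(v_\phi^2)=2\frac{v_\phi}{\rho}\p_\rho v_\phi$ and use $|2\eta_m-1|\le1$. Cauchy–Schwarz and (\ref{vSob})$_2$ then bound it by $\frac{2\sqrt3}{5}\|\rho^{-1}\p_\phi v_\phi\|_{L^2}\|\p_\rho v_\phi\|_{L^2}$. A weighted Young inequality splits this into multiples of $\|\p_\rho v_\phi\|^2$ and $\|\rho^{-1}\p_\phi v_\phi\|^2$. If that is not sharp enough, I would instead integrate by parts in $\rho$, using $\eta_m(1/m)=0$, $\eta_m(1)=1$ and the constraints in (\ref{est_cut_fn}).

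The main obstacle is comparing these pieces with $\|\nabla\bm{b}\|_{L^2}^2$. The matrix (\ref{grad_b}) contains $(\p_\phi v_\phi+v_\rho)/\rho$ and $(\p_\phi v_\rho-v_\phi)/\rho$, not $\rho^{-1}\p_\phi v_\phi$ alone. I would expand $|\nabla\bm{b}|^2$ and integrate the cross terms $\frac{2}{\rho^2}(v_\rho\p_\phi v_\phi-v_\phi\p_\phi v_\rho)$ by parts in $\phi$. This is legitimate because $v_\phi=0$ on the rays; the boundary term vanishes and the cross terms become $\int \frac{4}{\rho^2}v_\rho\p_\phi v_\phi$ together with $\cot\phi$ lower-order terms. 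These are then absorbed using (\ref{vSob})$_{1,2}$, with constants $\frac{2}{19}$ and $\frac{3}{25}$, and the divergence-free relation (\ref{div-free}). The aim is a lower bound of the form $\|\nabla\bm{b}\|_{L^2}^2\ge c_1\|\p_\rho v_\phi\|_{L^2}^2+c_2\|\rho^{-1}\p_\phi v_\phi\|_{L^2}^2$ with $c_1,c_2$ large enough that the $v_\phi$ error terms are $\le\frac58\|\nabla\bm{b}\|_{L^2}^2$. Tracking these numerical constants, and choosing the Young parameter to balance them, is where the argument is tight.
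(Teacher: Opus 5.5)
Your outline coincides with the paper's proof up to the reduction to showing that the two $v_\phi$ error terms in \eqref{lap_int} are at most $\frac58\|\nabla\bm b\|_{L^2}^2$. The shared steps are testing with $\bm v$, Lemma~\ref{Lemma, lap_int}, Corollary~\ref{Cor, Korn_ineq} (justified by conservation of $\int_{D_m}\Gamma\,dx$), the cutoff bounds $\frac95$ and $1$, and then \eqref{vSob}$_2$ and Young. Write $X=\|\rho^{-1}\partial_\phi v_\phi\|_{L^2}$ and $Y=\|\partial_\rho v_\phi\|_{L^2}$. Applying \eqref{vSob}$_2$ before Young, as you do, gives the error bound $\frac58Y^2+\frac{51}{125}X^2$, slightly better than the paper's $\frac58 Y^2+\frac{21}{50}X^2$.

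The step you leave open is the lower bound for $\|F_1\|^2+\|F_2\|^2$, where $F_1=\rho^{-1}(\partial_\phi v_\phi+v_\rho)$ and $F_2=\rho^{-1}(\partial_\phi v_\rho-v_\phi)$. This is where your route and the paper's differ.

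The paper does not expand the squares. It uses $\|F_1\|\ge X-\|v_\rho/\rho\|$ and $\|F_2\|\ge\|\rho^{-1}\partial_\phi v_\rho\|-\|v_\phi/\rho\|$, and weights the first by $\sqrt{19/2}$. Then \eqref{vSob}$_1$ disposes of the $v_\rho$ terms and \eqref{vSob}$_2$ handles $\|v_\phi/\rho\|$. This gives \eqref{F1F2_est1}, i.e.\ $\|F_1\|^2+\|F_2\|^2\ge\frac{7}{10}X^2$, with no integration by parts and no use of incompressibility.

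In your route, incompressibility is indispensable. Integrating the cross terms by parts in $\phi$ alone produces $4\int\rho^{-2}v_\rho\partial_\phi v_\phi\,dx$ plus a $\cot\phi$ term. Cauchy--Schwarz with \eqref{vSob}$_{1,2}$ then yields at best $\|F_1\|^2+\|F_2\|^2\ge\frac{37}{61}X^2$. Closing $\frac58Y^2+\frac{51}{125}X^2\le\frac58\|\nabla\bm b\|^2$ needs $\frac{408}{625}X^2$, so this falls short.

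If you do use \eqref{div-free}, together with $v_\phi=0$ on the rays and $v_\rho=0$ on the arcs, you get the exact identity $-2\int\rho^{-2}v_\phi\partial_\phi v_\rho\,dx=-3\|v_\rho/\rho\|^2$. Hence
\[
\|F_1\|^2+\|F_2\|^2=X^2+\|\rho^{-1}\partial_\phi v_\rho\|^2+\|v_\phi/\rho\|^2-5\|v_\rho/\rho\|^2-2\int_{D_m}\frac{\cot\phi}{\rho^2}\,v_\rho v_\phi\,dx\ \ge\ X^2,
\]
using $\|\rho^{-1}\partial_\phi v_\rho\|^2\ge\frac{19}{2}\|v_\rho/\rho\|^2$ and $|\cot\phi|\le 1/\sqrt3$. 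With this bound the final absorption is not tight at all.

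So both arguments close. The paper's is shorter and needs only the two Poincar\'e inequalities. Yours, once incompressibility enters, gives the sharper constant $1$ in place of $\frac{7}{10}$.
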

\begin{proof}
	Multiplying $\bm{v}$ on both sides of the original Navier-Stokes equation (\ref{NS1}) and then integrating over $D_m$, we find
	\[
	\int_{D_m} \bm{v} \cdot 
	\big[\Delta \bm{v}-(\bm{v} \cdot \nabla) \bm{v}-\nabla P-\partial_{t} \bm{v}\big] \d x = 0\,.
	\]
	Since $\bm{v}\cdot\bm{n}=0$ on $\p D_m$, one notices that
	\[
	\int_{D_{m}} \bm{v} \cdot \nabla P \d x=-\int_{D_{m}} P (\na\cdot\bm{v}) \d x=0\,,
	\] 
	and
	\[
	\int_{D_{m}}\bm{v}\cdot [(\bm{v} \cdot \nabla) \bm{v}] \d x
	= \frac{1}{2} \int_{D_{m}}\bm{v} \cdot \nabla (|\bm{v}|^{2}) \d x
	= -\frac{1}{2} \int_{D_{m}}(\na\cdot\bm{v}) \cdot |\bm{v}|^{2} \d x = 0\,.
	\]
	Thus we see
	\[
	\frac{1}{2} \f{\d}{\d t} \int_{D_{m}}|\bm{v}|^{2} \d x=\int_{D_{m}} \bm{v} \cdot \Delta \bm{v} \d x\,.
	\]
	Now we apply Lemma \ref{Lemma, lap_int} to find
	\ba\l{E04141}
	&\frac12 \f{\d}{\d t} \int_{D_{m}}|\bm{v}|^{2} \d x + \int_{D_m}|\na\bm{b}|^2\d x + 2 \int_{D_m} |\mathbb{S}(v_\th \bm{e_\th})|^2 \d x\\
	=& \int_{D_{m}} \frac{1}{\rho^{2}} v_{\phi}^{2}\left(2 \eta_{m}(\rho)
	+ \rho \eta_{m}^{\prime}(\rho)-1\right) \d x+\int_{D_{m}} \frac{1}{\rho} (\p_{\rho}v_\phi^2) \left(2 \eta_{m}(\rho)-1\right) \d x\,.
	\ea
	
	According to Lemma \ref{Lemcutoff}, we know 
	\[
	|2 \eta_{m}(\rho)+2 \rho \eta_{m}^{\prime}(\rho)-1| \leq \frac95 \quad\text{and}\quad 
	|2 \eta_{m}(\rho)-1| \leq 1.
	\]
	Then it follows from (\ref{E04141}) that 
	\bn
	& \f{\d}{\d t} \int_{D_{m}}|\bm{v}|^{2} \d x + 2\int_{D_m}|\na\bm{b}|^2\d x + 4 \int_{D_m} |\mathbb{S}(v_\th \bm{e_\th})|^2 \d x\\
	\leq & \frac{18}{5}\int_{D_{m}} \frac{1}{\rho^{2}} v_{\phi}^{2} \d x + 2\int_{D_{m}} \Big| \frac{1}{\rho} (\p_{\rho}v_\phi^2)\Big| \d x\,.
	\en
	Now we apply Corollary \ref{Cor, Korn_ineq} to obtain 
	\bn
	& \f{\d}{\d t} \int_{D_{m}}|\bm{v}|^{2} \d x + 2\int_{D_m}|\na\bm{b}|^2\d x + \frac32 \int_{D_m} |\nabla (v_\th \bm{e_\th})|^2 \d x\\
	\leq & \frac{18}{5}\int_{D_{m}} \frac{1}{\rho^{2}} v_{\phi}^{2} \d x + 2\int_{D_{m}} \Big| \frac{1}{\rho} (\p_{\rho}v_\phi^2)\Big| \d x\,.
	\en
	
	Using the Young inequality, for any $\la>0$, one has
	\[
	\begin{aligned}
		&\f{\d}{\d t} \int_{D_{m}}|\bm{v}|^{2} \d x+2\int_{D_m}|\na\bm{b}|^2\d x
		+ \frac32 \int_{D_m} |\nabla (v_\th \bm{e_\th})|^2 \d x \\
		\leq& \left(\frac{18}{5} + \frac{4}{\lambda}\right)\big\|\frac{v_\phi}{\rho}\big\|_{L^{2}}^{2}
		+ \lambda\left\|\partial_{\rho} v_{\phi}\right\|_{L^{2}}^{2}\,.
	\end{aligned}
	\]
	Setting $\la=\frac{5}{4}$, one derives
	\ba\l{042401}
	&\f{\d}{\d t} \int_{D_{m}}|\bm{v}|^{2} \d x + 2\int_{D_m}|\na\bm{b}|^2\d x
	+ \frac32 \int_{D_m} |\nabla (v_\th \bm{e_\th})|^2 \d x \\
	\leq\,\, &\frac{5}{4}\big\|\partial_{\rho} v_{\phi}\big\|_{L^{2}}^{2} + 7\Big\|\frac{v_{\phi}}{\rho}\Big\|_{L^{2}}^{2} \\
	\leq\,\, & \f{5}{4}\big\|\partial_{\rho} v_{\phi}\big\|_{L^{2}}^{2}
	+ \frac{21}{25}\Big\|\frac{\p_\phi v_{\phi}}{\rho}\Big\|_{L^{2}}^{2}\,,
	\ea
	where the last inequality took advantage of Lemma \ref{PoinB} with the constant $C_{\pi/6, B} = \frac{3}{25}$ thanks to the boundary condition $v_\phi = 0$ on $R_{1,m}\cup R_{2,m}$.
	
	Next, we will control the right hand side of (\ref{042401}) by $\frac54\int_{D_m} |\nabla \bm{b}|^2 \d x$ which can be absorbed by the left hand side of (\ref{042401}).  Based on (\ref{grad_b}), 
	\[
	\|\nabla \bm{b}\|_{L^2}^2 \geq \big\|\partial_{\rho} v_{\phi}\big\|_{L^{2}}^{2} + \|F_1\|_{L^2}^2 + \|F_2\|_{L^2}^2,
	\]
	where 
	\[
	F_1 \ed \frac{1}{\rho}(\p_\phi v_\phi + v_\rho)\,, \quad\text{and} \quad 
	F_2 \ed \frac{1}{\rho}(\p_\phi v_\rho - v_\phi)\,.
	\]
	So we aim to prove
	\be\label{F1F2ineq}
	\frac{21}{25}\Big\|\frac{\p_\phi v_{\phi}}{\rho}\Big\|_{L^{2}}^{2} 
	\leq \frac54 \big( \|F_1\|_{L^2}^2 + \|F_2\|_{L^2}^2 \big)\,.
	\ee
	Noting that $v_\phi=0$ on rays and the weighted zero mean value of $v_\rho$ on arcs is guaranteed by Lemma \ref{v_rho_mean0}, so we can use the triangle inequality and Poincar\'e inequalities in Lemma \ref{Poin0} and Lemma \ref{PoinB} to obtain
	\be\label{gra_b_1}
	\| F_1 \|_{L^2} \geq \Big\| \frac{1}{\rho} \p_\phi v_\phi \Big\|_{L^2} - \Big\| \frac{v_\rho}{\rho} \Big\|_{L^2}
	\geq \Big\| \frac{1}{\rho} \p_\phi v_\phi \Big\|_{L^2} - \sqrt{\frac{2}{19}} \Big\| \frac{1}{\rho} \p_\phi v_\rho \Big\|_{L^2},
	\ee
	and 
	\be\label{gra_b_2}
	\| F_2 \|_{L^2} \geq \Big\| \frac{1}{\rho} \p_\phi v_\rho \Big\|_{L^2} - \Big\| \frac{v_\phi}{\rho} \Big\|_{L^2}
	\geq \Big\| \frac{1}{\rho} \p_\phi v_\rho \Big\|_{L^2} - \frac{\sqrt{3}}{5} \Big\| \frac{1}{\rho} \p_\phi v_\phi \Big\|_{L^2}.
	\ee
	Multiplying (\ref{gra_b_1}) by $\sqrt{19/2}$ and then adding to (\ref{gra_b_2}) yields
	\[
	\sqrt{\frac{19}{2}} \| F_1 \|_{L^2} + \| F_2 \|_{L^2} \geq \bigg( \sqrt{\frac{19}{2}} - \frac{\sqrt{3}}{5} \bigg) \Big\| \frac{1}{\rho} \p_\phi v_\phi \Big\|_{L^2}.
	\]
	Now we square both sides and taking advantage of the Cauchy-Schwarz inequality to find
	\[
	\bigg( \sqrt{\frac{19}{2}} - \frac{\sqrt{3}}{5} \bigg)^2 \Big\| \frac{1}{\rho} \p_\phi v_\phi \Big\|_{L^2}^2 \leq \Big( \frac{19}{2} + 1\Big)(\|F_1\|_{L^2}^2 + \|F_2\|_{L^2}^2)\,.
	\]
	By direct calculation, we get 
	\be\label{F1F2_est1}
	\Big\| \frac{1}{\rho} \p_\phi v_\phi \Big\|_{L^2}^2  \leq \frac{10}{7} (\|F_1\|_{L^2}^2 + \|F_2\|_{L^2}^2)\,,
	\ee
	which further justifies  (\ref{F1F2ineq}).
	
	Plugging (\ref{F1F2ineq}) into (\ref{042401}) leads to 
	\bn
	&\f{\d}{\d t} \int_{D_{m}}|\bm{v}|^{2} \d x+2\int_{D_m}|\na\bm{b}|^2\d x
	+ \frac32 \int_{D_m} |\nabla (v_\th \bm{e_\th})|^2 \d x \\
	\leq\,\, & \f{5}{4} \Big( \big\|\partial_{\rho} v_{\phi}\big\|_{L^{2}}^{2} + \|F_1\|_{L^2}^2 + \|F_2\|_{L^2}^2\Big) \leq \frac54 \| \nabla \bm{b}\|_{L^2}^2\,,
	\en
	where the last inequality is due to the expression $(\ref{grad_b})$ of $\bm{b}$. Hence, 
	\[
	\f{\d}{\d t} \int_{D_{m}}|\bm{v}|^{2} \d x + \frac{3}{4}\int_{D_m}|\na\bm{b}|^2\d x + \frac32 \int_{D_m} |\nabla (v_\th \bm{e_\th})|^2 \d x \leq 0\,.
	\]
	Since $| \nabla \bm{v}|^{2} = |\nabla \bm{b}|^{2} + |\nabla (v_\th \bm{e_\th})|^{2}$, we conclude that 
	\[
	\f{\d}{\d t} \int_{D_{m}} |\bm{v}|^{2} \d x + \frac{3}{4}\int_{D_m}|\na\bm{v}|^2\d x 
	\leq 0\,.
	\]
	Integrating with time on $[0,T]$, we find
	\[
	\int_{D_{m}}|\bm{v}(x, T)|^{2} d x+\f{3}{4}\int_0^T\int_{D_m}|\na\bm{v}(x,s)|^2\dx \d s 	\leq \int_{D_{m}}|\bm{v}_{0}|^2\d x\,.
	\]
\end{proof}

Proposition \ref{Funden} provides a uniform energy bound independent of $m$ for the velocity $\bm{v}$. This estimate forms the foundation for the higher-order estimates of the good unknowns $(\m{K},\m{F},\m{O})$ in Section \ref{Sec9}.

\section{$L^\infty$ estimate of $\G$ by its initial data in $D_m$}\label{Sec7}

Recall that $\G = r v_\th$. Since the higher-order energy estimates in Section \ref{Sec9} require the smallness of $\G$, the purpose of this section is to show that the initial smallness assumption (\ref{COND}) propagates for all times. More precisely, we will derive an \(L^\infty\) bound for \(\Gamma\) in terms of its initial data: 
\[
\|\G(t,\cd)\|_{L^\i(D_m)}\leq C\|\G_0\|_{L^\i(D_m)}\,,
\]
where $C$ is an absolute constant which is independent of $m$.
Under the NHL boundary condition as that in \cite{LPYZZZ24}, the above estimate follows directly from the maximum principle applied to the equation for $\G$. However, under the Navier total-slip boundary condition, the induced boundary condition for $\G$ generates a boundary contribution with an unfavorable sign, preventing a direct application of the standard maximum principle. Our new strategy consists of two parts. First, by deriving the energy estimate of $\G$, we show the space-time $L^2$ bound of $\G(x,t)$ is uniformly bounded by initial data. Secondly, by virtue of the De Giorgi iteration, we prove that $\|\G(\cd,t)\|_{L^\i(D_m)}$ can also be controlled by its initial data.

\begin{lemma}[Fundamental energy of $\G$]\label{lem3.2}
	Let $m \geqslant 10^3$ and $\alpha \in\left(0, \frac{\pi}{6}\right]$. Then 
	\bn
	\|\Gamma\|_{L_{tx}^{2}\left(D_{m} \times[0, T]\right)} \leq 3\left\|\Gamma_{0}\right\|_{L^{2}\left(D_{m}\right)}\,.
	\en
\end{lemma}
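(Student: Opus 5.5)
The plan is a direct $L^2$ energy estimate for the equation \eqref{EGMA} of $\Gamma$, closed with the mean-zero Poincar\'e inequality of Lemma \ref{Poin3D}. The goal is a differential inequality of the form
\[
\frac{\d}{\d t}\int_{D_m}\Gamma^2\dx + c_0\int_{D_m}|\nabla\Gamma|^2\dx\leq 0
\]
with an absolute constant $c_0>0$. Since $\int_{D_m}\Gamma(x,t)\dx=0$ for all $t$ by Proposition \ref{Prop, local soln in ad}, Lemma \ref{Poin3D} gives $\|\Gamma\|_{L^2}^2\leq\frac{2}{19}\|\nabla\Gamma\|_{L^2}^2$. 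This turns the inequality into $\frac{\d}{\d t}\|\Gamma\|_{L^2}^2+\frac{19c_0}{2}\|\Gamma\|_{L^2}^2\leq0$. Integrating in time then gives $\int_0^T\|\Gamma\|_{L^2}^2\d t\leq\frac{2}{19c_0}\|\Gamma_0\|_{L^2}^2$, so the claimed constant $3$ requires $c_0\geq\frac{2}{171}$.

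To get the differential inequality I multiply \eqref{EGMA} by $\Gamma$ and integrate over $D_m$, treating the terms in the following order.

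\emph{Transport term.} The term $\int_{D_m}(\bm{b}\cdot\nabla\Gamma)\,\Gamma\dx=\frac12\int_{D_m}\bm{b}\cdot\nabla(\Gamma^2)\dx$ vanishes, because $\nabla\cdot\bm{b}=0$ and $\bm{b}\cdot\bm{n}=0$ on $\p D_m$.

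\emph{Viscous term.} $\int_{D_m}\Gamma\Delta\Gamma\dx$ produces $-\|\nabla\Gamma\|_{L^2}^2$ plus boundary integrals. I evaluate these with the induced conditions $\p_\rho\Gamma=\frac{2}{\rho}\Gamma$ on the arcs and $\p_\phi\Gamma=2\cot\phi\,\Gamma$ on the rays, both used in the proof of Proposition \ref{Prop, local soln in ad}.
\begin{itemize}
\item On the rays the outward normal derivative is $\pm\frac{2\cot\phi}{\rho}\Gamma$, and its sign makes both ray contributions nonpositive, so they are good terms.
\item The inner arc $A_{1,m}$ also contributes a nonpositive term, $-2m\int_{A_{1,m}}\Gamma^2\d S$.
\item The outer arc $A_{2,m}$ contributes $+2\int_{A_{2,m}}\Gamma^2\d S$, which has the bad sign.
\end{itemize}

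\emph{First-order terms.} I integrate $-\int_{D_m}\frac{2}{\rho}\p_\rho\Gamma\,\Gamma\dx$ and $-\int_{D_m}\frac{2\cot\phi}{\rho^2}\p_\phi\Gamma\,\Gamma\dx$ by parts in spherical coordinates. The first gives $-\int_{A_{2,m}}\Gamma^2\d S$ (partially cancelling the bad arc term), a harmless inner-arc term, and a positive volume term $\int_{D_m}\Gamma^2/\rho^2\dx$. The second gives ray boundary terms, which I expect to combine with the viscous ones, and a volume term of the size $\int_{D_m}(1+\cot^2\phi)\Gamma^2/\rho^2\dx$.

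After these steps the uncontrolled quantities are a trace $\int_{A_{2,m}}\Gamma^2\d S$ with coefficient about $1$ and $\int_{D_m}\Gamma^2/\rho^2\dx$ with coefficient $O(1)$, to be absorbed by $\|\nabla\Gamma\|_{L^2}^2$.

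\emph{Absorbing the trace.} As in the proof of Lemma \ref{Lemma, lap_int}, I use the cut-off $\eta_m$ of Lemma \ref{Lemcutoff}, which vanishes on $A_{1,m}$ and equals $1$ on $A_{2,m}$, and apply the Newton--Leibniz formula to $\rho\eta_m\Gamma^2$ in $\rho$. This writes the trace as a volume integral controlled by $\frac{7}{5}\|\Gamma/\rho\|_{L^2}^2+2\|\Gamma/\rho\|_{L^2}\|\p_\rho\Gamma\|_{L^2}$, using the bound \eqref{est_cut_fn}.

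\emph{Absorbing the singular volume term.} I control $\|\Gamma/\rho\|_{L^2}$ through $\Gamma/\rho=\sin\phi\,v_\theta$. The idea is to exploit that $\Gamma$ vanishes to first order in $\rho$, rather than applying the anisotropic Hardy inequality of Lemma \ref{Lemma, Hardy_Dm} with $j=0$, whose $\p_\rho$-constant is $4$. Concretely, I would pass to the variable $\Gamma/\rho^2$, or equivalently split $\Gamma=g(\rho)+h(\rho,\phi)$ as in that lemma with the weight adapted to $\Gamma$. On the angular part $h$ the constant is $\frac{2}{19+k}$, which is small. On the radial part $g$, I use the mean-zero condition and the radial eigenvalue computed in the proof of Lemma \ref{Lemma, Hardy_Dm}.

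If the constants refuse to close in the unweighted energy, the fallback is to test \eqref{EGMA} against $\Gamma\,r^{-2}$ instead. In that weight the operator $\Delta-\frac{2}{\rho}\p_\rho-\frac{2\cot\phi}{\rho^2}\p_\phi=r^{2}\,\nabla\cdot(r^{-2}\nabla\,\cdot)$ is symmetric, so the singular first-order terms disappear and only a drift commutator $\frac12\int_{D_m}\Gamma^2\,\bm{b}\cdot\nabla(r^{-2})\dx$ remains. I would control that commutator with Proposition \ref{Funden}, although this costs a dependence on $\|\bm{v}_0\|_{L^2}$ that the statement does not allow.

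\textbf{Main obstacle.} The step I expect to be hardest is the constant bookkeeping: verifying that the outer-arc trace together with the $\rho^{-2}$-weighted volume terms is strictly dominated by $\|\nabla\Gamma\|_{L^2}^2$, with a margin $c_0\geq\frac{2}{171}$ that stays uniform in $m$.
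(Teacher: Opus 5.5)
Your overall plan (test \eqref{EGMA} by $\Gamma$, drop the good boundary terms, close with Lemma~\ref{Poin3D}) is the paper's. The route you sketch does not close, however, because of the weight you use when converting the outer-arc trace.

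\textbf{No singular volume term survives.} The two drift terms together are $-\frac{2}{r}\partial_r\Gamma$. Integrating $-\int_{D_m}\frac{2\cot\phi}{\rho^2}\Gamma\,\partial_\phi\Gamma\dx$ by parts in $\phi$ gives exactly $-\int_{D_m}\Gamma^2/\rho^2\dx$, not a term of size $\int_{D_m}(1+\cot^2\phi)\Gamma^2/\rho^2\dx$. This cancels the $+\int_{D_m}\Gamma^2/\rho^2\dx$ from the radial drift. Its ray term removes only half of the nonpositive viscous ray contribution. This is just $\nabla\cdot(\bm{e_r}/r)=0$, and the exact identity is
\[
\frac12\frac{\d}{\d t}\|\Gamma\|_{L^2}^2+\|\nabla\Gamma\|_{L^2}^2=\int_{A_{2,m}}\Gamma^2\d S-m\int_{A_{1,m}}\Gamma^2\d S+2\pi\int_{1/m}^{1}\Gamma^2\cos\phi\,\Big|_{\phi=\frac{\pi}{2}-\alpha}^{\frac{\pi}{2}+\alpha}\d\rho .
\]
So the outer-arc trace is the only bad term.

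\textbf{Where your step fails.} Your Newton--Leibniz step with $\rho\eta_m\Gamma^2$ reintroduces $\|\Gamma/\rho\|_{L^2}$, and that cannot be absorbed. For mean-zero $\Gamma$, the constant in $\|\Gamma/\rho\|_{L^2}^2\le C\|\nabla\Gamma\|_{L^2}^2$ cannot be taken below $4$ uniformly in $m$. To see this, take radial profiles $\Gamma=\rho^{-1/2}u(\ln\frac1\rho)$ with $u$ a wide bump in the middle of $(0,\ln m)$, corrected near $\rho=1$ to have mean zero at negligible cost; the ratio tends to $4$. This is the radial bound $\lambda_{rho}\ge\beta^2=\frac14$ from the proof of Lemma~\ref{Lemma, Hardy_Dm}, and it is sharp as $m\to\infty$. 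Neither the splitting $g+h$ nor the substitution $\Gamma/\rho^2$ can improve it. For these profiles your bound $\frac75\|\Gamma/\rho\|^2+2\|\Gamma/\rho\|\|\partial_\rho\Gamma\|$ is about $9.6\,\|\nabla\Gamma\|^2$, even though the trace itself is tiny. Your weighted fallback, as you note, does not give the stated bound either.

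\textbf{The missing idea} is to use the weight $\rho^3$ instead. Since $\eta_m(1/m)=0$ and $\eta_m(1)=1$,
\[
\int_{A_{2,m}}\Gamma^2\d S=\int_{D_m}\frac{1}{\rho^2}\partial_\rho\big(\rho^3\eta_m\Gamma^2\big)\dx=\int_{D_m}(3\eta_m+\rho\eta_m')\Gamma^2\dx+2\int_{D_m}\rho\,\eta_m\Gamma\,\partial_\rho\Gamma\dx .
\]
By \eqref{est_cut_fn} this is at most $\frac{13}{4}\|\Gamma\|_{L^2}^2+2\|\Gamma\|_{L^2}\|\partial_\rho\Gamma\|_{L^2}$, with no singular weight. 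Lemma~\ref{Poin3D} and Young's inequality with parameter $\sqrt{19/2}$ then bound the trace by $\big(\frac{13}{38}+2\sqrt{2/19}\big)\|\nabla\Gamma\|_{L^2}^2<0.992\,\|\nabla\Gamma\|_{L^2}^2$. This gives your differential inequality with $c_0=0.016>\frac{2}{171}$ and final constant $\sqrt{125/19}<3$. This is how the paper closes the estimate.
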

\begin{proof}
	Recalling the equation (\ref{eqvth}) for $\G$ and the boundary condition (\ref{bdry for Dm}) for $v_\th$, we find $\G$ is a solution to the following system:
	\ba\l{EG}
	\left\{\begin{aligned}
		&\Delta \Gamma-\bm{b} \cdot \nabla \Gamma-\frac{2}{\rho} \partial_{\rho} \Gamma-\frac{2 \cot \phi}{\rho^{2}} \partial_{\phi} \Gamma-\partial_{t} \Gamma=0, \quad \text { in } D_{m} . \\
		&\partial_{\phi} \Gamma=2(\cot \phi) \Gamma, \quad \text { on }\left(R_{1, m} \cup R_{2, m}\right) \times(0, T)\,, \\
		&\partial_{\rho} \Gamma=\frac{2}{\rho} \Gamma, \text { on } \left(A_{1 ,m}\cup A_{2 ,m}\right) \times(0, T)\,.
	\end{aligned}\right.
	\ea
	Testing equation \ef{EG}$_1$ by $\Gamma$ yields 
	\be\label{ESTG}
	\begin{aligned}
		& \frac{1}{2} \int_{D_m}\Gamma^2(x,T) \d x+\int_0^T\int_{D_m}|\nabla \Gamma|^2 \d x \d t \\
		= & \frac{1}{2} \int_{D_m} \Gamma_0^2(x) \d x+\un{\int_0^T \int_{\partial D_m} \Gamma \partial_{n} \Gamma \d S \d t-\int_0^T \int_{D_m} \frac{2}{\rho} \Gamma \partial_\rho \Gamma \d x \d t}_{G_1}\\
		&-\int_0^T \int_{D_m} \frac{2 \cot \phi}{\rho^2} \Gamma \partial_\phi \Gamma \d x \d t\,.
	\end{aligned}
	\ee
	Applying integration by parts, we get 
	\ba\l{0410E1}
	-\int_{D_m} \frac{2}{\rho} \Gamma \partial_\rho \Gamma \d x=&-2\pi\int_{\f{\pi}{2}-\al}^{\f{\pi}{2}+\al}\int_{\f{1}{m}}^1\rho\sin\phi\p_\rho\G^2\d\rho\d\phi\\
	=&\int_{D_m}\f{\G^2}{\rho^2}\d x-2\pi\int_{\f{\pi}{2}-\al}^{\f{\pi}{2}+\al}\G^2\rho\sin\phi\Big|_{\rho=\f{1}{m}}^1\d\phi\,.
	\ea
	Using the boundary condition in \eqref{EG}, the boundary integration satisfies
	\be\l{0410E2}
	\begin{aligned}
		\int_{\partial D_m} \Gamma \partial_n \Gamma \d S=&
		4\pi\int_{\pi/2-\al}^{\pi/2+\al}\G^2\rho\sin\phi\Big|_{\rho=\f{1}{m}}^{1}\d\phi + 
		4\pi\int_{\frac1m}^{1}\G^2\cos\phi\Big|_{\phi=\pi/2-\al}^{\pi/2+\al}\d\rho.
	\end{aligned}
	\ee
	Noticing the $\cos\phi$ is negative when $\phi=\pi/2+\al$ and negative when $\phi=\pi/2-\al$, the second term on the right hand side of \eqref{0410E2} is non-positive. Meanwhile, the integral along the inner arc also carries a good sign, so we can omit it. Thus we infer from \eqref{0410E1}--\eqref{0410E2} that
	\[
	\begin{aligned}
		&\int_{\partial D_m} \Gamma \partial_n \Gamma \d x-\int_{D_m} \frac{2}{\rho} \Gamma \partial_\rho \Gamma \d x\\
		\leq& 2\pi\int_{\pi/2-\al}^{\pi/2+\al}\G^2\rho\sin\phi\Big|_{\rho=1}\d\phi + 2\pi\int_{\frac1m}^{1}\G^2\cos\phi\Big|_{\phi=\pi/2-\al}^{\pi/2+\al}\d\rho+\int_{D_m}\f{\G^2}{\rho^2}\d x\\
		=& 2\pi\int_{\frac1m}^{1}\int_{\pi/2-\al}^{\pi/2+\al}\p_\rho(\G^2\rho^3\eta_{m})\sin\phi\d\phi\d\rho + 2\pi\int_{\frac1m}^{1}\int_{\phi=\pi/2-\al}^{\pi/2+\al}\p_\phi(\G^2\cos\phi)\d\phi\d\rho+\int_{D_m}\f{\G^2}{\rho^2}\d x\,,
	\end{aligned}
	\]
	where $\eta_{m} = \eta_{m}(\rho)$ refers to the cut-off function given in Lemma \ref{Lemcutoff}. Direct calculation shows
	\bn
	\int_{\partial D_m} \Gamma \partial_n \Gamma \d x-\int_{D_m} \frac{2}{\rho} \Gamma \partial_\rho \Gamma \d x\leq
	& \bigg(2 \int_{D_m} \rho\Gamma (\p_\rho \Gamma) \eta_{m} \d x+3 \int_{D_m}\Gamma^2\eta_{m}\d x + \int_{D_m}\rho|\Gamma|^2\eta_{m}'\d x\bigg)\\
	& + 2 \int_{D_m} \frac{\Gamma}{\rho} \frac{\partial_\phi \Gamma}{\rho}\cot\phi \d x.
	\en
	Substituting above in \eqref{ESTG}, one deduces
	\bn
	& \frac{1}{2} \int_{D_m}\Gamma^2(x,T) \d x+\int_0^T\int_{D_m}|\nabla \Gamma|^2 \d x \d t \\
	=\,\, & \frac{1}{2} \int_{D_m} \Gamma_0^2(x) \d x+2 \int_0^T\int_{D_m} \rho\Gamma (\p_\rho \Gamma) \eta_{m} \d x\d t \\
	& +3\int_0^T\int_{D_m}\Gamma^2\eta_{m}\d x\d t + \int_0^T\int_{D_m}\rho|\Gamma|^2\eta_{m}'\d x\d t\,.
	\en
	Using Lemma \ref{Lemcutoff} and the assumption that $m\geq 10^3$, we have 
	$|3\eta_m+ \rho\eta_{m}^{\prime}| \leq \f{13}{4}$.
	Therefore,
	$$
	\begin{aligned}
		&\frac{1}{2} \int_{D_{m}} \Gamma^{2}(x, T) \d x+\int_{0}^{T} \int_{D_{m}} |\nabla \Gamma|^{2} \d x \d t \\
		\leq & \frac{1}{2} \int_{D_{m}} \Gamma_{0}^{2}(x) d x+\f{13}{4}\int_0^T\int_{D_m}\G^2\d x\d t+2\int_0^T\int_{D_m}|\G| |\p_\rho\G|\d x\d t \,.
	\end{aligned}
	$$
	Using Cauchy-Schwartz inequality and the Poincar\'e inequality Lemma \ref{Poin3D}, for any positive constant $\lambda_1$, 
	$$
	\begin{aligned}
		& \frac{1}{2} \int_{D_{m}} \Gamma^{2}(x, T) d x+\int_{0}^{T} \int_{D_{m}}|\na\Gamma|^{2} \d x \d t \\
		\leq & \frac{1}{2} \int_{D_{m}} \Gamma_{0}^{2}(x) d x + \bigg(\f{13}{4} + \lambda_1 \bigg)\int_0^T\int_{D_m}\G^2\d x\d t + \lambda_{1}^{-1} \int_{0}^{T}\int_{D_{m}}|\partial_{\rho}\Gamma|^{2} \d x \d t \\
		\leq &\frac{1}{2} \int_{D_{m}} \Gamma_{0}^{2}(x) d x 
		+ \f{2}{19}\left(\f{13}{4} + \la_1\right) \int_{0}^{T} \int_{D_{m}} |\na\Gamma|^2\d x \d t
		+ \lambda_{1}^{-1} \int_{0}^{T}\int_{D_{m}}|\partial_{\rho}\Gamma|^{2} \d x \d t\,.
	\end{aligned}
	$$
	By choosing $\la_1 = \sqrt{19/2}$, we deduce that
	\bn
	\frac{1}{2} \int_{D_{m}} \Gamma^{2}(x, T) \d x+\int_{0}^{T} \int_{D_m} |\nabla \Gamma|^2\d x \d t\leq\frac{1}{2} \int_{D_{m}} \Gamma_{0}^{2}(x) \d x + 0.992\int_{0}^{T} \int_{D_m} |\nabla \Gamma|^2\d x \d t\,.
	\en
	This indicates that
	\bn
	\int_{0}^{T} \int_{D_m} |\nabla \Gamma|^2\d x \d t\leq \frac{125}{2}\int_{D_{m}} \Gamma_{0}^{2}(x) \d x\,.
	\en
	Then by Poincar\'e inequality Lemma \ref{Poin0} again, 
	$$
	\|\G\|_{L^2_{xt}(D_m\times[0,T])}\leq
	\sqrt{\frac{2}{19}}\|\nabla\G\|_{L^2_{xt}(D_m\times[0,T])}\leq 
	3\left\|\Gamma_{0}\right\|_{L^{2}\left(D_{m}\right)}\,.
	$$
	This completes the proof of the lemma.
\end{proof}
Based on Lemma \ref{lem3.2}, the following lemma gives a uniform bound for $\G$ by De Giorgi iteration.
\begin{lemma}\l{LEMGA}
	Let $m \geq 10^3, \alpha \in\left(0, \frac{\pi}{6}\right]$, then for any $T>0$,
	\ba\l{DeGest}
	\|\Gamma\|_{L^{\infty}(D_m \times[0, T])} \leq C_{\G}\left\|\Gamma_{0}\right\|_{L^{\infty}\left(D_{m}\right)}\,,
	\ea
	where $C_\G$ is an absolute constant which is independent of $m$ and $T$, given in Remark \ref{Remark, Cstar}.
\end{lemma}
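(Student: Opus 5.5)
We would prove \eqref{DeGest} by a De Giorgi iteration on the truncations $(\Gamma-k)_+$ (and symmetrically $(-\Gamma-k)_+$), working directly with the system \eqref{EG}. The starting level is $k_0=\|\Gamma_0\|_{L^\infty(D_m)}$, so every truncation vanishes at $t=0$. For levels $k_n=2k_0(1-2^{-n-1})$, $n\geq 0$, we set $w_n=(\Gamma-k_n)_+$ and $A_n(t)=\{x\in D_m:\Gamma(x,t)>k_n\}$. The iteration quantity is
\[
Y_n=\sup_{t\in[0,T]}\int_{D_m}w_n^2\,\d x+\int_0^T\int_{D_m}|\nabla w_n|^2\,\d x\,\d t .
\]

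\textbf{Step 1: Caccioppoli-type energy inequality.} We test \eqref{EG}$_1$ with $w=w_n$. The drift term vanishes, since $\operatorname{div}\bm b=0$ and $\bm b\cdot\bm n=0$. The lower-order terms $-\frac2\rho\p_\rho\Gamma-\frac{2\cot\phi}{\rho^2}\p_\phi\Gamma$ and the boundary term $\int_{\p D_m}w\,\p_n\Gamma$ are handled exactly as in the proof of Lemma \ref{lem3.2}, now with $w\Gamma$ in place of $\Gamma^2$ on the boundary. Since $\Gamma=w+k$ on the support of $w$, the boundary conditions give $\p_n\Gamma\cdot w=c\,(w^2+kw)$ with $c$ bounded; the ray parts and the inner arc again carry good signs. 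The outer arc and the Hardy-type term $\int w\Gamma/\rho^2$ are converted into volume integrals through $\eta_m$ and Lemma \ref{Lemcutoff}. This should give
\[
\sup_t\int w^2+\int_0^T\!\!\int|\nabla w|^2\le C\int_0^T\!\!\int_{D_m}\Big(w^2+k\,w+k^2\mathbf 1_{\{w>0\}}\Big),
\]
where the $k$-dependent terms come from the cross terms $kw\,\p\eta_m$ and $kw\,\p_\rho w$, the latter absorbed by Young's inequality. The coefficients $\frac{1}{\rho}$ and $\frac1{\rho^2}$ cause no trouble near $\rho=1/m$: after integration by parts they are multiplied by $\rho^2$ from the volume element, exactly as in \eqref{0410E1}. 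Importantly, the constant $C$ is independent of $m$.

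\textbf{Step 2: Iteration.} On $A_{n+1}$ we have $w_n\ge k_{n+1}-k_n=k_02^{-n-2}$. Hence $|A_{n+1}|_{t,x}\le 4^{n+2}k_0^{-2}\|w_n\|_{L^2_{tx}}^2$, and the lower-order terms $\int w_{n+1}^2,\ k\int w_{n+1},\ k^2|A_{n+1}|$ are bounded by $C4^n\|w_n\|^2_{L^{10/3}_{tx}}|A_{n+1}|^{2/5}$ times powers of $k_0^{-1}$. The parabolic Sobolev embedding $L^\infty_tL^2_x\cap L^2_tH^1_x\subset L^{10/3}_{tx}$ is needed with a constant uniform in $m$. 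We would obtain it by extending $w$ across the arcs and rays via reflection in the spherical variables: the domains $D_m$ are uniformly Lipschitz cones truncated at scale $1/m$, and dilation invariance near the vertex keeps the constant uniform. The result is
\[
Y_{n+1}\le C\,b^n\,k_0^{-4/3}\,T^{\gamma}\,Y_n^{1+2/5}.
\]
The factor of $T$ is a problem. To remove it we would run the iteration on time slabs $[t_0,t_0+1]$, using a time cutoff $\zeta(t)$ for slabs with $t_0\ge1$ and the initial level $k_0$ on the first slab. On each slab we need $Y_0$ small relative to $k_0^2$. Lemma \ref{lem3.2} supplies only $\|\Gamma\|_{L^2_{tx}}\le 3\|\Gamma_0\|_{L^2}\le C\|\Gamma_0\|_{L^\infty}|D|^{1/2}$, so we choose the level $M k_0$ with $M$ an absolute constant large enough that the standard De Giorgi lemma gives $Y_n\to0$. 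This yields $\Gamma\le 2Mk_0$, with $C_\Gamma=2M$ independent of $m$ and $T$. The same argument applied to $-\Gamma$ gives the lower bound.

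\textbf{Main obstacle.} I expect Step 1 to be the hardest: showing that the unfavorable outer-arc boundary term $\int_{A_{2,m}}\frac2\rho\Gamma w$ and the positive lower-order terms produced by $\frac2\rho\p_\rho$ can be controlled by $\int|\nabla w|^2$ plus level-set terms, with constants independent of $m$ and of the level $k$. Unlike in Lemma \ref{lem3.2}, $w$ has no mean-zero property, so Lemma \ref{Poin3D} is unavailable. The first attempt would be to keep these terms on the right-hand side as $k^2|A_n|$ and $\int w^2$ terms rather than absorb them, since De Giorgi tolerates such terms; this is precisely why De Giorgi is used instead of Moser, which would produce power-dependent constants.
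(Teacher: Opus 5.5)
\textbf{Step 1.} Your Step 1 is the paper's Step 1. You test with $(\G-k)_+$ and kill the drift term. You then use the negative ray contribution, after $2w(w+k)\ge w^2$, to cancel both $\int w^2\rho^{-2}\,dx$ and the $\cot\phi$ term. Finally you drop the inner arc and convert the outer arc with $\eta_m$, which gives exactly \eqref{Lap4}.

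One caution on your justification. The $\rho^{-1},\rho^{-2}$ coefficients are neutralized by this exact cancellation, not by the $\rho^2$ in the volume element. The quantity $\int w^2\rho^{-2}\,dx$ is not bounded by $\int w^2\,dx$ uniformly in $m$. So, contrary to your last paragraph, the term produced by $\frac{2}{\rho}\p_\rho$ cannot be kept on the right-hand side as an $\int w^2$ term; only the outer-arc terms can. Also, the $k$-cross term is $k\rho\eta_m\p_\rho w$, not $kw\,\p_\rho w$.

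\textbf{Step 2.} Here you genuinely depart from the paper. Your factor of $T$ comes from leaving $\int_0^T\!\int w_n^2$ out of $Y_n$, because the embedding into $L^{10/3}_{tx}$ needs the full $L^2_tH^1_x$ norm. The paper instead does the following:
\begin{itemize}
\item it iterates on $Y_n=\|w_n\|^2_{L^2_{tx}(D_m\times[0,T])}$;
\item it adds $\int_0^T\!\int w_n^2$ to both sides of the energy inequality, which is \eqref{Lap5};
\item it uses the $T$-free bound $\|w\|^2_{L^{10/3}_{tx}}\le\|w\|^2_{L^\infty_tL^2_x}+C_{sob}\|w\|^2_{L^2_tH^1_x}$.
\end{itemize}
Combined with Chebyshev, $|S_{n+1}|\le 4^{n+1}B^{-2}Y_n$, this gives $\widetilde Y_{n+1}\le C_3\,8^n\widetilde Y_n^{7/5}$ on all of $[0,T]$ at once. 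Then $\widetilde Y_0$ is made small a single time using the $T$-uniform bound of Lemma~\ref{lem3.2}.

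Your unit-slab scheme with time cutoffs is also correct. Lemma~\ref{lem3.2} gives $Y_0\lesssim k_0^2$ on every slab, so an absolute $M$ works and $C_\G=2M$ is independent of $m$ and $T$. It would even survive with the weaker input $\sup_{t_0}\int_{t_0}^{t_0+1}\!\int\G^2\lesssim k_0^2$. The price is the cutoff bookkeeping, which the paper's choice of iteration quantity avoids entirely.

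Finally, tidy your exponents. With your $Y_n$ the recursion is $Y_{n+1}\le Cb^n k^{-4/5}Y_n^{7/5}$, or $Cb^nk^{-4/3}Y_n^{5/3}$ if you apply Chebyshev in $L^{10/3}$. These are the scale-invariant forms; the mixed form $k_0^{-4/3}Y_n^{7/5}$ is not.
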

\begin{proof}
	We only prove the positive part of \eqref{DeGest}, i.e.
	\[
	\|\max\{\Gamma,0\}\|_{L^{\infty}(D_m \times[0, T])} \leq C\left\|\Gamma_{0}\right\|_{L^{\infty}\left(D_{m}\right)}\,,
	\]
	since $-\G$ satisfies the same equation as $\G$. 
	
	\leftline{\textbf{Step 1: Energy estimates for truncated $\Gamma$.}}
	
	Fix $k\geq0$ and set
	\[
	w\ed(\G-k)_+=\max\{\G-k,0\}\,.
	\]
	Test the weak form of \eqref{EG}$_1$ by $w$. 
	\ba\l{EGAMMA}
	& - \int_{D_m} (\partial_t\Gamma)w\d x + \int_{D_m} (\Delta\Gamma)w\d x-\int_{D_m} (\bm{b}\cdot\nabla\Gamma)w\d x
	-\int_{D_m} \frac{2}{\rho} (\p_\rho\Gamma) w\d x\\
	-&\int_{D_m} \frac{2\cot\phi}{\rho^{2}} (\p_\phi\Gamma) w\d x = 0.
	\ea
	Using $w=(\G-k)_+$, the term for time derivative is
	\ba\l{ETem}
	\int_{D_m} (\partial_t\Gamma)w\d x=\frac12\frac{\d}{\d t}\int_{D_m} w^2\d x\,.
	\ea
	Since $\na\G=\na w$ a. e. on $\{\G>k\}$ and $\na w=0$ on $\{\G\leq k\}$, the Laplacian term gives 
	\ba\l{Lap0428}
	\int_{D_m} (\Delta\Gamma)w\d x=& -\int_{D_m} \nabla\Gamma\cdot\nabla w\d x + \int_{\partial D_m} (\partial_n\Gamma)w\d S\\
	=& -\int_{D_m} |\nabla w|^2\d x + \un{2\pi\int_{\f{\pi}{2}-\al}^{\f{\pi}{2}+\al}w (\p_\rho\G) \rho^2\sin\phi\d\phi\bigg|_{\rho=\f{1}{m}}^1}_{B_1}\\
	& + \un{2\pi\int_{\f{1}{m}}^{1} w (\p_\phi \G) \sin\phi\d\rho\bigg|_{\phi=\f{\pi}{2}-\al}^{\f{\pi}{2}+\al}}_{B_2}\,.
	\ea
	In addition, 
	\be\label{drift_zero}
	\int_{D_m} (\bm{b}\cdot\nabla\Gamma)w\d x = \int_{D_m} (\bm{b}\cdot\nabla w)w\d x = 0.
	\ee
	Plugging (\ref{ETem})--(\ref{drift_zero}) into (\ref{EGAMMA}) yields 
	\be\label{Lap1}\begin{split}
		&\frac12\frac{\d}{\d t}\int_{D_m} w^2\d x + \int_{D_m} |\nabla w|^2\d x \\
		=\quad & B_1 + B_2 - \int_{D_m} \frac{2}{\rho} (\partial_\rho\Gamma) w\d x - \int_{D_m} \frac{2\cot\phi}{\rho^{2}} (\partial_\phi\Gamma) w\d x,
	\end{split}\ee
	where $B_1$ and $B_2$ are as defined in (\ref{Lap0428}).
	
	Using \eqref{EG}$_2$, one deduces
	\ba\l{BG2}
	B_2=2\pi\int_{\f{1}{m}}^12 w(w+k)\cos\phi\d\rho\bigg|_{\phi=\f{\pi}{2}-\al}^{\f{\pi}{2}+\al}\,.
	\ea
	Since $\cos\phi$ is negative at $\phi = \frac{\pi}{2} + \al$ and positive at $\phi = \frac{\pi}{2} - \al$, the right hand side of (\ref{BG2}) is negative, so 
	\[
	B_2 \leq 2\pi\int_{\f{1}{m}}^1 w(w+k)\cos\phi\d\rho\bigg|_{\phi=\f{\pi}{2}-\al}^{\f{\pi}{2}+\al} \leq 2\pi\int_{\f{1}{m}}^1 w^2\cos\phi\d\rho\bigg|_{\phi=\f{\pi}{2}-\al}^{\f{\pi}{2}+\al}.
	\]
	To exploit cancellation properties, we use the fundamental theorem of calculus to find
	\begin{align*}
		B_2 &\leq 2\pi\int_{\f{1}{m}}^1 \int_{\f{\pi}{2}-\al}^{\f{\pi}{2}+\al} \p_{\phi} [w^2\cos\phi] \d \phi \d \rho \\
		&= 2\pi\int_{\f{1}{m}}^1 \int_{\f{\pi}{2}-\al}^{\f{\pi}{2}+\al} [ 2w (\p_\phi w) \cos\phi - w^2 \sin\phi] \d \phi \d \rho \\
		&= \int_{D_m} \frac{2 \cot\phi}{\rho^2} w (\p_\phi w) \d x - \int_{D_m} \frac{w^2}{\rho^2} \d x.
	\end{align*}
	Noticing $w\p_\phi w = w \p_\phi \Gamma$, so 
	\be\label{B2}
	B_2 \leq \int_{D_m} \frac{2 \cot\phi}{\rho^2} w (\p_\phi \Gamma) \d x - \int_{D_m} \frac{w^2}{\rho^2} \d x.
	\ee
	Putting (\ref{B2}) into (\ref{Lap1}) yields 
	\be\label{Lap2}
	\frac12\frac{\d}{\d t}\int_{D_m} w^2\d x + \int_{D_m} |\nabla w|^2\d x 
	\leq B_1 - \int_{D_m} \frac{2}{\rho} (\partial_\rho\Gamma) w\d x 
	- \int_{D_m} \frac{w^2}{\rho^2} \d x.
	\ee
	
	Next, we compute $B_1$. Recalling the boundary condition \eqref{EG}$_3$, we have
	\[
	B_1 = 4\pi \int_{\f{\pi}{2}-\al}^{\f{\pi}{2}+\al} \rho (w^2+kw)(\rho,\phi,t)\Big|_{\rho=\f{1}{m}}^1 \sin\phi\d\phi.
	\]
	Meanwhile,
	\bn
	-\int_{D_m} \f{2}{\rho} (\p_\rho\G) w\d x =& -2\pi\int_{\frac1m}^1\int_{\f{\pi}{2}-\al}^{\f{\pi}{2}+\al} 2\rho w (\p_\rho w) \sin\phi\d\rho\d\phi\\
	= & 2\pi\int_0^1\int_{\f{\pi}{2}-\al}^{\f{\pi}{2}+\al}w^2\sin\phi\d\rho\d\phi - 2\pi\int_{\f{\pi}{2}-\al}^{\f{\pi}{2}+\al}w^2\rho\sin\phi\Big|_{\rho=\frac1m}^{1} \d\phi\\
	=& \int_{D_m} \frac{w^2}{\rho^2} \d x - 2\pi\int_{\f{\pi}{2}-\al}^{\f{\pi}{2}+\al}w^2\rho\sin\phi\Big|_{\rho=\frac1m}^{1}\d\phi\,.
	\en
	Plugging the above two estimates into (\ref{Lap2}) leads to 
	\bn
	& \frac12\frac{\d}{\d t}\int_{D_m} w^2\d x + \int_{D_m} |\nabla w|^2\d x \\
	\leq\,\, & 2\pi\int_{\f{\pi}{2}-\al}^{\f{\pi}{2}+\al}\rho w^2(\rho,\phi,t)\Big|_{\rho=\f{1}{m}}^1\sin\phi\d\phi
	+ 4\pi\int_{\f{\pi}{2}-\al}^{\f{\pi}{2}+\al} k\rho w(\rho,\phi,t)\Big|_{\rho=\f{1}{m}}^1\sin\phi\d\phi\,.
	\en
	Next, we drop all the integral terms on the inner arc, which have the good sign, to obtain
	\be\label{Lap3}\begin{split}
		& \frac12\frac{\d}{\d t}\int_{D_m} w^2\d x + \int_{D_m} |\nabla w|^2\d x \\
		\leq\,\, & \un{2\pi\int_{\f{\pi}{2}-\al}^{\f{\pi}{2}+\al}  w^2(1,\phi,t) \sin\phi\d\phi}_{B_3}
		+ \un{4\pi k\int_{\f{\pi}{2}-\al}^{\f{\pi}{2}+\al} w(1,\phi,t) \sin\phi\d\phi}_{B_4}.
	\end{split}\ee
	For the integral terms on the outer arc, we recall $\eta_{m} = \eta_{m}(\rho)$, the cut-off function given in Lemma \ref{Lemcutoff}. Using the Newton-Leibniz formula:
	\bn
	B_3 &= 2\pi\int_{\f{\pi}{2}-\al}^{\f{\pi}{2}+\al} \int_{\frac1m}^{1} \p_\rho (\rho^3 w^2 \eta_m ) \sin\phi \d \rho \d\phi \\
	&= \int_{D_m} \big( 3 w^2 \eta_m + 2\rho w (\p_\rho w) \eta_m + \rho w^2 \eta_m'\big) \d x \\
	&= \int_{D_m} (3\eta_m + \rho \eta_m') w^2 \d x + 2 \int_{D_m} \rho w (\p_\rho w) \eta_m \d x.
	\en	
	Taking advantage of Lemma \ref{Lemcutoff} and applying the Cauchy-Schwarz inequality, we find 
	\be\label{B3}\begin{split}
		|B_3| &\leq \frac{13}{4} \int_{D_m} w^2 \d x + \Big(4 \int_{D_m} w^2 \d x  + \frac14 \int_{D_m} |\p_\rho w|^2 \d x\Big) \\
		&=  \frac{29}{4} \int_{D_m} w^2 \d x + \frac14 \int_{D_m} |\p_\rho w|^2 \d x.
	\end{split}\ee
	Meanwhile, 
	\bn
	B_4 &= 4\pi k \int_{\f{\pi}{2}-\al}^{\f{\pi}{2}+\al} \int_{\frac1m}^{1} \p_\rho (\rho^3 w \eta_m ) \sin\phi \d \rho \d\phi \\
	&= 2k \int_{D_m} (3\eta_m + \rho \eta_m') w \d x + 2k \int_{D_m} \rho (\p_\rho w) \eta_m \d x\,.
	\en
	Let $S$ denote the support of the function $w$ within the space-time domain $D_m\times[0,T]$:
	\be\label{S_supp}
	S \ed \{(x,t)\in D_{m}\times [0,T]: \Gamma(x,t) > k\}\,.
	\ee
	Accordingly, we define $\chi_{S}$ to be the characteristic function of $S$. Then it follows from Lemma \ref{Lemcutoff} that 
	\[
	|B_4| \leq \frac{13}{2} \int_{D_m} k w \chi_{S} \d x + 2 \int_{D_m} k |\p_\rho w| \chi_{S}\d x\,.
	\]
	Now we apply Cauchy-Schwarz inequality to obtain
	\be\label{B4}\begin{split}
		|B_4| &\leq \frac{13}{2} \Big( \frac12\int_{D_m} k^2 \chi_S \d x + \frac12 \int_{D_m} w^2 \d x\Big) + \Big( 4\int_{D_m} k^2 \chi_S \d x + \frac14 \int_{D_m} |\p_\rho w|^2 \d x\Big) \\
		&\leq  8\int_{D_m} k^2 \chi_S \d x + \frac{13}{4} \int_{D_m} w^2 \d x + \frac14 \int_{D_m} |\p_\rho w|^2 \d x\,.
	\end{split}\ee
	Plugging (\ref{B3}) and (\ref{B4}) into (\ref{Lap3}) yields 
	\[\begin{split}
		& \frac12\frac{\d}{\d t}\int_{D_m} w^2\d x + \int_{D_m} |\nabla w|^2\d x\leq8\int_{D_m} k^2 \chi_S \d x + \frac{21}{2} \int_{D_m} w^2 \d x + \frac12 \int_{D_m} |\p_\rho w|^2 \d x\,,
	\end{split}\]
	which implies that 
	\[
	\frac{\d}{\d t}\int_{D_m} w^2\d x + \int_{D_m} |\nabla w|^2\d x
	\leq 16\int_{D_m} k^2 \chi_S \d x + 21 \int_{D_m} w^2 \d x\,.
	\]
	Integrating in time $t$ over $[0,T]$ leads to
	\[\begin{split}
		& \int_{D_m} w^2(x,T) \d x + \int_{0}^{T}\int_{D_m} |\nabla w|^2\d x \d t \\
		\leq\,\, & \int_{D_m} w^2(x,0) \d x + 16 k^2 |S| + 21\int_{0}^{T}\int_{D_m} w^2 \d x \d t\,,
	\end{split}\]
	where $|S|$ represents the Lebesgue measure of the set $S$ in (\ref{S_supp}).
	Since $T$ is arbitrary, the above result implies that 
	\be\label{Lap4}\begin{split}
		&\sup_{t\in[0,T]}\int_{D_m}|w(x,t)|^2\d x + \int_{0}^{T}\int_{D_m} |\nabla w|^2\d x \d t\\
		\leq\,\,& 2\int_{D_m} w^2(x,0) \d x + 42\int_{0}^{T}\int_{D_m} w^2 \d x \d t + 32 k^2 |S|\,.
	\end{split}\ee
	
	\leftline{\textbf{Step 2: Iteration with $k$}.}
	We set
	\ba\l{DefB}
	B \ed C_{\Gamma} \|\G_0\|_{L^\i}\,,
	\ea
	where $C_{\Gamma}$ is a positive constant to be determined later. Then for any integer $n\geq 0$, we denote
	\[
	k_n = (1-2^{-n})B\,, \quad w_n\ed(\Gamma-k_n)_+\,, \quad S_{n} = \{(x,t) \in D_m\times [0,T]: \Gamma > k_n\}\,.
	\]
	We first require $C_{\G} \geq 2$, then for any $n\geq 1$, $k_n \geq B/2 \geq \|\Gamma_0\|_{L^\infty}$, so 
	\[
	w_n(x,0) = (\Gamma_0(x) - k_n)_{+} = 0, \quad \forall\, x\in D_m\,.
	\]
	
	For $n\geq 1$, replacing $w$ with $w_n$ in (\ref{Lap4}) yields
	\be\label{Lap5}\begin{split}
		&\sup_{t\in[0,T]}\int_{D_m}|w_n(x,t)|^2\d x + \int_{0}^{T}\int_{D_m} |\nabla w_n|^2\d x \d t + \int_{0}^{T}\int_{D_m} w_n^2\d x \d t\\
		\leq\,\, & 43 \int_{0}^{T}\int_{D_m} w_n^2 \d x \d t + 32 k^2 |S_n|\,.
	\end{split}\ee	
	For $n\geq 0$, define
	\[
	Y_n\ed \int_0^T\int_{D_m}|w_n|^2\d x\d t\,.
	\]
	Using H\"older's inequality and the Sobolev embedding theorem, one deduces
	\be\label{Yn_est1}
	Y_{n} = \int_0^T\int_{D_m} |w_{n}|^2\d x\d t
	\leq \| w_{n} \|_{L^{10/3}_{tx}(D_m\times [0,T])}^{2} \big| S_{n} \big|^{\f{2}{5}} .
	\ee
	Applying the H\"older's inequality, we have
	\[
	\| w_{n} \|_{L^{10/3}_{tx}(D_m\times [0,T])} \leq\|w_{n}\|_{L^{\infty}_{t} L^{2}_{x}(D_m\times [0,T])}^{2/5} 
	\|w_{n}\|_{L^{2}_{t} L^{6}_{x}(D_m\times [0,T])}^{3/5}\,,
	\]
	and this further implies by Young's inequality that
	\[
	\| w_{n} \|_{L^{10/3}_{tx}(D_m\times [0,T])} \leq \frac25 \|w_{n}\|_{L^{\infty}_{t} L^{2}_{x}(D_m\times [0,T])} + \frac{3}{5} \|w_{n}\|_{L^{2}_{t} L^{6}_{x}(D_m\times [0,T])}\,.
	\]
	So 
	\[
	\| w_{n} \|_{L^{10/3}_{tx}(D_m\times [0,T])}^{2} \leq\|w_{n}\|_{L^{\infty}_{t} L^{2}_{x}(D_m\times [0,T])}^{2} + \|w_{n}\|_{L^{2}_{t} L^{6}_{x}(D_m\times [0,T])}^{2}\,.
	\]
	Plugging this estimate into (\ref{Yn_est1}) leads to 
	\bn
	Y_{n} &\leq \Big( \|w_{n}\|_{L^{\infty}_{t} L^{2}_{x}(D_m\times [0,T])}^{2} +  \|w_{n}\|_{L^{2}_{t} L^{6}_{x}(D_m\times [0,T])}^{2} \Big) \big| S_{n} \big|^{\f{2}{5}}.
	\en
	From the Sobolev embedding theorem, there exists a constant $C_{sob} > 0$ such that 
	\be\label{Sob_emb}
	\|w_{n}\|^2_{L^{2}_{t} L^{6}_{x}(D_m\times [0,T])} \leq C_{sob} \|w_{n}\|^2_{L^{2}_{t} H^{1}_{x}(D_m\times [0,T])}\,.
	\ee
	Combining the above three inequalities together leads to 
	\[
	Y_{n} \leq\Big( \|w_{n}\|_{L^{\infty}_{t} L^{2}_{x}(D_m\times [0,T])}^{2} + C_{sob} \|w_{n}\|_{L^{2}_{t} H^{1}_{x}(D_m\times [0,T])}^{2} \Big) \big| S_{n} \big|^{\f{2}{5}}, \quad \forall\, n\geq 0.
	\]
	Plugging this into (\ref{Lap5}) with $n\geq 1$ yields 
	\[\begin{split}
		Y_{n} &\leq C_1 \Big( \|w_{n}\|_{L^{\infty}_{t} L^{2}_{x}(D_m\times [0,T])}^{2} + \| w_{n}\|_{L^{2}_{t} H^{1}_{x}(D_m\times [0,T])}^{2} \Big) \big| S_{n} \big|^{\f{2}{5}} \\
		& \leq 43 C_1 \bigg( \int_{0}^{T}\int_{D_m} w_n^2 \d x \d t + k_n^2 |S_n| \bigg) |S_n|^{\frac25},
	\end{split}\]
	where 
	\bn
	C_1 := \max\{ 1, C_{sob} \}.
	\en
	Since $k_n^2 \leq B^2$, then 
	\be\label{Yn_est3}
	Y_{n} \leq C_2 ( Y_n + B^2 |S_n|) |S_n|^{\frac25}\,, \qquad\forall\, n\geq 1\,,
	\ee
	where $C_2 := 43 C_1$. For $n\geq 0$, noticing that 
	\[
	w_{n}(x,t) = (\Gamma(x,t) - k_n)_{+} > (k_{n+1} - k_n) = 2^{-(n+1)} B \quad\text{on}\quad S_{n+1}\,,
	\] 
	so
	\be\label{Sn_est1}
	|S_{n+1}| \le {2^{2n+2}}{B^{-2}}\int_0^T\int_{D_m} |w_n|^2\d x\d t = 2^{2n+2} B^{-2} Y_n\,.
	\ee
	As a result, for any $n\geq 0$, by applying (\ref{Yn_est3}) with $n+1$ and using \eqref{Sn_est1} yields
	\be\label{Yn_est4}
	Y_{n+1} \leq C_2  (Y_{n+1} + 2^{2n+2}Y_{n}) (2^{2n+2} B^{-2} Y_n)^{\frac25}.
	\ee
	Noticing $Y_{n+1}\leq Y_{n}$, it then follows from (\ref{Yn_est4}) that 
	\[
	Y_{n+1} \leq C_2 (1 + 2^{2n+2}) Y_n(2^{2n+2} B^{-2} Y_n)^{\frac25}.
	\]
	Define $\widetilde{Y}_{n} = B^{-2} Y_n$. Then the above estimate indicates that 
	\ba\l{Degiorgi}
	\widetilde{Y}_{n+1}\leq C_3 8^n \widetilde{Y}_n^{\f{7}{5}}, \qquad \forall\, n\geq 0,
	\ea
	where $C_3 := 10C_2$.
	
	Taking logarithm of both sides of (\ref{Degiorgi}) yields 
	\[
	\ln \widetilde{Y}_{n+1} \leq \ln C_3 + \la n + \frac75 \ln \widetilde{Y}_n,
	\]
	where $\la := \ln 8$. The above inequality can be converted into a recurrence relation: 
	\[g_{n+1} \leq \frac75 g_{n}, \quad\forall\, n\geq 0,\]
	where 
	\[
	g_{n}:= \ln \widetilde{Y}_n + \frac52 \la n + \frac{25}{4}\la + \frac52 \ln C_3.
	\]
	As a result, 
	\[
	g_{n} \leq \Big(\frac75\Big)^{n} g_0, \quad\forall\, n\geq 0.
	\]
	This implies that 
	\[
	\ln \widetilde{Y}_n \leq \Big(\frac75\Big)^{n} \Big(\ln \widetilde{Y}_0 + \frac{25}{4}\la + \frac52 \ln C_3 \Big) - \frac52 \la n - \frac{25}{4}\la - \frac52 \ln C_3, \quad\forall\, n\geq 0.
	\]
	Noticing that if 
	\be\label{coef_neg}
	\ln \widetilde{Y}_0 + \frac{25}{4}\la + \frac52 \ln C_3  \leq 0,
	\ee
	then one concludes that $\widetilde{Y}_n\to0$ as $n\to\infty$, and hence,
	\[
	\|\Gamma_+\|_{L^\infty(D_m\times[0,T])} \leq B = C_{\G} \|\Gamma_0\|_{L^\infty}.
	\]
	Since $\widetilde{Y}_0 = B^{-2} \|\Gamma_+\|_{L^2(D_m\times [0,T])}$ and $\la = \ln 8$, then (\ref{coef_neg}) reduces to 
	\be\label{B_large1}
	B^2 \geq 8^{25/4} C_3^{5/2} \|\Gamma_+\|^2_{L^2(D_m\times[0,T])}.
	\ee
	Thanks to Lemma \ref{lem3.2}, 
	\[
	\|\Gamma_+\|^2_{L^2(D_m\times[0,T])} \leq 9 \|\Gamma_0\|^2_{L^2(D_m)}.
	\]
	By H\"older's inequality, one finds that 	
	\[\|\Gamma_0\|_{L^2(D_m)} \leq \|\Gamma_0\|_{L^{\infty}(D_m)} |D_m|^{1/2} \leq \frac{2\pi}{3}  \|\Gamma_0\|_{L^{\infty}(D_m)}, \]
	so 
	$\|\Gamma_+\|^2_{L^2(D_m\times[0,T])} \leq 4\pi^2 \|\Gamma_0\|^2_{L^\infty(D_m)}$. Then based on (\ref{B_large1}) and the definition of $B$ in (\ref{DefB}), it suffices to guarantee
	\[
	C_{\G}^{2} \|\Gamma_0\|^{2}_{L^{\infty}(D_m)} \geq  8^{25/4} C_3^{5/2} 4\pi^2 \|\Gamma_0\|^2_{L^\infty(D_m)}.
	\]
	This goal can be achieved by choosing 
	\be\label{C_star}
	C_{\G} = 2^{11} C_3^{5/4} \pi.
	\ee
	With this choice of $C_\G$, 
	\[
	\|\Gamma_+\|_{L^\infty(D_m\times[0,T])} \leq C_{\G} \|\Gamma_0\|_{L^\infty(D_m)}.
	\]
	This estimate also holds for $\Gamma_{-}$ for the similar reason. Since $C_{sob}$ is an absolute constant which is  independent of $m$ and $T$, the lemma is established.
\end{proof}

\begin{rem}\label{Remark, Cstar}
	We remark that the constant $C_\G$ defined in (\ref{C_star}) has the following expression:
	\[
	C_{\G} = 2^{11} \big( 430\max \{ 1, C_{sob}\} \big)^{\frac54} \pi,
	\]
	where $C_{sob}$ is the Sobolev embedding constant in (\ref{Sob_emb}). Noting that the domains $D_m$ eventually converge to $D$ which is a bounded Lipschitz domain, so the Sobolev embedding constant $C_{sob}$ in (\ref{Sob_emb}) can be chosen uniformly in $m$.
\end{rem}

As a consequence of Lemma \ref{LEMGA}, if the initial data $\Gamma_0$ satisfies (\ref{COND}), then $\Gamma$ remains uniformly small for all time. This smallness will be used repeatedly in Section \ref{Sec9} to absorb nonlinear terms in the energy estimates for $(\m{K},\m{F},\m{O})$.

\section{Estimates based on the Biot-Savart law}\l{Sec8}

In this section, we will take advantage of the Biot-Savart law to control $\| \nabla (\frac{\bm{v}}{\rho})\|_{L^2(D_m)}$ and $\|\frac{1}{\rho} \nabla (\frac{\bm{v}}{\rho})\|_{L^2(D_m)}$ in terms of the triple of good unknowns $(\m{K}, \m{F}, \m{O})$. 
The goal is to control singular velocity derivatives in terms of $\m{K}$, $\m{F}$ and $\m{O}$, so that Section \ref{Sec9} can close the energy estimate.

\subsection{Biot-Savart law involving $(\m{K}, \m{F}, \m{O})$}
We first recall the classical Biot-Savart law $\Delta v = -\nabla \times \o$ in spherical coordinates. According to \cite[(4.12) and the discussion before (4.13)]{LPYZZZ24}, the Biot-Savart law can be written as 
\be\l{Biot-Savart}
\left\{
\begin{aligned}
	&\left(\Dl+\f{2}{\rho}\p_\rho+\f{2}{\rho^2}\right)v_\rho=-\f{1}{\rho\sin\phi}\p_\phi(\sin\phi\o_\th)\,,\\
	&\left(\Dl+\f{2}{\rho}\p_\rho+\f{1-\cot^2\phi}{\rho^2}\right)v_\phi=\f{1}{\rho^3}\p_\rho(\rho^3\o_\th)\,,\\
	&\left(\Dl-\f{1}{\rho^2\sin^2\phi}\right)v_\th=-\f{1}{\rho}\p_\rho(\rho\o_\phi)+\f{1}{\rho}\p_\phi\o_\rho\,.
\end{aligned}
\right.
\ee
Based on the definition (\ref{key-triple}), we know 
\be\label{KFO_vor}
\left\{
\begin{aligned}
	&\m{K} = \frac{\o_\rho}{\rho} - \frac{2 v_\th \cot\phi}{\rho^2}, \vspace{0.1in}\\
	&\mF = \frac{\o_\phi}{\rho} + \frac{2 v_\th}{\rho^2}, \vspace{0.1in} \\ 
	&\m{O} = \f{\o_\th}{\rho\sin\phi} - \f{2v_\phi \eta}{\rho^2\sin\phi}\,.
\end{aligned}
\right.
\ee
Plugging these relations into the right hand side of (\ref{Biot-Savart}) yields 
\be\l{Biot-Savart2}
\left\{
\begin{aligned}
	&\left(\Dl+\f{2}{\rho}\p_\rho+\f{2}{\rho^2}\right)v_\rho = - \f{1}{\rho\sin\phi} \p_\phi \Big( \rho \sin^2\phi \, \m{O} + \frac{2\sin\phi}{\rho} v_\phi \eta \Big)\,,\\
	&\left(\Dl+\f{2}{\rho}\p_\rho+\f{1-\cot^2\phi}{\rho^2}\right)v_\phi = \f{1}{\rho^3} \p_\rho \big( \rho^4 \sin\phi \,\m{O} + 2\rho^2 v_\phi \eta \big)\,,\\
	&\left(\Dl - \frac{2}{\rho}\p_\rho - \frac{2\cot\phi}{\rho^2}\p_\phi + \f{1}{\rho^2\sin^2\phi}\right)v_\th = -\f{1}{\rho}\p_\rho(\rho^2 \m{F}) + \p_\phi \m{K}\,.
\end{aligned}
\right.
\ee
Noticing that the right hand side of the equation for $v_\rho$ in (\ref{Biot-Savart2})$_{1}$ contains the function $v_\phi$, so our strategy is to estimate $v_\phi$ first based on (\ref{Biot-Savart2})$_{2}$ and then estimate $v_\rho$ by taking advantage of the estimate for $v_\phi$.

\subsection{Control of $v_\phi/\rho$ via $\m{O}$}

\begin{lemma}\label{Lemma, f_2_est}
	Let the region $D_m$ be as defined in \eqref{app domain-cyl} with $m\geq 10^3$ and the angle $\alpha\in (0,\frac{\pi}{6}]$. Then for any $T>0$ and a.e. $t\in[0,T]$, the following hold:
	\begin{align}
		\|\nabla (v_\phi/\rho)(\cdot,t)\|_{L^2(D_m)}
		&\le \sqrt{3}\,\|\m{O}(\cdot,t)\|_{L^2(D_m)},  \label{v_phi_est1}\\
		\left\|\frac{1}{\rho}\nabla (v_\phi/\rho)(\cdot,t)\right\|_{L^2(D_m)}
		&\le 10\sqrt{3}\,\|\nabla\m{O}(\cdot,t)\|_{L^2(D_m)}.  \label{v_phi_est2}
	\end{align}
\end{lemma}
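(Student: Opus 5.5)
We derive an elliptic equation for $f_2 := v_\phi/\rho$ from the $v_\phi$ line of the Biot--Savart system (\ref{Biot-Savart2})$_2$ and test it against $f_2$ and $f_2/\rho^2$.

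\textbf{Step 1: equation for $f_2$.} Writing $v_\phi=\rho f_2$, a direct computation with (\ref{Lap_sph}) gives $\Delta(\rho f_2)=\rho\Delta f_2+2\partial_\rho f_2+\frac{2}{\rho}f_2$, so (\ref{Biot-Savart2})$_2$ becomes
\[
\Big(\Delta+\frac{4}{\rho}\partial_\rho+\frac{5-\cot^2\phi}{\rho^2}\Big)f_2=\frac{1}{\rho^4}\partial_\rho\big(\rho^4\sin\phi\,\m{O}+2\rho^3 f_2\eta\big).
\]
The term $\frac{1}{\rho^4}\partial_\rho(2\rho^3 f_2\eta)$ has lower order and is supported in $\{\rho\geq 1/3\}$, so it can be moved to the left or bounded crudely.

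\textbf{Step 2: boundary conditions.} On the rays $f_2=0$ by (\ref{NTSR}). On $A_{2,m}$, (\ref{NTSA}) gives $\partial_\rho v_\phi=v_\phi/\rho$, hence $\partial_\rho f_2=0$. On $A_{1,m}$, (\ref{NTS-NHL}) gives $\partial_\rho f_2=-\frac{2}{\rho}f_2$. Since $\m{O}=0$ on $\partial D_m$ by (\ref{BCOKF}), integrating the right-hand side by parts in $\rho$ produces no $\m{O}$ boundary terms. The $\eta$-term vanishes on $A_{1,m}$, but on $A_{2,m}$ it leaves a trace term $\sim\int_{A_{2,m}} f_2^2$, which we handle with the $\eta_m$ Newton--Leibniz trick from Lemma \ref{Lemma, lap_int}.

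\textbf{Step 3: first estimate (\ref{v_phi_est1}).} Multiply by $-f_2$ and integrate. The term $-\int(\frac{4}{\rho}\partial_\rho f_2)f_2$ becomes $2\int f_2^2/\rho^2$ plus arc boundary terms, using $\partial_\rho(\rho f_2^2)$ in the volume element $\rho^2\d\rho$. At the inner arc this boundary term has the good sign after combining with $\partial_n f_2$ there. The resulting zeroth-order coefficient is roughly $-(7-\cot^2\phi)/\rho^2$, which is not signed favorably. This is exactly what Lemma \ref{PoinB} with $C_{\pi/6,B}=3/25$ (since $f_2=0$ on the rays) controls: $\int f_2^2/\rho^2\le\frac{3}{25}\int|\partial_\phi f_2|^2/\rho^2$, so the negative part is absorbed, leaving a fixed fraction of $\|\nabla f_2\|^2$ as long as $7\cdot\frac{3}{25}<1$. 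The right-hand side, after integration by parts, is $\int \sin\phi\,\m{O}\,\partial_\rho f_2$ plus the $\eta$ terms. Cauchy--Schwarz then gives $\|\nabla f_2\|\le\sqrt{3}\|\m{O}\|$ once the constants are tracked, with the $\eta$ contributions bounded using $\eta'\le 6$ and $\rho\geq1/3$ on the support.

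\textbf{Step 4: weighted estimate (\ref{v_phi_est2}).} Test instead with $-f_2/\rho^2$, or equivalently run the same computation for the weighted quantity. The additional commutator terms $\partial_\rho(\rho^{-2})$ produce more $\rho^{-4}f_2^2$ and $\rho^{-3}f_2\partial_\rho f_2$ terms, again absorbed by Lemma \ref{PoinB} in $\phi$ applied with weight $\rho^{-2}$. On the right-hand side we integrate by parts so that the derivative lands on $\m{O}$: $\int \rho^{-2}\sin\phi\,\partial_\rho\m{O}\,f_2$ plus $\rho^{-3}\m{O}f_2$ terms. We then use Cauchy--Schwarz with $\|f_2/\rho^2\|$, $\|\m{O}/\rho\|$, and bound $\|\m{O}/\rho\|\lesssim\|\nabla\m{O}\|$ by Lemma \ref{PoinB}, which applies because $\m{O}$ vanishes on the rays. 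The cruder constants here account for the factor $10$ in $10\sqrt3$.

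\textbf{Main obstacle.} We expect the hardest step to be the boundary terms on the inner arc $A_{1,m}$ in the weighted estimate of Step 4. There the weight $\rho^{-2}$ blows up like $m^2$, so we must verify that the induced Robin condition $\partial_\rho f_2=-\frac{2}{\rho}f_2$ gives a good sign rather than merely a bounded contribution. The first thing to try is to write all inner-arc terms as multiples of $\int_{A_{1,m}}\rho^{-1}f_2^2\,\d S$ and check the sign directly, discarding them if they are favorable, as was done for $\Gamma$ in Section \ref{Sec7}.
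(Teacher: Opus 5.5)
Your architecture matches the paper's: equation \eqref{f2_eq2} for $f_2=v_\phi/\rho$, tests against $f_2$ and $\rho^{-2}f_2$, and Lemma \ref{PoinB} with $\frac{3}{25}$. The gap is that the lemma consists precisely of the constants $\sqrt3$ and $10\sqrt3$, the margins are very thin, and your bookkeeping does not deliver them.

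\textbf{Step 3.} The zeroth-order coefficient is $3-\cot^2\phi$, not $7-\cot^2\phi$: the drift contributes $+2\int f_2^2/\rho^2$ against $-\int(5-\cot^2\phi)f_2^2/\rho^2$. With your $7$ the angular surplus is only $\frac{4}{25}$. Absorbing $2\int\rho^{-1}f_2\,\m{O}\sin\phi$ by Young and Lemma \ref{PoinB} then gives at best $\|\nabla f_2\|^2\le\frac{75}{4}\|\m{O}\|^2$, so Step 3 as written cannot yield \eqref{v_phi_est1}.

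Even with the correct $3$, $\sqrt3$ comes out only through exact cancellations you do not identify. The inner-arc traces from $\Delta$ and from $\frac4\rho\partial_\rho$ cancel. The outer-arc trace $4\pi\int f_2^2(1,\phi)\sin\phi\,\d\phi$ from the drift cancels the one produced by $\rho^{-4}\partial_\rho(2\rho^3f_2\eta)$, because $\eta(1)=1$. So no $\eta_m$ trick is needed, and one obtains the boundary-free identity
\[
\begin{aligned}
\|\nabla f_2\|_{L^2(D_m)}^2&=2\int_{D_m}\frac{f_2\,\partial_\rho f_2}{\rho}\,\eta\d x+\int_{D_m}\frac{(3-4\eta-\cot^2\phi)f_2^2}{\rho^2}\d x\\
&\quad-2\int_{D_m}\frac{f_2}{\rho}\,\m{O}\sin\phi\d x+\int_{D_m}\partial_\rho f_2\,\m{O}\sin\phi\d x .
\end{aligned}
\]
The cross term must be split as $\frac14\|\partial_\rho f_2\|^2+4\int\rho^{-2}f_2^2\eta^2$, so that $\eta^2\le\eta$ cancels it exactly against $-4\eta$. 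Young on the two $\m{O}$-terms then gives $\|\nabla f_2\|^2\le\frac{125}{42}\|\m{O}\|^2<3\|\m{O}\|^2$.

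No $\eta'$-terms survive in this identity. A ``crude'' treatment via $\eta'\le6$ and $\rho\ge\frac13$ does not help: nothing previously controlled is available to bound terms supported in $\{\rho\ge\frac13\}$, so they eat into the same budget and $\sqrt3$ is lost.

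\textbf{Step 4.} The inner arc is not where the danger lies. In boundary-free form (converting the inner-arc trace from $\Delta$ with the cutoff $1-\eta$), the weighted identity reads
\[
\begin{aligned}
\int_{D_m}\frac{|\nabla f_2|^2}{\rho^2}\d x&=\int_{D_m}\frac{(2+2\eta)f_2\,\partial_\rho f_2}{\rho^3}\d x+\int_{D_m}\frac{(7-8\eta-\cot^2\phi)f_2^2}{\rho^4}\d x\\
&\quad+\int_{D_m}\frac{\partial_\rho f_2}{\rho^2}\,\m{O}\sin\phi\d x-4\int_{D_m}\frac{f_2}{\rho^3}\,\m{O}\sin\phi\d x .
\end{aligned}
\]
Here the zeroth-order part already uses $\frac{21}{25}$ of the angular budget, and the cross term $2\int\rho^{-3}f_2\partial_\rho f_2$ carries no cutoff. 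Integrating that term by parts gives a favorable inner trace $-2\pi m\int f_2^2(\frac1m,\phi)\sin\phi\,\d\phi$. It also gives an unfavorable outer trace $2\pi\int f_2^2(1,\phi)\sin\phi\,\d\phi$, and raises the zeroth-order coefficient to $8$, i.e.\ $\frac{24}{25}$ of the budget.

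The paper bounds this outer trace, and the $\eta$-supported cross term, by $\|\nabla f_2\|^2$, hence via \eqref{v_phi_est1} and \eqref{O_Poincare} by multiples of $\|\nabla\m{O}\|^2$. Your plan mentions neither the outer trace nor any use of \eqref{v_phi_est1}, so the factor $10$ is asserted rather than derived.

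Alternatively, you can apply Young directly in the boundary-free form: $(2+2\eta)ab\le\lambda b^2+\lambda^{-1}(1+\eta)^2a^2$ with $a=f_2/\rho^2$, $b=\partial_\rho f_2/\rho$ and $\lambda\in(\frac34,1)$. This keeps the zeroth-order coefficient at most $7+\lambda^{-1}<\frac{25}{3}$ and closes without \eqref{v_phi_est1}. Either way, some such explicit computation has to be carried out.
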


\begin{proof}
	Let $f_2 \ed \frac{v_\phi}{\rho}$. Recalling the equation (\ref{Biot-Savart2})$_{2}$ for $v_\phi$ and the boundary conditions \eqref{NTSR}--\eqref{NTS-NHL}, we replace $v_\phi$ by $\rho f_2$ to obtain
	\be\label{f2_eq2}
	\left\{\begin{aligned}
		&\left(\Delta+\dfrac{4}{\rho}\partial_\rho
		+\dfrac{5-\cot^2\phi}{\rho^2}\right)f_2
		= \dfrac{1}{\rho^4}\partial_\rho
		(\rho^4 \m{O} \sin\phi\, + 2\rho^3 f_2 \eta), \quad \text{ in } D_m,\\
		&f_2=0, \quad \text{ on } R_{1,m} \cup R_{2,m},\\
		&\partial_\rho f_2=-\dfrac{2}{\rho}f_2, \quad \text{ on } A_{1,m},\\
		&\partial_\rho f_2=0, \quad \text{ on } A_{2,m}.
	\end{aligned}\right.
	\ee
	
	Testing (\ref{f2_eq2}) by $f_2$ yields
	\be\label{f2_energy}
	\begin{aligned}
		&\int_{D_m} f_2 \Delta f_2 \d x
		+ 4\int_{D_m}\frac{f_2}{\rho}\partial_\rho f_2 \d x
		+\int_{D_m}\frac{5-\cot^2\phi}{\rho^2}f_2^2 \d x \\
		= \quad & \int_{D_m}\frac{1}{\rho^4} f_2
		\partial_\rho(\rho^4 \m{O} \sin\phi) \d x +  \int_{D_m}\frac{1}{\rho^4} f_2
		\partial_\rho(2\rho^3 f_2\eta) \d x.
	\end{aligned}
	\ee
	Using integration by parts and the boundary conditions for $f_2$ in (\ref{f2_energy}), we obtain
	\[
	\int_{D_m} f_2 \Delta f_2 \d x
	= -\int_{D_m}|\nabla f_2|^2 \d x + \int_{A_{1,m}}\frac{2}{\rho}f_2^2 \d S.
	\]
	For any point $x$ on $A_{1,m}$, its corresponding $\rho$ is $\frac1m$. For convenience of notation, we denote \[\rho_1 \ed \frac1m.\] 
	Meanwhile, we recall the cut-off function $\eta$ in (\ref{DCUT}). Then the above surface integral can be rewritten as 
	\begin{align*}
		\int_{A_{1,m}}\frac{2}{\rho}f_2^2 \d S &= 4\pi \int_{{\f{\pi}{2}-\al}}^{{\f{\pi}{2}+\al}} f_2^2(\rho_1, \phi) \rho_1 \sin\phi \d\phi \\
		&= -4\pi \int_{{\f{\pi}{2}-\al}}^{{\f{\pi}{2}+\al}} \bigg(\int_{\frac1m}^{1} \p_{\rho} 
		\big[\rho f_2^2(1-\eta)\big] \d\rho \bigg) \sin\phi\d\phi \\
		&= -2 \int_{D_m} \bigg[ \frac{f_2^2}{\rho^2}(1-\eta) + \frac{2}{\rho} f_2 (\p_{\rho} f_2) (1-\eta) - \frac{f_2^2}{\rho}\eta' \bigg]\d x. 
	\end{align*}
	So
	\be\label{vphiest1}
	\begin{split}
		\text{LHS of (\ref{f2_energy})} &= -\int_{D_m}|\nabla f_2|^2 \d x + 4\int_{D_m}\frac{f_2}{\rho} (\partial_\rho f_2)\eta \d x + \int_{D_m} (3 + 2\eta - \cot^2\phi) \frac{f_2^2}{\rho^2} \d x \\
		&\qquad + \int_{D_m} \frac{2}{\rho} f_2^2\eta' \d x.
	\end{split}\ee
	We turn to the computation of the right hand side of (\ref{f2_energy}). Firstly, it follows from the integration by parts that 
	\bn
	\int_{D_m}\frac{1}{\rho^4} f_2 \partial_\rho(\rho^4 \m{O} \sin\phi) \d x &= 
	2\pi \int_{\frac{\pi}{2}-\al}^{\frac{\pi}{2}+\al} \bigg( \int_{\frac1m}^{1} \frac{\sin\phi}{\rho^2} f_2 \p_\rho(\rho^4 \m{O} \sin\phi) \d\rho \bigg) \d \phi \\
	&= -2\pi \int_{\frac{\pi}{2}-\al}^{\frac{\pi}{2}+\al} \bigg( \int_{\frac1m}^{1} \p_\rho\Big(\frac{\sin\phi}{\rho^2} f_2\Big) \rho^4 \m{O} \sin\phi \d\rho \bigg) \d \phi \\
	&= 2\int_{D_m} \frac{f_2}{\rho} \m{O}\sin\phi \d x - \int_{D_m}(\p_{\rho} f_2) \m{O} \sin\phi \d x.
	\en
	As a result, 
	\be\label{vphiest2}
	\begin{split}
		\text{RHS of (\ref{f2_energy})} &= \bigg(2\int_{D_m} \frac{f_2}{\rho} \m{O}\sin\phi \d x - \int_{D_m}(\p_{\rho} f_2) \m{O} \sin\phi \d x\bigg)\\
		&\quad + \bigg(\int_{D_m} \frac{6 f_2^2}{\rho^2} \eta \d x + \int_{D_m} \frac{2f_2}{\rho} (\p_{\rho} f_2) \eta \d x + \int_{D_m} \frac{2}{\rho} f_2^2 \eta' \d x\bigg)\,.
	\end{split}\ee
	Combining (\ref{vphiest1}) and (\ref{vphiest2}) together yields 
	\[\begin{split}
		\int_{D_m} |\nabla f_2|^2 \d x &=  2\int_{D_m}\frac{f_2}{\rho} (\partial_\rho f_2)\eta \d x + \int_{D_m} (3 - 4\eta - \cot^2\phi) \frac{f_2^2}{\rho^2} \d x \\
		& \quad - 2\int_{D_m} \frac{f_2}{\rho} \m{O}\sin\phi \d x + \int_{D_m}(\p_{\rho} f_2) \m{O} \sin\phi \d x\,.
	\end{split}\]
	By Cauchy-Schwarz inequality, for any positive $\la_1$, $\la_2$ and $\la_3$, 
	\[\begin{split}
		\int_{D_m} |\nabla f_2|^2 \d x & \leq \bigg( \la_1 \int_{D_m} (\p_{\rho} f_2)^2 \d x + \frac{1}{\la_1} \int_{D_m} \frac{f_2^2}{\rho^2} \eta^2\d x \bigg) + \bigg(3 \int_{D_m} \frac{f_2^2}{\rho^2}\d x - 4 \int_{D_m} \frac{f_2^2}{\rho^2} \eta \d x \bigg)\\
		& \quad + \bigg( \la_2 \int_{D_m} \frac{f_2^2}{\rho^2}\d x + \frac{1}{\la_2} \int_{D_m} |\m{O}|^2 \d x  \bigg) \\
		&\quad + \bigg( \la_3 \int_{D_m} (\p_{\rho} f_2)^2 \,dx + \frac{1}{4\la_3} \int_{D_m} |\m{O}|^2 \d x \bigg).
	\end{split}\]
	Rearranging terms yields 
	\[\begin{split}
		\int_{D_m} |\nabla f_2|^2 \d x &\leq (\la_1 + \la_3) \int_{D_m} (\p_{\rho} f_2)^2 \d x + (3+\la_2) \int_{D_m} \frac{f_2^2}{\rho^2} \d x \\
		& \quad + \Big( \frac{1}{\la_1} - 4 \Big)\int_{D_m} \frac{f_2^2}{\rho^2} \eta \d x + \Big( \frac{1}{\la_2} + \frac{1}{4\la_3} \Big) \int_{D_m} |\m{O}|^2 \d x.
	\end{split}\]
	Noticing that $f_2=0$ on rays $R_{1,m} \cup R_{2,m}$, it then follows from Lemma \ref{PoinB} that 
	\[ 
	\int_{D_m} \frac{f_2^2}{\rho^2} \d x \leq \frac{3}{25} \int_{D_m} \frac{1}{\rho^2}(\p_{\phi} f_2)^2 \d x\,. 
	\]
	Thus, by taking $\la_1 = \frac14$, $\la_2 = \frac{12}{5}$, $\la_3 = \frac25$, we get 
	\bn
	\int_{D_m} |\nabla f_2|^2 \d x &\leq \frac{13}{20} \int_{D_m} (\p_{\rho} f_2)^2 \d x 
	+ \frac{81}{125} \int_{D_m} \frac{1}{\rho^2}(\p_{\phi} f_2)^2 \d x
	+ \frac{25}{24} \int_{D_m} |\m{O}|^2 \d x \\
	&\leq \frac{13}{20} \int_{D_m} |\nabla f_2|^2 \d x  + \frac{25}{24} \int_{D_m} |\m{O}|^2 \d x,
	\en
	which implies (\ref{v_phi_est1}).
	
	Next, we turn to prove (\ref{v_phi_est2}). Testing (\ref{f2_eq2}) by $\rho^{-2}f_2$ yields
	\be\label{f2_energy2}
	\begin{aligned}
		&\int_{D_m} \frac{1}{\rho^2} f_2 \Delta f_2 \d x
		+ 4\int_{D_m}\frac{f_2}{\rho^3}\partial_\rho f_2 \d x
		+\int_{D_m}\frac{5-\cot^2\phi}{\rho^4}f_2^2 \d x \\
		= \quad & \int_{D_m}\frac{1}{\rho^6} f_2
		\partial_\rho(\rho^4 \m{O} \sin\phi) \d x +  \int_{D_m}\frac{1}{\rho^6} f_2
		\partial_\rho(2\rho^3 f_2\eta) \d x.
	\end{aligned}
	\ee
	Via integration by parts, we deduce 
	\ba\l{IBP0318}
	\int_{D_m} \frac{1}{\rho^2} f_2 \Delta f_2 \d x &= \int_{\p D_m} \frac{1}{\rho^2} f_2 \p_{n} f_2 \d S - \int_{D_m} \nabla\Big( \frac{1}{\rho^2} f_2 \Big) \nabla f_2 \d x\,.
	\ea
	According to the boundary conditions for $f_2$ in (\ref{f2_energy}), the surface integral in (\ref{IBP0318}) can be computed as 
	\[\begin{split}
		\int_{\p D_m} \frac{1}{\rho^2} f_2 \p_{n} f_2 \d S &= \int_{A_{1,m}} \frac{1}{\rho_1^2} f_2 \p_{n} f_2 \d S \\
		=\,\, & 4\pi \int_{{\f{\pi}{2}-\al}}^{{\f{\pi}{2}+\al}} \frac{1}{\rho_1} f_2^2(\rho_1, \phi) \sin\phi \d\phi \\
		=\,\,&  -4\pi \int_{{\f{\pi}{2}-\al}}^{{\f{\pi}{2}+\al}} \int_{\rho_1}^{\rho_2} \p_{\rho}\Big( \frac{f_2^2}{\rho}(1-\eta)\Big) \sin\phi \d\phi \\
		=\,\, & -2 \int_{D_m} \bigg[ -\frac{f_2^2}{\rho^4}(1-\eta) + \frac{2}{\rho^3} f_2 (\p_{\rho} f_2) (1-\eta) - \frac{f_2^2}{\rho^3}\eta' \bigg]\d x. 
	\end{split}\]
	Plugging this equality into (\ref{IBP0318}) yields 
	\bn
	\int_{D_m} \frac{1}{\rho^2} f_2 \Delta f_2 \d x & = -\int_{D_m} \frac{1}{\rho^2} |\nabla f_2|^2 \d x + \int_{D_m}\frac{f_2}{\rho^3} (\partial_\rho f_2) (4\eta-2) \d x \\
	&\quad + \int_{D_m} \frac{f_2^2}{\rho^4} (2 + 2\rho\eta' - 2\eta) \d x.
	\en
	Hence, 
	\be\label{vphi_rho_est1}\begin{split}
		\text{LHS of (\ref{f2_energy2})} &= -\int_{D_m} \frac{1}{\rho^2} |\nabla f_2|^2 \d x + \int_{D_m}\frac{f_2}{\rho^3} (\partial_\rho f_2) (2+4\eta) \d x \\
		&\quad  + \int_{D_m} \frac{f_2^2}{\rho^4} (7 + 2\rho\eta' - 2\eta - \cot^2\phi) \d x.
	\end{split}\ee
	
	On the other hand, analogous to the derivation of (\ref{vphiest2}), we use integration by parts to find
	\be\label{vphi_rho_est2}\begin{split}
		\text{RHS of (\ref{f2_energy2})} &=  \bigg( 4\int_{D_m} \frac{f_2}{\rho^3} \m{O}\sin\phi \d x - \int_{D_m} \frac{1}{\rho^2} (\p_{\rho} f_2) \m{O} \sin\phi \,dx \bigg) \\
		& \quad + \bigg( 2\int_{D_m} \frac{f_2}{\rho^3} (\p_{\rho} f_2) \eta \d x + \int_{D_m} \frac{ f_2^2}{\rho^4} (6\eta + 2\rho \eta') \d x \bigg)\,.
	\end{split}\ee
	Combining (\ref{vphi_rho_est1}) and (\ref{vphi_rho_est2}) together gives
	\be\label{vphi_rho_est3}
	\begin{split}
		\int_{D_m} \frac{1}{\rho^2} |\nabla f_2|^2 \d x &= \int_{D_m} \frac{1}{\rho^2} (\p_{\rho} f_2) \m{O} \sin\phi \d x - 4\int_{D_m} \frac{f_2}{\rho^3} \m{O}\sin\phi \d x \\
		&\quad + \int_{D_m}\frac{f_2}{\rho^3} (\partial_\rho f_2) (2 + 2\eta) \d x + \int_{D_m} \frac{f_2^2}{\rho^4} (7 - 8\eta - \cot^2\phi) \d x,
	\end{split}
	\ee
	Since $f_2$ vanishes on rays, then it follows from the Poincar\'e inequality (\ref{PoinBE}) that 
	\bn
	\int_{D_m} \frac{f_2^2}{\rho^4} \d x \leq \frac{3}{25} \int_{D_m} \frac{1}{\rho^4}(\p_{\phi} f_2)^2 \d x.
	\en
	Plugging this inequality into (\ref{vphi_rho_est3}) leads to 
	\be\l{EE0416}
	\begin{split}
		\int_{D_m} \frac{1}{\rho^2} |\nabla f_2|^2 \d x &\leq \int_{D_m} \frac{1}{\rho^2} (\p_{\rho} f_2) \m{O} \sin\phi \d x - 4\int_{D_m} \frac{f_2}{\rho^3} \m{O}\sin\phi \d x \\
		&\quad + \un{ 2\int_{D_m}\frac{f_2}{\rho^3} (\p_\rho f_2) \d x}_{I_1} 
		+ \un{2\int_{D_m}\frac{f_2}{\rho^3} (\partial_\rho f_2) \eta \d x}_{I_2} 
		+ \frac{21}{25} \int_{D_m} \frac{1}{\rho^4}(\p_{\phi} f_2)^2 \d x.
	\end{split}
	\ee
	Since the coefficient $\frac{21}{25}$ on the right hand side of (\ref{EE0416}) is very close to $1$, then we have to treat other terms with particular care. The most challenging term is $I_1$ since its integrand is quadratic in $f_2$ and does not involve the cut-off function $\eta$. Next, we will first discuss how to estimate $I_1$ effectively. 
	
	Using integration by parts and the boundary conditions for $f_2$ in (\ref{f2_eq2}), we have
	\bn
	I_1
	=& 2\pi\int_\f{1}{m}^1\int_{\f{\pi}{2}-\al}^{\f{\pi}{2}+\al}\f{1}{\rho} 
	(\p_\rho f_2^2) \sin\phi\d\phi\d\rho\\
	=&2\pi\int_\f{1}{m}^1\int_{\f{\pi}{2}-\al}^{\f{\pi}{2}+\al}\f{f_2^2}{\rho^2}\sin\phi\d\phi\d\rho+2\pi\int_{\f{\pi}{2}-\al}^{\f{\pi}{2}+\al}\f{f_2^2}{\rho}\sin\phi\d\phi\Big|_{\rho=\f{1}{m}}^1 \\
	=&  \int_{D_m}\f{f_2^2}{\rho^4}\d x + 2\pi\int_{\f{\pi}{2}-\al}^{\f{\pi}{2}+\al}\f{f_2^2}{\rho}\sin\phi\d\phi\Big|_{\rho=1}
	- 2\pi\int_{\f{\pi}{2}-\al}^{\f{\pi}{2}+\al}\f{f_2^2}{\rho}\sin\phi\d\phi\Big|_{\rho=\frac1m}\,,
	\en
	which implies that 
	\ba\l{E20416}
	I_1 \leq \int_{D_m}\f{f_2^2}{\rho^4}\d x
	+\un{2\pi\int_{\f{\pi}{2}-\al}^{\f{\pi}{2}+\al}\f{f_2^2}{\rho}\sin\phi\d\phi\Big|_{\rho=1}}_{I_{11}}\,.
	\ea
	Moreover, using the cut-off function $\eta$ and the fact that $\rho=1$ in $I_{11}$, we obtain
	\bn
	I_{11}=&2\pi\int_{\f{\pi}{2}-\al}^{\f{\pi}{2}+\al}\rho\eta f_2^2\sin\phi\d\phi\Big|_{\rho=\f{1}{m}}^1\\
	=&2\pi\int_{\f{1}{m}}^1\int_{\f{\pi}{2}-\al}^{\f{\pi}{2}+\al}\p_\rho(\rho\eta f_2^2)\sin\phi\d\phi\d\rho\\
	=&\int_{D_m}\f{2 f_2}{\rho} (\p_\rho f_2)\eta + \f{f_2^2}{\rho^2}(\eta+\rho\eta^\prime)\d x\,.
	\en
	Since $|\eta^\prime|\leq 6$ and $0\leq \eta\leq 1$, then
	\be\label{E0523}\begin{split}
		|I_{11}|\leq& 2\Big\|\f{f_2}{\rho}\Big\|_{L^2(D_m)}
		\|\p_\rho f_2\|_{L^2(D_m)}+7\Big\|\f{f_2}{\rho}\Big\|_{L^2(D_m)}^2\\
		\leq & \f{2\sqrt{3}}{5}\Big\|\f{\p_\phi f_2}{\rho}\Big\|_{L^2(D_m)}\|\p_\rho f_2\|_{L^2(D_m)}
		+\f{21}{25}\Big\|\f{\p_\phi f_2}{\rho}\Big\|_{L^2(D_m)}^2 
		\leq \|\nabla f_2\|_{L^2(D_m)}^2\,,
	\end{split}\ee
	Now we take advantage of estimate (\ref{v_phi_est1}) to infer from (\ref{E0523}) that 
	\[
	|I_{11}| \leq 3 \|\m{O}\|_{L^2(D_m)}^2\,.
	\]
	Since $\m{O}$ vanishes on the rays according to the boundary condition (\ref{BCOKF}), 	
	then we apply the Poincar\'e inequality in Lemma \ref{PoinB} to deduce 
	\be\label{O_Poincare}
	\|\m{O}\|_{L^2(D_m)}^2 \leq \frac{3}{25} \|\nabla \m{O}\|_{L^2(D_m)}^2. 
	\ee
	Thus, 	
	\ba\l{E30416}
	|I_{11}|\leq\f{9}{25}\|\na\m{O}\|_{L^2(D_m)}^2\,.
	\ea
	Substituting (\ref{E30416}) into (\ref{E20416}) yields 
	\ba\l{E0523-2}
	I_1 &\leq \int_{D_m}\f{f_2^2}{\rho^4}\d x + \f{9}{25}\|\na\m{O}\|_{L^2(D_m)}^2 \\
	&\leq \frac{3}{25} \int_{D_m} \frac{1}{\rho^4}(\p_{\phi} f_2)^2 \d x + \f{9}{25}\|\na\m{O}\|_{L^2(D_m)}^2\,.
	\ea
	
	On the other hand, since $\eta\equiv 0$ when $\rho\leq\f{1}{3}$, direct calculation shows
	\ba\l{E40416}
	I_2 = 2\int_{D_m}\f{f_2}{\rho^3} (\p_\rho f_2) \eta\d x
	\leq 18 \|\p_\rho f_2\|_{L^2(D_m)}\Big\|\f{f_2}{\rho}\Big\|_{L^2(D_m)}.
	\ea
	Next, we apply Poincar\'e inequality (\ref{PoinBE}) and estimate (\ref{v_phi_est1}) to (\ref{E40416}) to find
	\[
	I_2 \leq 18 \frac{\sqrt{3}}{5} \|\nabla f_2\|_{L^2(D_m)}^2 \leq \frac{54\sqrt{3}}{5} \|\m{O}\|_{L^2(D_m)}^2\,.
	\]
	Combining this estimate with (\ref{O_Poincare}) gives 
	\be\label{E0523-3}
	I_2 \leq \frac{162\sqrt{3}}{125} \|\na\m{O}\|_{L^2(D_m)}^2\,.
	\ee

	Putting (\ref{E0523-2}) and (\ref{E0523-3}) into (\ref{EE0416}) leads to 
	\[
	\begin{split}
		\int_{D_m} \frac{1}{\rho^2} |\nabla f_2|^2 \d x &\leq \int_{D_m} \frac{1}{\rho^2} (\p_{\rho} f_2) \m{O} \sin\phi \d x - 4\int_{D_m} \frac{f_2}{\rho^3} \m{O}\sin\phi \d x \\
		&\quad + \frac{24}{25} \int_{D_m} \frac{1}{\rho^4}(\p_{\phi} f_2)^2 \d x + 3\|\nabla\m{O}\|_{L^2(D_m)}^2\,.
	\end{split}
	\]
	
	Then by applying the Cauchy-Schwarz inequality, for any positive $\la_1$, $\la_2$ and $\la_3$,
	\[\begin{split}
		\int_{D_m} \frac{1}{\rho^2} |\nabla f_2|^2 \,dx &\leq \bigg( \la_1 \int_{D_m} \Big(\frac{1}{\rho} \p_{\rho} f_2 \Big)^2 \,dx + \frac{1}{4\la_1} \int_{D_m} \Big( \frac{1}{\rho} \m{O}\Big)^2 \d x \bigg) \\
		&\quad + \bigg( \la_2 \int_{D_m} \Big(\frac{f_2}{\rho^2}\Big)^2 \,dx + \frac{4}{\la_2} \int_{D_m} \Big(\frac{1}{\rho} \m{O}\Big)^2 \d x \bigg) \\
		&\quad + \frac{24}{25} \int_{D_m} \frac{1}{\rho^4}(\p_{\phi} f_2)^2 \d x + 3\|\nabla\m{O}\|_{L^2(D_m)}^2\,.
	\end{split}\]
	Rearranging the above terms, and noticing (\ref{O_Poincare}) and
	\[ 
	\int_{D_m} \frac{f_2^2}{\rho^4} \d x \leq \frac{3}{25} \int_{D_m} \frac{1}{\rho^4}(\p_{\phi} f_2)^2 \d x, 
	\]
	we obtain 
	\[\begin{split}
		\int_{D_m} \frac{1}{\rho^2} |\nabla f_2|^2 \d x &\leq \la_1\int_{D_m} \Big(\frac{1}{\rho} \p_{\rho} f_2 \Big)^2 \d x + \Big(\frac{3\la_2}{25} + \frac{24}{25}\Big) \int_{D_m} \Big(\frac{1}{\rho^2} \p_\phi f_2\Big)^2 \d x \\
		&\quad + \left(\frac{3}{25}\Big( \frac{1}{4\la_1} + \frac{4}{\la_2} \Big)+3\right) \int_{D_m} | \nabla \m{O} |^2 \d x.
	\end{split}\]
	By choosing $\la_1 = \frac{49}{50}$ and $\la_2 = \frac16$, we find 
	\[
	\int_{D_m} \frac{1}{\rho^2} |\nabla f_2|^2 \d x \leq 300 \int_{D_m} | \nabla \m{O} |^2 \d x,
	\]
	which justifies \eqref{v_phi_est2}.
\end{proof}


\subsection{Control of $v_\rho/\rho$ via $\m{O}$}
\begin{lemma}\label{Lemma, f_1_est}
	Let the region $D_m$ be as defined in \eqref{app domain-cyl} with $m\geq 10^3$ and the angle $\alpha\in (0,\frac{\pi}{6}]$. Then for any $T>0$ and a.e. $t\in[0,T]$, the following hold
	\begin{align}
		\|\nabla (v_\rho/\rho)(\cdot,t)\|_{L^2(D_m)}
		&\le 4\,\|\m{O}(\cdot,t)\|_{L^2(D_m)},  \label{v_rho_est1}\\
		\left\|\frac{1}{\rho}\nabla (v_\rho/\rho)(\cdot,t)\right\|_{L^2(D_m)}
		&\le 240\,\|\nabla\m{O}(\cdot,t)\|_{L^2(D_m)}.   \label{v_rho_est2}
	\end{align}
\end{lemma}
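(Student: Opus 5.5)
I will mirror the proof of Lemma \ref{Lemma, f_2_est}, working with $f_1 \ed \frac{v_\rho}{\rho}$ and the Biot--Savart equation (\ref{Biot-Savart2})$_1$. Replacing $v_\rho$ by $\rho f_1$ gives an equation of the form
\[
\Big(\Delta + \frac{4}{\rho}\p_\rho + \frac{6}{\rho^2}\Big) f_1 = -\frac{1}{\rho^2\sin\phi}\p_\phi\Big(\rho\sin^2\phi\,\m{O} + \frac{2\sin\phi}{\rho} v_\phi\eta\Big).
\]
The boundary conditions follow from (\ref{NTSR})--(\ref{NTS-NHL}): $f_1=0$ on $A_{1,m}\cup A_{2,m}$ and $\p_\phi f_1 = 0$ on $R_{1,m}\cup R_{2,m}$. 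These are more favourable than those for $f_2$, since no Robin trace appears on the inner arc. By Lemma \ref{v_rho_mean0}, $f_1(\rho,\cdot)$ also has zero $\sin\phi$-weighted mean for each $\rho$. Hence Lemma \ref{Poin0} gives
\[
\|f_1/\rho\|_{L^2} \leq \sqrt{2/19}\,\|\rho^{-1}\p_\phi f_1\|_{L^2},
\]
and the same holds with the extra weight $\rho^{-1}$.

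For (\ref{v_rho_est1}) I test the equation with $f_1$. The Laplacian term gives $-\|\nabla f_1\|^2$ with no boundary integral. The drift term $4\int \frac{f_1}{\rho}\p_\rho f_1 = 2\int \frac{1}{\rho}\p_\rho f_1^2$ integrates by parts to $-2\int f_1^2/\rho^2$, again without boundary terms. So the zeroth-order terms combine to $+4\int f_1^2/\rho^2$, which is bounded by $\frac{8}{19}\|\nabla f_1\|^2$. On the right-hand side I integrate by parts in $\phi$; the boundary terms vanish because $\m{O}=v_\phi=0$ on the rays. This produces
\[
\int \frac{\p_\phi f_1}{\rho}\sin\phi\,\m{O} \quad\text{and}\quad 2\int \frac{\p_\phi f_1}{\rho}\,\frac{v_\phi\eta}{\rho^2}.
\]
The second integral is supported in $\rho\geq\frac13$, so it is bounded by $C\|\nabla f_1\|\,\|v_\phi/\rho\|$. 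Since $\|v_\phi/\rho\|\leq\frac{\sqrt3}{5}\|\nabla f_2\|\leq\frac{3}{5}\|\m{O}\|$ by Lemma \ref{PoinB} and (\ref{v_phi_est1}), Young's inequality then yields $\|\nabla f_1\|\leq C\|\m{O}\|$. The constant will have to be tracked to land at $4$, possibly after choosing the Young weights carefully.

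For (\ref{v_rho_est2}) I test with $\rho^{-2}f_1$. The Laplacian now produces a cross term $2\int \frac{f_1}{\rho^3}\p_\rho f_1$ from $\nabla\rho^{-2}$. Combined with the drift this gives $c\int \frac{1}{\rho^3}\p_\rho f_1^2$, which integrates to a multiple of $\int f_1^2/\rho^4$, again with no boundary terms since $f_1$ vanishes on the arcs. So the left side becomes $\|\rho^{-1}\nabla f_1\|^2$ minus a multiple of $\|f_1/\rho^2\|^2$. The Poincar\'e bound with constant $\frac{2}{19}$ in $\phi$ lets me absorb this. This absorption is the main obstacle: I expect a coefficient like $\frac{2}{19}(6+\text{const})<1$, and it must be checked honestly, just as the $\frac{21}{25}$ bottleneck was in Lemma \ref{Lemma, f_2_est}.

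The $\m{O}$ term then gives $\|\rho^{-1}\nabla f_1\|\,\|\m{O}/\rho\|$. Because $\m{O}$ vanishes on the rays, Lemma \ref{PoinB} gives $\|\m{O}/\rho\|\leq\frac{\sqrt3}{5}\|\rho^{-1}\p_\phi\m{O}\|\leq\frac{\sqrt3}{5}\|\nabla\m{O}\|$. The $v_\phi\eta$ term is localized to $\rho\geq\frac13$, so the weights are harmless there. It is controlled by $C\|\nabla f_1\|\,\|\nabla f_2\|$, which by (\ref{v_rho_est1}), (\ref{v_phi_est1}) and (\ref{O_Poincare}) is at most $C\|\nabla\m{O}\|^2$. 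Young's inequality then closes the bound with a large absolute constant, which I expect to be at most $240$.
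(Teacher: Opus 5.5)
Your outline matches the paper's proof: the same test functions $f_1$ and $\rho^{-2}f_1$, the same boundary-free integrations by parts, and the same absorption of the zeroth-order terms through the zero-mean Poincar\'e inequality of Lemma \ref{Poin0}. The absorption you flag as the main risk does close. In the $\rho^{-2}f_1$ test the contributions are $\int f_1^2/\rho^4$ from $\nabla\rho^{-2}$, $2\int f_1^2/\rho^4$ from the drift, and $6\int f_1^2/\rho^4$ from the potential. This gives $9\cdot\frac{2}{19}=\frac{18}{19}$, leaving a margin of exactly $\frac{1}{19}$.

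One step, as you set it up, cannot give the stated constant $4$ in \eqref{v_rho_est1]}. This is the $\eta v_\phi$ term. You pass through the support of $\eta$, bounding $\eta/\rho$ by a constant $K$, and then use $\|v_\phi/\rho\|\le\frac35\|\mathcal{O}\|$. That leads to $\|\nabla f_1\|^2\le\frac{8}{19}\|\nabla f_1\|^2+(1+\frac{6K}{5})\|\nabla f_1\|\,\|\mathcal{O}\|$, hence at best $\|\nabla f_1\|\le\frac{19}{11}(1+\frac{6K}{5})\|\mathcal{O}\|$. With $K=3$ this is about $7.9$. Since $\eta/\rho=\frac32$ at $\rho=\frac23$, no choice of $K$ or of Young weights brings it down to $4$. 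Instead use only $\eta\le 1$:
\[
\Bigl|2\int\frac{\partial_\phi f_1}{\rho}\frac{\eta f_2}{\rho}\Bigr|\le 2\|\rho^{-1}\partial_\phi f_1\|\,\|f_2/\rho\|,
\]
with $\|f_2/\rho\|\le\frac{\sqrt3}{5}\|\rho^{-1}\partial_\phi f_2\|\le\frac35\|\mathcal{O}\|$ by Lemma \ref{PoinB} and \eqref{v_phi_est1}. Then $\|\nabla f_1\|\le\frac{19}{11}\cdot\frac{11}{5}\|\mathcal{O}\|=\frac{19}{5}\|\mathcal{O}\|<4$, which is what the paper does.

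For \eqref{v_rho_est2} your treatment of the $\eta f_2$ term genuinely differs from the paper's, and here the support argument is the better choice. The paper keeps the singular weight and bounds the term by $\|f_2/\rho^2\|\le\frac{\sqrt3}{5}\|\rho^{-2}\partial_\phi f_2\|\le 6\|\nabla\mathcal{O}\|$ using \eqref{v_phi_est2}. It then absorbs $\|\rho^{-2}\partial_\phi f_1\|^2$ with Young weights totalling $\frac{1}{38}$, which is why it ends near $\sqrt{38\cdot 1450}\approx 235$. You instead use $\rho\ge\frac13$ on $\mathrm{supp}\,\eta$ to reduce the term to $162\,\|\nabla f_1\|\,\|f_2\|$. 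By \eqref{v_rho_est1}, \eqref{v_phi_est1} and \eqref{O_Poincare} this is at most $162\cdot 4\cdot\frac35\cdot\frac{3}{25}\|\nabla\mathcal{O}\|^2\approx 47\|\nabla\mathcal{O}\|^2$. This avoids \eqref{v_phi_est2} entirely. Solving $\frac{1}{19}B^2\le\frac{\sqrt3}{5}B\|\nabla\mathcal{O}\|+47\|\nabla\mathcal{O}\|^2$ for $B=\|\rho^{-1}\nabla f_1\|$ gives $B\le 34\|\nabla\mathcal{O}\|$, far below $240$. Even with your weaker first-step constant of about $7.9$ it stays below $50$.
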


\begin{proof}
	Let $f_1 \ed \frac{v_\rho}{\rho}$. Recalling the equation (\ref{Biot-Savart2})$_{1}$ for $v_\rho$ and the boundary conditions \eqref{NTSR}--\eqref{NTS-NHL}, we replace $v_\rho$ by $\rho f_1$ to obtain
	\be\label{f1_eq2}
	\left\{\begin{aligned}
		&\left(\Delta + \dfrac{4}{\rho}\partial_\rho
		+ \dfrac{6}{\rho^2}\right) f_1
		= -\frac{1}{\rho \sin\phi} \partial_\phi
		\Big(\m{O} \sin^2\phi\, + \frac{2 f_2 \eta \sin\phi}{\rho}\Big), \quad \text{ in } D_m,\\
		& \p_\phi f_1 = 0, \quad \text{ on } R_{1,m} \cup R_{2,m},\\
		& f_1 = 0, \quad \text{ on } A_{1,m} \cup A_{2,m}.
	\end{aligned}\right.
	\ee
	
	Testing (\ref{f1_eq2}) by $f_1$ yields
	\be\label{f1_energy}
	\begin{aligned}
		&\int_{D_m} f_1 \Delta f_1 \d x
		+ 4\int_{D_m}\frac{f_1}{\rho}\partial_\rho f_1 \d x
		+ 6 \int_{D_m}\frac{f_1^2}{\rho^2} \d x \\
		= \quad & -\int_{D_m} \frac{1}{\rho \sin\phi} f_1
		\partial_\phi (\m{O} \sin^2\phi) \d x - 2 \int_{D_m}\frac{\eta f_1}{\rho^2 \sin\phi}
		\partial_\phi( f_2\sin\phi) \d x.
	\end{aligned}
	\ee
	Noting
	\[
	\int_{D_m} f_1 \Delta f_1 \d x = -\int_{D_m}|\nabla f_1|^2 \d x \qquad \text{and} \qquad 4\int_{D_m}\frac{f_1}{\rho}\partial_\rho f_1 \d x = -2 \int_{D_m}\frac{f_1^2}{\rho^2} \d x\,, \\
	\]
	so 
	\[
	\text{LHS of (\ref{f1_energy})} = -\int_{D_m}|\nabla f_1|^2 \d x + 4\int_{D_m}  \frac{f_1^2}{\rho^2} \d x\,.
	\]
	On the other hand, we apply integration by parts to find
	\[
	\text{RHS of (\ref{f1_energy})} = \int_{D_m} \frac{1}{\rho} (\p_\phi f_1)
	\m{O} \sin\phi \d x + 2 \int_{D_m} \frac{1}{\rho^2} (\p_\phi f_1) f_2 \eta \d x\,.
	\]
	Combining the above two equations yields 
	\[
	\int_{D_m} |\nabla f_1|^2 \d x = 4\int_{D_m}  \frac{f_1^2}{\rho^2} \d x -  \int_{D_m} \frac{1}{\rho} (\p_\phi f_1) \m{O} \sin\phi \d x - 2 \int_{D_m}\frac{(\p_\phi f_1) f_2 \eta}{\rho^2} \d x\,.
	\]
	By Cauchy-Schwarz inequality, for any positive $\la_1$ and $\la_2$, 
	\[\begin{split}
		\int_{D_m} |\nabla f_1|^2 \d x & \leq 4\int_{D_m}  \frac{f_1^2}{\rho^2} \d x +
		\bigg( \la_1 \int_{D_m} \Big(\frac{1}{\rho} \p_{\phi} f_1 \Big)^2 \d x + \frac{1}{4\la_1} \int_{D_m} |\m{O}|^2 \d x \bigg) \\
		& \quad + \bigg( \la_2 \int_{D_m} \Big(\frac{1}{\rho} \p_{\phi} f_1 \Big)^2 \d x + \frac{1}{\la_2} \int_{D_m} \Big(\frac{1}{\rho} f_2\Big)^2 \d x \bigg)\,.
	\end{split}\]
	Thanks to Lemma \ref{v_rho_mean0} and the Poincar\'e inequality in Lemma \ref{Poin0},
	\[
	\int_{D_m} \Big(\frac{1}{\rho} f_1\Big)^2 \d x \leq \frac{2}{19} \int_{D_m} \Big(\frac{1}{\rho} \p_\phi f_1\Big)^2 \d x.
	\]
	In addition, due to Lemma \ref{PoinBE} and Lemma \ref{Lemma, f_2_est}, 
	\[ \int_{D_m} \Big(\frac{1}{\rho} f_2\Big)^2 \d x \leq \frac{3}{25} \int_{D_m} \Big(\frac{1}{\rho} \p_\phi f_2\Big)^2 \d x \leq \frac{9}{25} \int_{D_m} |\m{O}|^2 \d x.\]
	Combining the above three inequalities leads to 
	\[ 
	\int_{D_m} |\nabla f_1|^2 \d x \leq \Big(\frac{8}{19} + \la_1 + \la_2 \Big) \int_{D_m} \Big(\frac{1}{\rho} \p_\phi f_1\Big)^2 \d x + \Big(\frac{1}{4\la_1} + \frac{9}{25 \la_2}\Big) \int_{D_m} |\m{O}|^2 \d x.
	\]
	By choosing $\la_1 = \frac{1}{10}$ and $\la_2 = \frac15$, we verified (\ref{v_rho_est1}).

	Next, we turn to prove (\ref{v_rho_est2}). Testing (\ref{f1_eq2}) by $\rho^{-2} f_1$ yields
	\be\label{f1_energy2}
	\begin{aligned}
		&\int_{D_m} \frac{1}{\rho^2} f_1 \Delta f_1 \d x
		+ 4\int_{D_m}\frac{f_1}{\rho^3}\partial_\rho f_1 \d x
		+ 6 \int_{D_m}\frac{f_1^2}{\rho^4} \d x \\
		= \quad & -\int_{D_m} \frac{f_1}{\rho^3 \sin\phi} 
		\partial_\phi (\m{O} \sin^2\phi) \d x - 2 \int_{D_m}\frac{\eta f_1}{\rho^4 \sin\phi}
		\partial_\phi( f_2\sin\phi) \d x.
	\end{aligned}
	\ee
	Based on the boundary conditions for $f_1$ in (\ref{f1_eq2}), we apply integration by parts to obtain 
	\[
	\int_{D_m}  \frac{1}{\rho^2} f_1 \Delta f_1 \d x = -\int_{D_m}  \frac{1}{\rho^2} |\nabla f_1|^2 \d x + 2 \int_{D_m} \frac{f_1}{\rho^3} \p_\rho f_1 \d x 
	\]
	and 
	\[
	2\int_{D_m}\frac{f_1}{\rho^3}\partial_\rho f_1 \d x = \int_{D_m}\frac{f_1^2}{\rho^4} \d x\,. \\
	\]
	Thus, 
	\[
	\text{LHS of (\ref{f1_energy2})} = -\int_{D_m}  \frac{1}{\rho^2} |\nabla f_1|^2 \d x + 9\int_{D_m}  \frac{f_1^2}{\rho^4} \d x\,.
	\]
	On the other hand, we apply integration by parts again to deduce
	\[
	\text{RHS of (\ref{f1_energy2})} = \int_{D_m} \frac{1}{\rho^3} (\p_\phi f_1) \m{O} \sin\phi \d x + 2 \int_{D_m} \frac{1}{\rho^4} (\p_\phi f_1) f_2 \eta \d x\,.
	\]
	Combining the above two equations yields 
	\[
	\int_{D_m}  \frac{1}{\rho^2} |\nabla f_1|^2 \d x = 9\int_{D_m}  \frac{f_1^2}{\rho^4} \d x - \int_{D_m} \frac{1}{\rho^3} (\p_\phi f_1) \m{O} \sin\phi \d x - 2 \int_{D_m} \frac{1}{\rho^4} (\p_\phi f_1) f_2 \eta \d x.
	\]
	By Cauchy-Schwarz inequality, for any positive $\la_1$ and $\la_2$, 
	\be\label{vrho_rho1}\begin{split}
		\int_{D_m} \frac{1}{\rho^2} |\nabla f_1|^2 \d x & \leq 9\int_{D_m}  \frac{f_1^2}{\rho^4} \d x +
		\bigg( \la_1 \int_{D_m} \Big(\frac{1}{\rho^2} \p_{\phi} f_1 \Big)^2 \d x + \frac{1}{4\la_1} \int_{D_m} \Big( \frac{1}{\rho}\m{O} \Big)^2 \d x \bigg) \\
		& \quad + \bigg( \la_2 \int_{D_m} \Big(\frac{1}{\rho^2} \p_{\phi} f_1 \Big)^2 \d x + \frac{1}{\la_2} \int_{D_m} \Big(\frac{1}{\rho^2} f_2\Big)^2 \d x \bigg).
	\end{split}\ee
	Thanks to Lemma \ref{v_rho_mean0} and Poincar\'e inequality \eqref{Poin0E},
	\[
	\int_{D_m} \frac{f_1^2}{\rho^4} \d x \leq \frac{2}{19} \int_{D_m} \Big(\frac{1}{\rho^2} \p_\phi f_1\Big)^2 \d x.
	\]
	In addition, due to Poincar\'e inequality (\ref{PoinBE}) and Lemma \ref{Lemma, f_2_est}, 
	\[ 
	\int_{D_m} \Big(\frac{1}{\rho^2} f_2\Big)^2 \d x \leq \frac{3}{25} \int_{D_m} \Big(\frac{1}{\rho^2} \p_\phi f_2\Big)^2 \d x \leq 36\int_{D_m} \big| \nabla \m{O} \big|^2 \d x.
	\]
	Plugging the above two inequalities and estimate (\ref{O_Poincare}) into (\ref{vrho_rho1}) leads to
	\[ 
	\int_{D_m} \frac{1}{\rho^2} |\nabla f_1|^2 \d x \leq \Big(\frac{18}{19} + \la_1 + \la_2 \Big) \int_{D_m} \Big(\frac{1}{\rho} \p_\phi f_1\Big)^2 \d x + \Big(\frac{3}{100\la_1} + \frac{36}{\la_2}\Big) \int_{D_m} | \nabla \m{O}|^2 \d x.
	\]
	By choosing $\la_1 = \frac{1}{38} \cdot \frac{1}{37}$ and $\la_2 = \frac{1}{38} \cdot \frac{36}{37}$, we find 
	\[
	\int_{D_m} \frac{1}{\rho^2} |\nabla f_1|^2 \d x \leq \frac{37}{38}\int_{D_m} \frac{1}{\rho^2} |\nabla f_1|^2 \d x + 1450 \int_{D_m} | \nabla \m{O}|^2 \d x,
	\]
	which implies (\ref{v_rho_est2}).
\end{proof}

The estimates in Lemma \ref{Lemma, f_2_est} and Lemma \ref{Lemma, f_1_est} provide $L^2$ control of the first-order derivative of singular quantities $v_\phi/\rho$ and $v_\rho/\rho$ through the good unknown $\m{O}$. However, nonlinear terms such as $v_\phi \p_\rho v_\rho$ and $v_\rho\p_\rho v_\rho$ appear in the higher-order energy estimate for $\m{O}$ carried out in Section \ref{Subsec, Est_O}. To control these terms, one also needs bounds for $L^2$ norm of second-order derivatives of $v_\rho$. The next lemma provides such estimates by recovering second-order information of $v_\rho$ from $\m{O}$ through the Biot–Savart law.
We also mention that the left hand side of (\ref{E2nd}) is the second-order while the right hand side is the third-order derivatives of the velocity in the sense of scaling. 

\begin{lemma}\l{Lem2nd}
	Let the region $D_m$ be as defined in \eqref{app domain-cyl} with $m \geq 10^3$ and the angle $\alpha \in \left(0, \frac{\pi}{6}\right]$. Then for any $T>0$ and for a.e. $t \in[0, T]$, there exists an absolute constant $C$ such that
	\ba\l{E2nd}
	\|\p_\rho^2 v_\rho (\cdot, t)\|_{L^2(D_m)} + \Big\|\f{\p_\phi\p_\rho v_\rho}{\rho} (\cdot, t)\Big\|_{L^2(D_m)}
	\leq C \|\nabla\m{O}(\cdot,t)\|_{L^2(D_m)}\,.
	\ea
\end{lemma}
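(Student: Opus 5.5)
Write $f_1=v_\rho/\rho$ and $f_2=v_\phi/\rho$ as in Lemmas \ref{Lemma, f_1_est} and \ref{Lemma, f_2_est}. The plan is not to do a new elliptic regularity argument for $v_\rho$. Instead I would express $\p_\rho^2 v_\rho$ and $\rho^{-1}\p_\phi\p_\rho v_\rho$ through first derivatives of $f_1,f_2$ and through $\m{O}$. Lemmas \ref{Lemma, f_1_est}, \ref{Lemma, f_2_est} and the Poincaré inequality (\ref{O_Poincare}) then bound those by $\|\nabla\m{O}\|_{L^2}$. The main tools are the divergence-free identity (\ref{div-free}) and the definition (\ref{key-triple}) of $\m{O}$.

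\emph{Step 1: the mixed derivative.} From $v_\rho=\rho f_1$ we get
\[
\frac{\p_\phi\p_\rho v_\rho}{\rho}=\frac{\p_\phi f_1}{\rho}+\p_\rho\p_\phi f_1 .
\]
The first term is bounded by $\|\nabla f_1\|_{L^2}$, hence by $4\|\m{O}\|_{L^2}\le C\|\nabla \m{O}\|_{L^2}$. For the second term I use the vorticity formula (\ref{vor-sph}) together with (\ref{key-triple}):
\[
\p_\phi v_\rho=\p_\rho(\rho v_\phi)-\rho\,\o_\th=\p_\rho(\rho^2 f_2)-\rho^2\sin\phi\,\m{O}-2\rho f_2\eta .
\]
Dividing by $\rho$ gives $\p_\phi f_1=\rho\p_\rho f_2+2f_2-\rho\sin\phi\,\m{O}-2f_2\eta$. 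Differentiating in $\rho$ produces
\[
\p_\rho\p_\phi f_1=\rho\p_\rho^2 f_2+(3-2\eta)\p_\rho f_2-2\eta' f_2-\sin\phi\,\m{O}-\rho\sin\phi\,\p_\rho\m{O}.
\]
Every term except $\rho\p_\rho^2 f_2$ is controlled by $\|\nabla f_2\|_{L^2}$, $\|f_2/\rho\|_{L^2}$, $\|\m{O}\|_{L^2}$ and $\|\nabla\m{O}\|_{L^2}$, and all of these are bounded by $C\|\nabla\m{O}\|_{L^2}$.

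\emph{Step 2: the pure radial derivative.} By (\ref{2nd_rho_deri}), $\p_\rho^2 v_\rho$ is a combination of $v_\rho/\rho^2$, $\p_\rho v_\rho/\rho$, $\rho^{-2}\p_\phi(\sin\phi\, v_\phi)/\sin\phi$ and $\rho^{-1}(\sin\phi)^{-1}\p_\phi(\sin\phi\,\p_\rho v_\phi)$. Rewriting these in terms of $f_1,f_2$ gives quantities like $f_1/\rho$, $\p_\rho f_1$, $\rho^{-1}\p_\phi f_2$, $\cot\phi\, f_2/\rho$, $\p_\phi\p_\rho f_2$ and $\cot\phi\,\p_\rho f_2$. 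The lower-order ones are handled exactly as in Step 1, using (\ref{v_rho_est1}), (\ref{v_phi_est1}) and the Poincaré inequalities Lemma \ref{Poin0} and Lemma \ref{PoinB}. Note that $|\cot\phi|\le 1/\sqrt3$. So both estimates in (\ref{E2nd}) reduce to bounding the second derivatives $\rho\p_\rho^2 f_2$ and $\p_\phi\p_\rho f_2$ of $f_2$.

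\emph{Step 3 (main obstacle): an $H^2$-type bound for $f_2$.} This is the hard step. Lemma \ref{Lemma, f_2_est} gives only first derivatives, including the weighted bound (\ref{v_phi_est2}). I would test equation (\ref{f2_eq2}) against $-\p_\rho(\rho^2\p_\rho f_2)$, or more simply rewrite (\ref{f2_eq2}) to isolate the operator $\p_\rho^2+\rho^{-2}\p_\phi^2$. The right-hand side is then $\rho^{-4}\p_\rho(\rho^4\sin\phi\,\m{O})$ plus terms controlled by (\ref{v_phi_est2}), i.e. $\rho^{-1}\nabla f_2$ and $f_2/\rho^2$, all bounded by $C\|\nabla\m{O}\|_{L^2}$. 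The key point is that the scaled quantity $\|\rho\,\partial^2 f_2\|_{L^2}$ is exactly at the level of $\|\rho^{-1}\nabla f_2\|_{L^2}+\|\nabla\m{O}\|_{L^2}$, so the problem is scale-invariant.

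Working on the rectangle $(\rho,\phi)\in(1/m,1)\times(\frac{\pi}{2}-\al,\frac{\pi}{2}+\al)$, I would integrate by parts in the cross term $\int \rho^2\,\p_\rho^2 f_2\cdot \rho^{-2}\p_\phi^2 f_2\,\d x$ to turn it into $\int|\p_\rho\p_\phi f_2|^2\,\d x$ plus boundary terms. These boundary terms vanish or are lower order, for the following reasons:
\begin{itemize}
\item $f_2=0$ on the rays, hence also $\p_\rho f_2=0$ there;
\item $\p_\rho f_2=0$ on $A_{2,m}$;
\item $\p_\rho f_2=-\frac{2}{\rho}f_2$ on $A_{1,m}$, which gives a tangential relation;
\item the corner contributions vanish because of the Dirichlet condition on the rays.
\end{itemize}
Any leftover arc term on $A_{1,m}$ I would convert into a volume integral by the Newton–Leibniz trick with the cut-off $1-\eta$, as done repeatedly above. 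Combining this with Steps 1–2 yields (\ref{E2nd}). The constants only need to be absolute, so no sharp bookkeeping is required here.
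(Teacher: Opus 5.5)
Your plan is correct and closes, but it takes a genuinely longer route than the paper.

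\textbf{The paper's route.} The paper never passes through $f_2$. It tests the Biot--Savart equation \eqref{bsrho} for $v_\rho$ directly against $\partial_\rho^2 v_\rho$. Because $v_\rho$ satisfies a homogeneous Dirichlet condition on the arcs and $\partial_\phi v_\rho=0$ on the rays, the angular part of the Laplacian integrates by parts (first in $\rho$, then in $\phi$) to exactly $\|\rho^{-1}\partial_\phi\partial_\rho v_\rho\|_{L^2}^2$, with no boundary contributions at all. The remaining terms are absorbed using Lemma~\ref{Poin0} applied to $\partial_\rho v_\rho$; this is legitimate because the zero weighted mean of Lemma~\ref{v_rho_mean0} holds for every $\rho$. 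The source $\frac{1}{\rho\sin\phi}\partial_\phi(\sin\phi\,\omega_\theta)$ is rewritten through $\mathcal{O}$ and $v_\phi$ and bounded using only \eqref{v_phi_est1}.

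\textbf{Your route.} Your Steps 1--2 are right; they collapse to $\partial_\rho^2 v_\rho=-2\partial_\rho f_1-\partial_\rho\partial_\phi f_2-\cot\phi\,\partial_\rho f_2$. However, they move the elliptic estimate onto $f_2$, whose condition $\partial_\rho f_2=-\frac{2}{\rho}f_2$ on $A_{1,m}$ is of Robin type. That is the cost. The cross term $\int\partial_\rho^2 f_2\,\partial_\phi^2 f_2$ (with the appropriate weight) leaves on $A_{1,m}$ a trace of $(\partial_\phi f_2)^2$ and $f_2^2$ with the unfavourable sign. You must convert it via the $(1-\eta)$ Newton--Leibniz trick and partly absorb it into the mixed-derivative term by Young's inequality. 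This does work uniformly in $m$, as you anticipated.

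What you gain is the full weighted Hessian of $f_2$, i.e.\ control of $\partial_\phi\partial_\rho(v_\phi/\rho)$. The paper needs this quantity later, in the estimate of $PN_3$, and obtains it from this lemma through the same divergence identity read backwards.

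\textbf{A small correction.} $\|\rho\,\nabla^2 f_2\|_{L^2}$ scales like $\|\nabla f_2\|_{L^2}+\|\mathcal{O}\|_{L^2}+\|\rho\nabla\mathcal{O}\|_{L^2}$, not like $\|\rho^{-1}\nabla f_2\|_{L^2}$. Testing against $\rho^2\partial_\rho^2 f_2$ therefore needs only \eqref{v_phi_est1}, while the unweighted estimate for $\nabla^2 f_2$ uses \eqref{v_phi_est2}. Either suffices because $\rho\le 1$.

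In short, your Step~3 machinery applied to $v_\rho$ instead of $f_2$ is the paper's proof. The homogeneous boundary conditions of $v_\rho$ are what make your Steps 1--2 unnecessary.
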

\begin{proof}
	Recall the Biot-Savart law of $v_\rho$ in \eqref{Biot-Savart}$_1$
	\ba\l{bsrho}
	\left(\Dl+\f{2}{\rho}\p_\rho+\f{2}{\rho^2}\right)v_\rho=-\f{1}{\rho\sin\phi}\p_\phi(\sin\phi \, \o_\th)\,.
	\ea
	Multiplying \eqref{bsrho} by $\p_\rho^2v_\rho$ and integrating over $D_m$, we have
	\ba\l{bsrM}
	&\int_{D_m}(\partial^2_\rho v_\rho)^2\d x
	+ \int_{D_m}\frac{4}{\rho} \partial_\rho v_\rho (\partial_\rho^2 v_\rho)\d x
	+ \int_{D_m}\frac{2}{\rho^2} v_\rho (\partial_\rho^2 v_\rho) \d x \\
	& + \un{\int_{D_m}\frac{\p_\rho^2v_\rho}{\rho^2 \sin\phi} \partial_\phi\left(\sin\phi \, \partial_\phi v_\rho\right)\d x}_{M}
	= -\int_{D_m}\f{1}{\rho\sin\phi}\p_\phi(\sin\phi \,\o_\th)\p_\rho^2v_\rho\d x\,.
	\ea
	
	We first compute $M$ to obtain 
	\[
	M = 2 \pi \int_{\pi / 2-\alpha}^{\pi / 2+\alpha} \int_{1 / m}^1 \partial_\phi\left(\sin \phi\, \partial_\phi v_\rho\right) \partial_\rho^2 v_\rho \d \rho \d \phi.
	\]
	Noticing that $v_\rho$, together with its derivatives in $\phi$, vanish on arcs $A_{1,m}\cup A_{2,m}$, so it follows from integration by parts in $\rho$ that 
	\bn
	M &= -2 \pi \int_{\pi / 2-\alpha}^{\pi / 2+\alpha} \int_{1 / m}^1 \p_\rho\big[ \partial_\phi\left(\sin \phi \,\partial_\phi v_\rho\right)\big] \partial_\rho v_\rho \d \rho \d \phi \\
	&= -2 \pi \int_{\pi / 2-\alpha}^{\pi / 2+\alpha} \int_{1 / m}^1 \partial_\phi\left(\sin \phi\, \partial_\phi \partial_\rho v_\rho\right) \partial_\rho v_\rho \d \rho \d \phi\,.
	\en
	Now we perform the integration by parts in $\phi$, and using the fact that $\p_\phi v_\rho$ and $\p_\rho \p_\phi v_\rho$ vanish on rays $R_{1,m} \cup R_{2,m}$, to get 	
	\ba\l{Er1}
	M =  2 \pi \int_{\pi / 2-\alpha}^{\pi / 2+\alpha} \int_{1 / m}^1\left(\partial_\phi \partial_\rho v_\rho\right)^2 \sin \phi \d \rho \d \phi 	
	= \int_{D_m} \Big(\f{\p_\phi\p_\rho v_\rho}{\rho}\Big)^2\d x\,.
	\ea
	
	Next, we will treat other terms in (\ref{bsrM}). Using integration by parts and the Poincar\'e inequality \eqref{Poin0E}, we deduce
	\ba\l{Er2}
	\bigg |\int_{D_m}\frac{2}{\rho^2} v_\rho (\partial_\rho^2 v_\rho) \d x\bigg|
	= 2 \int_{D_m}\Big(\frac{\p_\rho v_\rho}{\rho}\Big)^2 \d x
	\leq\f{4}{19}\int_{D_m}\Big(\f{\p_\phi\p_\rho v_\rho}{\rho}\Big)^2\d x\,.
	\ea
	By the Young inequality and Poincar\'e inequality \eqref{Poin0E} again, one has
	\ba\l{Er3}
	\bigg|\int_{D_m}\frac{4}{\rho} 
	(\p_\rho v_\rho) (\p_\rho^2 v_\rho) \d x\bigg|
	\leq &\f{3}{4}\int_{D_m}(\p_\rho^2 v_\rho)^2\d x+\f{16}{3}\int_{D_m} \Big(\frac{\p_\rho v_\rho}{\rho}\Big)^2\d x\\
	\leq &\f{3}{4}\int_{D_m}(\p_\rho^2 v_\rho)^2\d x+\f{32}{57}\int_{D_m}\Big(\frac{\p_\phi\p_\rho v_\rho}{\rho}\Big)^2\d x\,.
	\ea
	Combining (\ref{bsrM})--(\ref{Er1})--(\ref{Er2})--(\ref{Er3}) yields 
	\bn
	& \quad \int_{D_m}(\partial^2_\rho v_\rho)^2\d x + \int_{D_m} \Big(\frac{\p_\phi\p_\rho v_\rho}{\rho}\Big)^2\d x \\
	& \leq \frac34 \int_{D_m}(\partial^2_\rho v_\rho)^2\d x + \bigg(\frac{4}{19} + \frac{32}{57}\bigg) \int_{D_m} \Big(\frac{\p_\phi\p_\rho v_\rho}{\rho}\Big)^2\d x + \bigg| \int_{D_m}\f{1}{\rho\sin\phi}\p_\phi(\sin\phi \,\o_\th)\p_\rho^2v_\rho\d x \bigg| \,,
	\en
	which further implies that 
	\be\label{260514}
	\int_{D_m}(\partial^2_\rho v_\rho)^2\d x + \int_{D_m} \Big(\frac{\p_\phi\p_\rho v_\rho}{\rho}\Big)^2\d x \leq 5 \bigg| \int_{D_m}\f{1}{\rho\sin\phi}\p_\phi(\sin\phi \,\o_\th)\p_\rho^2v_\rho\d x\bigg|\,.
	\ee
	
	Recalling the relation (\ref{KFO_vor}), we know 
	$\o_\th = \rho\sin\phi\, \m{O} + \frac{2 v_\phi \eta}{\rho}$. 
	Consequently, 
	\bn
	\f{1}{\rho\sin\phi}\p_\phi(\sin\phi \, \o_\th) &= \f{\o_\th\cot\phi}{\rho} + \f{\p_\phi\o_\th}{\rho}\\
	&= 2\cos\phi \, \m{O} + \sin\phi \, \p_\phi\m{O} + \f{2\eta \cot\phi}{\rho^2}v_\phi + \f{2\eta}{\rho^2}\p_\phi v_\phi\,.
	\en
	Thus, it follows from Poincar\'e inequality \eqref{Poin0E} and Lemma \ref{Lemma, f_2_est} that 
	\bn
	\Big\|\f{1}{\rho\sin\phi}\p_\phi(\sin\phi \,\o_\th)\Big\|_{L^2(D_m)}
	\leq C\bigg( \Big\|\f{\p_\phi \m{O}}{\rho} \Big\|_{L^2(D_m)} + \Big\|\f{\p_\phi v_\phi}{\rho} \Big\|_{L^2(D_m)}\bigg) 
	\leq C \| \nabla \m{O} \|_{L^2(D_m)}\,.
	\en
	This implies that
	\ba\l{Er4}
	\bigg |\int_{D_m}\f{1}{\rho\sin\phi}\p_\phi(\sin\phi\, \o_\th)\p_\rho^2v_\rho\d x\bigg|
	\leq \f{1}{10}\|\p_\rho^2 v_\rho\|_{L^2(D_m)}^2+C\|{\na}\m{O}\|_{L^2(D_m)}^2\,.
	\ea
	Substituting \eqref{Er4} in \eqref{260514}, one concludes \eqref{E2nd}. This finishes the proof of Lemma \ref{Lem2nd}.
\end{proof}
\subsection{Control of ${v_\th}/{\rho}$ via $\m{K}$ and $\mF$}
\begin{lemma}\l{Lem35}
	Let the region $D_m$ be as defined in \eqref{app domain-cyl} with $m\geq 10^3$ and the angle $\alpha\in (0,\frac{\pi}{6}]$. Then for any $T>0$ and for a.e. $t \in[0, T]$, the next inequality holds
	\be\l{estg1}
	\Big\| \na\Big(\frac{v_\th}{\rho}\Big)(\cd,t)\Big\|_{L^2(D_m)}
	\leq \f{10}{3}\left( \|\m{K}(\cd,t)\|_{L^2(D_m)}^2 + \|\m{F}(\cd,t)\|_{L^2(D_m)}^2\right)^{1/2}\,.
	\ee
\end{lemma}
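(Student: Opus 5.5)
The plan is to express the left-hand side of \eqref{estg1} directly through $\m{K}$ and $\mF$, using the algebraic identities in \eqref{key-triple}. Set $g\ed v_\th/\rho$. Then
\[
\mF=-\p_\rho g,\qquad \m{K}=\frac{\sin\phi}{\rho^2}\p_\phi\Big(\frac{\rho g}{\sin\phi}\Big)=\frac{1}{\rho}\big(\p_\phi g-\cot\phi\, g\big),
\]
while $|\nabla g|^2=(\p_\rho g)^2+\rho^{-2}(\p_\phi g)^2$ for axisymmetric $g$. Expanding the square gives the exact identity
\[
\|\m{K}\|_{L^2}^2+\|\mF\|_{L^2}^2=\|\nabla g\|_{L^2}^2-2I+\Big\|\frac{\cot\phi\, g}{\rho}\Big\|_{L^2}^2,\qquad I\ed\int_{D_m}\cot\phi\,\frac{\p_\phi g}{\rho}\frac{g}{\rho}\d x .
\]
It therefore suffices to show that $-2I+\|\cot\phi\, g/\rho\|_{L^2}^2\geq-\frac{91}{100}\|\nabla g\|_{L^2}^2$, since $\big(\frac{3}{10}\big)^2=\frac{9}{100}$.

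To control $I$ I will use two pieces of information. The first is Corollary \ref{COR0416-1}, which is applicable because $\int_{D_m}\rho\sin\phi\, v_\th\d x=0$ by Proposition \ref{Prop, local soln in ad}; it gives $|I|\le 0.7\|\nabla g\|^2$. The second is an exact formula for $I$ obtained by integrating by parts in $\phi$ in spherical coordinates:
\[
2I=2\pi\int_{1/m}^1 \cos\phi\, g^2\Big|_{\phi=\pi/2-\al}^{\pi/2+\al}\d\rho+\Big\|\frac{g}{\rho}\Big\|_{L^2}^2\eqqcolon \mathcal{B}+\Big\|\frac{g}{\rho}\Big\|_{L^2}^2 .
\]
Here $\mathcal{B}\le0$, because $\cos\phi$ is negative at $\phi=\pi/2+\al$ and positive at $\phi=\pi/2-\al$. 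Hence $-2I=-\mathcal{B}-\|g/\rho\|^2\geq-\|g/\rho\|^2$, and the boundary term carries the favourable sign. Since $\cot^2\phi\le\frac13$ on $D_m$, the positive term $\|\cot\phi\, g/\rho\|^2$ is a small bonus that can partially offset $\|g/\rho\|^2$.

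The key step, and the one I expect to be the main obstacle, is to interpolate between the two lower bounds for $-2I$. Writing $-2I=\theta(-2I)+(1-\theta)(-\mathcal{B}-\|g/\rho\|^2)$ with $\theta\in[0,1]$, I get
\[
-2I\ \geq\ -1.4\,\theta\,\|\nabla g\|^2-(1-\theta)\Big\|\frac{g}{\rho}\Big\|^2 .
\]
I would then estimate $\|g/\rho\|^2$ by the anisotropic Hardy inequality of Corollary \ref{Cor, vth} with $f=v_\th/\rho$, keeping the $\p_\rho$ and $\rho^{-1}\p_\phi$ parts of $\nabla g$ separate, and choose $\theta$ (possibly different weights in the two directions) so that the total coefficient of each part of $\|\nabla g\|^2$ stays at most $0.91$. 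The difficulty is that the $\p_\rho$ constant $8/\sqrt3$ in Corollary \ref{Cor, vth} is large, so a careless split loses. If this bookkeeping does not close, my fallback is to go back into the proof of Corollary \ref{COR0416-1} and run its Cauchy--Schwarz step with the optimal anisotropic weights, feeding in the exact sign information from $\mathcal{B}$ rather than the cruder bound $0.7$.

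Finally, rearranging $\|\m{K}\|^2+\|\mF\|^2\geq\frac{9}{100}\|\nabla g\|^2$ yields $\|\nabla g\|_{L^2}\le\frac{10}{3}\big(\|\m{K}\|_{L^2}^2+\|\mF\|_{L^2}^2\big)^{1/2}$, which is \eqref{estg1}.
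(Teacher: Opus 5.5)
Your identity $\|\m{K}\|_{L^2}^2+\|\mF\|_{L^2}^2=\|\nabla g\|_{L^2}^2-2I+\|\cot\phi\,g/\rho\|_{L^2}^2$ is correct, and so is the reduction to $-2I+\|\cot\phi\,g/\rho\|_{L^2}^2\ge-\frac{91}{100}\|\nabla g\|_{L^2}^2$. The key step you propose for the latter, however, cannot close.

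Because you use $\|\cot\phi\,g/\rho\|^2$ only as a nonnegative bonus, you actually need $I\le 0.455\|\nabla g\|^2$. None of your inputs delivers this:
\begin{itemize}
\item Corollary \ref{COR0416-1} gives only the constant $0.7$.
\item The bound $2I\le\|g/\rho\|^2$ (from $\mathcal{B}\le0$), combined with Corollary \ref{Cor, vth}, puts the coefficient $8/\sqrt3\approx4.6$ in front of $\|\p_\rho g\|^2$. So every convex combination of your two bounds still has a coefficient $\ge1.4$ in front of $\|\p_\rho g\|^2$.
\item Re-weighting the Cauchy--Schwarz step in the proof of Corollary \ref{COR0416-1} does not rescue this. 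Optimizing $|I|\le 3^{-1/2}\|\rho^{-1}\p_\phi g\|\,\|g/\rho\|$ against Corollary \ref{Cor, vth}, with $2I\le\|g/\rho\|^2$ added, gives at best $I\le0.63\|\nabla g\|^2$, the worst case being $\|\p_\rho g\|\approx\|\rho^{-1}\p_\phi g\|$.
\end{itemize}
The loss comes from squaring. $I$ enters with the factor $-2$ and is compensated only through its coupling with $\|\cot\phi\,g/\rho\|^2$ (indeed $I\le\|\rho^{-1}\p_\phi g\|\,\|\cot\phi\,g/\rho\|$), and your plan throws that coupling away.

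The missing step is to use the \emph{linear} version of your identity. From $\p_\rho g=-\mF$ and $\rho^{-1}\p_\phi g=\m{K}+\cot\phi\,g/\rho$ you get
\[
\|\nabla g\|_{L^2}^2-I=-\int_{D_m}\mF\,\p_\rho g\d x+\int_{D_m}\m{K}\,\frac{\p_\phi g}{\rho}\d x\le\|\nabla g\|_{L^2}\big(\|\m{K}\|_{L^2}^2+\|\mF\|_{L^2}^2\big)^{1/2}.
\]
The one-sided bound $I\le0.7\|\nabla g\|^2$ from Corollary \ref{COR0416-1} then gives $\frac{3}{10}\|\nabla g\|_{L^2}\le\big(\|\m{K}\|_{L^2}^2+\|\mF\|_{L^2}^2\big)^{1/2}$, which is \eqref{estg1}.

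This is precisely the paper's identity \eqref{0319EE}. The paper obtains it by testing \eqref{Eg} against $g$ and integrating by parts using \eqref{Bdrg}, $\mF=0$ on the arcs and $\m{K}=0$ on the rays. Your substitution shows that it follows from the definitions of $\m{K}$ and $\mF$ alone.

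Alternatively, you can keep the squared identity, provided you do not expand $\|\m{K}\|^2$. The reverse triangle inequality $\|\m{K}\|\ge\|\rho^{-1}\p_\phi g\|-3^{-1/2}\|g/\rho\|$, together with Corollary \ref{Cor, vth}, already gives $\|\m{K}\|^2+\|\mF\|^2\ge\frac{9}{100}\|\nabla g\|^2$ without Corollary \ref{COR0416-1}. To see this, split according to whether $\|\p_\rho g\|^2\ge\frac{9}{100}\|\nabla g\|^2$.
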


\begin{proof}
	Denote $g\ed\f{v_\th}{\rho}$. Recalling the equation (\ref{Biot-Savart2})$_{3}$ for $v_\th$ and the boundary conditions \eqref{NTSR}--\eqref{NTS-NHL}, we replace $v_\th$ by $\rho g$ to obtain
	\be\l{Eg}
	\left(\Dl-\frac{2\cot\phi}{\rho}\frac{\p_\phi}{\rho}+\frac{\csc^2\phi}{\rho^2}\right)g=-\frac{1}{\rho^2}\p_\rho(\rho^2\m{F})+\frac{\p_\phi\m{K}}{\rho}\,.
	\ee
	and
	\be\l{Bdrg}
	\left\{
	\begin{aligned}
		&\p_\rho g=0\,,\q\text{on}\q A_{1,m}\cup A_{2,m}\,,\\
		&\p_\phi g=\cot\phi \, g\,,\q\text{on}\q R_{1,m}\cup R_{2,m}\,.
	\end{aligned}
	\right.
	\ee
	Testing \eqref{Eg} by $g$ and integrating on $D_m$, one deduces
	\be\l{Egmain}
	\begin{aligned}
		\int_{D_m}|\nabla g|^2 \d x-\int_{D_m} \frac{\csc^2\phi}{\rho^2} g^2 \d x=&-2 \int_{D_m} \frac{\cot \phi}{\rho^2} g \partial_\phi g \d x+\int_{D_m} \frac{1}{\rho^2} \partial_\rho\left(\rho^2 \m{F}\right) g \d x\\
		&-\int_{D_m} \frac{\partial_\phi \m{K}}{\rho}g \d x+\int_{\partial D_m} g \partial_n g \d S\,.
	\end{aligned}
	\ee
	Using \eqref{Bdrg}, we notice that
	\ba\l{Ebdg}
	\int_{\partial D_m} g \partial_n g \d S&=2\pi\int_{\f{1}{m}}^{1} g^{2} \cot \phi \sin \phi \d \rho\Big|_{\phi=\f{\pi}{2}-\al} ^{\f{\pi}{2}+\al}=\un{2\pi\int_{\frac1m}^{1} \int_{{\f{\pi}{2}-\al}}^{{\f{\pi}{2}+\al}}\partial_\phi\left(g^2 \cos \phi\right) \d \phi\d \rho}_{B_1}\, .
	\ea
	First we see
	\be\l{Ebdg1}
	\begin{aligned}
		B_1= & 4\pi \int_{\f{1}{m}}^{1} \int_{{\f{\pi}{2}-\al}}^{{\f{\pi}{2}+\al}} \frac{1}{\rho^2}g (\partial_\phi g) \cot \phi (\rho^2 \sin \phi) \d \phi \d \rho-2\pi\int_{\f{1}{m}}^{1} \int_{{\f{\pi}{2}-\al}}^{{\f{\pi}{2}+\al}} \frac{g^2}{\rho^2} (\rho^2\sin\phi) \d \phi \d \rho \\
		= & 2 \int_{D_m} \frac{\cot \phi}{\rho^2} g \partial_\phi g \d x-\int_{D_m} \frac{g^2}{\rho^2} \d x\, .
	\end{aligned}
	\ee
	Moreover, noticing that $\m{F}=0$ {on} $A_{1,m}\cup A_{2,m}$, one has
	\be\l{EFg}
	\int_{D_m} \frac{1}{\rho^2} \partial_\rho\left(\rho^2 \m{F}\right) g \d x=\int_{{\f{\pi}{2}-\al}}^{{\f{\pi}{2}+\al}} \int_{\f{1}{m}}^{1}g\p_\rho(\rho^2\m{F})\sin\phi\d \rho\d\phi=-\int_{D_m}\m{F}\p_\rho g\d x\,.
	\ee
	Owing to  $\m{K}=0$ {on} $R_{1,m}\cup R_{2,m}$, one deduces
	\be\label{EKg}
	-\int_{D_m} \frac{\partial_\phi \m{K}}{\rho}g \d x 
	= -\int_{{\f{\pi}{2}-\al}}^{{\f{\pi}{2}+\al}} \int_{\f{1}{m}}^{1}\rho g (\p_\phi\m{K}) \sin\phi\d \rho\d\phi
	=\int_{D_m} \frac{\partial_\phi g}{\rho}\m{K} \d x+\int_{D_m} \frac{\cot \phi}{\rho} \m{K} g \d x\,.
	\ee
	Substituting \eqref{Ebdg}, \eqref{Ebdg1}, \eqref{EFg} and \eqref{EKg} in \eqref{Egmain}, one derives
	\be\l{Estg}
	\begin{aligned}
		&\hskip .5cm \int_{D_m}|\nabla g|^2 \d x+\int_{D_m} \frac{1-\csc^2\phi}{\rho^2} g^2 \d x \\
		&=-\int_{D_m}\m{F}\p_\rho g\d x+\int_{D_m} \frac{\partial_\phi g}{\rho}\m{K} \d x+\int_{D_m} \frac{\cot \phi}{\rho} \m{K} g \d x\,.
	\end{aligned}
	\ee
	Here the last term follows that
	\bn
	\int_{D_m} \frac{\cot \phi}{\rho} \m{K} g \d x=&2\pi\int_\f{1}{m}^1\int_{\f{\pi}{2}-\al}^{\f{\pi}{2}+\al}\f{\sin\phi\cos\phi}{\rho} g \p_\phi \Big(\f{v_\th}{\sin\phi}\Big) \d\phi\d\rho\\
	=& 2\pi\int_\f{1}{m}^1\int_{\f{\pi}{2}-\al}^{\f{\pi}{2}+\al} \sin\phi\cos\phi \, g \, \p_\phi\Big(\f{g}{\sin\phi}\Big)\d\phi\d\rho\\
	=&2\pi\int_\f{1}{m}^1\int_{\f{\pi}{2}-\al}^{\f{\pi}{2}+\al}\cos\phi g\p_\phi g\d\phi\d\rho-2\pi\int_\f{1}{m}^1\int_{\f{\pi}{2}-\al}^{\f{\pi}{2}+\al}g^2\f{\cos^2\phi}{\sin\phi}\d\phi\d\rho\\
	=&\int_{D_m}\cot\phi\f{\p_\phi g}{\rho} \f{g}{\rho}\d x-\int_{D_m}\f{g^2}{\rho^2}\cot^2\phi\d x\,.
	\en
	Thus one infers from \eqref{Estg} that
	\ba\l{0319EE}
	\int_{D_m}|\nabla g|^2 \d x=-\int_{D_m}\m{F}\p_\rho g\d x+\int_{D_m} \frac{\partial_\phi g}{\rho}\m{K} \d x+\int_{D_m}\cot\phi\f{\p_\phi g}{\rho} \f{g}{\rho}\d x\,.
	\ea
	With the help of Corollary \ref{COR0416-1}, we infer from \eqref{0319EE} that
	\[
	\f{3}{10}\int_{D_m}|\nabla g|^2 \d x 
	\leq \|\m{F}\|_{L^2(D_m)}\|\p_\rho g\|_{L^2(D_m)}
	+ \|\m{K}\|_{L^2(D_m)} \Big\|\f{\p_\phi g}{\rho}\Big\|_{L^2(D_m)}.
	\]
	Then it follows from Cauchy inequality that 
	\[
	\f{3}{10}\int_{D_m}|\nabla g|^2 \d x \leq \|\nabla g\|_{L^2(D_m)}
	\left( \|\m{F}\|_{L^2(D_m)}^2 + \|\m{K}\|_{L^2(D_m)}^2\right)^{1/2}.
	\]
	which concludes \eqref{estg1}. 
\end{proof}

\section{Energy estimate for the triple $(\m{K}, \m{F}, \m{O})$ in $D_m$}\l{Sec9}
In this section, we will obtain a uniform energy bound for \((\m K, \m F, \m O)\).

\subsection{Statement of the energy bound for $(\m{K},\m{F},\m{O})$}
Recall the system \eqref{EOKF}, \eqref{rhs_KFO} and their boundary conditions \eqref{BCOKF} given in Section \ref{Sec2}. We will establish the following uniform energy bound for $(\m{K}, \m{F}, \m{O})$:

\begin{prop}\label{Prop_OKFest}
	Let the region $D_m$ be as defined in \eqref{app domain-cyl} with $m \geq 10^3$ and the angle $\alpha \in \left(0, \frac{\pi}{6}\right]$. Let $\mK$, $\m{F}$ and $\m{O}$ be defined as in \eqref{DO}--\eqref{DmK}. Then there exists an absolute constant $C_*$ such that if
	\be\label{small_G0}
	\|\G_0\|_{L^\i(D_m)}\leq C_*\,,
	\ee
	then the following estimate holds for any $T>0$:
	\be\label{KFOee0}
	\begin{aligned}
		& \int_{D_m}\left(\m{K}^2+\m{F}^2 + \m{O}^2\right)(x, T) \d x + \frac16\int_0^T \int_{D_m}\left(|{\nabla} \m{K}|^2+|{\nabla} \m{F}|^2 + |{\nabla} \m{O}|^2\right)(x,t) \d x \d t \\
		\leq & \exp\Big[C  \big( 1+\|\bm{v}_{0}\|_{L^2(D_m)}^4\big)\Big]\int_{D_m}\left(\m{K}_0^2+\m{F}_0^2 + \m{O}_0^2 + 1\right)(x) \d x\,,
	\end{aligned}
	\ee
	where $C$ is a positive constant independent of $T$ and $m$.
\end{prop}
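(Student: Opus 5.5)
We test the three equations of \eqref{EOKF} by $\m{K}$, $\mF$ and $\m{O}$ respectively, integrate over $D_m$ and add the results. The aim is a differential inequality of the form
\[
\frac{\d}{\d t}E(t)+\tfrac13 D(t)\le C\big(1+\|\nabla\bm v\|_{L^2}^2\big)\big(1+\|\bm{v}\|_{L^2}^2\big)E(t)+C\|\nabla \bm v\|_{L^2}^2 ,
\]
where $E=\int(\m{K}^2+\mF^2+\m{O}^2)$ and $D=\int(|\nabla\m{K}|^2+|\nabla\mF|^2+|\nabla\m{O}|^2)$. Gronwall's inequality together with Proposition \ref{Funden}, which gives $\int_0^T\|\nabla\bm v\|_{L^2}^2\le\frac43\|\bm v_0\|_{L^2}^2$, then yields \eqref{KFOee0}. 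The $\|\bm v_0\|^4$ in the exponent comes from pairing the $L^\infty_tL^2_x$ bound with the $L^2_tH^1_x$ bound.

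\textbf{Linear and boundary terms.} These are handled using \eqref{BCOKF}, in the manner of Lemma \ref{Lemma, lap_int}. The boundary integrals vanish where the unknown is zero: $\m{K}$ on the rays, $\mF$ on the arcs, $\m{O}$ everywhere. The Robin traces ($\p_\rho\m{K}=-\m{K}/\rho$ on the arcs, $\p_\phi\mF=\cot\phi\,\mF$ on the rays) produce surface terms. On the inner arc these have a good sign. On the outer arc and the rays we convert them to volume integrals via the Newton--Leibniz formula with the cut-off $\eta_m$ of Lemma \ref{Lemcutoff} or a $\p_\phi(\cos\phi\,\cdot)$ identity, exactly as was done for $\Gamma$ in Section \ref{Sec7}.

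The singular zeroth-order potentials are
\[
\frac{2-4\cot^2\phi}{\rho^2}\m{K}^2,\qquad -\frac{3+\cot^2\phi}{\rho^2}\mF^2,\qquad \frac{4}{\rho}\m{K}\,\p_\rho\m{K},
\]
together with the coupling terms $\frac{2}{\rho^2}\mF\,\p_\phi\m{K}$ and $\frac{4\cot\phi}{\rho^2}\m{K}\mF$. These must be absorbed by the dissipation. For this we use the Poincar\'e inequality with zero boundary values (Lemma \ref{PoinB}, constant $3/25$) for $\m{K}$ in $\phi$, which gives $\|\m{K}/\rho\|^2\le\frac{3}{25}\|\nabla\m{K}\|^2$. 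The $\mF$ potential has a good sign. The transport terms vanish since $\bm b\cdot\bm n=0$ and $\operatorname{div}\bm b=0$.

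\textbf{Nonlinear terms.} The vortex-stretching terms $\bm\omega\cdot\nabla(v_\rho/\rho)\,\m{K}$ and $\bm\omega\cdot\nabla(v_\phi/\rho)\,\mF$ are estimated by Hölder and Gagliardo--Nirenberg. We control $\nabla(v_\rho/\rho)$ and $\nabla(v_\phi/\rho)$ by $\m{O}$ through Lemmas \ref{Lemma, f_2_est}--\ref{Lemma, f_1_est}, with second derivatives controlled through Lemma \ref{Lem2nd}, and we control $\nabla(v_\theta/\rho)$ by $(\m{K},\mF)$ through Lemma \ref{Lem35}. The swirl couplings
\[
\frac{2v_\th}{\rho\sin\phi}\,\m{K}\m{O},\qquad \frac{2v_\theta\cos\phi}{\rho\sin^2\phi}\,\mF\m{O},\qquad f_{\m K},\ f_{\mF}
\]
are critical. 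Writing $v_\th/\rho=\Gamma/(\rho^2\sin\phi)$, each of them is bounded by
\[
C\|\Gamma\|_{L^\infty}\big(\|\m{O}/\rho\|\,\|\m{K}/\rho\|+\dots\big).
\]
The weighted norms are then controlled by $D$ via Hardy's inequality (Lemma \ref{Lemma, Hardy_Dm}) and Lemmas \ref{Lemma, f_2_est}--\ref{Lemma, f_1_est}, giving a bound $C\,C_\Gamma C_*\,D$. Lemma \ref{LEMGA} together with \eqref{small_G0} makes this at most $\frac1{100}D$. This is exactly where the smallness is used.

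\textbf{The main obstacle: $f_{\m O}$.} This term contains the pressure $\frac{2\eta}{\rho^3\sin\phi}\p_\phi P$ and velocity terms; all of them are supported in $\rho\ge\frac13$. We bound $\|\nabla P\|$ by Lemma \ref{EPs}. The nonlinear terms there, such as $\|v_\phi\p_\rho v_\rho\|$, are controlled by $\|\bm v\|_{L^2}^{1/2}\|\nabla\bm v\|^{1/2}\|\nabla^2 v_\rho\|$-type interpolation, using Lemma \ref{Lem2nd}, so that
\[
\int f_{\m O}\m{O}\le \varepsilon D+C\big(1+\|\bm v\|^2_{L^2}\|\nabla \bm v\|^2_{L^2}\big)(E+\|\nabla\bm v\|^2).
\]
The delicate point is keeping every pressure-related contribution subcritical, with at most one factor of $D$. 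A second delicate point is checking that the numerical constants from the boundary and potential terms leave a margin of $\frac16$ of $D$. If the direct constants fail, I would first try weighting $\m{O}$ by a small constant relative to $(\m{K},\mF)$ in $E$.
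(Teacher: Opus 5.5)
There is a genuine gap in how you treat the vortex-stretching terms $\int_{D_m}\mathcal{K}\,\boldsymbol{\omega}\cdot\nabla(v_\rho/\rho)\,dx$ and $\int_{D_m}\mathcal{F}\,\boldsymbol{\omega}\cdot\nabla(v_\phi/\rho)\,dx$. These terms are critical: they are of the same order as the dissipation. Any H\"older/Gagliardo--Nirenberg bound in terms of $E$, $D$ and the energy is therefore superlinear in $E$.

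For instance, Lemma~\ref{Lem35} gives $\|\omega_\rho\|_{L^2}+\|\omega_\phi\|_{L^2}\lesssim E^{1/2}$. Even granting $\|\nabla(v_\rho/\rho)\|_{L^6}\lesssim D^{1/2}$, the bound $\|\mathcal{K}\|_{L^3}\|(\omega_\rho,\omega_\phi)\|_{L^2}\|\nabla(v_\rho/\rho)\|_{L^6}\lesssim E^{3/4}D^{3/4}$ leads to $\varepsilon D+CE^3$. Using $\|\nabla\boldsymbol v\|_{L^2}$ in place of $E^{1/2}$ gives $\varepsilon D+C\|\nabla\boldsymbol v\|_{L^2}^4E$ instead. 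But $\|\nabla\boldsymbol v\|_{L^2}^4$ is not integrable in time a priori, and bounding one factor by $E$ brings back $E^2$. Gronwall then gives only a bound that is local in time or needs $E(0)$ small, not \eqref{KFOee0}.

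The missing idea is structural, and it is where the paper really uses \eqref{small_G0}. By axisymmetry only $\omega_\rho=\frac{1}{\rho\sin\phi}\partial_\phi(\sin\phi\,v_\theta)$ and $\omega_\phi=-\frac1\rho\partial_\rho(\rho v_\theta)$ enter. So you integrate by parts in $\phi$ in the first piece and in $\rho$ in the second. The boundary terms vanish because $\mathcal{K}=0$ on the rays and $v_\rho=\partial_\phi v_\rho=0$ on the arcs. The second-order terms cancel, leaving
\[
\int_{D_m}\frac{\Gamma}{\rho\sin\phi}\Big(\partial_\rho\mathcal{K}\,\frac{1}{\rho}\partial_\phi\Big(\frac{v_\rho}{\rho}\Big)-\frac{1}{\rho}\partial_\phi\mathcal{K}\,\partial_\rho\Big(\frac{v_\rho}{\rho}\Big)\Big)dx ,
\]
in which $v_\theta$ appears undifferentiated. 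This is at most $C\|\Gamma\|_{L^\infty}\|\nabla\mathcal{K}\|_{L^2}\|\rho^{-1}\nabla(v_\rho/\rho)\|_{L^2}\le C\|\Gamma\|_{L^\infty}\|\nabla\mathcal{K}\|_{L^2}\|\nabla\mathcal{O}\|_{L^2}$ by \eqref{v_rho_est2}. The $\mathcal{F}$ term is handled the same way with \eqref{v_phi_est2}, using $v_\phi=0$ on the rays and $\mathcal{F}=0$ on the arcs. Such products of two dissipation norms can be absorbed only because $\|\Gamma\|_{L^\infty}\le C_\Gamma C_*$ is small by Lemma~\ref{LEMGA}. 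This is the essential use of the smallness, not only the zeroth-order swirl couplings you list.

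Three further points need repair.

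First, your differential inequality puts $C(1+\|\nabla\boldsymbol v\|^2)(1+\|\boldsymbol v\|^2)$ in front of $E$, and your bound for $\int f_{\mathcal O}\mathcal O$ contains a bare $CE$. Gronwall then produces $e^{CT}$, which contradicts the $T$-independence in \eqref{KFOee0}. Every forcing term must carry a factor $\|\nabla\boldsymbol v\|_{L^2}^2$, as in the paper's $H(t)=\|\nabla\boldsymbol v\|^2(1+\|\boldsymbol v\|^2)$.

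Second, your proposed interpolation bounds $\|v_\phi\partial_\rho v_\rho\|_{L^2}^2$ only by $\|\boldsymbol v\|\|\nabla\boldsymbol v\|\|\nabla\mathcal O\|^2$. That is a full power of $D$ with a coefficient that is neither small nor bounded in time, so it is useless if $\nabla P$ is paired with $\nabla\mathcal O$. It closes if you keep $\|\mathcal O\|_{L^2}$ as the partner of $\|\nabla P\|_{L^2}$ and use $\|\boldsymbol v\|\lesssim\|\nabla\boldsymbol v\|$ from Corollary~\ref{Cor, vSob}. The paper instead pairs with $\nabla\mathcal O$. It then uses the anisotropic product estimate of Lemma~\ref{NonC}, which rests on the zero $\phi$-mean of $v_\rho$ from Lemma~\ref{v_rho_mean0}, to get $\|\boldsymbol v\|\|\nabla\boldsymbol v\|\|\mathcal O\|\|\nabla\mathcal O\|$.

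Third, the only order-one cross terms are $\frac{2}{\rho^2}\partial_\phi\mathcal K+\frac{4\cot\phi}{\rho^2}\mathcal K$ in the $\mathcal F$ equation, which cost $\frac43\|\nabla\mathcal K\|^2$. The paper handles this by weighting $\mathcal K$ by $10$, not by shrinking the weight on $\mathcal O$.
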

To prove this proposition, we split the argument into three lemmas, corresponding to the estimates of $\mK$, $\mF$, and $\mO$ respectively in Section \ref{Subsec, K and F} and Section \ref{Subsec, Est_O}. For convenience of readers, we recall the system for $(\m{K}, \m{F}, \m{O})$ from (\ref{EOKF})--(\ref{BCOKF}):
\be
\left\{
\begin{aligned}
	&\left(\Delta+\frac{4}{\rho} \partial_\rho+\frac{2-4\cot^2\phi}{\rho^2}\right) \m{K}-\bm{b} \cdot \nabla \m{K}+\bm{\omega} \cdot \nabla\Big(\frac{v_\rho}{\rho}\Big)-\partial_t \m{K}=f_{\m{K}}\,,\\[3mm]
	&\left(\Delta+\frac{2}{\rho} \partial_\rho-\frac{3+\cot^2\phi}{\rho^2}\right) \m{F}-\bm{b} \cdot \nabla \m{F}+\frac{2}{\rho^2} \partial_\phi \m{K}+\f{4\cot\phi}{\rho^2}\m{K}+\bm{\omega} \cdot \nabla\Big(\frac{v_\phi}{\rho}\Big)-\p_t\m{F}=f_{\mF}\,,\\[3mm]
	&\left(\Delta+\frac{2}{\rho} \partial_{\rho}+\frac{2 \cot \phi}{\rho^{2}} \partial_{\phi}\right) \m{O}-\bm{b} \cdot \nabla \m{O}-\frac{2 v_{\theta}}{\rho \sin \phi}\m{K}-\f{2v_\th\cos\phi}{\rho\sin^2\phi}\mF-\partial_{t} \m{O}=f_{\m{O}}\,.
\end{aligned}
\right. \tag{c.f. (2.13)}
\ee
Where
\be\left\{
\begin{aligned}
	&f_{\m{K}}=-\frac{6\cot\phi}{\rho^3}v_\rho v_\th-\frac{2+2\cos^2\phi}{\rho^3\sin\phi}v_\phi v_\th\,,\\[3mm]
	&f_{\mF}=\frac{6}{\rho^3}v_\rho v_\th+\frac{2\cot\phi}{\rho^3}v_\phi v_\th\,, \\[3mm]
	&f_{\m{O}}=-\frac{2\eta^{\prime\prime}(\rho)}{\rho^2\sin\phi}v_\phi+\f{4\eta(\rho)}{\rho^3\sin\phi}\f{\p_\phi v_\rho}{\rho}+\frac{4}{\rho^2\sin\phi}\left(\f{\eta(\rho)}{\rho}-\eta^\prime(\rho)\right)\p_\rho v_\phi-\frac{2\eta(\rho)}{\rho^2\sin\phi}\f{\p_\phi P}{\rho}\\
	&\hskip 1cm+\frac{2}{\rho^2\sin\phi}\left(\eta^\prime(\rho)-\f{3\eta(\rho)}{\rho}\right)v_\phi v_\rho+\frac{2\eta(\rho)\cos\phi}{\rho^3\sin^2\phi}\left(v_\th^2-v_\phi^2\right)\,.
\end{aligned}
\right. \tag{c.f. (2.14)}
\ee
The boundary conditions for $(\m{K}, \m{F}, \m{O})$ are as follows:
\be
\left\{
\begin{aligned}
	&\m{K}=0\,,\q\text{on}\q R_{1,m}\cup R_{2,m}\,;\q\p_\rho\m{K}=-\f{1}{\rho}\m{K}\,,\q\text{on}\q A_{1,m}\cup A_{2,m}\,;\\[2mm]
	&\p_\phi\m{F}=\cot\phi \, \m{F}\q\text{on}\q R_{1,m}\cup R_{2,m}\,;\q\m{F}=0\,,\q\text{on}\q A_{1,m}\cup A_{2,m}\,;\\[2mm]
	&\m{O}=0\,,\q\text{on}\q \p D_m\,.
\end{aligned}
\right. \tag{c.f. (2.15)}
\ee

The main difficulty of Proposition \ref{Prop_OKFest} is that the above system \eqref{EOKF}, \eqref{rhs_KFO} contains nonlinear terms and the pressure term $P$. Fortunately, the $L^2$ estimate of $\nabla P$ in Section \ref{Sec, pressure}, the $L^{\infty}$ estimate of $\Gamma$ obtained in Section \ref{Sec7} and the Biot–Savart estimates established in Section \ref{Sec8} allow us to control these terms and close the energy estimate. 

For convenience of notations in the rest of the paper, we write $f \ls g$ if $|f| \leq C |g|$, where $C$ is some absolute positive constant which is not essential. The notation $f \gs g$ can be interpreted in a similar way, and $f \simeq g$ means that both $f\ls g$ and $f\gs g$ hold. 

\subsection{Energy estimates for $\m{K}$ and $\m{F}$}
\label{Subsec, K and F}

\begin{lemma}[estimate of $\m{K}$]\label{Lemma, K_energy}
	Under the same condition as Proposition \ref{Prop_OKFest}, we have 
	\ba\l{EK}
	&\f{\d}{\d t}\|\m{K}\|_{L^2(D_m)}^2 + \frac15 \|{\na}\m{K}\|_{L^2(D_m)}^2
	\leq C_{\m{K}} \|\G\|_{L^\i(D_m)}^2\|{\na}\m{O}\|_{L^2(D_m)}^2\,,
	\ea
	where $C_{\m{K}}>0$ is an absolute constant.
\end{lemma}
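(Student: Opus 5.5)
We test the $\m{K}$-equation in \eqref{EOKF}$_1$ by $\m{K}$ and integrate over $D_m$. The viscous term produces $-\|\nabla\m{K}\|_{L^2}^2$ plus a boundary integral. On the rays it vanishes, since $\m{K}=0$ there by \eqref{BCOKF}. On the arcs we use the Robin condition $\p_\rho\m{K}=-\m{K}/\rho$, so the boundary term becomes
\[
-2\pi\int \m{K}^2\rho\sin\phi\,\d\phi\Big|_{\rho=1/m}^{1}.
\]
The contribution from the inner arc has a favorable sign and is dropped. The outer-arc term is written as a volume integral through $\p_\rho(\rho^3\m{K}^2\eta_m)$, exactly as for $B_3$ in Lemma \ref{LEMGA}, and Lemma \ref{Lemcutoff} then bounds it by $C\|\m{K}\|_{L^2}^2$ plus a small multiple of $\|\p_\rho\m{K}\|_{L^2}^2$.

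The lower-order terms $\frac4\rho\p_\rho\m{K}$ and $\frac{2-4\cot^2\phi}{\rho^2}\m{K}$ are integrated by parts in $\rho$ (again with the cut-off for boundary values). This yields combinations of $\int\m{K}^2/\rho^2$ with coefficients of size roughly $2$ or $4$. These are absorbed using the Poincaré inequality of Lemma \ref{PoinB} in $\phi$, which applies because $\m{K}=0$ on the rays:
\[
\|\m{K}/\rho\|_{L^2}^2\le\tfrac{3}{25}\|\rho^{-1}\p_\phi\m{K}\|_{L^2}^2 .
\]
The remaining $\|\m{K}\|_{L^2}^2$ terms are also bounded via Lemma \ref{PoinB}, using $\rho\le1$. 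The transport term $\int(\bm{b}\cdot\nabla\m{K})\m{K}$ vanishes by incompressibility and $\bm{b}\cdot\bm{n}=0$. The main risk at this stage is the constant bookkeeping: the net coefficient of $\|\nabla\m{K}\|_{L^2}^2$ must remain at least $1/10$ after these absorptions. I expect this works because $\frac{3}{25}$ times the constants involved is well below $1$.

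For the nonlinear terms, note that $\o_\rho,\o_\phi$ are built from derivatives of $v_\th$, while $\o_\th$ is controlled by $\m{O}$. We integrate by parts in $\int\bm{\omega}\cdot\nabla(v_\rho/\rho)\,\m{K}$, or estimate it directly. The factor $v_\th$ (equivalently $\Gamma/(\rho\sin\phi)$) is bounded in $L^\infty$ by $\|\G\|_{L^\infty}$. The velocity factors are handled by Lemmas \ref{Lemma, f_2_est}, \ref{Lemma, f_1_est} and \ref{Lem2nd}:
\[
\|\rho^{-1}\nabla(v_\rho/\rho)\|_{L^2},\ \|\rho^{-1}\nabla(v_\phi/\rho)\|_{L^2}\lesssim\|\nabla\m{O}\|_{L^2}.
\]
The same treatment applies to $f_{\m{K}}$. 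Its terms $\rho^{-3}v_\rho v_\th\m{K}$ and $\rho^{-3}v_\phi v_\th\m{K}$ are written as $\frac{\G}{\rho\sin\phi}\cdot\frac{v_\rho}{\rho^2}\cdot\m{K}$ and estimated by
\[
\|\G\|_{L^\infty}\,\|\rho^{-2}v_\rho\|_{L^2}\,\|\m{K}/\rho\|_{L^2}.
\]
Then $\|\rho^{-2}v_\rho\|_{L^2}\lesssim\|\nabla\m{O}\|_{L^2}$ follows from the Poincaré inequality (Lemma \ref{Poin0}, via Lemma \ref{v_rho_mean0}) together with Lemma \ref{Lemma, f_1_est}. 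Likewise, $\|\m{K}/\rho\|_{L^2}\lesssim\|\nabla\m{K}\|_{L^2}$ by Lemma \ref{PoinB}. For the $\o_\rho,\o_\phi$ parts we substitute $\o_\rho=\rho\m{K}+2v_\th\cot\phi/\rho$, so that $\|\Gamma\|_{L^\infty}$ multiplies only $\m{K}$-type quantities and $\nabla\m{O}$.

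Young's inequality then gives
\[
\tfrac{1}{20}\|\nabla\m{K}\|_{L^2}^2+C\|\G\|_{L^\infty}^2\|\nabla\m{O}\|_{L^2}^2 .
\]
Any term quadratic in $\m{K}$ multiplied by $\|\G\|_{L^\infty}$ (for example from $\o_\rho\p_\rho(v_\rho/\rho)$) is absorbed using the smallness \eqref{small_G0} propagated by Lemma \ref{LEMGA}. I expect this absorption of $\|\G\|_{L^\infty}$-small terms to be the main obstacle. Collecting everything yields \eqref{EK} with coefficient $\frac15$.
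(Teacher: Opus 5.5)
\textbf{The gap: the vortex-stretching term.} Your treatment of $I_1=\int_{D_m}\m{K}\,\bm{\omega}\cdot\nabla(v_\rho/\rho)\d x$ does not work as described. A direct estimate fails, because $\|\Gamma\|_{L^\infty}$ controls $v_\theta$ but not the derivatives of $v_\theta$ contained in $\omega_\rho$ and $\omega_\phi$.

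Your substitution $\omega_\rho=\rho\m{K}+2v_\theta\cot\phi/\rho$, $\omega_\phi=\rho\m{F}-2v_\theta/\rho$ does not cure this. Besides harmless terms, it produces
\[
\int_{D_m}\rho\,\m{K}^2\,\partial_\rho\Big(\frac{v_\rho}{\rho}\Big)\d x+\int_{D_m}\m{K}\,\m{F}\,\partial_\phi\Big(\frac{v_\rho}{\rho}\Big)\d x .
\]
These terms are cubic and contain no factor $v_\theta$, hence no factor $\|\Gamma\|_{L^\infty}$. So the smallness of $\Gamma$ cannot absorb them, contrary to what you anticipate. They also involve $\m{F}$, which does not appear on the right-hand side of \eqref{EK}. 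To extract a factor of $\Gamma$ from them you would have to undo the substitution and integrate the derivatives of $v_\theta$ by parts, which is exactly the missing step.

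\textbf{How the paper handles it.} Keep $\omega_\rho=\frac{1}{\rho\sin\phi}\partial_\phi(\sin\phi\,v_\theta)$ and $\omega_\phi=-\frac1\rho\partial_\rho(\rho v_\theta)$. Integrate by parts in $\phi$ in the first piece, using $\m{K}=0$ on the rays. Integrate by parts in $\rho$ in the second piece, using $v_\rho=\partial_\phi v_\rho=0$ on the arcs. The two mixed terms $\pm2\pi\iint\m{K}\,\rho\sin\phi\,v_\theta\,\partial_\rho\partial_\phi(v_\rho/\rho)\d\rho\d\phi$ then cancel exactly, leaving
\[
I_1=-\int_{D_m}\frac{\partial_\phi\m{K}}{\rho}\,v_\theta\,\partial_\rho\Big(\frac{v_\rho}{\rho}\Big)\d x+\int_{D_m}(\partial_\rho\m{K})\,v_\theta\,\frac{1}{\rho}\partial_\phi\Big(\frac{v_\rho}{\rho}\Big)\d x .
\]
Here $v_\theta$ is undifferentiated, so $|v_\theta|\le\frac{2}{\sqrt3\,\rho}\|\Gamma\|_{L^\infty}$ together with \eqref{v_rho_est2} gives $|I_1|\lesssim\|\Gamma\|_{L^\infty}\|\nabla\m{K}\|_{L^2}\|\nabla\m{O}\|_{L^2}$. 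Without the cancellation you would need $\|\rho^{-2}\partial_\phi\partial_\rho v_\rho\|_{L^2}$, which Lemma \ref{Lem2nd} does not provide. In particular, no term quadratic in $\m{K}$ ever arises. The smallness \eqref{small_G0} is not used in this lemma at all; it enters only in Proposition \ref{Prop_OKFest}.

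\textbf{Two smaller corrections.} First, you have the signs of the arc terms reversed. For $\m{K}$ the Robin coefficient is $-1/\rho$, not $+2/\rho$ as for $\Gamma$ in Lemma \ref{LEMGA}. Hence the outer-arc piece $-2\pi\int\m{K}^2(1,\phi)\sin\phi\d\phi$ is the favorable one. The inner-arc piece $+\frac{2\pi}{m}\int\m{K}^2(\frac1m,\phi)\sin\phi\d\phi$ is unfavorable and cannot simply be dropped. The paper instead uses the exact identity
\[
-2\pi\int\m{K}^2\rho\sin\phi\d\phi\Big|_{\rho=1/m}^{1}=-\int_{D_m}\Big(\frac{\m{K}^2}{\rho^2}+\frac{2\m{K}\partial_\rho\m{K}}{\rho}\Big)\d x .
\]
With it, the linear part becomes
\[
-\|\nabla\m{K}\|_{L^2}^2+2\int_{D_m}\frac{\m{K}}{\rho}\partial_\rho\m{K}\d x+\int_{D_m}\frac{1-4\cot^2\phi}{\rho^2}\m{K}^2\d x ,
\]
which is at most $-\frac12\|\nabla\m{K}\|_{L^2}^2$ by Lemma \ref{PoinB}. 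Your variant also works once you add the boundary term $4\pi\int\m{K}^2\rho\sin\phi\d\phi\big|_{\rho=1/m}^{1}$ that comes from integrating the $\frac4\rho\partial_\rho$ term by parts, since this flips the net signs.

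Second, you dropped a power of $\rho$ in $f_{\m{K}}$: $\rho^{-3}v_\rho v_\theta\m{K}=\frac{\Gamma}{\sin\phi}\,\frac{v_\rho}{\rho^3}\,\frac{\m{K}}{\rho}$. So you need $\|v_\rho/\rho^3\|_{L^2}$ rather than $\|v_\rho/\rho^2\|_{L^2}$. This still follows from Lemma \ref{Poin0} and \eqref{v_rho_est2}, and similarly for $v_\phi$ via Lemma \ref{PoinB} and \eqref{v_phi_est2}.
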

\begin{proof}
	Multipling \eqref{EOKF}$_{1}$ by $\m{K}$, then integrating the results over $D_m$, we deduce
	\ba\l{EKMM}
	\frac{1}{2}\frac{\d}{\d t}\|\m{K}\|_{L^2(D_m)}^2 &= 
	\int_{D_m}\m{K}\left(\Delta+\frac{4}{\rho} \partial_\rho+\frac{2-4\cot^2\phi}{\rho^2}\right) \m{K}\d x \\
	& \quad + \un{\int_{D_m}\m{K}\bm{\omega} \cdot \nabla\left(\frac{v_\rho}{\rho}\right)\d x}_{I_1} -\int_{D_m}\m{K}f_{\m{K}}\d x\,.
	\ea
	
	We first estimate $I_1$ which is the most challenging part in (\ref{EKMM}). According to the vorticity formula in (\ref{vor-sph}), we deduce
	\bn
	I_1 = & \int_{D_m} \m{K} \Big( \omega_\rho \partial_\rho + \omega_\phi \frac{1}{\rho} \partial_\phi \Big)\Big(\frac{v_\rho}{\rho}\Big) \d x\\
	= & \un{2\pi\int_{\f{\pi}{2}-\al}^{\f{\pi}{2}+\al}\int_{\f{1}{m}}^1\m{K} \rho \partial_\phi(\sin\phi \, v_\theta) \partial_\rho\Big(\frac{v_\rho}{\rho}\Big) \d \rho \d \phi}_{I_{11}}
	- \un{2\pi\int_{\f{\pi}{2}-\al}^{\f{\pi}{2}+\al}\int_{\f{1}{m}}^1\m{K} \partial_\rho(\rho v_\theta) \partial_\phi \Big(\frac{v_\rho}{\rho}\Big) \sin \phi \d \rho \d \phi}_{I_{12}}.
	\en
	Based on the boundary condition (\ref{BCOKF}) for $\m{K}$ which vanishes on rays $R_{1,m}\cup R_{2,m}$, so 
	\bn
	I_{11} &= -2\pi\int_{\f{\pi}{2}-\al}^{\f{\pi}{2}+\al}\int_{\f{1}{m}}^1 (\p_\phi\m{K}) \rho(\sin \phi\, v_\theta) \partial_\rho\Big(\frac{v_\rho}{\rho}\Big) \d \rho \d \phi \\
	&\quad - 2\pi\int_{\f{\pi}{2}-\al}^{\f{\pi}{2}+\al}\int_{\f{1}{m}}^1 \m{K} \rho(\sin \phi\, v_\theta) \p_\phi\p_\rho\Big(\frac{v_\rho}{\rho}\Big) \d \rho \d \phi\,.
	\en
	Meanwhile, since $v_\rho$ and $\p_\phi \rho$ vanish on arcs $A_{1,m}\cup A_{2,m}$, then
	\bn
	I_{12} &= -2\pi\int_{\f{\pi}{2}-\al}^{\f{\pi}{2}+\al}\int_{\f{1}{m}}^1 (\p_\rho\m{K})(\rho v_\theta) \partial_\phi\Big(\frac{v_\rho}{\rho}\Big) \sin \phi \d \rho \d \phi \\
	&\quad -2\pi\int_{\f{\pi}{2}-\al}^{\f{\pi}{2}+\al}\int_{\f{1}{m}}^1 \m{K}(\rho v_\theta) \p_\rho \p_\phi\Big(\frac{v_\rho}{\rho}\Big) \sin \phi \d \rho \d \phi \,.
	\en
	Noting that the second term in $I_{11}$ matches that in $I_{12}$, so we obtain 
	\bn
	I_{1} =& -2\pi\int_{\f{\pi}{2}-\al}^{\f{\pi}{2}+\al}\int_{\f{1}{m}}^1 (\p_\phi\m{K}) \rho(\sin \phi\, v_\theta) \partial_\rho\Big(\frac{v_\rho}{\rho}\Big) \d \rho \d \phi \\
	& + 2\pi\int_{\f{\pi}{2}-\al}^{\f{\pi}{2}+\al}\int_{\f{1}{m}}^1 (\p_\rho\m{K})(\rho v_\theta) \p_\phi \Big(\frac{v_\rho}{\rho}\Big) \sin \phi \d \rho \d \phi\\
	=& -\int_{D_m}\f{\partial_\phi\m{K}}{\rho} v_\theta \p_\rho \Big(\frac{v_\rho}{\rho}\Big)\d x
	+\int_{D_m} (\p_\rho\m{K}) v_\theta\f{\partial_\phi}{\rho}\Big(\frac{v_\rho}{\rho}\Big)\d x\,.
	\en
	Since $\Gamma = \rho \sin\phi \, v_\th$ and $\phi \in [\pi/2-\al, \pi/2+\al]\subset [\pi/3, 2\pi/3]$, then 
	\be\label{Gamma_vth}
	|v_\th| = \frac{|\Gamma|}{\rho\sin\phi} \leq \frac{2}{\sqrt{3} \rho} \|\G\|_{L^\i(D_m)}\,.
	\ee
	As a consequence, 
	\[
	|I_1| \ls \|\G\|_{L^\i(D_m)} \bigg( 
	\Big\| \f{\p_\phi\m{K}}{\rho} \Big\|_{L^2(D_m)} 
	\Big\| \f{1}{\rho}\p_\rho\Big(\frac{v_\rho}{\rho}\Big) \Big\|_{L^2(D_m)} + \|\p_\rho\m{K}\|_{L^2(D_m)} \Big\| \f{\p_\phi}{\rho^2} \Big(\frac{v_\rho}{\rho}\Big) \Big\|_{L^2(D_m)}
	\bigg)\,.
	\]
	Combining with the estimate (\ref{v_rho_est2}) in Lemma \ref{Lemma, f_1_est} yields 
	\ba\l{EOO1}
	|I_1| &\les \|\G\|_{L^\i(D_m)}\|{\na}\m{O}\|_{L^2(D_m)}\|{\na}\m{K}\|_{L^2(D_m)}\\
	&\leq \f{1}{5}\|\na\m{K}\|_{L^2(D_m)}^2 + C\|\G\|_{L^\i(D_m)}^2\|{\na}\m{O}\|_{L^2(D_m)}^2\,.
	\ea
	
	We next deal with the term that involves the Laplacian in (\ref{EKMM}). Recalling the boundary condition \eqref{BCOKF} for $\m{K}$ again, we use integration by parts to obtain
	\ba\l{KKK}
	\int_{D_m}\m{K}\Dl\m{K}\d x =&-\int_{D_m}|{\na}\m{K}|^2\d x+2\pi\int_{\f{\pi}{2}-\al}^{\f{\pi}{2}+\al}\m{K}\p_\rho\m{K}\rho^2\sin\phi\Big|_{\rho=\f{1}{m}}^1\d\phi\\
	=& -\int_{D_m}|{\na}\m{K}|^2\d x
	- 2\pi\int_{\f{\pi}{2}-\al}^{\f{\pi}{2}+\al}\m{K}^2 \rho\sin\phi \Big|_{\rho=\f{1}{m}}^1\d\phi\,.
	\ea
	Using the Newton-Leibniz formula, it follows that
	\bn
	\int_{D_m}\m{K}\Dl\m{K}\d x &= -\|{\na}\m{K}(\cd,t)\|_{L^2(D_m)}^2 - 2\pi\int_{\f{1}{m}}^1\int_{\f{\pi}{2}-\al}^{\f{\pi}{2}+\al}\p_\rho(\rho\m{K}^2)\sin\phi\d\phi\d\rho \\
	&= -\|{\na}\m{K}(\cd,t)\|_{L^2(D_m)}^2 - \int_{D_m} \Big( \frac{\m{K}^2}{\rho^2} + \frac{2\m{K}}{\rho}  \p_\rho\m{K}\Big) \d x\,.
	\en
	Hence,
	\ba\l{KKKK}
	&\int_{D_m}\m{K}\left(\Dl+\frac{4}{\rho} \partial_\rho+\frac{2-4\cot^2\phi}{\rho^2}\right) \m{K}\d x\\
	=& -\|{\na}\m{K}\|_{L^2(D_m)}^2 - \int_{D_m} \Big( \frac{\m{K}^2}{\rho^2} + \frac{2\m{K}}{\rho}  \p_\rho\m{K}\Big) \d x + \int_{D_m}\m{K}\left(\frac{4}{\rho} \partial_\rho+\frac{2-4\cot^2\phi}{\rho^2}\right) \m{K}\d x\\
	=& -\|{\na}\m{K}\|_{L^2(D_m)}^2
	+\un{2\int_{D_m}\f{\m{K}}{\rho}\p_\rho\m{K}\d x}_{K_1}
	+\un{\int_{D_m}\f{1-4\cot^2\phi}{\rho^2} \m{K}^2\d x}_{K_2}\,.
	\ea
	Using the Cauchy-Schwarz inequality and the Poincar\'e inequality \eqref{PoinBE},
	\bn
	|K_1| \leq 2\Big\|\f{\m{K}}{\rho}\Big\|_{L^2(D_m)} \|\p_\rho\m{K}\|_{L^2(D_m)} 
	\leq &\f{2\sqrt{3}}{5}\Big\|\f{\p_\phi\m{K}}{\rho} \Big\|_{L^2(D_m)}\|\p_\rho\m{K}\|_{L^2(D_m)}\\
	\leq& \f{1}{2}\|\p_\rho\m{K}\|_{L^2(D_m)}^2 + \f{6}{25}\Big\|\f{\p_\phi\m{K}}{\rho}\Big\|_{L^2(D_m)}^2\,.
	\en
	and
	\bn
	|K_2|\leq\f{3}{25}\Big\|\f{\p_\phi\m{K}}{\rho}\Big\|_{L^2(D_m)}^2\,.
	\en
	The above two estimates indicate that
	\[
	|K_1|+|K_2|\leq\f{1}{2}\|\na\m{K}\|_{L^2(D_m)}^2\,.
	\]
	Plugging this estimate into (\ref{KKKK}) yields
	\be\label{EK1}
	\int_{D_m}\m{K}\left(\Dl+\frac{4}{\rho} \partial_\rho+\frac{2-4\cot^2\phi}{\rho^2}\right) \m{K}\d x \leq - \frac12 \|{\na}\m{K}\|_{L^2(D_m)}^2.
	\ee
	
	Combining (\ref{EOO1}), (\ref{EK1}) with (\ref{EKMM}) leads to 
	\be\label{EKMM1}
	\frac{1}{2}\frac{\d}{\d t}\|\m{K}\|_{L^2(D_m)}^2 
	\leq -\frac{3}{10} \|\na\m{K}\|_{L^2(D_m)}^2 + C\|\G\|_{L^\i(D_m)}^2\|{\na}\m{O}\|_{L^2(D_m)}^2 + \bigg|  \int_{D_m}\m{K}f_{\m{K}}\d x \bigg| \,.
	\ee
	Recalling the expression for $f_{\m{K}}$ in (\ref{rhs_KFO}):
	\[
	f_{\m{K}} = -\frac{6\cot\phi}{\rho^3}v_\rho v_\th - \frac{2+2\cos^2\phi}{\rho^3\sin\phi}v_\phi v_\th,
	\]
	then it follows from (\ref{Gamma_vth}) and the Cauchy-Schwarz inequality that 
	\bn
	\bigg| \int_{D_m}\m{K}f_{\m{K}}\d x \bigg| & \ls 
	\|\G\|_{L^\i(D_m)} \int_{D_m} \frac{|\m{K}|}{\rho} \, \frac{|v_\rho| + |v_\phi|}{\rho^3} \d x \\
	&\ls \|\G\|_{L^\i(D_m)} \bigg(\Big\| \f{v_\rho}{\rho^3} \Big\|_{L^2(D_m)} + \Big\|\f{v_\phi}{\rho^3} \Big\|_{L^2(D_m)}\bigg) \Big\|\f{\m{K}}{\rho}\Big\|_{L^2(D_m)} \,.
	\en
	Applying the Poincar\'e inequality \eqref{Poin0E} for $v_\rho$ and \eqref{PoinBE} for $v_\phi$ and $\m{K}$ yields 
	\[
	\bigg| \int_{D_m}\m{K}f_{\m{K}}\d x \bigg| \ls
	\|\G\|_{L^\i(D_m)}
	\left( \Big\| \f{1}{\rho}\f{\p_\phi}{\rho}\Big(\f{v_\rho}{\rho}\Big) \Big\|_{L^2(D_m)} + \Big\|\f{1}{\rho}\f{\p_\phi}{\rho}\Big(\f{v_\phi}{\rho}\Big)\Big\|_{L^2(D_m)}\right) \Big\|\f{\p_\phi\m{K}}{\rho}\Big\|_{L^2(D_m)} \,.
	\]
	Moreover, by taking advantage of Lemma \ref{Lemma, f_2_est} and Lemma \ref{Lemma, f_1_est}, we find
	\ba\l{ERHSK}
	\bigg| \int_{D_m}\m{K}f_{\m{K}}\d x\bigg|
	\les & \|\G\|_{L^\i(D_m)} \| \nabla \m{O}\|_{L^2(D_m)}
	\Big\| \f{\p_\phi\m{K}}{\rho} \Big\|_{L^2(D_m)}\\
	\leq&\f{1}{5}\|\na\m{K}\|_{L^2(D_m)}^2+C\|\G\|_{L^\i(D_m)}^2\|\na\m{O}\|_{L^2(D_m)}^2\,.
	\ea
	
	Finally, plugging \eqref{ERHSK} into \eqref{EKMM1} leads to 
	\[
	\frac{1}{2}\frac{\d}{\d t}\|\m{K}\|_{L^2(D_m)}^2 
	\leq -\frac{1}{10} \|\na\m{K}\|_{L^2(D_m)}^2 + C\|\G\|_{L^\i(D_m)}^2\|{\na}\m{O}\|_{L^2(D_m)}^2 \,.
	\]
\end{proof}

\begin{lemma}[estimate of $\m{F}$]\l{LemmaF}
	Under the same condition as Proposition \ref{Prop_OKFest}, we have
	\ba\l{EF}
	&\f{\d}{\d t}\|\m{F}\|_{L^2(D_m)}^2 + \frac45\|{\nabla}\m{F}\|_{L^2(D_m)}^2 
	\leq & \frac43 \|{\nabla}\m{K}\|_{L^2(D_m)}^2 + 
	C_\mF \|\G\|_{L^\i(D_m)}^2\|{\nabla}\m{O}\|_{L^2(D_m)}^2\,.
	\ea
	where $C_\mF>0$ is an absolute constant.
\end{lemma}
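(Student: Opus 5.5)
Multiply \eqref{EOKF}$_2$ by $\m{F}$ and integrate over $D_m$, mirroring the proof of Lemma \ref{Lemma, K_energy}. This gives
\[
\frac12\frac{\d}{\d t}\|\m{F}\|_{L^2}^2=\int_{D_m}\m{F}\Big(\Delta+\frac{2}{\rho}\partial_\rho-\frac{3+\cot^2\phi}{\rho^2}\Big)\m{F}\d x+\int_{D_m}\m{F}\Big(\frac{2}{\rho^2}\partial_\phi\m{K}+\frac{4\cot\phi}{\rho^2}\m{K}\Big)\d x+I_1-\int_{D_m}\m{F}f_{\mF}\d x,
\]
where $I_1=\int_{D_m}\m{F}\,\bm{\omega}\cdot\nabla(v_\phi/\rho)\d x$. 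The drift term vanishes as in (\ref{drift_zero}).

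\textbf{Viscous part.} Use \eqref{BCOKF}. On the arcs $\m{F}=0$, so no boundary term appears there. On the rays $\partial_\phi\m{F}=\cot\phi\,\m{F}$, so the boundary term is $2\pi\int\m{F}^2\cos\phi\big|_{\phi_1}^{\phi_2}\d\rho\le 0$, which has a good sign. One may either drop it or convert it into $\int\frac{2\cot\phi}{\rho^2}\m{F}\partial_\phi\m{F}-\frac{\m{F}^2}{\rho^2}$ as in (\ref{B2}). Next, $\int\frac{2}{\rho}\m{F}\partial_\rho\m{F}\d x=\int\frac{\m{F}^2}{\rho^2}\d x$, since $\m{F}=0$ on the arcs. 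Together with the potential term $-\frac{3+\cot^2\phi}{\rho^2}$, these give
\[
\int\m{F}\,L\m{F}\le-\|\nabla\m{F}\|_{L^2}^2-2\Big\|\frac{\m{F}}{\rho}\Big\|_{L^2}^2 .
\]
So the zeroth-order part is coercive; a term $-c\|\m{F}/\rho\|^2$ with $c\ge 2$ is available for absorption, and no Poincar\'e inequality is needed.

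\textbf{Coupling with $\m{K}$.} The coupling term is the main obstacle, because it is not small: its coefficient must be absorbed with the explicit constants $\frac45$ and $\frac43$. Bound
\[
\Big|\int\frac{2}{\rho^2}\m{F}\,\partial_\phi\m{K}\Big|\le 2\Big\|\frac{\m{F}}{\rho}\Big\|\Big\|\frac{\partial_\phi\m{K}}{\rho}\Big\|.
\]
For the $\cot\phi$ term, use $|\cot\phi|\le 1/\sqrt3$ and the Poincar\'e inequality (\ref{PoinBE}) for $\m{K}$ (which vanishes on the rays): $\|\m{K}/\rho\|\le\frac{\sqrt3}{5}\|\partial_\phi\m{K}/\rho\|$. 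The total coupling is then at most $\big(2+\frac{4}{5}\big)\|\m{F}/\rho\|\,\|\partial_\phi\m{K}/\rho\|$. Apply Young with weights chosen to fit the $-2\|\m{F}/\rho\|^2$ budget: $2.8ab\le 2a^2+0.98\,b^2$. This is within $\frac23\|\nabla\m{K}\|^2$ after doubling, i.e.\ the $\frac43$ in \eqref{EF}. If the numbers do not fit exactly, I would first try integrating the $\partial_\phi\m{K}$ term by parts onto $\m{F}$, using $\m{K}=0$ on the rays, and split the cost between $\|\partial_\phi\m{F}/\rho\|$ and $\|\m{K}/\rho\|$. The slack $\frac15\|\nabla\m{F}\|^2$ in \eqref{EF} is reserved for this.

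\textbf{Nonlinear terms.} For $I_1$, write $\bm\omega\cdot\nabla=\omega_\rho\partial_\rho+\omega_\phi\frac1\rho\partial_\phi$ with $\omega_\rho=\frac{1}{\rho\sin\phi}\partial_\phi(\sin\phi\,v_\theta)$ and $\omega_\phi=-\frac1\rho\partial_\rho(\rho v_\theta)$. Integrate by parts in $\phi$ and $\rho$ exactly as for $I_{11},I_{12}$ in Lemma \ref{Lemma, K_energy}, so that no derivative falls on $v_\theta$. The mixed second-derivative terms cancel. Boundary terms vanish because $\m{F}=0$ on the arcs and $v_\phi/\rho=0$ on the rays. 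This leaves $\int v_\theta\,(\nabla\m{F})\cdot\nabla(v_\phi/\rho)$-type integrals. Using (\ref{Gamma_vth}) and (\ref{v_phi_est2}), these are bounded by $\|\Gamma\|_{L^\infty}\|\nabla\m{F}\|\|\nabla\m{O}\|$. Similarly, $|\int\m{F}f_{\mF}|\lesssim\|\Gamma\|_{L^\infty}\|\m{F}/\rho\|\big(\|v_\rho/\rho^3\|+\|v_\phi/\rho^3\|\big)$. The Poincar\'e inequalities (\ref{Poin0E}) and (\ref{PoinBE}) and Lemmas \ref{Lemma, f_2_est}--\ref{Lemma, f_1_est} bound the bracket by $C\|\nabla\m{O}\|$, and $\|\m{F}/\rho\|$ is absorbed as above. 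Young's inequality with a small weight on $\|\nabla\m{F}\|^2$ then yields \eqref{EF} with an absolute constant $C_\mF$.
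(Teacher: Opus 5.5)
Your scheme is the same as the paper's: test by $\mathcal{F}$, use zeroth-order coercivity to absorb the $\mathcal{K}$-coupling, and treat $I_1$ and $f_{\mathcal{F}}$ as in the $\mathcal{K}$ estimate. Your handling of $I_1$ and of $f_{\mathcal{F}}$ is fine; the boundary terms in $I_1$ vanish because $\mathcal{F}=0$ on the arcs and $\partial_\rho(v_\phi/\rho)=0$ on the rays.

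The gap is in the constant bookkeeping, at exactly the step you single out as critical. First, there is a sign error. Because $\mathcal{F}=0$ on the arcs,
\[
\int_{D_m}\frac{2}{\rho}\,\mathcal{F}\,\partial_\rho\mathcal{F}\,\mathrm{d}x
=2\pi\int_{\frac{\pi}{2}-\alpha}^{\frac{\pi}{2}+\alpha}\int_{\frac1m}^{1}\rho\,\partial_\rho(\mathcal{F}^2)\sin\phi\,\mathrm{d}\rho\,\mathrm{d}\phi
=-\Big\|\frac{\mathcal{F}}{\rho}\Big\|_{L^2(D_m)}^2 ,
\]
not $+\|\mathcal{F}/\rho\|_{L^2}^2$. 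Second, even with the budget $2\|\mathcal{F}/\rho\|_{L^2}^2$ you derived, the absorption does not close. The splitting $\frac{14}{5}ab\le 2a^2+\frac{49}{50}b^2$ leaves $\frac{49}{50}\|\partial_\phi\mathcal{K}/\rho\|_{L^2}^2$ in the $\frac12\frac{\mathrm{d}}{\mathrm{d}t}$ inequality. After doubling this is $\frac{49}{25}\|\nabla\mathcal{K}\|_{L^2}^2$, which exceeds $\frac43$; your claim that $0.98$ is ``within $\frac23$'' is false. Getting a coefficient $\le\frac23$ out of $\frac{14}{5}ab$ needs weight at least $\frac{147}{50}$ on $a^2$, which is more than your budget. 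So your primary route fails as written, and the proof would rest on the fallback you only sketch.

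The repair is immediate. With the correct sign, and dropping the nonpositive ray boundary term as you propose, you get $\int_{D_m}\mathcal{F}\,L\mathcal{F}\,\mathrm{d}x\le-\|\nabla\mathcal{F}\|_{L^2}^2-4\|\mathcal{F}/\rho\|_{L^2}^2$. Then $\frac{14}{5}ab\le 3a^2+\frac{49}{75}b^2$ with $\frac{49}{75}<\frac23$. The leftover $-\|\mathcal{F}/\rho\|_{L^2}^2$ and the spare $\|\nabla\mathcal{F}\|_{L^2}^2$ absorb the nonlinear terms, which gives \eqref{EF}. Your fallback also closes: move $\partial_\phi$ onto $\mathcal{F}$ using $\mathcal{K}=0$ on the rays, then use $\|\mathcal{K}/\rho\|_{L^2}\le\frac{\sqrt3}{5}\|\partial_\phi\mathcal{K}/\rho\|_{L^2}$.

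For comparison, the paper does not drop the ray term. It converts it into $\int\frac{2\cot\phi}{\rho^2}\mathcal{F}\,\partial_\phi\mathcal{F}\,\mathrm{d}x-\|\mathcal{F}/\rho\|_{L^2}^2$ and pays $\frac15\|\nabla\mathcal{F}\|_{L^2}^2$ for the cross term, reaching $-\frac45\|\nabla\mathcal{F}\|_{L^2}^2-\frac{10}{3}\|\mathcal{F}/\rho\|_{L^2}^2$. It then splits the coupling crudely as $3\|\mathcal{F}/\rho\|_{L^2}^2+\frac{33}{50}\|\partial_\phi\mathcal{K}/\rho\|_{L^2}^2$. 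Once the sign is corrected, your version is slightly simpler and gives a better constant.
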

\begin{proof}
	Multiply \eqref{EOKF}$_2$ by $\m{F}$ and integrating over $D_m$, we get
	\ba\l{LF6}
	\frac{1}{2}\frac{\d}{\d t}\|\m{F}\|_{L^2(D_m)}^2 &= 
	\int_{D_m}\m{F}\left(\Delta+\frac{2}{\rho} \partial_\rho-\frac{3+\cot^2\phi}{\rho^2}\right) \m{F}\d x
	- \int_{D_m}\m{F}\bm{\omega} \cdot \nabla\Big(\frac{v_\phi}{\rho}\Big)\d x
	\\
	&\quad -\int_{D_m}\m{F}\left(\frac{2}{\rho^2} \partial_\phi \m{K}+\f{4\cot\phi}{\rho^2}\m{K}\right)\d x
	- \int_{D_m}\m{F}f_{\m{F}}\d x\,.
	\ea
	Here $f_{\m{F}}$ is the right hand side of \eqref{EOKF}$_2$. 
	
	Similarly as the derivation for \eqref{EOO1}, we obtain
	\ba\l{LF9}
	\bigg|\int_{D_m}\m{F}\bm{\omega} \cdot \nabla\Big(\frac{v_\phi}{\rho}\Big)\d x\bigg|
	\leq\f{1}{5}\|\na\m{F}\|_{L^2(D_m)}^2
	+ C\|\G\|_{L^\i(D_m)}^2\|{\nabla}\m{O}\|_{L^2(D_m)}^2\,.
	\ea
	Then analogous to the argument between \eqref{KKK}--\eqref{KKKK}, we deduce
	\[
	\int_{D_m}\m{F} \Delta \m{F} \d x = -\|{\nabla}\m{F}\|_{L^2(D_m)}^2 + \int_{D_m}\f{2\m{F} (\p_\phi \m{F})\cot\phi}{\rho^2}\dx - \int_{D_m}\f{1}{\rho^2}\mF^2\d x\,,
	\]
	and 	
	\ba\l{LF7}
	&\int_{D_m}\m{F}\left(\Delta+\frac{2}{\rho} \partial_\rho-\frac{3+\cot^2\phi}{\rho^2}\right) \m{F}\d x\\
	= & -\|{\nabla}\m{F}\|_{L^2(D_m)}^2
	+ \un{\int_{D_m}\f{2\m{F} (\p_\phi \m{F})\cot\phi}{\rho^2}\dx}_{F_1}
	+ \un{\int_{D_m}\f{2\m{F}\p_\rho\m{F}}{\rho}\d x}_{F_2}
	- \int_{D_m}\f{4+\cot^2\phi}{\rho^2}\mF^2\d x \,.\\
	\ea
	
	Using the Young inequality and noticing that $|\cot\phi| \leq\f{2}{\sqrt{3}}$,
	\bn
	|F_1| \leq & \f{2}{\sqrt{3}}\Big\|\f{\m{F}}{\rho}\Big\|_{L^2(D_m)} 	\Big\|\f{\p_\phi\m{F}}{\rho}\Big\|_{L^2(D_m)}
	\leq \f{1}{5}\Big\|\f{\p_\phi\m{F}}{\rho}\Big\|_{L^2(D_m)}^2 + 	\f{5}{3}\Big\|\f{\m{F}}{\rho}\Big\|_{L^2(D_m)}^2\,.
	\en
	Meanwhile, since $\m{F}$ vanishes on arcs $A_{1,m}\cup A_{2,m}$ due to the boundary condition (\ref{BCOKF}), we apply integration by parts to get
	\bn
	F_2=2\pi\int_\f{1}{m}^1\int_{\f{\pi}{2}-\al}^{\f{\pi}{2}+\al}\rho \p_\rho(\mF^2) \sin\phi\d\phi\d\rho
	= - 2\pi\int_\f{1}{m}^1\int_{\f{\pi}{2}-\al}^{\f{\pi}{2}+\al}\mF^2\sin\phi\d\phi\d\rho
	= - \Big\|\f{\mF}{\rho}\Big\|_{L^2(D_m)}^2\,.
	\en
	Combining the above estimates of $F_1$ and $F_2$ with (\ref{LF7}) yields 
	\be\label{F_lap_est}
	\int_{D_m}\m{F}\left(\Delta+\frac{2}{\rho} \partial_\rho-\frac{3+\cot^2\phi}{\rho^2}\right) \m{F}\d x \leq -\frac45 \|{\nabla}\m{F}\|_{L^2(D_m)}^2 - \frac{10}{3} \Big\|\f{\mF}{\rho}\Big\|_{L^2(D_m)}^2\,.
	\ee
	Plugging (\ref{LF9}) and (\ref{F_lap_est}) into (\ref{LF6}) yields 
	\ba\l{F_eest1}
	\frac{1}{2}\frac{\d}{\d t}\|\m{F}\|_{L^2(D_m)}^2 
	& \leq -\frac35 \|{\nabla}\m{F}\|_{L^2(D_m)}^2 
	- \frac{10}{3} \Big\|\f{\mF}{\rho}\Big\|_{L^2(D_m)}^2
	+ C\|\G\|_{L^\i(D_m)}^2\|{\nabla}\m{O}\|_{L^2(D_m)}^2 \\
	& \quad + \int_{D_m}\m{F}\left(\frac{2}{\rho^2} \partial_\phi \m{K}+\f{4\cot\phi}{\rho^2}\m{K}\right)\d x
	+ \int_{D_m}\m{F}f_{\m{F}}\d x\,.
	\ea
	
	Moreover, by Young's inequality and the Poincar\'e inequality \eqref{PoinBE}, we get
	\ba\l{LF10}
	&\quad \bigg|\int_{D_m}\m{F}\left(\frac{2}{\rho^2} \partial_\phi \m{K}+\f{4\cot\phi}{\rho^2}\m{K}\right)\d x\bigg| \\
	&\leq \bigg(2\Big\|\f{\m{F}}{\rho}\Big\|_{L^2(D_m)}^2 
	+ \f{1}{2} \Big\|\f{\p_\phi\m{K}}{\rho}\Big\|_{L^2(D_m)}^2\bigg)
	+ \bigg(\Big\|\f{\m{F}}{\rho}\Big\|_{L^2(D_m)}^2
	+ \f{4}{3}\Big\|\f{\m{K}}{\rho}\Big\|_{L^2(D_m)}^2 \bigg)\\
	&\leq 3\Big\|\f{\m{F}}{\rho}\Big\|_{L^2(D_m)}^2 + \Big(\f{1}{2} + \frac43 \cdot \frac{3}{25}\Big)\Big\|\f{\p_\phi\m{K}}{\rho}\Big\|_{L^2(D_m)}^2\,.
	\ea
	Next, using the same method in the derivation for (\ref{ERHSK}), we deduce that
	\ba\l{LF11}
	\bigg|\int_{D_m}\m{F}f_{\m{F}}\d x\bigg|
	\leq \frac{1}{5}\|\na\m{F}\|_{L^2(D_m)}^2 + C\|\G\|_{L^\i(D_m)}^2\|{\nabla}\m{O}\|_{L^2(D_m)}^2\,.
	\ea
	
	Substituting (\ref{LF10}) and (\ref{LF11}) into (\ref{F_eest1}) gives 
	\bn
	\frac{1}{2}\frac{\d}{\d t}\|\m{F}\|_{L^2(D_m)}^2 
	& \leq -\frac25 \|{\nabla}\m{F}\|_{L^2(D_m)}^2 
	+ \f{33}{50} \| \nabla \mK \|_{L^2(D_m)}^2
	+ C\|\G\|_{L^\i(D_m)}^2\|{\nabla}\m{O}\|_{L^2(D_m)}^2 \,,
	\en
	which implies \eqref{EF}. This finishes the proof of Lemma \ref{LemmaF}. 
\end{proof}

\subsection{Energy estimate for $\m{O}$}
\label{Subsec, Est_O}
The equation for $\m{O}$ is substantially more difficult than those for $\m{K}$ and $\m{F}$ because it contains the pressure term $P$. Before carrying out the energy estimate for $\m{O}$, we first introduce some basic inequalities which will be applied to control nonlinear terms in Lemma \ref{LemEO}. These nonlinear terms arise due to the $L^2$ estimate of $\nabla P$ in Lemma \ref{EPs}. 
\begin{lemma}[Nonlinear control]\l{NonC}
	Let the region $D_m$ be as defined in \eqref{app domain-cyl} with $m\geq 10^3$ and the angle $\alpha\in (0,\frac{\pi}{6}]$. Let $f,g\in H^1(D_m)$ be axially symmetric functions. Suppose for any $\rho\in[\f{1}{m},1]$, there exists $\phi_\rho\in[\f{\pi}{2}-\al,\f{\pi}{2}+\al]$ such that $g(\rho,\phi_\rho)=0$. Then the following estimates hold
	\ba\l{NlinE1}
	\|fg\|_{L^2(D_m)}^2&\leq C \Big\|\f{f}{\rho}\Big\|_{L^2(D_m)} \Big( \Big\|\f{\p_\rho f}{\rho}\Big\|_{L^2(D_m)} + \Big\|\f{f}{\rho^2}\Big\|_{L^2(D_m)}\Big) \|g\|_{L^2(D_m)} \|\p_\phi g\|_{L^2(D_m)}\,,
	\ea
	and
	\ba\l{NlinE2}
	\|fg\|_{L^2(D_m)}^2 &\leq C \Big\|\f{f}{\rho}\Big\|_{L^2(D_m)}
	\Big(\|{\p_\rho f}\|_{L^2(D_m)}
	+ \Big\|\f{f}{\rho}\Big\|_{L^2(D_m)}\Big) \|g\|_{L^2(D_m)} \Big\|\f{\p_\phi g}{\rho} \Big\|_{L^2(D_m)}\,,
	\ea
	where $C>0$ denotes an absolute constant.
\end{lemma}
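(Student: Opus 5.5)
The plan is a Ladyzhenskaya-type anisotropic interpolation in spherical coordinates: a one-dimensional sup bound in $\rho$ for $f$ and one in $\phi$ for $g$, followed by Fubini. In the measure $\d x = 2\pi\rho^2\sin\phi\,\d\rho\,\d\phi$ with $\sin\phi\in[\sqrt3/2,1]$, we have
\[
\|fg\|_{L^2}^2\simeq\int_{1/m}^1\int_{\phi_1}^{\phi_2} f^2g^2\rho^2\d\phi\d\rho\le \int_{1/m}^1 \Big(\sup_{\phi}g^2(\rho,\phi)\Big)\Big(\int_{\phi_1}^{\phi_2}\sup_{\rho'} \big(\rho' f^2(\rho',\phi)\big)\,\rho\,\d\phi\Big)\d\rho .
\]
It is more convenient to bound $\sup_\rho$ of $f$ for fixed $\phi$ and $\sup_\phi$ of $g$ for fixed $\rho$ separately, and then factor the double integral into a product of two one-dimensional integrals.

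\textbf{Step 1: the bound for $g$.} Since $g(\rho,\phi_\rho)=0$, for every $\phi$ we have $g^2(\rho,\phi)=\int_{\phi_\rho}^{\phi}2g\p_\phi g\,\d\phi'$. Hence
\[
\sup_\phi g^2(\rho,\cdot)\le 2\int_{\phi_1}^{\phi_2}|g||\p_\phi g|\,\d\phi .
\]
No mean-zero or boundary condition is needed beyond the existence of the zero $\phi_\rho$; this is exactly the hypothesis in Lemma \ref{NonC}.

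\textbf{Step 2: the bound for $f$.} For fixed $\phi$, I apply the fundamental theorem of calculus in $\rho$ to a weighted quantity, with no boundary information. To handle the endpoint without a vanishing condition, I use the one-dimensional inequality $\sup_{[a,b]}|h|^2\le \frac{1}{b-a}\int h^2+2\int|h||h'|$, or more precisely a weighted version of it. For \eqref{NlinE1}, take $h=f$. Then
\[
\sup_\rho f^2(\cdot,\phi)\le 2\int |f||\p_\rho f|\d\rho + C\int f^2\d\rho ,
\]
and the length $1-1/m\ge 1/2$ is uniform in $m$. Write $\int|f||\p_\rho f|\d\rho = \int \frac{|f|}{\rho}\frac{|\p_\rho f|}{\rho}\rho^2\d\rho$ and $\int f^2\,\d\rho\le \int \frac{|f|}{\rho}\frac{|f|}{\rho^2}\rho^2\d\rho$, since $\rho\le1$.

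\textbf{Step 3: combine.} Next,
\[
\|fg\|^2_{L^2}\lesssim \int\!\!\int f^2g^2\,\rho^2\d\phi\d\rho\le \int_{1/m}^1\sup_\phi g^2\;\Big(\int_{\phi_1}^{\phi_2}\sup_\rho f^2\,\d\phi\Big)\rho^2\d\rho .
\]
The step that needs care is the weight $\rho^2$: it has to be spent on $g$, while $f$ is measured with the full weight coming from its own $\rho$-integral in Step 2. Concretely, I bound the integral by
\[
\Big(\int_{\phi}\sup_\rho f^2\Big)\Big(\int_\rho \sup_\phi g^2\,\rho^2\d\rho\Big).
\]
By Step 1, the second factor is at most $2\int\!\!\int|g||\p_\phi g|\rho^2\lesssim \|g\|_{L^2}\|\p_\phi g\|_{L^2}$ by Cauchy--Schwarz. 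By Step 2, the first factor is $\lesssim \|f/\rho\|_{L^2}\big(\|\p_\rho f/\rho\|_{L^2}+\|f/\rho^2\|_{L^2}\big)$, again by Cauchy--Schwarz in $\d x$. This gives \eqref{NlinE1}.

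For \eqref{NlinE2}, I shift one power of $\rho$ from $f$ to $g$. Bound $\sup_\rho (\rho f^2)$ instead: $\rho f^2(\rho)\le f^2(1)+\int|\p_\rho(\rho f^2)|$, and $f^2(1)$ is again controlled through the averaging trick. Then
\[
\int|\p_\rho(\rho f^2)|\d\rho\le \int\big(f^2+2\rho|f||\p_\rho f|\big)\d\rho=\int\Big(\frac{f^2}{\rho^2}+\frac{2|f||\p_\rho f|}{\rho}\Big)\rho^2\d\rho,
\]
which is $\lesssim\|f/\rho\|(\|\p_\rho f\|+\|f/\rho\|)$. The remaining factor is $\int\sup_\phi g^2\,\rho\,\d\rho\le2\int\!\!\int |g|\frac{|\p_\phi g|}{\rho}\rho^2$, which gives $\|g\|\,\|\p_\phi g/\rho\|$.

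The main obstacle is the boundary term at $\rho=1$ in Step 2. It must be absorbed without an $m$-dependent constant. I would treat it by averaging the starting point over $\rho'\in[1/2,1]$, where all weights $\rho^{\pm k}$ are comparable to $1$, so that the constants remain absolute.
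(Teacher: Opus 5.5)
Your argument is correct and follows the paper's proof: a $\sup_\rho$ bound for $f$ (resp.\ $\rho f^2$) at fixed $\phi$, a $\sup_\phi$ bound for $g$ from its zero $\phi_\rho$, then Fubini and Cauchy--Schwarz with the $\rho$-weights distributed exactly as you describe. The only difference is minor. You handle the missing endpoint condition in $\rho$ by averaging the base point, whereas the paper writes $f=f\eta+f(1-\eta)$ with the cutoff $\eta$ from \eqref{DCUT}, so that each piece vanishes at one end; both give constants independent of $m$.
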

\begin{proof}
	Let $\eta$ be the cut-off function of variable $\rho$ defined in \eqref{DCUT}. For any $\phi\in[\f{\pi}{2}-\al,\f{\pi}{2}+\al]$, one has
	\ba\l{Sup0}
	\int_{\f{1}{m}}^1f^2g^2\rho^2\d\rho 
	\leq & 2\int_{\f{1}{m}}^1(f\eta)^2g^2\rho^2\d\rho + 2\int_{\f{1}{m}}^1[f(1-\eta)]^2 g^2 \rho^2 \d\rho\\
	\leq & 2\Big(\sup_{\rho\in[\f{1}{m},1]}(f\eta)^2+\sup_{\rho\in[\f{1}{m},1]}[f(1-\eta)]^2 \Big) \int_{\f{1}{m}}^1 g^2 \rho^2 \d\rho\,.
	\ea
	Since $\eta=0$ for $\rho\leq\frac{1}{3}$, using the Newton-Leibniz formula, we infer that
	\ba\label{sup1}
	\sup_{\rho \in [\f{1}{m},1]}(f\eta)^2 \leq \int_{\f{1}{m}}^1 \big| \p_\rho \big[(f\eta)^2 \big] \big| \d\rho \les \int_{\f{1}{m}}^1 \bigg( \f{f^2}{\rho^3} + \f{|f \p_\rho f|}{\rho^2} \bigg) \rho^2 \d\rho\,.
	\ea
	Similarly, since $1-\eta=0$ for $\rho\geq\frac{2}{3}$, we also have
	\ba\l{sup2}
	\sup_{\rho\in[\f{1}{m},1]} [f(1-\eta)]^2 \les \int_{\f{1}{m}}^1\bigg(\f{f^2}{\rho^3} + \f{|f \p_\rho f|}{\rho^2}\bigg)\rho^2\d\rho\,.
	\ea
	Moreover, since there exists $\phi_\rho\in[\f{\pi}{2}-\al,\f{\pi}{2}+\al]$ such that $g(\rho,\phi_\rho)=0$, one has
	\be\l{Sup3} 								\sup_{\phi\in[\f{\pi}{2}-\al,\f{\pi}{2}+\al]}\int_{\f{1}{m}}^1g^2\rho^2\d\rho
	\leq\int_{\f{1}{m}}^1\sup_{\phi\in[\f{\pi}{2}-\al,\f{\pi}{2}+\al]} \big(g^2 \rho^2\big) \d\rho
	\leq 2\int_{\f{1}{m}}^1 \bigg(\int_{\f{\pi}{2}-\al}^{\f{\pi}{2}+\al} |g\p_\phi g|\d\phi \bigg) \rho^2\d\rho\,.
	\ee
	Combining \eqref{Sup0}--\eqref{sup1}--\eqref{sup2}--\eqref{Sup3} yields
	\bn
	\|fg\|_{L^2(D_m)}^2 = & 2\pi \int_{\f{\pi}{2}-\al}^{\f{\pi}{2}+\al} \bigg(\int_{\f{1}{m}}^1 f^2 g^2 \rho^2 \d\rho\bigg) \sin\phi \d\phi\\
	\les & \bigg[\int_{\f{\pi}{2}-\al}^{\f{\pi}{2}+\al} \bigg(\int_{\f{1}{m}}^1\bigg(\f{f^2}{\rho^3} + \f{|f\p_\rho f|}{\rho^2}\bigg)\rho^2\d\rho\bigg) \sin\phi \d\phi \bigg]
	\bigg( \int_{\f{1}{m}}^1\int_{\f{\pi}{2}-\al}^{\f{\pi}{2}+\al}|g\p_\phi g|\rho^2\d\phi\d\rho\bigg)\\
	= & \int_{D_m}\bigg( \f{f^2}{\rho^3} + \f{|f\p_\rho f|}{\rho^2}\bigg)\d x  \int_{\f{1}{m}}^1\int_{\f{\pi}{2}-\al}^{\f{\pi}{2}+\al}|g\p_\phi g|\rho^2\d\phi\d\rho\,.
	\en
	Noticing that $\sin\phi$ is away from zero since $0 < \al \leq \f{\pi}{6}$, so
	\[
	\|fg\|_{L^2(D_m)}^2  \ls \int_{D_m}\bigg( \f{f^2}{\rho^3} + \f{|f\p_\rho f|}{\rho^2}\bigg)\d x \int_{D_m}|g\p_\phi g|\d x\,.
	\]
	Then one concludes \eqref{NlinE1} by applying the Cauchy-Schwarz inequality. 
	
	Estimates in \eqref{NlinE2} can be achieved via a similar approach, by distributing the powers of $\rho$ differently. Instead of \eqref{Sup0}, we can derive
	\ba\l{sup01}
	\int_{\f{1}{m}}^1f^2g^2\rho^2\d\rho \leq & 2\Big(\sup_{\rho\in[\f{1}{m},1]}\rho(f\eta)^2+\sup_{\rho\in[\f{1}{m},1]}\rho [f(1-\eta)]^2 \Big) \int_{\f{1}{m}}^1g^2\rho\d\rho\,.
	\ea
	Akin to \eqref{sup1} and \eqref{sup2}, we also have
	\ba\label{sup11}
	\sup_{\rho\in[\f{1}{m},1]}\rho(f\eta)^2 \leq \int_{\f{1}{m}}^1 |\p_\rho(\rho f^2\eta^2)|\d\rho\les\int_{\f{1}{m}}^1\bigg(\f{f^2}{\rho^2} + \f{|f\p_\rho f|}{\rho} \bigg)\rho^2\d\rho\,
	\ea
	and
	\ba\l{sup21}
	\sup_{\rho\in[\f{1}{m},1]}\rho [f(1-\eta)]^2 \les \int_{\f{1}{m}}^1\bigg(\f{f^2}{\rho^2} + \f{|f\p_\rho f|}{\rho} \bigg)\rho^2\d\rho\,.
	\ea
	Similar to \eqref{Sup3}, we also have
	\be\l{Sup31} \sup_{\phi\in[\f{\pi}{2}-\al,\f{\pi}{2}+\al]}\int_{\f{1}{m}}^1g^2\rho\d\rho 
	\leq \int_{\f{1}{m}}^1 \sup_{\phi\in[\f{\pi}{2}-\al,\f{\pi}{2}+\al]} (g^2\rho) \d\rho
	\les \int_{\f{1}{m}}^1 \bigg(\int_{\f{\pi}{2}-\al}^{\f{\pi}{2}+\al} \bigg| g\f{\p_\phi g}{\rho} \bigg| \d\phi \bigg) \rho^2\d\rho\,.
	\ee
	Combining \eqref{sup01}--\eqref{sup11}--\eqref{sup21}--\eqref{Sup31}, one can derive \eqref{NlinE2} by using the same method as we deduce \eqref{NlinE1}. 
\end{proof}

\begin{lemma}[estimate of $\m{O}$]\l{LemEO}
	Under the same condition as Proposition \ref{Prop_OKFest}, we have
	\ba\l{EOmega}
	&\f{\d}{\d t}\|\m{O}\|_{L^2(D_m)}^2+\|{\na}\m{O}\|_{L^2(D_m)}^2\\
	\leq\,\,& 
	C_{1}\left(\|\m{K}\|_{L^2(D_m)}^2 + \|\m{F}\|_{L^2(D_m)}^2 + \|\m{O}\|_{L^2(D_m)}^2+1\right)\|\na\bm{v}\|_{L^2(D_m)}^2\left(1+\|\bm{v}\|_{L^2(D_m)}^2\right) \\
	& + \|\G\|_{L^\i(D_m)}^2\left(\|{\na}\m{K}\|_{L^2(D_m)}^2 + 40\|{\na}\m{F}\|_{L^2(D_m)}^2\right) \,.
	\ea
	where $C_{1}>0$ is an absolute constant. 
\end{lemma}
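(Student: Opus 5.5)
Test \eqref{EOKF}$_3$ against $\m{O}$ and integrate over $D_m$; this gives
\[
\tfrac12\tfrac{\d}{\d t}\|\m{O}\|_{L^2}^2=\int_{D_m}\m{O}\Big(\Delta+\tfrac{2}{\rho}\p_\rho+\tfrac{2\cot\phi}{\rho^2}\p_\phi\Big)\m{O}\,\d x-\int_{D_m}\m{O}\,\bm{b}\cdot\nabla\m{O}\,\d x-\int_{D_m}\m{O}\Big(\tfrac{2v_\th}{\rho\sin\phi}\m{K}+\tfrac{2v_\th\cos\phi}{\rho\sin^2\phi}\mF\Big)\d x-\int_{D_m}\m{O}f_{\m{O}}\,\d x.
\]

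\textbf{Linear part.} Since $\m{O}=0$ on all of $\p D_m$ by \eqref{BCOKF}, integration by parts in $\Delta$ produces no boundary terms. The first-order terms combine as
\[
\int_{D_m}\m{O}\Big(\tfrac{2}{\rho}\p_\rho+\tfrac{2\cot\phi}{\rho^2}\p_\phi\Big)\m{O}\,\d x=\int_{D_m}\Big(-\tfrac{\m{O}^2}{\rho^2}+\tfrac{\m{O}^2}{\rho^2}\Big)\d x=0 .
\]
This holds because $\tfrac1\rho\p_\rho(\m{O}^2)$ integrates to $-\int \m{O}^2/\rho^2$, and $\tfrac{\cot\phi}{\rho^2}\p_\phi(\m{O}^2)$ against $\sin\phi$ integrates to $+\int\m{O}^2/\rho^2$. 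The linear part is therefore exactly $-\|\nabla\m{O}\|^2$. The drift term vanishes by $\operatorname{div}\bm{b}=0$ and $\bm{b}\cdot\bm{n}=0$.

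\textbf{Coupling terms.} Use $|v_\th|\le \tfrac{2}{\sqrt3\rho}\|\G\|_{L^\infty}$ from \eqref{Gamma_vth}. Then bound $\|\m{O}/\rho\|$ by $\sqrt{3}/5\,\|\nabla\m{O}\|$ via Lemma \ref{PoinB}, and bound $\|\m{K}/\rho\|$ and $\|\mF/\rho\|$ by Poincaré/Hardy, using that $\m{K}=0$ on rays and $\mF=0$ on arcs. For $\mF$, use the $\rho$-direction Hardy estimate $\|\mF/\rho\|\le 2\|\p_\rho\mF\|$ together with the identity $F_2=-\|\mF/\rho\|^2$ obtained in Lemma \ref{LemmaF}. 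Young's inequality with weights tuned so that $\|\nabla\m{O}\|$ absorbs at most $1/4$ should then give the stated $\|\G\|_{L^\infty}^2(\|\nabla\m{K}\|^2+40\|\nabla\mF\|^2)$; I expect the constant $40$ to come out of this weighting.

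\textbf{Forcing term, and the main obstacle.} In $f_{\m{O}}$ every term carries $\eta,\eta',\eta''$, so it is supported in $\rho\ge1/3$ and all powers of $\rho^{-1}$ are harmless. I treat its pieces as follows.
\begin{itemize}
\item[(a)] The linear velocity terms $v_\phi$, $\p_\phi v_\rho$, $\p_\rho v_\phi$ are controlled by $\|\nabla\bm v\|$. With Young's inequality and $\|\m{O}\|\lesssim\|\nabla\m{O}\|$, this leaves $C\|\nabla\bm v\|^2$, which is covered by the ``$+1$'' factor.
\item[(b)] The pressure term is handled with Lemma \ref{EPs}. Its linear pieces behave as in (a). Its nonlinear pieces are $\|v^2/\rho\|$, $\|(v_\rho\p_\rho+\tfrac{v_\phi}{\rho}\p_\phi)v_\rho\|$ and the analogous expression for $v_\phi$. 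For these I use Lemma \ref{NonC}, taking $f$ to be a velocity component and $g$ a derivative (or vice versa), since $v_\phi$ vanishes on rays and $v_\rho$ has zero $\phi$-mean, so a zero in $\phi$ exists. For example,
\[
\|v_\phi\p_\rho v_\rho\|^2\lesssim \|\bm v\|\,\|\nabla\bm v\|\cdot(\text{second-order }v_\rho\text{ norms}),
\]
and the second-order factors are bounded by Lemmas \ref{Lemma, f_1_est}, \ref{Lemma, f_2_est}, \ref{Lem2nd} and \ref{Lem35} in terms of $\|\nabla\m{O}\|$ and $\|(\m{K},\mF)\|$. A final Young's inequality absorbs $\|\nabla\m{O}\|$ and leaves $(\|\m{K}\|^2+\|\mF\|^2+\|\m{O}\|^2+1)\|\nabla\bm v\|^2(1+\|\bm v\|^2)$.
\item[(c)] The quadratic $v_\th^2$ and $v_\phi^2$ terms are treated the same way as in (b).
\end{itemize}
The hardest step is this pressure/nonlinear accounting. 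For each product I must choose the interpolation so that exactly one factor $\|\nabla\m{O}\|$ (to be absorbed) and one factor $\|\nabla\bm v\|$ (integrable in time) appear, with no leftover higher power of $\|\nabla\m{O}\|$. I would first try \eqref{NlinE2} with $f=v$, $g=\nabla v$ for every term and check that the exponents balance.
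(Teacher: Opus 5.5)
Your outline matches the paper's proof up to the nonlinear pressure terms. That includes testing by $\m{O}$ and the cancellation of the first-order terms. It includes the coupling terms via $|v_\th|\le \frac{2}{\sqrt3\,\rho}\|\G\|_{L^\infty}$, with Lemma \ref{PoinB} for $\m{O},\mK$ and $\|\mF/\rho\|\le 2\|\p_\rho\mF\|$; this is indeed where the $40$ comes from. It also includes Lemma \ref{EPs} for $\na P$. The gap is the step you defer, and the default you propose for it fails.

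Take \eqref{NlinE2} with $f$ a velocity component and $g$ a first derivative, say $f=v_\phi$, $g=\p_\rho v_\rho$. The factor $\|f/\rho\|(\|\p_\rho f\|+\|f/\rho\|)$ costs $\|\na\bm{v}\|^2$, while $\|g\|\,\|\p_\phi g/\rho\|$ is at best $\|\m{O}\|\|\na\m{O}\|$. After Young you are left with $\|\na\bm{v}\|^4\|\m{O}\|^2$. This has one factor $\|\na\bm{v}\|^2$ too many to be bounded by $C_1(\|\mK\|^2+\|\mF\|^2+\|\m{O}\|^2+1)\|\na\bm{v}\|^2(1+\|\bm{v}\|^2)$. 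Absorbing it via $\|\na\bm{v}\|^2\ls\|\mK\|^2+\|\mF\|^2+\|\m{O}\|^2$ would make the Gr\"onwall step in Proposition \ref{Prop_OKFest} superlinear.

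The second velocity factor has to be $\|\bm{v}\|_{L^2}$. The paper gets this from \eqref{NlinE1} with $f=\rho\,v_\bullet$ ($v_\bullet=v_\rho$ or $v_\phi$). Then $\|f/\rho\|=\|v_\bullet\|$ and $\|\p_\rho f/\rho\|+\|f/\rho^2\|\ls\|\na\bm{v}\|$, and $g$ carries the derivative:
\begin{itemize}
\item It first reduces $\frac1\rho v_\phi\p_\phi v_\phi$ by the divergence-free relation, then writes $v_\bullet\p_\rho v_\rho=\rho v_\bullet\,\p_\rho(v_\rho/\rho)+v_\bullet v_\rho/\rho$ and $v_\rho\p_\rho v_\phi=\rho v_\rho\,\p_\rho(v_\phi/\rho)+v_\rho v_\phi/\rho$.
\item It takes $g=\p_\rho(v_\rho/\rho)$, which has zero $\phi$-mean by Lemma \ref{v_rho_mean0}, or $g=\p_\rho(v_\phi/\rho)$, which vanishes on the rays. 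For these, $\|g\|\ls\|\m{O}\|$ and $\|\p_\phi g\|\ls\|\na\m{O}\|$ by Lemmas \ref{Lemma, f_2_est}, \ref{Lemma, f_1_est} and \ref{Lem2nd}.
\item For $\p_\phi\p_\rho(v_\phi/\rho)$ the divergence-free relation is needed again, to turn it into $\p_\rho^2 v_\rho$ plus lower order, since no bound on $\p_\rho^2 v_\phi$ is available.
\item Only $\frac1\rho v_\phi\p_\phi v_\rho$ goes through \eqref{NlinE2}, with roles reversed: $f=\frac1\rho\p_\phi v_\rho$, $g=v_\phi$.
\end{itemize}
Each product is then $\ls\|\bm{v}\|\|\na\bm{v}\|\|\m{O}\|\|\na\m{O}\|$, which is what makes the final absorption work. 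Your example $\|v_\phi\p_\rho v_\rho\|^2\ls\|\bm{v}\|\|\na\bm{v}\|(\cdots)$ has exactly this shape. But it comes from \eqref{NlinE1} with $f=\rho v_\phi$, not from the choice you say you would try first.

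Separately, Lemma \ref{NonC} cannot be applied to $\|v_\th^2/\rho\|$. It needs $g(\rho,\cdot)$ to vanish somewhere for every $\rho$. Here $v_\th$ satisfies only the global constraint $\int_{D_m}\rho\sin\phi\,v_\th\d x=0$, so without parity no such zero exists. The paper uses H\"older and Sobolev instead: $\|v_\th^2/\rho\|\le\|v_\th/\rho\|_{L^6}\|v_\th\|_{L^3}$. Here $\|v_\th/\rho\|_{H^1}\ls\|\na(v_\th/\rho)\|\ls\|\mK\|+\|\mF\|$ by Corollary \ref{Cor, vth} and Lemma \ref{Lem35}, and $\|v_\th\|_{H^1}\ls\|\na\bm{v}\|$. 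The same argument with Lemmas \ref{Lemma, f_1_est} and \ref{Lemma, f_2_est} handles $v_\rho^2/\rho$ and $v_\phi^2/\rho$.

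The quadratic terms inside $f_{\m{O}}$ need nothing this refined. On $\{\rho\ge 1/3\}$ the bound $\|\m{O}\|_{L^6}\|\bm{v}\|_{L^6}\|\bm{v}\|_{L^2}\ls\|\na\m{O}\|\|\na\bm{v}\|\|\bm{v}\|$ suffices.
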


\begin{proof}
	Multiplying \eqref{EOKF}$_3$ by $\m{O}$ and integrating over $D_m$ yields
	\ba\l{LemEOM}
	\frac{1}{2}\frac{\d}{\d t}\|\m{O}\|_{L^2(D_m)}^2 
	&= \int_{D_m}\m{O}\left(\Delta+\frac{2}{\rho} \partial_{\rho}+\frac{2 \cot \phi}{\rho^{2}} \partial_{\phi}\right) \m{O}\d x\\
	& \quad -\int_{D_m}\m{O}\frac{2 v_{\theta}}{\rho \sin \phi}\m{K}\d x
	-\int_{D_m}\m{O}\f{2v_\th\cos\phi}{\rho\sin^2\phi}\mF\d x
	- \int_{D_m}\m{O}f_{\m{O}}\d x\,.
	\ea  
	Since $\m{O}$ vanishes on $\p D_m$ due to the boundary condition (\ref{BCOKF}), then 
	we apply integration by parts to obtain
	\ba\l{EL1}
	\int_{D_m}\m{O}\Dl\m{O}\d x=-\int_{D_m}|\na\m{O}|^2\d x\,,
	\ea
	and
	\ba\l{EL2}
	&\int_{D_m}\m{O}\left(\frac{2}{\rho} \partial_{\rho}+\frac{2 \cot \phi}{\rho^{2}} \partial_{\phi}\right) \m{O}\d x\\
	=& 2\pi\int_{{\f{\pi}{2}-\al}}^{{\f{\pi}{2}+\al}}\int_{\frac1m}^{1}\p_\rho(\m{O}^2) \rho\sin\phi\d \rho \d\phi
	+ 2\pi\int_{{\f{\pi}{2}-\al}}^{{\f{\pi}{2}+\al}}\int_{\frac1m}^{1}\p_\phi (\m{O}^2) \cos\phi\d\rho\d\phi\\
	=&-2\pi\int_{{\f{\pi}{2}-\al}}^{{\f{\pi}{2}+\al}}\int_{\frac1m}^{1}\m{O}^2\sin\phi\d \rho \d\phi+2\pi\int_{{\f{\pi}{2}-\al}}^{{\f{\pi}{2}+\al}}\int_{\frac1m}^{1}\m{O}^2\sin\phi\d \rho \d\phi = 0\,.
	\ea
	Plugging (\ref{EL1}) and (\ref{EL2}) into (\ref{LemEOM}) yields 
	\ba\l{LemEOM2}
	& \quad \frac{1}{2}\frac{\d}{\d t}\|\m{O}\|_{L^2(D_m)}^2 + \|\nabla \m{O}\|_{L^2(D_m)}^2 
	\\
	& = -\int_{D_m}\m{O}\frac{2 v_{\theta}}{\rho \sin \phi}\m{K}\d x
	-\int_{D_m}\m{O}\f{2v_\th\cos\phi}{\rho\sin^2\phi}\mF\d x
	- \int_{D_m}\m{O}f_{\m{O}}\d x\,.
	\ea  
	
	Next, we will estimate the terms on the right hand side of (\ref{LemEOM2}) one by one. Firstly, since $\sin\phi \geq \frac{\sqrt{3}}{2}$ on $D_m$, then 
	\[
	\bigg|\int_{D_m}\m{O}\frac{2 v_{\theta}}{\rho \sin \phi}\m{K}\d x\bigg| 
	\leq \int_{D_m}\frac{2}{\rho^2 \sin^2 \phi}\big|\G\m{O}\m{K}\big|\d x \leq \frac{8}{3} \|\G\|_{L^\i(D_m)} \int_{D_m}\frac{|\m{O}|}{\rho} \frac{|\m{K}|}{\rho}
	\d x\,.
	\]
	Since $\m{O} = \m{K}=0$ on rays $R_{1,m}\cup R_{2,m}$, then it follows from the Cauchy-Schwarz inequality and the Poincar\'e inequality \eqref{PoinBE} that
	\ba\l{EL3}
	\bigg|\int_{D_m}\m{O}\frac{2 v_{\theta}}{\rho \sin \phi}\m{K}\d x\bigg|
	&\leq \frac{8}{3} \|\G\|_{L^\i(D_m)} 
	\frac{\sqrt{3}}{5} \Big\|\frac{\p_\phi\m{O}}{\rho}\Big\|_{L^2(D_m)} 
	\frac{\sqrt{3}}{5} \Big\|\frac{\p_\phi\m{K}}{\rho}\Big\|_{L^2(D_m)} \\
	&\leq \f{1}{16}\|\na\m{O}\|_{L^2(D_m)}^2 + \frac12\|\G\|_{L^\infty(D_m)}^2\|\na\m{K}\|_{L^2(D_m)}^2\,.
	\ea
	On the other hand, since $\sin\phi \geq \frac{\sqrt{3}}{2}$ and $\cos\phi \leq \frac12$, then 
	\[
	\bigg|\int_{D_m}\m{O}\f{2v_\th\cos\phi}{\rho\sin^2\phi}\mF\d x\bigg| 
	\leq \int_{D_m}\frac{2\cos\phi}{\rho^2 \sin^3 \phi}\big|\G\m{O}\m{F}\big|\d x \leq \frac{8}{3\sqrt{3}} \|\G\|_{L^\i(D_m)} \int_{D_m}\frac{|\m{O}|}{\rho} \frac{|\m{F}|}{\rho}
	\d x\,.
	\]
	Since 
	\[
	\int_{D_m} \Big( \frac{\m{F}}{\rho} \Big)^2 \d x = 2\pi \int_{\frac{\pi}{2} - \al}^{\frac{\pi}{2} + \al} \bigg( \int_{\frac1m}^{1} \m{F}^2 \d\rho \bigg) \sin\phi \d\phi,
	\]
	and $\m{F}=0$ on $A_{1,m}\cup A_{2,m}$, then we apply the classical Hardy's inequality to get 
	\[
	\int_{\frac1m}^{1} \m{F}^2 \d\rho \leq 4 \int_{\frac1m}^{1} \rho^2 |\p_\rho \m{F}|^2 \d\rho,
	\]
	which implies that $\| \frac{\m{F}}{\rho}\|_{L^2(D_m)}^2 \leq 4 \|{\nabla}\m{F}\|_{L^2(D_m)}^2$. As a result, 
	\ba\l{EL4}
	\bigg|\int_{D_m}\m{O}\f{2v_\th\cos\phi}{\rho\sin^2\phi}\mF\d x\bigg|
	&\leq \frac{8}{3\sqrt{3}} \|\G\|_{L^\i(D_m)} 
	\frac{\sqrt{3}}{5} \Big\|\frac{\p_\phi\m{O}}{\rho}\Big\|_{L^2(D_m)} 
	4\| \nabla \m{F}\|_{L^2(D_m)} \\
	&\leq \f{1}{16}\|\na\m{O}\|_{L^2(D_m)}^2 + 20\|\G\|_{L^\infty(D_m)}^2\|\na\m{F}\|_{L^2(D_m)}^2\,.
	\ea
	Combining (\ref{EL3}) and (\ref{EL4}) with (\ref{LemEOM2}) gives 
	\ba\label{LemEOM3}
	& \quad \frac{1}{2}\frac{\d}{\d t}\|\m{O}\|_{L^2(D_m)}^2 + \frac78 \|\nabla \m{O}\|_{L^2(D_m)}^2 
	\\
	& \leq \|\G\|_{L^\infty(D_m)}^2 \Big( \frac12 \|\nabla \m{K}\|_{L^2(D_m)}^2 
	+ 20 \|\nabla \m{F}\|_{L^2(D_m)}^2 \Big)
	+ \bigg|\int_{D_m}\m{O}f_{\m{O}}\d x \bigg|\,.
	\ea  
	
	Now we estimate the term with $f_{\m{O}}$ whose formula was given in (\ref{rhs_KFO}):	
	\[
	\begin{aligned}
		&f_{\m{O}}=-\frac{2\eta^{\prime\prime}(\rho)}{\rho^2\sin\phi}v_\phi+\f{4\eta(\rho)}{\rho^3\sin\phi}\f{\p_\phi v_\rho}{\rho}+\frac{4}{\rho^2\sin\phi}\left(\f{\eta(\rho)}{\rho}-\eta^\prime(\rho)\right)\p_\rho v_\phi-\frac{2\eta(\rho)}{\rho^2\sin\phi}\f{\p_\phi P}{\rho}\\
		&\hskip 1cm+\frac{2}{\rho^2\sin\phi}\left(\eta^\prime(\rho)-\f{3\eta(\rho)}{\rho}\right)v_\phi v_\rho+\frac{2\eta(\rho)\cos\phi}{\rho^3\sin^2\phi}\left(v_\th^2-v_\phi^2\right)\,.
	\end{aligned}
	\]
	Although $f_{\m{O}}$ contains many terms, all of them are supported away from the origin due to the cut-off function $\eta$, so 
	\ba\l{EEFF0}
	\bigg|\int_{D_m}\m{O}f_{\m{O}}\d x\bigg| \les 
	& \int_{D_m}|v_\phi\m{O}|\d x
	+\int_{D_m}\Big|\frac{\p_\phi v_\rho}{\rho}\m{O}\Big|\d x
	+\int_{D_m} | (\p_\rho v_\phi) \m{O} | \d x
	+\int_{D_m}|v_\phi v_\rho\m{O}|\d x\\
	& +\int_{D_m}|v_\th^2\m{O}|\d x+\int_{D_m}|v_\phi^2\m{O}| \d x
	+\int_{D_m}\Big|\frac{\p_\phi P}{\rho}\m{O}\Big|\d x\,.
	\ea
	Since $\rho\leq 1$ in $D_m$, we use Cauchy-Schwarz inequality, Corollary \ref{Cor, vSob} for $v_\phi$ and the Poincar\'e inequality \eqref{Poin0E} for $\m{O}$ to get
	\ba\l{EEFF1}
	&\int_{D_m}|v_\phi\m{O}|\d x
	+ \int_{D_m} | (\p_\rho v_\phi) \m{O} | \d x
	+\int_{D_m} \Big| \frac{\p_\phi v_\rho}{\rho} \m{O} \Big|\d x\\
	\les\,\, &  \big( \| \nabla v_\phi \|_{L^2(D_m)} + \| \nabla v_\rho \|_{L^2(D_m)}  \big) \Big\|\f{\p_\phi \mO}{\rho}\Big\|_{L^2(D_m)} \\
	\les \,\, & \| \nabla \bm{v} \|_{L^2(D_m)} \| \nabla \m{O}\|_{L^2(D_m)}  \,.
	\ea
	Meanwhile, we apply H\"older's inequality and Corollary \ref{Cor, vSob} for $v_\phi$ to deduce
	\ba\l{EEFF2}
	\int_{D_m}|v_\phi v_\rho\m{O}|\d x
	\leq& \|\m{O}\|_{L^6(D_m)}\|v_\phi\|_{L^6(D_m)} \|v_\rho\|_{L^2(D_m)} \| 1 \|_{L^6(D_m)}\\
	\les & \|{\nabla} \m{O}\|_{L^2(D_m)}\|{\na}v_\phi\|_{L^2(D_m)} \|\bm{v}\|_{L^2(D_m)}\,.
	\ea
	Akin to this, we also have
	\ba\l{EEFF3}
	\int_{D_m}|v_\th^2\m{O}|\d x + \int_{D_m}|v_\phi^2\m{O}|\d x
	\les \|{\nabla} \m{O}\|_{L^2(D_m)}\|{\na} \bm{v}\|_{L^2(D_m)} \|\bm{v}\|_{L^2(D_m)}\,.
	\ea
	Moreover, we use Cauchy-Schwarz inequality and Poincar\'e inequality (\ref{PoinB}) for $\m{O}$ to find 
	\be\l{EEFF4}
	\int_{D_m} \Big| \frac{\p_\phi P}{\rho} \m{O} \Big|\d x \les 
	\| \nabla \m{O}\|_{L^2(D_m)} \| \nabla P\|_{L^2(D_m)}.
	\ee
	Combining \eqref{EEFF0}--\eqref{EEFF1}--\eqref{EEFF2}--\eqref{EEFF3}--\eqref{EEFF4} together yields 
	\bn
	\bigg|\int_{D_m}\m{O}f_{\m{O}}\d x\bigg| &\les \bigg( 
	\| \nabla \m{O}\|_{L^2(D_m)} \| \nabla \bm{v} \|_{L^2(D_m)} \big( 1+ \|\bm{v}\|_{L^2(D_m)} \big) 
	+ \| \nabla \m{O}\|_{L^2(D_m)} \| \nabla P\|_{L^2(D_m)} \bigg) \\
	& \leq \frac{1}{8} \| \nabla \m{O}\|_{L^2(D_m)}^2 
	+ C \| \nabla \bm{v} \|_{L^2(D_m)}^2 \Big( 1+ \|\bm{v}\|_{L^2(D_m)}^2 \Big) 
	+ C \| \nabla P\|_{L^2(D_m)}^2 \,.
	\en
	Plugging this estimate into (\ref{LemEOM3}) leads to 
	\ba\label{LemEOM4}
	& \frac{1}{2}\frac{\d}{\d t}\|\m{O}\|_{L^2(D_m)}^2 + \frac34 \|\nabla \m{O}\|_{L^2(D_m)}^2 \\
	\leq\,\, & \|\G\|_{L^\infty(D_m)}^2 \Big( \frac12 \|\nabla \m{K}\|_{L^2(D_m)}^2 
	+ 20 \|\nabla \m{F}\|_{L^2(D_m)}^2 \Big) \\
	&+ C \| \nabla \bm{v} \|_{L^2(D_m)}^2 \Big( 1+ \|\bm{v}\|_{L^2(D_m)}^2 \Big) 
	+ C \| \nabla P\|_{L^2(D_m)}^2 \,,
	\ea  
	
	It remains to handle the term that involves the pressure $P$ in (\ref{LemEOM4}). We apply Lemma \ref{EPs} to get
	\ba\l{EPNT00}
	\|\na P\|_{L^2(D_m)}^2
	\leq\,\, & C\|\na\bm{v}\|_{L^2(D_m)}^2
	+\un{2\Big\|\frac{1}{\rho}\left(v_{\phi}^{2}+v_{\theta}^{2}\right)\Big\|_{L^{2}(D_m)}}_{PN_1}\\
	& + \Big\|\Big(v_{\rho} \partial_{\rho}+\frac{1}{\rho} v_{\phi} \partial_{\phi}\Big) v_{\rho}\Big\|_{L^{2}(D_m)}^2
	+\Big\|\Big[v_{\rho}\Big(\partial_{\rho}+\frac{1}{\rho}\Big) + \frac{1}{\rho} v_{\phi} \partial_{\phi}\Big] v_{\phi}\Big\|_{L^{2}(D_m)}^2\,.
	\ea
	We will treat $PN_1$ via the following claim:
	\begin{claim}\label{Claim, nlv}
		\be\label{nlv}\begin{split}
			\Big\| \frac{1}{\rho} (v_\rho^2 + v_\phi^2) \Big\|_{L^2(D_m)}^2 &\ls \|\m{O}\|_{L^2(D_m)}^2\|\na\bm{v}\|_{L^2(D_m)}^2, \\
			\Big\| \frac{1}{\rho} v_\th^2 \Big\|_{L^2(D_m)}^2 
			&\ls \big( \|\m{K}\|_{L^2(D_m)}^2 + \|\m{F}\|_{L^2(D_m)}^2 \big)\|\na\bm{v}\|_{L^2(D_m)}^2.
		\end{split}\ee
	\end{claim}
	\begin{proof}[Proof of Claim \ref{Claim, nlv}]
		We first estimate $\| v_\rho^2 / \rho\|_{L^2(D_m)}$. By H\"older's inequality,
		\[
		\Big\|\f{v_\rho^2}{\rho}\Big\|_{L^2(D_m)}\leq \Big\|\f{v_\rho}{\rho}\Big\|_{L^6(D_m)} \|v_\rho\|_{L^3(D_m)}.
		\]
		Then it follows from the Sobolev embedding theorem and Corollary \ref{Cor, vSob} that 
		\[
		\|v_\rho\|_{L^3(D_m)} \ls \|v_\rho\|_{H^1(D_m)} \ls \| \nabla v_\rho\|_{L^2(D_m)}.
		\]
		Meanwhile, we apply the Sobolev embedding theorem and Lemma \ref{Lemma, f_1_est} to obtain 
		\[
		\Big\|\f{v_\rho}{\rho}\Big\|_{L^6(D_m)} \ls \Big\| \nabla \Big(\f{v_\rho}{\rho}\Big)\Big\|_{H^1(D_m)} \ls \|\m{O}\|_{L^2(D_m)}.
		\]
		Combining the above estimates together gives 
		$\| v_\rho^2 / \rho\|_{L^2(D_m)} \ls \|\m{O}\|_{L^2(D_m)} \|\na\bm{v}\|_{L^2(D_m)}$.
		
		By a similar argument but replacing Lemma \ref{Lemma, f_1_est} with Lemma \ref{Lemma, f_2_est}, we can also justify 
		$\| v_\phi^2 / \rho\|_{L^2(D_m)} \ls \|\m{O}\|_{L^2(D_m)} \|\na\bm{v}\|_{L^2(D_m)}$.
		Meanwhile, by applying Lemma \ref{Lem35} instead of Lemma \ref{Lemma, f_1_est}, we can verify (\ref{nlv})$_2$. Hence, Claim \ref{Claim, nlv} is justified.
	\end{proof}
	
	Thanks to Claim \ref{Claim, nlv}, it follows from (\ref{EPNT00}) that 
	\ba\l{gradP1}
	\|\na P\|_{L^2(D_m)}^2
	\leq\,\, & C\big(\|\m{K}\|_{L^2(D_m)}^2 + \|\m{F}\|_{L^2(D_m)}^2 + \|\m{O}\|_{L^2(D_m)}^2 + 1\big)\|\na\bm{v}\|_{L^2(D_m)}^2 \\
	& +\un{\Big\| v_{\rho} \partial_{\rho} v_\rho + \frac{1}{\rho} v_{\phi} \partial_{\phi} v_{\rho} \Big\|_{L^{2}(D_m)}^2}_{PN_2}
	+ \un{\Big\| v_{\rho}\partial_{\rho} v_\phi + \frac{1}{\rho} v_{\phi} \partial_{\phi} v_\phi \Big\|_{L^{2}(D_m)}^2}_{PN_3}\,.
	\ea
	Now we consider $PN_2$ and rewrite 
	\[
	v_{\rho} \partial_{\rho} v_\rho = \rho v_\rho \p_\rho \Big( \frac{v_\rho}{\rho}\Big) + \frac{1}{\rho} v_\rho^2\,.
	\]
	The term $\frac{1}{\rho} v_\rho^2$ can be treated using Claim \ref{Claim, nlv}, so we will estimate $\big\| \rho v_\rho \p_\rho \big( \frac{v_\rho}{\rho}\big) \big\|_{L^2}$. Recalling the following identity in Lemma \ref{v_rho_mean0}:
	\[
	\int_{\f{\pi}{2}-\al}^{\f{\pi}{2}+\al}v_\rho(\rho,\phi)\sin\phi\d\phi = 0, \q\forall\rho\in \big[\f{1}{m}, 1\big]\,,
	\]
	which implies that 
	\[
	\int_{\f{\pi}{2}-\al}^{\f{\pi}{2}+\al} \p_\rho\Big( \frac{v_\rho(\rho,\phi)}{\rho} \Big) \sin\phi\d\phi = 0, \q\forall\rho\in \big[\f{1}{m}, 1\big]\,,
	\]
	so for any $\rho\in[\f{1}{m},1]$, there exists $\phi_\rho\in[\f{\pi}{2}-\al,\f{\pi}{2}+\al]$ such that $\p_\rho \big(\f{v_\rho}{\rho}\big)(\rho,\phi_\rho)=0$. 
	Thus, one can apply \eqref{NlinE1} in Lemma \ref{NonC} (with $f=\rho v_\phi$, $g=\p_\rho\f{v_\rho}{\rho}$) to obtain 
	\[
	\Big\| \rho v_\phi \p_\rho \Big( \frac{v_\rho}{\rho}\Big) \Big\|_{L^2}^2 \ls
	\|v_\phi\|_{L^2} \Big( \Big\| \p_\rho v_\phi + \frac{v_\phi}{\rho} \Big\|_{L^2} + \Big\| \frac{v_\phi}{\rho}\Big\|_{L^2} \Big) 
	\Big\| \p_\rho \Big( \frac{v_\rho}{\rho} \Big) \Big\|_{L^2} 
	\Big\| \p_\phi \p_\rho \Big( \frac{v_\rho}{\rho} \Big) \Big\|_{L^2} 
	\]
	Since $\|v_\phi/\rho\|_{L^2} \ls \|\nabla v_\phi\|_{L^2}$ and $\|\p_\rho(v_\rho/\rho)\|_{L^2} \ls \|\m{O}\|_{L^2}$, we have 
	\[
	\Big\| \rho v_\phi \p_\rho \Big( \frac{v_\rho}{\rho}\Big) \Big\|_{L^2}^2 \ls
	\|v_\phi\|_{L^2} \|\nabla v_\phi\|_{L^2} \|\m{O}\|_{L^2}
	\Big\| \p_\phi \p_\rho \Big( \frac{v_\rho}{\rho} \Big) \Big\|_{L^2}.
	\]
	Noticing that 
	\[
	\p_\phi \p_\rho \Big( \frac{v_\rho}{\rho} \Big) = \frac{1}{\rho} \p_\phi \p_\rho v_\rho - \frac{1}{\rho^2} \p_\phi v_\rho,
	\]
	we apply Lemma \ref{Lem2nd} and Lemma \ref{Lemma, f_1_est} to deduce
	\[
	\Big\| \p_\phi \p_\rho \Big( \frac{v_\rho}{\rho} \Big) \Big\|_{L^2} \ls \|\nabla \m{O}\|_{L^2} + \|\m{O}\|_{L^2} \ls \|\nabla \m{O}\|_{L^2} \,.
	\]	
	Therefore, 
	\be\label{PN2-1}
	\Big\| \rho v_\phi \p_\rho \Big( \frac{v_\rho}{\rho}\Big) \Big\|_{L^2}^2 \ls
	\|v_\phi\|_{L^2} \|\nabla v_\phi\|_{L^2} \|\m{O}\|_{L^2} \|\nabla \m{O}\|_{L^2}\,.
	\ee
	
	Next, we estimate the term $\frac{1}{\rho} v_\phi \p_\phi v_\rho$ in $PN_2$. Thanks to the fact that $v_\phi$ vanishes on rays $R_{1,m}\cup R_{2,m}$, one can apply (\ref{NlinE2}) in Lemma \ref{NonC} (with $f = \frac{1}{\rho} \p_\phi v_\rho$ and $g=v_\phi$) to get 
	\[
	\Big\| \frac{1}{\rho} v_\phi \p_\phi v_\rho \Big\|_{L^2}^2 \ls
	\Big\| \frac{1}{\rho^2} \p_\phi v_\rho\Big\|_{L^2} \Big( \Big\| \frac{1}{\rho}\p_\rho \p_\phi v_\rho \Big\| + 2\Big\| \frac{1}{\rho^2}\p_\phi v_\rho \Big\| \Big)
	\|v_\phi\|_{L^2} \Big\| \frac{1}{\rho} \p_\phi v_\phi \Big\|_{L^2}.
	\]
	By taking advantage of Lemma \ref{Lemma, f_1_est} and Lemma \ref{Lem2nd}, 
	\be\label{PN2-2}\begin{split}
		\Big\| \frac{1}{\rho} v_\phi \p_\phi v_\rho \Big\|_{L^2}^2 &\ls \|\m{O}\|_{L^2} \big( \| \nabla \m{O}\|_{L^2} + \|\m{O}\|_{L^2} \big) \|v_\phi\|_{L^2} \| \nabla v_\phi \|_{L^2} \\
		& \ls \|\m{O}\|_{L^2} \|\nabla \m{O}\|_{L^2} \|v_\phi\|_{L^2} \|\nabla v_\phi\|_{L^2} \,.
	\end{split}\ee
	Combining (\ref{PN2-1}) and (\ref{PN2-2}) with (\ref{gradP1}) yields 
	\ba\l{gradP2}
	\|\na P\|_{L^2(D_m)}^2
	\leq\,\, & C\big(\|\m{K}\|_{L^2(D_m)}^2 + \|\m{F}\|_{L^2(D_m)}^2 + \|\m{O}\|_{L^2(D_m)}^2 + 1\big)\|\na\bm{v}\|_{L^2(D_m)}^2 \\
	& + C \|\bm{v}\|_{L^2} \|\nabla \bm{v}\|_{L^2} \|\m{O}\|_{L^2} \|\nabla \m{O}\|_{L^2}
	+ \un{\Big\| v_{\rho}\partial_{\rho} v_\phi + \frac{1}{\rho} v_{\phi} \partial_{\phi} v_\phi \Big\|_{L^{2}(D_m)}^2}_{PN_3}\,.
	\ea
	
	We continue to treat $PN_3$ by reducing to the case for $PN_2$. Firstly, it follows from the divergence-freeness of $\bm{v}$ that 
	\be\label{convert_div}
	\frac{1}{\rho} \p_\phi v_\phi = -\p_\rho v_\rho - \frac{2}{\rho} v_\rho - \frac{\cot\phi}{\rho} v_\phi,
	\ee
	so we deduce the estimate for the second term in $PN_3$:
	\[
	\Big\| \frac{1}{\rho} v_{\phi} \partial_{\phi} v_\phi \Big\|_{L^2} \leq \|v_\phi \p_\rho v_\rho\|_{L^2} + 2 \Big\| \frac{v_\phi v_\rho}{\rho} \Big\|_{L^2} + \Big\| \frac{v_\phi^2}{\rho} \Big\|_{L^2}.
	\]
	Thanks to Claim \ref{Claim, nlv} and similar to the estimate for $\| v_\rho \p_\rho v_\rho \|_{L^2}$ in $PN_2$, we have
	\be\label{PN3-1}
	\Big\| \frac{1}{\rho} v_{\phi} \partial_{\phi} v_\phi \Big\|_{L^2}^2 
	\leq C \|\m{O}\|_{L^2(D_m)}^2\|\na\bm{v}\|_{L^2(D_m)}^2
	+ C \|\bm{v}\|_{L^2} \|\nabla \bm{v}\|_{L^2} \|\m{O}\|_{L^2} \|\nabla \m{O}\|_{L^2}.
	\ee
	Then we rewrite the first term in $PN_3$ as:
	\be\label{PN3-2}
	v_\rho \p_\rho v_\phi = \rho v_\rho \p_\rho\Big( \frac{v_\phi}{\rho} \Big) + \frac{v_\rho v_\phi}{\rho}.
	\ee
	The term $\frac{v_\rho v_\phi}{\rho}$ can be treated using Claim \ref{Claim, nlv}. Meanwhile, we apply \eqref{NlinE1} in Lemma \ref{NonC} (with $f=\rho v_\rho$, $g=\p_\rho\f{v_\phi}{\rho}$) to obtain 
	\[
	\Big\| \rho v_\rho \p_\rho \Big( \frac{v_\phi}{\rho}\Big) \Big\|_{L^2}^2 \ls
	\|v_\rho\|_{L^2} \Big( \Big\| \p_\rho v_\rho + \frac{v_\rho}{\rho} \Big\|_{L^2} + \Big\| \frac{v_\rho}{\rho}\Big\|_{L^2} \Big) 
	\Big\| \p_\rho \Big( \frac{v_\phi}{\rho} \Big) \Big\|_{L^2} 
	\Big\| \p_\phi \p_\rho \Big( \frac{v_\phi}{\rho} \Big) \Big\|_{L^2}.
	\]
	Since $\|v_\rho/\rho\|_{L^2} \ls \|\nabla v_\rho\|_{L^2}$ and $\|\p_\rho(v_\phi/\rho)\|_{L^2} \ls \|\m{O}\|_{L^2}$, we have 
	\be\label{PN3-3}
	\Big\| \rho v_\rho \p_\rho \Big( \frac{v_\phi}{\rho}\Big) \Big\|_{L^2}^2 \ls
	\|v_\rho\|_{L^2} \|\nabla v_\rho\|_{L^2} \|\m{O}\|_{L^2}
	\Big\| \p_\phi \p_\rho \Big( \frac{v_\phi}{\rho} \Big) \Big\|_{L^2}.
	\ee
	Thanks to the conversion (\ref{convert_div}), we know 
	\bn
	\p_\phi \p_\rho \Big( \frac{v_\phi}{\rho} \Big) = \p_\rho \Big( \frac{1}{\rho}\p_\phi v_\phi  \Big) &= \p_\rho \Big( -\p_\rho v_\rho - \frac{2}{\rho} v_\rho - \frac{\cot\phi}{\rho} v_\phi \Big) \\
	&= -\p_\rho^2 v_\rho - 2 \p_\rho \Big(\frac{v_\rho}{\rho}\Big) - \cot\phi\, \p_\rho\Big(\frac{v_\phi}{\rho}\Big).
	\en
	Hence, it follows from Lemma \ref{Lemma, f_2_est}, Lemma \ref{Lemma, f_1_est} and Lemma \ref{Lem2nd} that 
	\[
	\Big\| \p_\phi \p_\rho \Big( \frac{v_\phi}{\rho} \Big) \Big\|_{L^2} \ls \|\nabla \m{O}\|_{L^2} + \|\m{O}\|_{L^2} \ls  \|\nabla \m{O}\|_{L^2}.
	\]
	Plugging this inequality into (\ref{PN3-3}) yields 
	\[
	\Big\| \rho v_\rho \p_\rho \Big( \frac{v_\phi}{\rho}\Big) \Big\|_{L^2}^2 \ls \|\bm{v}\|_{L^2} \|\nabla \bm{v}\|_{L^2} \|\m{O}\|_{L^2} \|\nabla \m{O}\|_{L^2}.
	\]
	Combining this result with (\ref{PN3-2}) and Claim \ref{Claim, nlv} gives 
	\be\label{PN3-4}
	\| v_\rho \p_\rho v_\phi\|_{L^2}^2 \ls \|\bm{v}\|_{L^2} \|\nabla \bm{v}\|_{L^2} \|\m{O}\|_{L^2} \|\nabla \m{O}\|_{L^2} + \|\m{O}\|_{L^2(D_m)}^2\|\na\bm{v}\|_{L^2(D_m)}^2.
	\ee
	Based on (\ref{PN3-1}) and (\ref{PN3-4}), we obtain 
	\be\label{PN3-5}
	|PN_3| \ls \|\bm{v}\|_{L^2} \|\nabla \bm{v}\|_{L^2} \|\m{O}\|_{L^2} \|\nabla \m{O}\|_{L^2} + \|\m{O}\|_{L^2(D_m)}^2\|\na\bm{v}\|_{L^2(D_m)}^2.
	\ee
	
	Substituting (\ref{PN3-5}) into (\ref{gradP2}) leads to 
	\bn
	\|\na P\|_{L^2(D_m)}^2
	\leq\,\, & C\big(\|\m{K}\|_{L^2(D_m)}^2 + \|\m{F}\|_{L^2(D_m)}^2 + \|\m{O}\|_{L^2(D_m)}^2 + 1\big)\|\na\bm{v}\|_{L^2(D_m)}^2 \\
	& + C \|\bm{v}\|_{L^2} \|\nabla \bm{v}\|_{L^2} \|\m{O}\|_{L^2} \|\nabla \m{O}\|_{L^2}\,.
	\en
	Then we plugging this estimate into (\ref{LemEOM4}) to find
	\ba\label{LemEOM5}
	& \frac{1}{2}\frac{\d}{\d t}\|\m{O}\|_{L^2(D_m)}^2 + \frac34 \|\nabla \m{O}\|_{L^2(D_m)}^2 \\
	\leq\,\, & \|\G\|_{L^\infty(D_m)}^2 \Big( \frac12 \|\nabla \m{K}\|_{L^2(D_m)}^2 
	+ 20 \|\nabla \m{F}\|_{L^2(D_m)}^2 \Big) 
	+ C \| \nabla \bm{v} \|_{L^2(D_m)}^2 \Big( 1+ \|\bm{v}\|_{L^2(D_m)}^2 \Big) \\
	& + C\big(\|\m{K}\|_{L^2(D_m)}^2 + \|\m{F}\|_{L^2(D_m)}^2 + \|\m{O}\|_{L^2(D_m)}^2 + 1\big)\|\na\bm{v}\|_{L^2(D_m)}^2 \\
	&+ C\|\bm{v}\|_{L^2} \|\nabla \bm{v}\|_{L^2} \|\m{O}\|_{L^2} \|\nabla \m{O}\|_{L^2}\,.
	\ea  
	Since 
	\[
	C\|\bm{v}\|_{L^2} \|\nabla \bm{v}\|_{L^2} \|\m{O}\|_{L^2} \|\nabla \m{O}\|_{L^2} \leq \frac14 \|\nabla \m{O}\|_{L^2(D_m)}^2 + C^2 \|\bm{v}\|_{L^2}^2 \|\nabla \bm{v}\|_{L^2}^2 \|\m{O}\|_{L^2}^2,
	\]
	it then follows from (\ref{LemEOM5}) that 
	\bn
	& \frac{1}{2}\frac{\d}{\d t}\|\m{O}\|_{L^2(D_m)}^2 + \frac12 \|\nabla \m{O}\|_{L^2(D_m)}^2 \\
	\leq\,\, & \|\G\|_{L^\infty(D_m)}^2 \Big( \frac12 \|\nabla \m{K}\|_{L^2(D_m)}^2 
	+ 20 \|\nabla \m{F}\|_{L^2(D_m)}^2 \Big) \\
	& + C_1\big(\|\m{K}\|_{L^2(D_m)}^2 + \|\m{F}\|_{L^2(D_m)}^2 + \|\m{O}\|_{L^2(D_m)}^2 + 1\big)\|\na\bm{v}\|_{L^2(D_m)}^2 \big( 1+ \|\bm{v}\|_{L^2(D_m)}^2\big) \,,
	\en
	where $C_1$ is some large absolute constant, which justifies (\ref{EOmega}).
	
\end{proof}

\subsection{Proof of the energy bound for $(\m{K}, \m{F}, \m{O})$}
After the preparation of the energy estimates for $\m{K}$, $\m{F}$ and $\m{O}$ in Lemma \ref{Lemma, K_energy}, Lemma \ref{LemmaF} and Lemma \ref{LemEO} respectively, we are ready to justify the main result, Proposition \ref{Prop_OKFest}, in this section.

\begin{proof}[Proof of Proposition \ref{Prop_OKFest}] 
	By calculating 
	\[
	10\times \eqref{EK} + \eqref{EF} + \eqref{EOmega}\,,
	\]
	one derives
	\be\label{KFOee1}\begin{split}
		&\quad\,\, \frac{d}{dt}\Big( 10 \|\m{K}\|_{L^2}^2 + \|\m{F}\|_{L^2}^2 + \|\m{O}\|_{L^2}^2 \Big) 
		+ \frac23 \| \nabla \m{K}\|_{L^2}^2 + \frac45 \|\nabla \m{F}\|_{L^2}^2 + \|\nabla \m{O}\|_{L^2}^2 \\
		& \leq \|\Gamma\|_{L^{\infty}}^2 \Big( \|\nabla \m{K}\|_{L^2}^2 + 40\|\nabla \m{F}\|_{L^2}^2 
		+ (10 C_{\m{K}} + C_{\m{F}}) \|\nabla \m{O}\|_{L^2}^2 \Big) \\
		& \quad + C_{1} \Big( \|\m{K}\|_{L^2}^2 + \|\m{F}\|_{L^2}^2 + \|\m{O}\|_{L^2}^2 + 1\Big) \|\nabla \bm{v}\|_{L^2}^2 \big(1+\|\bm{v}\|_{L^2}^2\big),
	\end{split}\ee
	where $L^2$ denotes $L^2(D_m)$, and $C_\m{K}$, $C_\m{F}$ and $C_1$ are the constants in Lemma \ref{Lemma, K_energy}, Lemma \ref{LemmaF} and Lemma \ref{LemEO} respectively. Denote 
	\[\left\{\begin{array}{ll}
		Y(t) &= 10 \|\m{K}(\cdot,t)\|_{L^2}^2 + \|\m{F}(\cdot,t)\|_{L^2}^2 + \|\m{O}(\cdot, t)\|_{L^2}^2, \vspace{0.1in}\\
		G(t) &= \| \nabla \m{K}(\cdot,t)\|_{L^2}^2 + \|\nabla \m{F}(\cdot,t)\|_{L^2}^2 + \|\nabla \m{O}(\cdot,t)\|_{L^2}^2, \vspace{0.1in}\\
		H(t) &= \|\nabla \bm{v}(\cdot, t)\|_{L^2}^2 \big(1+\|\bm{v}(\cdot, t)\|_{L^2}^2\big).
	\end{array}\right.\]
	Then it follows from (\ref{KFOee1}) that for any $t\in (0,T]$,
	\be\label{KFOee2}
	Y'(t) + \frac23 G(t) \leq \|\Gamma\|_{L^\infty}^2 \max\{40, 10 C_{\m{K}} + C_{\m{F}} \}  G(t) + C_{1} (Y+1) H(t)\,.
	\ee
	According to the constraint (\ref{small_G0}) and Lemma \ref{LEMGA},
	\[
	\|\G\|_{L^\infty} \leq C_\G \|\G_0\|_{L^\infty} \leq C_\G C_*,
	\]
	where $C_\G$ is the positive constant in (\ref{DeGest}) and $C_*$ is a positive constant to be determined. Based on (\ref{KFOee2}), we choose $C_{*}$ as
	\bn
	C_{*}^2 = \frac{1}{2 C_\G^2 \max\{40, 10C_\m{K}+C_\m{F}\}}, 
	\en
	then it follows from (\ref{KFOee2}) that 
	\be\label{KFOee3}
	Y'(t) + \frac16 G(t) \leq C_{1} \big( Y(t)+1 \big) H(t).
	\ee
	
	Applying Gr\"onwall's inequality to (\ref{KFOee3}) and noticing $G\geq 0$, we obtain 
	\[
	Y(t) + 1 \leq \big(Y(0)+1\big) \exp\Big( C_1 \int_{0}^{t} H(\tau) \d \tau \Big).
	\]
	Plugging the above inequality to the right hand side of (\ref{KFOee3}) and then integrating $t$ from $0$ to $T$ yields 
	\be\label{KFOee4}
	Y(T) + \frac16 \int_{0}^{T} G(t) \d t \leq Y(0) + C_1\big(Y(0) + 1\big) \bigg(\int_{0}^{T} H(t) \d t\bigg) \exp\Big(C_1 \int_{0}^{T} H(t) \d t\Big).
	\ee
	Based on Proposition \ref{Funden}, 
	\[
	\int_{0}^{T} H(t) \d t \leq \big(1+\|\bm{v}_0\|_{L^2}^2\big) \int_{0}^{T} \|\nabla \bm{v}(\cdot,t)\|_{L^2}^2 \d t \leq \frac43 \big(1+\|\bm{v}_0\|_{L^2}^2\big) \|\bm{v}_0\|_{L^2}^2.
	\]
	Combining this estimate with (\ref{KFOee4}), we conclude there exists a positive constant $C_2$ such that 
	\[
	Y(T) + \frac16 \int_{0}^{T} G(t) \d t \leq \big( Y(0) + 1 \big)
	\exp\Big[C_2 \big( 1 + \|\bm{v}_0\|_{L^2}^4 \big)\Big],
	\]
	which implies (\ref{KFOee0}).
\end{proof}

\section{Existence and uniqueness of strong solutions in $D$}\l{Sec10}

\label{Sec, exist-uniq}

In this section, we will establish the main results, Theorem \ref{Thm_main} and Corollary \ref{Cor, unstable-bus}, of this paper. 
We will first derive a uniform \(L^\infty_{x,t}\) bound, together with uniform higher-order estimates, for the velocity field $\bm{v}$ on \(D_m\times[0,T]\). These bounds will be used to pass to the limit as $m\to\infty$ and establish the existence theorem of (\ref{NS}) on the original domain $D\times [0,T]$.

\subsection{Uniform boundedness and higher-order regularity estimates of the velocity}

\begin{prop}\label{Prop, inf_vomega}
	Let the region $D_m$ be as defined in \eqref{app domain-cyl} with $m\geq 10^3$ and the angle $\al\in(0,\f{\pi}{6}]$. Let $C_*$ be the positive constant given in Proposition \ref{Prop_OKFest}
	\[
	\|\G_0\|_{L^\i(D_m)}\leq C_*\,,
	\]
	then there exists a constant $C$, which only depends on $\alpha$ and $\|\bm{v}_0 \|_{C^2(\overline{D_m})}$, such that for any $T>0$,
	\bn
	\|\bm{v}\|_{L_{t x}^{\infty}\left(D_m \times[0, T]\right)}+\left\|\omega_\theta\right\|_{L_{t x}^{\infty}\left(D_m \times[0, T]\right)} \leq C\,.
	\en
\end{prop}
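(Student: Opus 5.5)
We derive the $L^\infty$ bounds from the uniform energy bound of Proposition \ref{Prop_OKFest}, combined with a parabolic argument for $\o_\th$.

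\textbf{Step 1: initial data.} Since $\bm{v}_0\in C^2(\overline{D_m})$ satisfies \eqref{bdry for Dm}, the quantities $\m{K}_0,\m{F}_0,\m{O}_0$ are bounded pointwise by $C\|\bm{v}_0\|_{C^2}$. Indeed, the weights $\rho^{-2}$ in \eqref{key-triple} are harmless on $D_m$ only if we avoid using $\rho\ge 1/m$. To keep the constant $m$-independent, we instead argue via the admissible class $\mathscr{A}$ and Taylor expansion near the vertex. Since $\rho^2$ is integrated against $\rho^2\,d\rho$, the $L^2$ norms $\|\m{K}_0\|_{L^2},\|\m{F}_0\|_{L^2},\|\m{O}_0\|_{L^2}$ are controlled by $\|\bm{v}_0\|_{C^2}$. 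Proposition \ref{Prop_OKFest} together with Proposition \ref{Funden} then gives a uniform bound
\[
\sup_t\big(\|\m{K}\|_{L^2}+\|\m{F}\|_{L^2}+\|\m{O}\|_{L^2}\big)+\int_0^T\big(\|\nabla\m{K}\|_{L^2}^2+\|\nabla\m{F}\|_{L^2}^2+\|\nabla\m{O}\|_{L^2}^2\big)\,dt\le C .
\]
The time integral is finite but grows with $T$ only through the exponential factor in \eqref{KFOee0}, which is itself independent of $T$; hence this bound holds uniformly in $T$.

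\textbf{Step 2: $\bm{v}$ in $L^\infty_tH^1$ and better.} Lemmas \ref{Lemma, f_2_est}, \ref{Lemma, f_1_est} and \ref{Lem35} give $\nabla(\bm{v}/\rho)\in L^\infty_tL^2_x$. By Sobolev embedding on the Lipschitz domains (with constants uniform in $m$, as in Remark \ref{Remark, Cstar}), this yields $\bm{v}/\rho\in L^\infty_tL^6_x$, and therefore $\nabla\bm{v}\in L^\infty_t L^2$. This follows by writing $\nabla\bm{v}=\rho\nabla(\bm{v}/\rho)+(\bm{v}/\rho)\nabla\rho$. Thus $\bm{v}\in L^\infty_tL^6$.

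\textbf{Step 3: boundedness of $\o_\th$ and then $\bm{v}$.} Write $\o_\th=\rho\sin\phi\,\m{O}+2v_\phi\eta/\rho$. The second term is supported where $\rho\ge1/3$. There we use standard parabolic regularity for the Navier--Stokes system away from the axis and the vertex, with smooth data and energy bounds, to get $L^\infty$ bounds on $v_\phi$ and $\nabla\bm{v}$, for instance by a local $H^2$ estimate plus bootstrap.

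For $\m{O}$ we run a De Giorgi (or Moser) iteration on equation \eqref{EOKF}$_3$, exactly as in Lemma \ref{LEMGA}. The homogeneous Dirichlet condition $\m{O}=0$ on $\p D_m$ removes the boundary terms, and the first-order term $\frac{2}{\rho}\p_\rho+\frac{2\cot\phi}{\rho^2}\p_\phi$ integrates to zero, as in \eqref{EL2}. The drift $\bm{b}$ is divergence free and tangential, so it also drops out. The source terms $\frac{v_\th}{\rho\sin\phi}(\m{K},\m{F})$ and $f_{\m{O}}$ then need to be placed in a parabolic $L^{q}_tL^p_x$ space with $\frac{2}{q}+\frac{3}{p}<2$. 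Using $|v_\th|\lesssim \|\G\|_\infty/\rho$, the first source is $\lesssim \rho^{-2}|\m{K}|$. This is the main obstacle: the weight $\rho^{-2}$ is not in any good space uniformly in $m$. My first attempt would be to combine it with the Hardy-type gain from $\m{K}=0$ on rays (Lemma \ref{PoinB}) and the energy bound on $\nabla\m{K}$, treating $\rho^{-1}\m{K}\in L^2_tH^{-}$ scaled. Failing that, I would apply the iteration instead to $\rho\,\m{O}\sim\o_\th$ directly, whose source $\frac{v_\th}{\rho}\cdot\frac{\o_\rho}{\rho}$-type terms are $\lesssim\rho^{-1}|\nabla(v_\th/\rho)|$. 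These are controlled by $\m{K},\m{F}$ via Lemma \ref{Lem35}, with smallness of $\G$ absorbing the critical scaling.

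Once $\o_\th\in L^\infty$ and $\o_\rho,\o_\phi$ are bounded in suitable $L^p$ (from $\m{K},\m{F}\in L^\infty_tL^2\cap L^2_tH^1$, giving $\rho\m{K}\in L^{10/3}$ and better after iteration), we apply the Biot--Savart estimates of Section \ref{Sec8} in $L^p$, $p>3$. This yields $\nabla\bm{v}\in L^p$ and hence $\bm{v}\in L^\infty$ via Morrey. Alternatively, $v_\th=\G/(\rho\sin\phi)$ is bounded away from the axis, while near the vertex the angular Poincaré bounds together with $\nabla(\bm{v}/\rho)\in L^\infty_t L^2$ give $\bm{v}=O(\rho^{1/2})$-type control.
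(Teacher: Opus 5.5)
You have a genuine gap in Step 3; Steps 1--2 are sound. Since $\mathcal{K}=\frac{2}{\rho}(\mathbb{S}\bm{v})_{\phi\theta}$, $\mathcal{F}=-\frac{2}{\rho}(\mathbb{S}\bm{v})_{\rho\theta}$, and $\mathcal{O}$ equals $\omega_\theta/(\rho\sin\phi)$ up to a term supported in $\{\rho\geq 1/3\}$, the initial $L^2$ norms are bounded by $\|\bm{v}_0\|_{C^1}$ uniformly in $m$, because $\rho^{-2}$ is integrable in three dimensions. Proposition \ref{Prop_OKFest}, the Biot--Savart lemmas and the Poincar\'e/Hardy inequalities then give $\bm{v}/\rho\in L^\infty_tH^1$, hence $\bm{v}\in L^\infty_tL^6$, uniformly in $m$ and $T$.

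Step 3 is the actual content of the proposition; the paper does not prove it either, deferring to Section 4.7 of \cite{LPYZZZ24} with boundary terms fixed by cut-off corrections in $\rho$. You never obtain $\omega_\theta\in L^\infty$. You correctly see that the coupling $\frac{2v_\theta}{\rho\sin\phi}\mathcal{K}\sim\|\Gamma\|_{L^\infty}\rho^{-2}|\mathcal{K}|$ rules out a De Giorgi iteration for $\mathcal{O}$. Your fallback for $\rho\,\mathcal{O}\sim\omega_\theta$ fails for the same reason. The swirl forcing in the $\omega_\theta$ equation is exactly $\frac{1}{r}\partial_{x_3}(v_\theta^2)=-\frac{2\Gamma}{\rho\sin\phi}\big(\mathcal{K}+\cot\phi\,\mathcal{F}\big)$. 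Proposition \ref{Prop_OKFest} gives $\mathcal{K},\mathcal{F}$ only in $L^\infty_tL^2\cap L^2_tH^1$, even counting the Hardy gain $\rho^{-1}\mathcal{K},\rho^{-1}\mathcal{F}\in L^2_{tx}$. So this forcing is controlled only in spaces $L^q_tL^p_x$ with $\frac{2}{q}+\frac{3}{p}\geq\frac{5}{2}$, never in the range $\frac{2}{q}+\frac{3}{p}<2$ the iteration needs. You would need something like $\mathcal{K},\mathcal{F}\in L^\infty_tL^p$ with $p>3$, which is not available. Smallness of $\Gamma$ cannot repair this: it multiplies a forcing by a different unknown, not a term proportional to the iterated function that the dissipation could absorb.

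Your closing steps are also unsupported.
\begin{itemize}
\item Section \ref{Sec8} contains only $L^2$ Biot--Savart bounds obtained by integration by parts. ``Applying them in $L^p$, $p>3$'' is a new elliptic estimate, which would have to hold uniformly in $m$ on domains with edges and a conical vertex.
\item $\nabla(\bm{v}/\rho)\in L^\infty_tL^2$ gives no pointwise $O(\rho^{1/2})$ control, since $H^1\not\subset L^\infty$ in three dimensions.
\item $\{\rho\geq 1/3\}$ is not an interior region: it contains the rays, the outer arc and the edges where they meet. The ``standard parabolic regularity'' you invoke there, uniformly in $T$, is itself a boundary-regularity result for the Navier total-slip condition.
\end{itemize}

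What Steps 1--2 really give you is a subcritical, Serrin-type bound $\bm{v}\in L^\infty_tL^6$, uniform in $m$ and $T$. The missing ingredient is an $m$-uniform argument turning this into boundedness up to $\partial D_m$. One natural route is to bound $v_\theta$ first through its own linear equation, whose drift $\bm{b}$ and potential $\frac{1}{\rho}(v_\rho+\cot\phi\,v_\phi)$ both lie in $L^\infty_tL^6$. The swirl forcing of $\omega_\theta$ then becomes the divergence-form term $\partial_{x_3}(v_\theta^2/r)$, with $v_\theta^2/r\leq\|v_\theta\|_{L^\infty}|v_\theta/r|\in L^\infty_tL^6$, which is admissible. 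Even then you must still handle the inhomogeneous boundary value $\omega_\theta=2v_\phi/\rho$ on $A_{2,m}$, for example by a cut-off correction as in the definition of $\mathcal{O}$.
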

\begin{proof}
	The proof can be derived by adapting contents in Section 4.7 of \cite{LPYZZZ24}. The main difference comes from the boundary term when carrying out integration by parts for the major term. However, these can be corrected by adding lower-order terms with cut-off functions of $\rho-$variable. We omit the details here.
\end{proof}

Next, we will derive uniform estimates for $\|\blll{v}\|_{L^2_tH^2_x}$ and $\|\blll{v}\|_{H^1_tL^2_x}$ on $D_m\times[0,T]$.
\begin{prop}\label{Prop, high_deri_est}
	Let the region $D_m$ be defined as in \eqref{app domain-cyl} with $m \geq 10^3$ and the angle $\alpha \in\left(0, \frac{\pi}{6}\right]$. Let $C_*$ be the positive constant given in Proposition \ref{Prop_OKFest}
	\[
	\|\G_0\|_{L^\i(D_m)}\leq C_*\,.
	\]
	Then there exists a constant $C$, which only depends on $\alpha$ and $\left\|\bm{v}_0\right\|_{C^2\left(\overline{D_m}\right)}$, such that for any $T>0$,
	$$
	\|\nabla^2 \bm{v}\|_{L_{t x}^2\left(D_m \times[0, T]\right)}+\|\partial_t \bm{v}\|_{L_{t x}^2\left(D_m \times[0, T]\right)} \leq C\, .
	$$
\end{prop}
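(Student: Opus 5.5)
The plan is a standard $H^1$-level energy estimate for $\bm{v}$, obtained by testing the momentum equation with $-\mathbb{P}\Delta\bm{v}$ (or equivalently with $\partial_t\bm{v}$), closed with the $L^\infty$ bound from Proposition~\ref{Prop, inf_vomega} and the good-unknown bounds from Proposition~\ref{Prop_OKFest}.

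\emph{Step 1: bound $\partial_t\bm{v}$.} Test (\ref{NS1}) against $\partial_t\bm{v}$. The pressure term vanishes because $\partial_t\bm{v}\cdot\bm{n}=0$ and $\partial_t\bm{v}$ is divergence free. The viscous term produces
\[
-\tfrac12\tfrac{\d}{\d t}\int_{D_m}|\nabla\bm{v}|^2\d x
\]
plus boundary terms. These boundary terms are handled exactly as in Lemma~\ref{Lemma, lap_int}: each is of the form $\int v\,\partial_t v$ on arcs or rays. Using the cut-off $\eta_m$ and the Newton--Leibniz formula, we convert them into volume integrals of type
\[
\int \rho^{-1} v_\phi\,\partial_t v_\phi, \qquad \int \rho^{-1}v_\theta\,\partial_t v_\theta.
\]
These are bounded by $\tfrac14\|\partial_t\bm{v}\|_{L^2}^2 + C\|\bm{v}/\rho\|_{L^2}^2$, and Corollary~\ref{Cor, vSob} controls the last quantity by $\|\nabla\bm{v}\|_{L^2}^2$.

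For the convection term, Proposition~\ref{Prop, inf_vomega} gives
\[
\|(\bm{v}\cdot\nabla)\bm{v}\|_{L^2}\le C\|\nabla\bm{v}\|_{L^2}.
\]
Integrating in time and using Proposition~\ref{Funden} then yields $\|\partial_t\bm{v}\|_{L^2_{tx}}\le C$ together with $\sup_t\|\nabla\bm{v}\|_{L^2}\le C$. The initial quantity $\|\nabla\bm{v}_0\|_{L^2}$ is bounded by $\|\bm{v}_0\|_{C^2}$.

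\emph{Step 2: bound $\nabla^2\bm{v}$.} Rewrite the equation as a Stokes-type system,
\[
-\Delta\bm{v}+\nabla P=-\partial_t\bm{v}-(\bm{v}\cdot\nabla)\bm{v},
\]
with an $L^2$ right-hand side already controlled by Step 1. We do not invoke an abstract Stokes regularity result on the Lipschitz domain $D_m$, since its constant may depend on $m$. Instead we derive second derivatives from the quantities already estimated uniformly in $m$:
\begin{itemize}
\item $\nabla(v_\rho/\rho)$, $\nabla(v_\phi/\rho)$ and their weighted versions are controlled by $\m{O}$ and $\nabla\m{O}$ (Lemmas~\ref{Lemma, f_2_est} and \ref{Lemma, f_1_est}).
\item $\partial_\rho^2 v_\rho$ and $\rho^{-1}\partial_\phi\partial_\rho v_\rho$ are controlled by $\nabla\m{O}$ (Lemma~\ref{Lem2nd}).
\item The remaining second derivatives of $v_\phi$ come from the divergence identity (\ref{convert_div}).
\item $\nabla(v_\theta/\rho)$ is controlled by $\m{K}$ and $\m{F}$ (Lemma~\ref{Lem35}). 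Second derivatives of $v_\theta$ follow by writing $\m{K}$ and $\m{F}$ as first derivatives of $v_\theta/\rho$ and using $\nabla\m{K},\nabla\m{F}$.
\end{itemize}
Schematically this gives
\[
\|\nabla^2\bm{v}\|_{L^2}\lesssim \|\nabla(\m{K},\m{F},\m{O})\|_{L^2}+\|(\m{K},\m{F},\m{O})\|_{L^2}+\|\nabla\bm{v}\|_{L^2},
\]
where the constant is uniform in $m$, since $\rho\le1$ lets lower weights be absorbed. Squaring, integrating in time and applying Proposition~\ref{Prop_OKFest} gives the uniform $L^2_{tx}$ bound.

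\emph{Main obstacle.} The delicate step is the boundary terms in Step 1 near the inner arc $A_{1,m}$. Because of the mixed condition there, $\partial_\rho v_\phi=-v_\phi/\rho$ rather than $+v_\phi/\rho$. The cut-off argument must therefore be arranged as in (\ref{EB2})--(\ref{EB3}) so that the resulting volume integrals carry only $\rho^{-1}$ weights and no worse. A second concern is that the Hardy constants from Corollary~\ref{Cor, vth} must remain uniform in $m$.

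If handling $\partial_t$ in the boundary terms proves awkward, the fallback is to obtain the $\partial_t\bm{v}$ bound directly from the equation. One bounds $\|\Delta\bm{v}\|_{L^2}$ via Step 2, $\|\nabla P\|_{L^2}$ via Lemma~\ref{EPs} together with the estimates of $PN_1$--$PN_3$, and the convection term via the $L^\infty$ bound. This reverses the order of Steps 1 and 2, and then requires $\sup_t\|\nabla\bm{v}\|_{L^2}$ only through Proposition~\ref{Prop_OKFest}.
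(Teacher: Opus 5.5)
Your Step~2, together with the fallback ordering at the end, is essentially the paper's argument. The paper gets the $H^2_x$ bound by adapting the proof of Lemma~\ref{Lem2nd}, and then reads off $\partial_t\bm v$ from the equation using Lemma~\ref{EPs}. With Proposition~\ref{Prop, inf_vomega}, every term in Lemma~\ref{EPs} is $\lesssim(1+\|\bm v\|_{L^\infty})\|\nabla\bm v\|_{L^2}$, so Proposition~\ref{Funden} suffices and no $\sup_t\|\nabla\bm v\|_{L^2}$ is needed.

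\textbf{The gap in Step~1.} Your primary Step~1 does not work as written.
\begin{itemize}
\item The boundary term is $\int_{\partial D_m}\partial_n\bm v\cdot\partial_t\bm v\,\d S$. Under (\ref{bdry for Dm}) it reduces to traces of $\rho^{-1}v_\phi\partial_t v_\phi$ and $\rho^{-1}v_\theta\partial_t v_\theta$ on the arcs, and of $\rho^{-1}\cot\phi\,v_\theta\partial_t v_\theta$ on the rays.
\item Converting these by Newton--Leibniz as in Lemma~\ref{Lemma, lap_int} differentiates the product. You therefore also get terms like $\eta_m\rho^{-1}v_\phi\,\partial_\rho\partial_t v_\phi$ and $\rho^{-2}\cot\phi\,v_\theta\,\partial_\phi\partial_t v_\theta$.
\item These terms, equivalently traces of $\partial_t\bm v$, are not controlled by $\|\partial_t\bm v\|_{L^2}$. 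So your bound $\tfrac14\|\partial_t\bm v\|_{L^2}^2+C\|\bm v/\rho\|_{L^2}^2$ is unjustified.
\end{itemize}

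\textbf{How to repair it.} The boundary conditions are linear with time-independent coefficients, so $\partial_n\bm v\cdot\partial_t\bm v=\tfrac12\partial_t(\partial_n\bm v\cdot\bm v)$ on $\partial D_m$. Hence
\[
\int_{D_m}\Delta\bm v\cdot\partial_t\bm v\,\d x=-\frac12\frac{\d}{\d t}\mathcal{Q}(\bm v),\qquad \mathcal{Q}(\bm v):=-\int_{D_m}\bm v\cdot\Delta\bm v\,\d x .
\]
Lemma~\ref{Lemma, lap_int} plus the Korn/Poincar\'e argument of Proposition~\ref{Funden} give $\mathcal Q(\bm v)\ge\frac38\|\nabla\bm v\|_{L^2}^2$ uniformly in $m$; this step uses $\int_{D_m}\Gamma\,\d x=0$. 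Also $\mathcal Q(\bm v_0)\le\|\bm v_0\|_{L^2}\|\Delta\bm v_0\|_{L^2}$.

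This exact coercivity is genuinely needed. The ray and inner-arc contributions scale like $\|\nabla\bm v\|_{L^2}^2$ near the vertex. They therefore cannot be absorbed into $\delta\|\nabla\bm v\|_{L^2}^2+C_\delta\|\bm v\|_{L^2}^2$ with $m$-independent constants.

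With this fix, Step~1 becomes a real alternative to the paper's route. It gives $\partial_t\bm v\in L^2_{tx}$ and $\sup_t\|\nabla\bm v\|_{L^2}$ without Step~2 or any pressure estimate. Without the fix, only your fallback is justified.

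\textbf{Two smaller points in Step~2.}
\begin{itemize}
\item The divergence identity (\ref{convert_div}) only gives the $\partial_\phi$-derivatives of $v_\phi$. The term $\partial_\rho^2v_\phi$ must come from $\omega_\theta=\rho^{-1}\partial_\rho(\rho v_\phi)-\rho^{-1}\partial_\phi v_\rho$, i.e.\ from $\mathcal O$ together with Lemmas~\ref{Lemma, f_2_est}, \ref{Lemma, f_1_est} and~\ref{Lem2nd}.
\item To keep $C$ independent of $T$, replace $\|(\mathcal K,\mathcal F,\mathcal O)\|_{L^2}$ by the corresponding gradients before integrating in time. Use Poincar\'e/Hardy: $\mathcal O=0$ on $\partial D_m$, $\mathcal K=0$ on the rays, and $\mathcal F=0$ on the arcs. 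Otherwise the $\sup_t$ bound contributes a factor of $T$.
\end{itemize}
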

\begin{proof}
	The full $H^2_x$ estimate could be derived by adapting the proof of Lemma \ref{Lem2nd}. See also \cite[Section 5]{LPYZZZ24} for a proof for $\bm{v}$ with NHL boundary. The temporal estimate could be deduced by applying the equation \eqref{NS}, together with the pressure estimate in Section \ref{Sec, pressure}. We omit the details here.
\end{proof}

\subsection{Proof of Theorem \ref{Thm_main}}

\begin{proof}[Proof of Theorem \ref{Thm_main}]
	
	\textbf{Step 1: Existence.}
	
	We first show the existence of a strong solution $ (\bm{v},P) $. Pick any $ \bm{v}_0 $ in the admissible class $ \mathscr{A} $ that satisfies the properties (i) and (ii) in Theorem \ref{Thm_main}. By Definition \ref{Def, admissible sets}, there exists a sequence $ \{\bm{v}^{(m)}_0\}_{m\geq 10^3} $ such that $\bm{v}^{(m)}_0\in\mathscr{A}_m$ and
	\be\label{conv-init}
	\lim_{m\to\infty} \|\bm{v}_0 - \bm{v}^{(m)}_0 \|_{C^2(\overline{D_m})} = 0.
	\ee
	
	Although the initial data $\bm{v}_0$ satisfies $\int_{D} r v_{0,\th}(x) \d x = 0$ due to property (i) in Theorem \ref{Thm_main}, $\bm{v}^{(m)}_0$ may not have this property on $D_m$, 
	so we adjust $\bm{v}^{(m)}_0$ to be $\tilde{\bm{v}}^{(m)}_0$ which is defined as 
	\bn
	\tilde{\bm{v}}^{(m)}_{0}(x) = \bm{v}^{(m)}_0(x) - \epsilon_m r \bm{e_{\th}},
	\en
	where $\ep_m$ is chosen so that
	\be\label{init_ker_ad}
	\int_{D_m} r \tilde{v}^{(m)}_{0,\th}(x) \d x = 0. 
	\ee
	In fact, 
	\[
	\int_{D_m} r \tilde{v}^{(m)}_{0,\th}(x) \d x  = \int_{D_m} r v^{(m)}_{0,\th}(x) \d x - \ep_{m} \int_{D_m} r^2 \d x,
	\]
	so choosing 
	\bn
	\ep_m = \frac{1}{ \int_{D_m} r^2 \d x} \int_{D_m} r v^{(m)}_{0,\th}(x) \d x 
	\en
	validates (\ref{init_ker_ad}). With this choice of $\ep_m$, we have
	$\tilde{\bm{v}}^{(m)}_0 \in \mathscr{A}_m$ and 
	\[
	\lim_{m\to\infty} \ep_m = \frac{1}{ \int_{D} r^2 \d x} \int_{D} r v_{0,\th}(x) \d x = 0.
	\]
	Consequently, by noting that $\|r \bm{e_{\th}}\|_{C^2(D)} = \| (-x_2, x_1, 0) \|_{C^2(D)}$ is finite, (\ref{conv-init}) also holds if $\bm{v}^{(m)}_0$ is replaced by $\tilde{\bm{v}}^{(m)}_0$. For ease of notation, we still denote $\t{\bm{v}}^{(m)}_0$ to be $ \bm{v}^{(m)}_0 $.
	
	Let $C_{*}$ be the constant in Proposition \ref{Prop_OKFest}. Then by requiring 
	\[
	\sup_{x\in D} r |v_{0,\th}| \leq \frac12 C_{*},
	\]
	there exists some integer $m_0\geq 10^3$ such that for any $m\geq m_0$,
	\bn
	\|\bm{v}^{(m)}_0\|_{C^2(\ol{D_m})} &\leq \|\bm{v}_0\|_{C^2(\ol{D})} + 1\,,\\
	\| r v^{(m)}_{0,\th} \|_{L^\infty(D_m)} &\leq C_{*}\,.
	\en
	In the following, we will only consider those $\bm{v}^{(m)}_0$ for $m\geq m_0$. Now we fix any time $T>0$. For each $m$, there exists a strong solution $(\bm{v}^{(m)}, P^{(m)})$ of (\ref{NS}) on $D_m\times [0,T]$ with the initial data $\bm{v}^{(m)}_0$ and the boundary condition (\ref{bdry for Dm}). In addition, $\bm{v}^{(m)}$ is bounded and satisfies 
	\be\label{gamma_m_ave0}
	\int_{D_m} r v^{(m)}_\th(x,t) \d x = 0, \quad \forall\, t\in[0,T].
	\ee
	On the other hand, we can assume 
	\be\label{ave_P_0}
	\int_{D_m} P^{(m)}(x,t) \d x = 0, \quad\forall\, t\in[0,T].
	\ee
	Actually, if we define 
	$ \widetilde{P}^{(m)}(x,t) = P^{(m)}(x,t) - \frac{1}{|D_m|}\int_{D_m}P^{(m)}(x,t)\,dx $, 
	then $ \widetilde{P}^{(m)} $ satisfies (\ref{ave_P_0}) and $ (\bm{v}^{m}, \widetilde{P}^{(m)}) $ is also a strong solution.
	
	Based on the above setup and thanks to Proposition \ref{Prop, inf_vomega} and Proposition \ref{Prop, high_deri_est}, we can follow the idea in (Section 6, \cite{LPYZZZ24}) to extract a subsequence of $(\bm{v}^{(m)}, P^{(m)})$ which converge to a solution $(\bm{v}, P)$ of (\ref{NS1}) on $D\times [0,T]$ such that  $ \bm{v}\in L_{tx}^\infty\cap H_t^1 L_x^2\cap L_t^2 H_x^2(D\times[0,T])$, $ P\in L_t^2 H_x^1(D\times[0,T])$ and 
	\bn
	\|\bm{v}\|_{L_{tx}^\infty(D\times[0,T])} + \|\bm{v}\|_{H_t^1 L_x^2(D\times[0,T])} + \|\bm{v}\|_{L_t^2 H_x^2(D\times[0,T])} + \|P\|_{L_t^2 H_x^1(D\times[0,T])} \leq C\,.
	\en
	Meanwhile, since $\bm{v}^{(m)}$ possesses the the identity (\ref{gamma_m_ave0}) and the energy estimate (\ref{energy_est_Dm}) on $D_m$, we can send $m\to\infty$ to justify (\ref{gamma_int0}) and (\ref{energy_decay}). Hence, the existence part is finished. 
	
	\textbf{Step 2: Uniqueness.}
	
	It remains to verify the uniqueness of the strong solution $ \bm{v} $. Suppose  $ (\t{\bm{v}}, \t{P}) $ is another strong solution of (\ref{NS1}) on $ D\times[0,T] $ under the Navier total-slip boundary condition with the same initial data $ \bm{v}_0 $. We will prove that $ \t{\bm{v}}$ coincides with $ \bm{v} $. 
	Let $ \bm{f} = \bm{v}-\t{\bm{v}}\ed f_\rho\bm{e_\rho}+f_\phi\bm{e_\phi}+f_\th\bm{e_\th}$ and $ g= P-\t{P} $. Then $ \bm{f} $ satisfies
	\be\label{diff-ss} \left\{\, \begin{aligned}
		\Delta \bm{f} - (\bm{f}\cdot \nabla) \bm{v} - (\t{\bm{v}} \cdot\nabla)\bm{f} -  \nabla g - \p_{t} \bm{f} = 0 \quad\text{in}\quad & D\times (0,T], \\
		\nabla \cdot \bm{f} = 0  \quad \text{in} \quad & D\times (0,T], \\
		\bm{f}\cdot \bm{n} = 0,\quad  (\mathbb{S} \bm{f}\cdot \bm{n})_{\tan } = 0 \quad\text{on} \quad & \p D\times (0,T],\\
		\bm{f}(\cdot, 0) = 0 \quad\text{in} \quad & D.
	\end{aligned} \right.\ee
	Since both $\bm{v}$ and $ \t{\bm{v}} $ are strong solutions, we test the first equation in (\ref{diff-ss}) by $\bm{f}$ on $ D\times[0,T_1] $, for any $ 0 < T_1 <T $, to find
	\[\begin{split}
		&\quad \int_0^{T_1}\int_{D} (\Delta\bm{f})\cdot \bm{f} \d x\d t - \int_0^{T_1}\int_{D} [(\bm{f}\cdot \nabla)\bm{v}]\cdot \bm{f} \d x\d t \\
		&= \int_0^{T_1}\int_{D} [(\t{\bm{v}} \cdot\nabla)\bm{f}]\cdot \bm{f} \d x\d t +  \int_0^{T_1}\int_{D} (\nabla g)\cdot \bm{f}\d x\d t +  \int_0^{T_1}\int_{D} (\p_t \bm{f})\cdot \bm{f}\d x\d t\,.
	\end{split} \]
	
	Thanks to the boundary condition and the incompressibility condition of $ \t{\bm{v}} $ and $ \bm{f} $, both $ \int_0^{T_1}\int_{D} [(\t{\bm{v}} \cdot\nabla)\bm{f}]\cdot \bm{f} \d x\d t$ and $\int_0^{T_1}\int_{D} (\nabla g)\cdot\bm{f}\d x\d t$ vanish, so
	\be\label{eq1-diff-ss} 
	\int_0^{T_1}\int_{D} (\Delta \bm{f})\cdot \bm{f} \d x\d t - \int_0^{T_1}\int_{D} [(\bm{f}\cdot \nabla) \bm{v}]\cdot \bm{f} \d x\d t = \frac12 \int_{D} |\bm{f}|^2(x,T_1)\d x\,.  
	\ee
	Since $\bm{f}\cdot \bm{n} = 0$ on $\p D$ and 
	\[
	\int_{D} r f_\th(x,t) \d x = \int_{D} r \big[v_\th(x,t) - \tilde{v}_\th(x,t) \big] \d x = 0, \quad \forall\, t\in[0,T],
	\]
	we can apply Lemma \ref{Lemma, lap_int} (replacing $D_m$ with $D$, and replacing $\eta_m$ with the constant function $1$) to obtain 
	\[
	\int_{D} (\Delta \bm{f}) \cdot \bm{f} \d x = - \int_{D} |\na\bm{b_f}|^2\d x 
	- 2 \int_{D} |\mathbb{S}(f_\th \bm{e_\th})|^2 \d x + \int_{D} \frac{1}{\rho^2} f_\phi^2 \d x + \int_{D} \frac{1}{\rho} (\p_\rho f_\phi^2) \d x,
	\]
	where $\bm{b_f} := f_\rho \bm{e_{\rho}} + f_\phi \bm{e_{\phi}}$.
	Plugging this identity into (\ref{eq1-diff-ss}) leads to 
	\be\label{unique3}\begin{split}
		&\quad \frac12 \int_{D} |\bm{f}|^2(x,T_1)\d x + \int_0^{T_1}\int_D |\na\bm{b_f}|^2\d x \d t + 2\int_0^{T_1}\int_D |\mathbb{S}(f_\th \bm{e_\th})|^2 \d x \d t\\
		&= \int_0^{T_1}\int_{D} \frac{1}{\rho^2} f_\phi^2 \d x \d t + \int_0^{T_1}\int_{D} \frac{1}{\rho} (\p_\rho f_\phi^2) \d x \d t
		- \int_0^{T_1}\int_{D} [(\bm{f}\cdot \nabla)\bm{v}]\cdot \bm{f} \d x\d t\,.
	\end{split}\ee
	
	Similar to Corollary \ref{Cor, Korn_ineq}, we have 
	\be\label{ST_est2}
	\int_0^{T_1}\int_D |\mathbb{S}(f_\th \bm{e_\th})|^2 \d x \d t \geq \frac38 \int_0^{T_1}\int_D |\nabla (f_\th \bm{e_\th})|^2 \d x \d t.
	\ee
	Next, we will control the first two terms on the right hand side of (\ref{unique3}) by
	a small multiple of $\int_0^{T_1}\int_D |\na\bm{b_f}|^2\d x \d t$ which can be absorbed by the left hand side of (\ref{unique3}). Based on formula (\ref{grad_b}), for any fixed $t\in (0, T_1)$,
	\be\label{bf_ldd}
	\int_{D} | \nabla \bm{b_f} |^2 \d x \geq \int_{D} (\p_\rho f_\phi)^2 \d x + \int_{D} (F_1^2 + F_2^2) \d x, 
	\ee
	where 
	\[
	F_1 \ed \frac{1}{\rho}(\p_\phi f_\phi + f_\rho)\,, \quad\text{and} \quad 
	F_2 \ed \frac{1}{\rho}(\p_\phi f_\rho - f_\phi)\,.
	\]
	Meanwhile, for any fixed $t\in(0,T_1)$, it follows from Cauchy-Schwarz inequality that 
	\[\begin{split}
		\int_{D} \frac{1}{\rho^2} f_\phi^2 \d x \d t + \int_{D} \Big|\frac{1}{\rho} (\p_\rho f_\phi^2)\Big| \d x \d t 
		&\leq \la\int_{D} (\p_\rho f_\phi)^2 \d x + \Big(1+\frac{1}{\la}\Big) \int_{D} \frac{1}{\rho^2} f_\phi^2 \d x \\
		& \leq \la\int_{D} (\p_\rho f_\phi)^2 \d x + \frac{3}{25}\Big(1+\frac{1}{\la}\Big) \int_{D} \frac{1}{\rho^2} |\p_\phi f_\phi|^2 \d x,
	\end{split}\]
	where the last inequality is due to $f_\phi=0$ on rays and the Poincar\'e inequality in Lemma \ref{PoinB}. Choosing $\la=\frac12$ yields 
	\[
	\int_{D} \frac{1}{\rho^2} f_\phi^2 \d x \d t + \int_{D} \Big|\frac{1}{\rho} (\p_\rho f_\phi^2)\Big| \d x \d t 
	\leq \f12 \int_{D} (\p_\rho f_\phi)^2 \d x + \frac{9}{25} \int_{D} \frac{1}{\rho^2} |\p_\phi f_\phi|^2 \d x.
	\]
	Then by analogous argument as (\ref{F1F2_est1}), we deduce 
	\[
	\Big\| \frac{1}{\rho} \p_\phi f_\phi \Big\|_{L^2(D)}^2  \leq \frac{10}{7} (\|F_1\|_{L^2(D)}^2 + \|F_2\|_{L^2(D)}^2)\,,
	\]
	Thus, 
	\be\label{bf_est1}\begin{split}
		\int_{D} \frac{1}{\rho^2} f_\phi^2 \d x \d t + \int_{D} \Big|\frac{1}{\rho} (\p_\rho f_\phi^2)\Big| \d x \d t 
		&\leq \f12 \int_{D} (\p_\rho f_\phi)^2 \d x + \frac{18}{35} (\|F_1\|_{L^2(D)}^2 + \|F_2\|_{L^2(D)}^2) \,.
	\end{split}\ee
	Combining (\ref{bf_ldd}) with (\ref{bf_est1}) together gives 
	\be\label{bf_est}
	\int_{D} \frac{1}{\rho^2} f_\phi^2 \d x \d t + \int_{D} \Big|\frac{1}{\rho} (\p_\rho f_\phi^2)\Big| \d x \d t \leq \frac{18}{35} \int_{D} |\nabla \bm{b_f}|^2 \d x.
	\ee
	
	Plugging (\ref{ST_est2}) and (\ref{bf_est}) into (\ref{unique3}) leads to 
	\be\label{unique4}\begin{split}
		&\quad \frac12 \int_{D} |\bm{f}|^2(x,T_1)\d x + \int_0^{T_1}\int_D |\na\bm{b_f}|^2\d x \d t+ \frac34\int_0^{T_1}\int_D |\nabla(f_\th \bm{e_\th})|^2 \d x \d t\\
		&\leq \frac{18}{35} \int_0^{T_1}\int_D |\na\bm{b_f}|^2\d x \d t
		+ \bigg| \int_0^{T_1}\int_{D} [(\bm{f}\cdot \nabla)\bm{v}]\cdot \bm{f} \d x\d t \bigg|\,.
	\end{split}\ee
	Since $|\nabla \bm{f}|^2 = |\na\bm{b_f}|^2 + |\nabla(f_\th \bm{e_\th})|^2$, it follows from (\ref{unique4}) that 
	\be\label{unique1}\begin{split}
		\frac12 \int_{D} |\bm{f}|^2(x,T_1)\d x + \frac{17}{35}\int_0^{T_1}\int_D |\na\bm{f}|^2\d x \d t
		\leq \bigg| \int_0^{T_1}\int_{D} [(\bm{f}\cdot \nabla)\bm{v}]\cdot \bm{f} \d x\d t \bigg|\,.
	\end{split}\ee
	
	By definition, 
	\[
	\int_0^{T_1}\int_{D} [(\bm{f}\cdot \nabla)\bm{v}]\cdot \bm{f} \d x\d t = \sum_{j,k = 1}^3 \int_0^{T_1}\int_{D} [f_j (\p_{x_j} v_k)] f_k \dx\d t\,.
	\]
	Then using integration by parts and taking advantage of the boundary condition and the incompressibility condition of $\bm{f} $, we infer that
	\[ 
	\int_0^{T_1}\int_{D} [(\bm{f}\cdot \nabla)\bm{v}]\cdot\bm{f} \d x\d t = -\sum_{j,k = 1}^3 \int_0^{T_1}\int_{D} f_j v_k (\p_{x_j} f_k ) \d x\d t\,. 
	\]
	Since $\|\bm{v}\|_{L^{\infty}_{tx}(D\times[0,T])}$ is finite, there exists some large constant $C_1$ such that 
	\[
	\bigg|\int_0^{T_1}\int_{D} [(\bm{f}\cdot \nabla)\bm{v}]\cdot \bm{f} \d x\d t \bigg| \leq 
	C_1 \int_0^{T_1}\int_{D} |\bm{f}(x,t)|^2\d x\d t + \frac15\int_0^{T_1}\int_{D} |\na\bm{f}|^2 \d x\d t\,.
	\]
	Putting this estimate into (\ref{unique1}) yields 
	\be\label{unique2}
	\int_{D} |\bm{f}(x,T_1)|^2\d x \leq 2 C_1 \int_0^{T_1}\int_{D} |\bm{f}(x,t)|^2\d x\d t\,, \quad\forall\, 0<T_1\leq T.
	\ee
	
	Finally, since both $\bm{v}$ and $ \t{\bm{v}} $ has the same initial data, $ \bm{f}(x,0)=0 $ on $ D $. As a result, it follows from (\ref{unique2}) and Gr\"onwall's inequality that $\bm{f}=0 $ on $ D\times[0,T] $. So $ \t{\bm{v}} = \bm{v} $ on $ D\times[0,T] $, which justifies the uniqueness of the strong solution $ \bm{v} $. This completes the proof of Theorem \ref{Thm_main}.
\end{proof}

\section{Proof of Theorem \ref{Thm2}}\l{Sec11}

\begin{proof}[Proof of Theorem \ref{Thm2}]
Our construction consists of the following four steps.

\textbf{Step 1. Construction of a solution $\bm{v}^{(1)}$ whose initial azimuthal component $v_{0,\th}$ is supported near the axis of symmetry.}

Since $rv_{0,\th}\to 0$ as $\rho\to 0$, there exists $\rho_0\in(0,125^{-1})$ such that
\[
\sup_{x\in D\cap\{\rho\,:\,\rho\leq\rho_0\}}r|v_{0,\th}|\leq C_*\,,
\]
where $C_*$ is the constant in (\ref{COND}) in Theorem \ref{Thm_main}.
Define a new initial velocity $\bm{v}^{(1)}_0$ as 
\bn
\bm{v}^{(1)}_0 = v_{0,\rho}\bm{e_\rho} + v_{0,\phi}\bm{e_\phi} + \eta_1(\rho) v_{0,\th}\bm{e_\th},
\en
where $\eta_1$ is a smooth cut-off function such that $0\leq\eta_1(\rho)\leq1$, $|\eta_1'(\rho)| \leq \f{4}{\rho_0}$, and
\[
\eta_1(\rho)=\left\{
\begin{aligned}
	&1,\q\text{for}\q &0\leq\rho\leq\frac{\rho_0}{2}\,;\\
	&0,\q\text{for}\q &\rho\geq\rho_0\,.\\
\end{aligned}
\right.
\]
Then both the conditions (i) and (ii) in Theorem \ref{Thm_main} are satisfied for the initial value $\bm{v}^{(1)}_0$, and hence the problem \eqref{NS1} with the initial value $\bm{v}^{(1)}_0$ has a bounded strong solution $(\bm{v}^{(1)}, P^{(1)})$ in $D\times[0,T]$. 

\textbf{Step 2. Construction of a solution $\bm{v}^{(2)}$ whose initial value is supported away from the axis of symmetry.}

Fix 
\be\label{m_choice} 
	m = \frac{8}{\rho_0}.
\ee
Then we consider the problem (\ref{NS}) on the domain $D\times [0,T]$ with the mixed boundary condition (\ref{bdry for Dm}) and the initial data $(1-\eta_1)v_{0,\th}\bm{e_\th}$. Noticing that this initial data is divergence free and satisfies the boundary condition (\ref{bdry for Dm}) since the support of $1-\eta_1$ is away from the inner arc of $D_m$, see Figure \ref{Fig, v2}. 
\begin{figure}[!ht]
	\centering
	\includegraphics[scale=0.22]{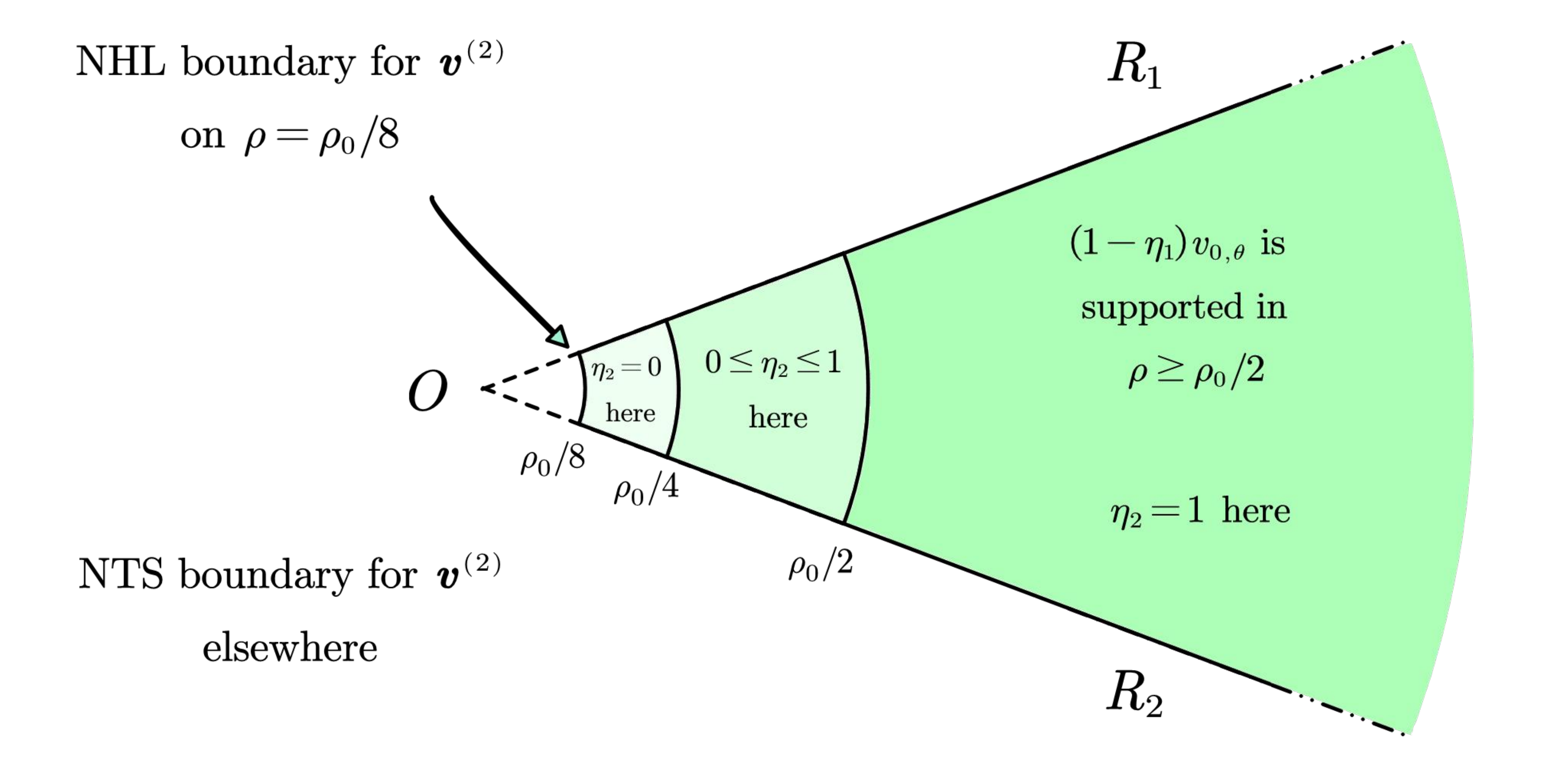}
	\caption{Construction of $\bm{v}^{(2)}$}
	\label{Fig, v2}
\end{figure}
Thanks to Proposition \ref{Prop, local soln in ad}, there exists a unique bounded strong solution $(\bm{v}^{(2)}, P^{(2)})$ of (\ref{NS}) on $D_m\times [0,T]$ with the boundary condition (\ref{bdry for Dm}) such that 
\[
	\bm{v}^{(2)} \in H_t^1 L_x^2 \cap L_t^2 H_x^2\cap L_{tx}^{\infty}\big(D_m\times[0,T]\big), 
	\quad 
	P^{(2)}\in L_t^2 H_x^1(D_m\times[0,T]). 
\]

Then we choose another smooth cut-off function $\eta_2$ such that $0\leq\eta_2(\rho)\leq1$, $|\eta_2'(\rho)| \leq \f{16}{\rho_0}$, and
\[
\eta_2(\rho)=\left\{
\begin{aligned}
	&0,\q\text{for}\q &0\leq\rho\leq\frac{\rho_0}{4}\,;\\
	&1,\q\text{for}\q &\rho\geq \frac{\rho_0}{2}\,.\\
\end{aligned}
\right.
\]
The support of $\eta_2$ is contained in $S_2$:
\[
S_2 \ed \{x\in \ol{D}: \rho\geq \rho_0/4\},
\]
which is away from the $x_3$ axis, and enables one to introduce $\eta_2 \bm{v}^{(2)}$ to extend the domain $D_m$ of $\bm{v}^{(2)}$ to the whole region $D$. In addition, we have
\[
	\eta_2(1-\eta_1)=(1-\eta_1), \quad\forall\, \rho\in[0,1]\,,
\]
which guarantees that the initial values of $\eta_2 \bm{v}^{(2)}$ and $\bm{v}^{(2)}$ match.

Now we combine $\bm{v}^{(1)}$ and $\bm{v}^{(2)}$ together to define $\bm{v}^{(3)}$ as
\[
\bm{v}^{(3)}\ed\bm{v}^{(1)}+\eta_2\bm{v}^{(2)}\,.
\]
Clearly, 
\[
\bm{v}^{(3)}(x,0)=v_{0,\rho}\bm{e_\rho} + v_{0,\phi}\bm{e_\phi} + \eta_1 v_{0,\th}\bm{e_\th} + (1-\eta_1)v_{0,\th}\bm{e_\th} = \bm{v}_0\,.
\]
However, $\bm{v}^{(3)}$ is not divergence-free since
\ba\l{DIV3}
\na\cdot\bm{v}^{(3)}=\na\eta_2\cdot\bm{v}^{(2)}=\eta_2^\pr(\rho)v_\rho^{(2)}(\rho,\phi,t)\,.
\ea

\textbf{Step 3. Divergence correction: the construction of an auxiliary vector field $\bm{X}$.}

To eliminate the divergence part of $\bm{v}^{(3)}$ in \eqref{DIV3}, we introduce
\[
\bm{X}=\left(\f{1}{\rho^2}\int_0^\rho s^2\eta_2^\pr(s)v_\rho^{(2)}(s,\phi,t)\d s\right)\bm{e_\rho}\ed \zeta(\rho,\phi,t)\bm{e_\rho}\,.
\]
Recalling the divergence formula in the spherical coordinates, we find
\bn
\na\cdot\bm{X}=\f{1}{\rho^2}\p_\rho\left(\rho^2\cdot\f{1}{\rho^2}\int_0^\rho s^2\eta_2^\pr(s)v_\rho^{(2)}(s,\phi,t)\d s\right)=\eta_2^\pr(\rho)v_\rho^{(2)}(\rho,\phi,t)\,,
\en
which guarantees the divergence-freeness of $\bm{v}^{(3)}-\bm{X}$ due to (\ref{DIV3}). Noticing that $\zeta(\rho,\phi,0) = 0$ for any $\rho$ and $\phi$ since $v_\rho^{(2)}(s,\phi,0) = 0$ for any $s$ and $\phi$. In addition, $\zeta(\rho,\phi,t)=0$ for $\rho\in(0,\f{\rho_0}{4})$ due to the support of the cut-off function $\eta_2$. Therefore, subtracting $\bm{X}$ does not affect the initial value of $\bm{v}^{(3)}$ or its values in a neighborhood of the axis of symmetry.

Now we consider the boundary condition of $\bm{X}$. Since $\bm{v}^{(2)}$ satisfies the Navier total-slip condition (\ref{NTSR}) on the rays $R_1 \cup R_2$, we have
\[
\p_\phi v_\rho^{(2)}(\rho,\phi,t) = 0 \q \text{on} \q \phi = \frac{\pi}{2}\pm\al\,.
\] 
By interchanging the order of integration, we conclude that $\zeta$ satisfies the same Neumann boundary condition: 	
\be\label{p_phi_m}
\p_\phi \zeta(\rho,\phi,t) = 0 \quad\text{on}\quad \phi = \frac{\pi}{2} \pm \al.
\ee
However, on $\rho=1$,
\[
\zeta(1,\phi,t)=\int_0^1 s^2\eta_2^\pr(s)v_\rho^{(2)}(s,\phi,t)\d s\,,
\]
which is not zero. Therefore, we need to adjust the value of $\bm{v}^{(3)}-\bm{X}$ on the arc boundary $\rho = 1$ in order to satisfy the Navier total-slip boundary condition (\ref{NTSA}) on the arc $\rho = 1$. 

\textbf{Step 4. Boundary correction: the construction of an auxiliary vector $\bm{Y}$.}

We first choose a smooth function $a(\rho)$ which satisfies 
\ba\l{CONDa}
a(\rho) = \left\{
\begin{aligned}
	& 0,  &\q\text{for}\q &0\leq\rho\leq\frac{\rho_0}{2}\,;\\
	& -1, &\q\text{for}\q & \frac12 \leq \rho \leq 1\,.
\end{aligned}
\right.
\ea
Then we define 
\[
\psi(\rho,\phi,t)\ed a(\rho)\int_{\f{\pi}{2}-\al}^\phi \zeta(1, \varphi,t)\sin\varphi \d \varphi
\]
to be the stream function, so that it generates the divergence-free vector field
\[
\bm{Y}=Y_\rho\bm{e_\rho}+Y_\phi\bm{e_\phi}\,,
\]
with
\[
\left\{
\begin{aligned}
	&Y_\rho \ed \f{1}{\rho^2\sin\phi}\p_\phi\psi=\f{a(\rho)}{\rho^2} \zeta(1,\phi,t)\,,\\
	&Y_\phi \ed -\f{1}{\rho\sin\phi}\p_\rho\psi = -\f{a^\pr(\rho)}{\rho\sin\phi}\int_{\f{\pi}{2}-\al}^\phi \zeta(1, \varphi,t)\sin\varphi \d \varphi\,.
\end{aligned}
\right.
\]
It is readily seen that
\[
Y_\rho(1,\phi,t) = -\zeta(1,\phi,t) = -X_\rho(1,\phi,t)\,.
\]
Moreover, direct calculation shows
\begin{itemize}
	\item $\bm{Y}\Big|_{t=0}=0$, since $\zeta(1,\phi,0)=0$ for any $\phi$;\\[2mm]
	
	\item $Y_\phi\Big|_{\phi=\f{\pi}{2}\pm\al}=0$, since $\int_{\f{\pi}{2}-\al}^{\f{\pi}{2}+\al} v_\rho^{(2)}(\rho,\varphi,t)\sin\varphi \d \varphi=0$ thanks to Lemma \ref{v_rho_mean0};\\[2mm]
	
	\item $\p_\phi Y_\rho\Big|_{\phi=\f{\pi}{2}\pm\al}=0$ due to (\ref{p_phi_m});\\[2mm]
	
	\item $\bm{Y}=0$ for $\rho\in[0,\f{\rho_0}{2}]$, and $\p_\rho Y_\phi=\f{1}{\rho}Y_\phi=0$ on $\rho=1$, which are direct conclusions from \eqref{CONDa}.
\end{itemize}

Finally, by denoting 
\[
\t{\bm{v}} \ed \bm{v}^{(3)}-\bm{X}-\bm{Y} = \bm{v}^{(1)}+\eta_2\bm{v}^{(2)}-\bm{X}-\bm{Y}\,,
\]
then 
\be\label{vtilde_eq1}\begin{split}
	\Delta \t{\bm{v}} - (\t{\bm{v}} \cdot \nabla) \t{\bm{v}}  - \p_t \t{\bm{v}}  =& \, \Delta (\bm{v}^{(1)}+\eta_2\bm{v}^{(2)}-\bm{X}-\bm{Y}) \\
	& - \big[ (\bm{v}^{(1)}+\eta_2\bm{v}^{(2)}-\bm{X}-\bm{Y})\cdot \nabla \big] (\bm{v}^{(1)}+\eta_2\bm{v}^{(2)}-\bm{X}-\bm{Y}) \\
	& - \p_t(\bm{v}^{(1)}+\eta_2\bm{v}^{(2)}-\bm{X}-\bm{Y}) = \bm{F}_1 + \bm{F}_2 + \bm{F}_3,
\end{split}\ee
where 
\[\begin{split}
	\bm{F}_1 &= \Delta(\bm{v}^{(1)} + \eta_2\bm{v}^{(2)}) - [(\bm{v}^{(1)} + \eta_2\bm{v}^{(2)})\cdot \nabla ](\bm{v}^{(1)} + \eta_2\bm{v}^{(2)}) - \p_t(\bm{v}^{(1)} + \eta_2\bm{v}^{(2)}), \\
	\bm{F}_2 &= -\Delta(\bm{X}+\bm{Y}) - [(\bm{X} + \bm{Y})\cdot \nabla ](\bm{X} + \bm{Y}) + \p_t(\bm{X} + \bm{Y}), \\
	\bm{F}_3 &= [(\bm{X} + \bm{Y})\cdot \nabla](\bm{v}^{(1)} + \eta_2 \bm{v}^{(2)}) + [( \bm{v}^{(1)} + \eta_2 \bm{v}^{(2)}) \cdot\nabla](\bm{X}+\bm{Y}).
\end{split}\]
Since both $\bm{X}$ and $\bm{Y}$ vanish when $\rho\leq \frac{\rho_0}{4}$, we know $\bm{F}_2 = \bm{F}_3 = 0$ when $\rho\in [ 0,\f{\rho_0}{4}]$. Meanwhile, we can decompose $\bm{F}_1$ as 
$\bm{F}_1 = \bm{F}_{11} + \bm{F}_{12} + \bm{F}_{13}$, where 
\[\begin{split}
	\bm{F}_{11} &= \Delta \bm{v}^{(1)} - \bm{v}^{(1)} \cdot \nabla \bm{v}^{(1)} - \p_t \bm{v}^{(1)}, \\
	\bm{F}_{12} &= \Delta (\eta_2 \bm{v}^{(2)}) - (\eta_2\bm{v}^{(2)} \cdot \nabla)(\eta_2 \bm{v}^{(2)}) - \p_t (\eta_2 \bm{v}^{(2)}),\\
	\bm{F}_{13} &= - (\bm{v}^{(1)}\cdot \nabla)(\eta_2 \bm{v}^{(2)}) - (\eta_2 \bm{v}^{(2)} \cdot \nabla)\bm{v}^{(1)}.
\end{split}\]
Since $(\bm{v}^{(1)}, P^{(1)})$ is a solution to the problem (\ref{NS1}), then $\bm{F}_{11} = \nabla P^{(1)}$. In addition, using spherical coordinates, we find that  
\[\begin{split}
	\Delta (\eta_2 \bm{v}^{(2)}) &= \eta_2 \Delta \bm{v}^{(2)} + 2\eta_2' \p_\rho \bm{v}^{(2)} + \Big( \eta_2'' + \frac{2}{\rho}\eta_2' \Big) \bm{v}^{(2)}, \\
	(\eta_2\bm{v}^{(2)} \cdot \nabla)(\eta_2 \bm{v}^{(2)}) &= \eta_2^2 (\bm{v}^{(2)} \cdot \nabla)\bm{v}^{(2)} + \eta_2 \eta_2' v^{(2)}_\rho \bm{v}^{(2)}.
\end{split}\]
As a result, 
\[\begin{split}
	\bm{F}_{12} &= \eta_2 (\Delta \bm{v}^{(2)} - \p_t \bm{v}^{(2)}) + 2\eta_2' \p_\rho \bm{v}^{(2)} + \Big( \eta_2'' + \frac{2}{\rho}\eta_2' + \eta_2 \eta_2' v^{(2)}_\rho\Big) \bm{v}^{(2)} - \eta_2^2 (\bm{v}^{(2)} \cdot \nabla)\bm{v}^{(2)}.
\end{split}\]
Since $\Delta \bm{v}^{(2)} - \p_t \bm{v}^{(2)} = (\bm{v}^{(2)} \cdot \nabla)\bm{v}^{(2)} + \nabla P^{(2)}$, we obtain 
$\bm{F}_{12} = \nabla (\eta_2 P^{(2)}) + \wt{\bm{F}}_{12}$, 
where 
\[
	\wt{\bm{F}}_{12} = 2\eta_2' \p_\rho \bm{v}^{(2)} + \Big( \eta_2'' + \frac{2}{\rho}\eta_2' + \eta_2 \eta_2' v^{(2)}_\rho\Big) \bm{v}^{(2)} + (\eta_2 - \eta_2^2) (\bm{v}^{(2)} \cdot \nabla)\bm{v}^{(2)} - \eta_2' P^{(2)} \bm{e_\rho}.
\]
Combining all the above computations, we deduce 
\[
	\bm{F}_1 = \nabla \big( P^{(1)} + \eta_2 P^{(2)} \big) + \wt{\bm{F}}_{12} + \bm{F}_{13},
\]
where both $\wt{\bm{F}}_{12}$ and $\bm{F}_{13}$ vanish when $\rho\leq \frac{\rho_0}{4}$ due to the support of $\eta_2$. 
Define 
\be\label{F_decomp}
	\bm{F} = \bm{F}_2 + \bm{F}_{3} + \wt{\bm{F}}_{12} + \bm{F}_{13}, \q \text{and} \q \t{P} = P^{(1)} + \eta_2 P^{(2)}.
\ee
Then 
\be\label{vtilde_eq2}
\Delta \t{\bm{v}} - (\t{\bm{v}} \cdot \nabla) \t{\bm{v}}  - \p_t \t{\bm{v}} = \nabla \t{P} + \bm{F},
\ee
which justifies \eqref{PP1} with 
\[
	\bm{F}=0\q\text{when}\q \rho \in \big[ 0,\f{\rho_0}{4} \big)\,.
\]

Since $\bm{v}^{(1)}$ is a bounded strong solution on the whole region $D$ and $\bm{v}^{(2)}$ is a bounded strong solution on $D_m$ with the fixed $m$ given in (\ref{m_choice}) such that the support of $\bm{F}$ is contained in $D_m$, one can easily check that all the terms in $\bm{F}$ in (\ref{F_decomp}) lie in $L^2_{tx}(D\times [0,T])$.
This completes the proof of Theorem \ref{Thm2}. 
\end{proof}

In the end, we remark that it is not clear whether the forcing term $\bm{F}$ constructed above is bounded in $D \times [0,T]$ due to complicated boundary conditions. In contrast, later in the proof for Theorem \ref{Thm3}, the forcing term $\bm{F}$ by a similar construction process can be proven to live in $L^{\infty}_{tx}(\mR^3\times [0,T])$.

\section{Proof of Theorem \ref{Thm3}}\l{Sec, reg_control_R3}

\begin{proof}
	The proof of Theorem \ref{Thm3} follows the similar strategy as that for Theorem \ref{Thm2}.
	
	\textbf{Step 1. Construction of a solution $\bm{v}^{(1)}$ whose initial azimuthal component $v_{0,\th}$ is supported near the axis of symmetry.}
	
	Firstly, we recall a result in \cite[Theorem 1.4]{LZ17}: there exists an absolute positive constant $\delta_* > 0$ such that if 
	\be\label{LZ_small_id}
	\|\Gamma_0\|_{L^\infty} \|\Gamma_0\|_{L^2} \Big( \|\Omega_0\|_{L^2} + \|V_0^2\|_{L^2} \Big) \leq \delta_*,
	\ee
	then the ASNS (\ref{eqasns}) on $\mR^3$ is globally well-posed, where 
	\[
	\Gamma_0 = r v_{0,\th}, \qquad \Omega_0 = \frac{\o_{0,\th}}{r}, \qquad V_0 = \frac{v_{0,\th}}{r}.
	\]
	Since $\bm{v_0}\in H^2(\mR^3)\cap C^2(\mR^3)$, we know $v_{0,\th} \in L^\infty(\mR^3)$ and all of  $\Gamma_0, \Omega_0, V_0^2$ belong to $L^2(\mR^3)$. On the other hand, since $v_{0,\th} \in L^\infty(\mR^3)$, there exists $r_0 > 0$ such that
	\be\label{small_G_cr}
	\sup_{ r \leq r_0 } r|v_{0,\th}|\leq \frac{\delta_*}{B_0} \,,
	\ee
	where 
	\[
	B_0 \ed \|\Gamma_0\|_{L^2} \big( \|\Omega_0\|_{L^2} + \|V_0^2\|_{L^2} \big).
	\]
	Based on the given initial velocity $\bm{v_0} \ed v_{0,r}\bm{e_r} + v_{0,x_3} \bm{e_{3}} + v_{0,\th}\bm{e_\th}$, we consider the following truncated initial condition $\bm{v}^{(1)}_{0}$:
	\ba\l{id1_cr}
	\bm{v}^{(1)}_0 = v_{0,r}\bm{e_r} + v_{0,x_3} \bm{e_{3}} + \eta_1(r) v_{0,\th}\bm{e_\th},
	\ea
	where $\eta_1$ is a smooth cut-off function such that $0\leq\eta_1(r)\leq 1$, $|\eta_1'(r)| \leq \f{4}{r_0}$, and
	\bn
	\eta_1(r) = \left\{
	\begin{aligned}
		&1,\q\text{for}\q & 0 \leq r \leq \frac{r_0}{2}\,;\\
		&0,\q\text{for}\q & r \geq r_0\,.\\
	\end{aligned}
	\right.
	\en
	Noticing 
	\[
	B_0^{(1)} \ed  \|\Gamma^{(1)}_0\|_{L^2} \Big( \|\Omega^{(1)}_0\|_{L^2} + \|(V^{(1)}_0)^2\|_{L^2} \Big) \leq B_0,
	\]
	so (\ref{LZ_small_id}) holds for the initial velocity $\bm{v}^{(1)}_{0}$ due to (\ref{small_G_cr}). Hence, the ASNS (\ref{eqasns}) with the initial value $\bm{v}_{0}^{(1)}$ in (\ref{id1_cr}) has a global strong solution $(\bm{v}^{(1)}, P^{(1)})$ in $\mR^3\times(0,\infty)$. 
	
	\textbf{Step 2. Construction of a solution $\bm{v}^{(2)}$ whose initial value is supported away from the axis of symmetry.}
	
	Let $\bm{v}^{(2)}$ be the suitable Leray-Hopf solution (i.e. the pressure term belongs to $L^{5/3}_{tx}(\mR^3\times[0,T])$) to the following problem
	\be\l{EV2-3D}\left\{\, \begin{aligned}
		&\Delta {\bm{v}}^{(2)} - ({\bm{v}}^{(2)}\cdot \nabla) {\bm{v}}^{(2)} - \nabla {P}^{(2)} - \p_{t} {\bm{v}}^{(2)} = 0 \quad\text{in}\quad  \mR^3\times (0,T], \\
		&\nabla \cdot {\bm{v}}^{(2)} = 0  \quad \text{in} \quad  \mR^3\times (0,T], \\
		&{\bm{v}}^{(2)}(\cdot, 0) = (1-\eta_1)v_{0,\th}\bm{e_\th} \quad\text{in} \quad  \mR^3.
	\end{aligned} \right.\ee
	Now we choose another smooth cut-off function $\eta_2$ such that $0\leq\eta_2(r)\leq1$, $|\eta_2'(r)| \leq \f{8}{r_0}$, and
	\bn
	\eta_2(r)=\left\{
	\begin{aligned}
		&0,\q\text{for}\q &0\leq r \leq\frac{r_0}{4}\,;\\
		&1,\q\text{for}\q & r \geq \frac{r_0}{2}\,.\\
	\end{aligned}
	\right.
	\en
	As a result, 
	\[
	\eta_2(1-\eta_1)=(1-\eta_1), \quad\forall\, r \in[0,1]\,.
	\]
	Now we introduce a new vector field $\bm{v}^{(3)}$ as
	\[
	\bm{v}^{(3)}\ed\bm{v}^{(1)}+\eta_2\bm{v}^{(2)}\,.
	\]
	Clearly, from \eqref{id1_cr} and \eqref{EV2-3D}$_3$, we have
	\[
	\bm{v}^{(3)}(x,0) = v_{0,r}\bm{e_r} + v_{0,x_3}\bm{e_{3}} + \eta_1 v_{0,\th} \bm{e_\th} + (1-\eta_1) v_{0,\th} \bm{e_\th} = \bm{v}_0\,.
	\]
	However, $\bm{v}^{(3)}$ is not divergence-free since
	\ba\l{DIV4}
	\na\cdot\bm{v}^{(3)}=\na\eta_2\cdot\bm{v}^{(2)} = \eta_2^\pr(r) v_r^{(2)}(r,x_3,t)\,.
	\ea
	
	\textbf{Step 3. Divergence correction: the construction of an auxiliary vector $\bm{X}$.}
	
	To eliminate the divergence part of $\bm{v}^{(3)}$ in \eqref{DIV4}, we introduce
	\[
	\bm{X} \ed -\left(\f{\eta_2^\pr(r)}{r}\int_0^rsv_{x_3}^{(2)}(s,z,t)\d s\right) \bm{e_{3}}\,.
	\]
	Noticing that $\bm{v}^{(2)}$ is divergence-free, so 
	\[
	\frac{1}{s} \p_{s}\big(sv_r^{(2)}(s,x_3,t)\big) + \p_{x_3}v_{x_3}^{(2)}(s,x_3,t) = 0,
	\]
	which implies that 
	\bn
	\na\cdot\bm{X} = -\f{\eta_2^\pr(r)}{r}\int_0^rs\p_{x_3}v_{x_3}^{(2)}(s,x_3,t)\d s =\f{\eta_2^\pr(r)}{r}\int_0^r\p_s\big(sv_r^{(2)}(s,x_3,t)\big)\d s =\eta_2^\pr(r) v_r^{(2)}(r,x_3,t)\,.
	\en
	Thus, $\bm{v}^{(3)}-\bm{X}$ is divergence-free. Noticing that
	\begin{itemize}
		\item $\bm{X}(r,x_3,0)=0$ since $v_{x_3}^{(2)}(r,x_3,0) \equiv 0$; 
		
		\item $\bm{X}(r,x_3,t) = 0$ for any $r \in (0, \f{r_0}{4})$, due to the support of the cut-off function $\eta_2$.
	\end{itemize}
	Therefore, subtracting $\bm{X}$ does not affect the initial value of $\bm{v}^{(3)}$ or its values in a neighborhood of the axis of symmetry.
	
	Finally, by denoting 
	\[
	\t{\bm{v}}\ed\bm{v}^{(3)} - \bm{X} = \bm{v}^{(1)} + \eta_2\bm{v}^{(2)} - \bm{X}\,,
	\]
	and performing similar computations as that in (\ref{vtilde_eq1})--(\ref{vtilde_eq2}), we find that $\t{\bm{v}}$ satisfies \eqref{PP2}, where 
	\[
	\bm{F}=0\q\text{when}\q r \in \big[ 0,\f{r_0}{4} \big)\,.
	\]
	Since the support of $\bm{F}$ is away from the $x_3$ axis, the integrability $L^{\infty}_{t}L^{2}_{x} \cap L^{\infty}_{tx}\big(\mR^3 \times [0,T]\big)$ of $\bm{F}$ follows from standard theory. This completes the proof of Theorem \ref{Thm3}.
	
\end{proof}

\section{Proof of Corollary \ref{Cor, unstable-bus}}\l{Sec12}

According to the setup in Corollary \ref{Cor, unstable-bus}, there exists a strong solution $\bm{v}$ to the problem (\ref{NS1}) which blows up in finite time with an initial value $\bm{v}_0 \in \mathscr{A}$ such that $\int_{D} r v_{0,\th} \d x = 0$. Writing 
\[\bm{v}_0 := v_{0,\th}\bm{e_\th} + v_{0,r}\bm{e_r} + v_{0,x_3}\bm{e_{3}},\]
then we define a class of initial velocities as below:
\[
\bm{v}^{\la}_{0} = \lambda v_{0,\th}\bm{e_\th} + v_{0,r}\bm{e_r} + v_{0,x_3}\bm{e_{3}}, \quad \forall\, \la \geq 0.
\]
Now we denote $S$ to be the set of $\la$ for which the problem (\ref{NS1}) with the initial data $\bm{v}_{0}^{\la}$ has a global strong solution. 
Thanks to Theorem \ref{Thm_main}, the problem (\ref{NS1}) with the initial data $\bm{v}_{0}^{\la}$ possesses a global strong solution for any sufficiently small $\la$, so $S$ contains an interval $[0, A_*]$ for some $A_*>0$. Define 
\[
\la_{*} = \sup \{B \geq 0: [0,B] \subseteq S\}.
\]
Since the solution with the initial data $\bm{v}_0$ blows up in finite time, then we know 
\[0 < A_{*} \leq \la_{*} \leq 1.\]
Now we consider the initial velocity $\bm{v}^{*}_0$ defined as
\[
\bm{v}^{*}_{0} = \la_{*} v_{0,\th}\bm{e_\th} + v_{0,r}\bm{e_r} + v_{0,x_3}\bm{e_{3}}.
\]

If $\la_{*}\in S$, then the strong solution $v^{*}$ to (\ref{NS1}) with the initial data $v^{*}_{0}$ exists globally. As a result, the energy inequality (\ref{energy_decay}) holds for any $T>0$. This energy decay guarantees that $v^{*}$ is globally bounded, which allows a small perturbation of the initial value $v^{*}_{0}$ so that the corresponding strong solution also exists globally. This implies that $\la_{*}+\ep \in S$ for any sufficiently small $\ep>0$, which contradicts to the definition of $\la_*$.
Consequently, we conclude that $\la_{*} \notin S$ and $v^{*}$ blows up in finite time.
Meanwhile, $\la_{*} - \ep \in S$ for any $\ep \in (0, \la_{*})$, and
\[
\| \bm{v^*}_{0} - \bm{v}_{0}^{\la_*-\ep} \|_{C^2(\ol{D})} = \ep \|v_{0,\th}\|_{C^2(\ol{D})},
\]
so by defining 
\[\bm{\wt{v}}_0 = \bm{v}_{0}^{\la_*-\ep},\] 
where $\ep \leq \delta/\|v_{0,\th}\|_{C^2(\ol{D})}$, 
then the strong solution $\bm{\wt{v}}$ to the problem (\ref{NS1}) with the initial data $\bm{\wt{v}}_0$ exists globally and 
\[ \|\bm{\wt{v}}_0 - \bm{v}^{*}_{0}\|_{C^2(\ol{D})} \leq \delta.\] 

As a summary, we obtain a strong solution $\bm{v}^{*}$ to the problem (\ref{NS1}) which blows up in finite time. Furthermore, $\bm{v}^{*}$ is unstable in the sense that for any $\delta>0$, we can find a strong solution $\bm{\wt{v}}$ to the problem (\ref{NS1}) which exists globally, and the $C^{2}(\ol{D})$ norm of the difference between the initial values of $\bm{v}^{*}$ and $\bm{\wt{v}}$ is bounded by $\delta$.

\qed


\section*{Acknowledgments}
We wish to thank Professors Hui Chen, Zhen Lei, Yanlin Liu, Xinghong Pan, Lu Yang, Ping Zhang, Na Zhao and Daoguo Zhou, Dr. Chulan Zeng, Mr. Zili Chen and Mr. Zhipeng Wu for helpful discussions.
X. Yang is supported by National Natural Science Foundation of China (No. 12401299), Natural Science Foundation of Jiangsu Province (No. BK20241260), Scientific Research Center of Applied Mathematics of Jiangsu Province (No. BK20233002). Q. S. Zhang is grateful to the support of the Simons Foundation through grant No. 710364.


\bigskip


\end{document}